\tikzset{cd/.style=matrix of math nodes} 
\tikzset{cdar/.style=->,auto}
\tikzset{dar/.style={double,double equal sign distance,-implies}}
\setlist[enumerate,1]{label=\textup{(\arabic*)}}
\setlist[enumerate,2]{label=\textup{(\alph*)}}
\newcommand{\longref}[2]{\hyperref[#2]{#1~\textup{\ref*{#2}}}}
\numberwithin{equation}{section}
\theoremstyle{plain}
\newtheorem{theorem}[equation]{Theorem}
\newtheorem{lemma}[equation]{Lemma}
\newtheorem{proposition}[equation]{Proposition}
\newtheorem{deflem}[equation]{Definition and Lemma}
\newtheorem{corollary}[equation]{Corollary}
\theoremstyle{definition}
\newtheorem{definition}[equation]{Definition}
\theoremstyle{remark}
\newtheorem{remark}[equation]{Remark}
\newtheorem{example}[equation]{Example}
\newcommand*{\alb}{\hspace{0pt}} 
\newcommand*{\nb}{\nobreakdash}
\newcommand*{\U}{\mathcal U}
\newcommand{\V}{\mathcal{V}}
\newcommand*{\lc}{\mathrm{lc}}
\newcommand*{\Qu}{\mathsf{p}}
\newcommand*{\s}{s} 
\newcommand*{\rg}{r}
\newcommand*{\Cat}[1][C]{\mathcal #1}
\newcommand*{\GroupoidCat}[1][C]{\textup{Gr}(\mathcal #1)}
\newcommand*{\Bisp}[1][X]{\mathcal #1}
\newcommand{\Gr}[1][G]{\mathcal #1}
\newcommand{\into}{\rightarrowtail}
\newcommand{\prto}{\twoheadrightarrow}
\newcommand*{\op}{\mathrm{op}}
\newcommand*{\prop}{\mathrm{prop}}
\newcommand*{\mult}{\mathrm{mult}}
\newcommand*{\tight}{\mathrm{tight}}
\newcommand*{\Grcat}{\mathfrak{Gr}}
\newcommand*{\ContS}{\mathfrak S}
\newcommand*{\Cont}{\mathrm C}
\newcommand*{\Cst}{\texorpdfstring{\textup C^*}{C*}}
\newcommand*{\defeq}{\mathrel{\vcentcolon=}}
\newcommand*{\congto}{\xrightarrow\sim}
\newcommand*{\id}{\mathrm{id}}
\newcommand*{\pr}{\mathrm{pr}}
\newcommand{\C}{\mathbb{C}}
\newcommand{\N}{\mathbb{N}}
\newcommand{\Bis}{\mathcal{S}}
\newcommand{\IS}{\mathcal{I}}
\newcommand{\Grcomp}{\circ}
\newcommand{\pt}{\mathrm{pt}}
\newcommand{\blank}{{\llcorner\!\lrcorner}}
\DeclareMathOperator{\Ad}{Ad}
\DeclareMathOperator{\Dom}{Dom}
\newcommand{\const}{\mathrm{const}}
\DeclarePairedDelimiter{\abs}{\lvert}{\rvert}
\DeclarePairedDelimiterX{\braket}[2]{\langle}{\rangle}{#1\,\delimsize\vert\,\mathopen{}#2}
\DeclarePairedDelimiterX{\setgiven}[2]{\{}{\}}{#1\,{:}\,\mathopen{}#2}
\begin{document}
\title{Groupoid models for diagrams of groupoid correspondences}
\author{Ralf Meyer}
\email{rmeyer2@uni-goettingen.de}
\address{Mathematisches Institut\\
  Universität Göttingen\\
  Bunsenstraße 3--5\\
  37073 Göttingen\\
  Germany}

\keywords{étale groupoid; groupoid correspondence; bicategory;
  bilimit; groupoid action; complex of groups; self-similar group;
  higher-rank graph; self-similar graph}

\begin{abstract}
  A diagram of groupoid correspondences is a homomorphism to the
  bicategory of étale groupoid correspondences.  We study examples
  of such diagrams, including complexes of groups and self-similar
  higher-rank graphs.  We encode the diagram in a single groupoid,
  which we call its groupoid model.  The groupoid model is defined
  so that there is a natural bijection between its actions on a
  space and suitably defined actions of the diagram.  We describe
  the groupoid model in several cases, including a complex of groups
  or a self-similar group.  We show that the groupoid model is a
  bilimit in the bicategory of groupoid correspondences.
\end{abstract}

\maketitle

\setcounter{tocdepth}{1}
\tableofcontents

\section{Introduction}
\label{sec:intro}

This article is about a certain construction of étale groupoids, and
there will be no \(\Cst\)\nb-algebras in the main text.  The
motivation for this article, however, is that these groupoids have
interesting groupoid \(\Cst\)\nb-algebras.  This is why
\(\Cst\)\nb-algebras do occur in this introduction.

Many interesting \(\Cst\)\nb-algebras may be realised as
\(\Cst\)\nb-algebras of étale, locally compact groupoids.  Examples
are the \(\Cst\)\nb-algebras associated to group actions on spaces,
(higher-rank) graphs, self-similar groups, and many
\(\Cst\)\nb-algebras associated to semigroups.  A (higher-rank)
graph is interpreted in~\cite{Albandik-Meyer:Product} as a
generalised dynamical system.  Similarly, a self-similarity of a
group may be interpreted in this way, namely, as a generalised
endomorphism of a group.  The right setting for such
``endomorphisms'' is the bicategory~\(\Grcat\) of étale groupoid
correspondences introduced
in~\cite{Antunes-Ko-Meyer:Groupoid_correspondences}.  A
correspondence between two groupoids is a space with commuting
actions of the two groupoids, subject to some conditions.  It is
important not to identify isomorphic correspondences.  Instead, we
add \(2\)\nb-arrows between groupoid correspondences.

A groupoid correspondence~\(\Bisp\) from an étale groupoid~\(\Gr\)
to itself gives rise to a \(\Cst\)\nb-correspondence~\(\Cst(\Bisp)\)
from \(\Cst(\Gr)\) to itself, which then has a Cuntz--Pimsner
algebra~\(\mathcal{O}_{\Cst(\Bisp)}\).  It is shown
in~\cite{Antunes-Ko-Meyer:Groupoid_correspondences} that this
construction gives many important classes of \(\Cst\)\nb-algebras.
Katsura's topological graph \(\Cst\)\nb-algebras arise when~\(\Gr\)
is a locally compact Hausdorff space, viewed as an étale groupoid.
Nekrashevych's \(\Cst\)\nb-algebras of self-similar groups arise
when~\(\Gr\) is a discrete group, viewed as an étale groupoid.  The
\(\Cst\)\nb-algebras of self-similar graphs by Exel and Pardo arise
when~\(\Gr\) is a transformation group \(V\rtimes \Gamma\) for a
discrete group~\(\Gamma\) and a discrete \(\Gamma\)\nb-set~\(V\).
These examples suggest to interpret a groupoid correspondence on an
étale groupoid~\(\Gr\) as a self-similar topological graph.

More generally, a ``homomorphism'' from a monoid~\(M\) to the
correspondence bicategory produces a homomorphism to the
\(\Cst\)\nb-correspondence bicategory, which is equivalent to a
product system over~\(M\).  Again, this gives rise to a
Cuntz--Pimsner algebra.  If \(M=(\N^k,+)\), these diagrams are
higher-rank versions of graphs, topological graphs, self-similar
groups or self-similar graphs, and we may interpret them as
higher-rank self-similar topological graphs.  Here ``homomorphisms''
are bicategorical generalisation of functors between categories,
and their data consists of both arrows and \(2\)\nb-arrows.

In all the cases above, it is known that the resulting
Cuntz--Pimsner algebra is a groupoid \(\Cst\)\nb-algebra of an étale
groupoid.  For ``regular'' self-similar topological graphs, this
groupoid is built by Albandik in his thesis~\cite{Albandik:Thesis}.
More generally, Albandik describes this groupoid for homomorphisms
\(M\to \Grcat\) assuming that~\(M\) satisfies ``right Ore
conditions'' (see \longref{Lemma}{lem:Ore_conditions}) and that the
groupoid correspondences are ``regular'' (see
\longref{Definition}{def:proper_regular_tight}).  We are going to
define a groupoid model for any diagram of groupoid correspondences.
In the cases treated by Albandik, our groupoid model agrees with
his.  We will show in~\cite{Ko-Meyer:Groupoid_models} that such a
groupoid model always exists and that it is locally compact for
diagrams of regular, locally compact groupoid correspondences.
Another separate article will study the \(\Cst\)\nb-algebra of the
groupoid model and relate it to the Cuntz--Pimsner algebra of the
product system attached to the diagram.  This article is limited to
the definition of the groupoid model, examples, and the proof that
it gives a bicategorical bilimit.  For a better perspective,
however, we briefly sketch some results about its
\(\Cst\)\nb-algebra, to be proven later.

The regularity condition greatly simplifies the underlying groupoid
of the Cuntz--Pimsner algebra, already in the case of graphs.
Albandik's construction of the groupoid model treats only the
simpler ``regular'' case, and our groupoid model generalises that.
Therefore, the groupoid model that we are going to define only has a
chance to correspond to the Cuntz--Pimsner algebra of the product
system for regular diagrams.  I expect that irregular groupoid
correspondences may be treated as well by changing the
definition of the groupoid model.  Celso Antunes is currently working
out the case of a single groupoid correspondence in his thesis.

Without the right Ore conditions, the result fails in an interesting
way, as follows.  In \longref{Section}{sec:universal_nm}, we study
the groupoid model for a class of simple diagrams, which are related
to the \((m,n)\)-dynamical systems by Ara, Exel and
Katsura~\cite{Ara-Exel-Katsura:Dynamical_systems}.  The groupoid
model in this case is the one identified
in~\cite{Ara-Exel-Katsura:Dynamical_systems}.  The relevant
Cuntz--Pimsner algebra is already computed
in~\cite{Albandik-Meyer:Colimits}, and it is the ``Leavitt
\(\Cst\)\nb-algebra'' of~\cite{Ara-Exel-Katsura:Dynamical_systems}.
The Leavitt \(\Cst\)\nb-algebra itself is not a groupoid
\(\Cst\)\nb-algebra in any natural way, but the \(\Cst\)\nb-algebra
of our groupoid model is a quotient of it.

\smallskip

Now we leave \(\Cst\)\nb-algebras and come to the actual contents of
this article.  Our purpose is to build an étale groupoid.  Its
applications to \(\Cst\)\nb-algebras will be discussed elsewhere.
The input data for our construction is a homomorphism from a
category~\(\Cat\) to the bicategory~\(\Grcat\) of groupoid
correspondences.  We recall this bicategory in
\longref{Section}{sec:groupoid_corr}.  Actually, the bicategory
introduced in~\cite{Antunes-Ko-Meyer:Groupoid_correspondences} is
the bicategory~\(\Grcat_\lc\) of \emph{locally compact} groupoid
correspondences.  We allow a larger bicategory where some
assumptions on objects, arrows and \(2\)\nb-arrows are dropped.
This generalisation is not terribly important, but it is useful for
the existence results in~\cite{Ko-Meyer:Groupoid_models}.  Namely, in the
irregular case, the groupoid model --~as it is defined here~-- may
fail to be locally compact.

In \longref{Section}{sec:diagrams}, we study homomorphisms from a
category~\(\Cat\) to the bicategory~\(\Grcat\) of groupoid
correspondences.  We also call such homomorphisms diagrams in
analogy to their use in category theory.  We first specialise the
general bicategorical concepts to the case at hand and then look at
a number of examples of such diagrams.

In these examples, we usually show that diagrams of a certain form
are ``equivalent'' to something simpler.  Here ``equivalent'' means
that the bicategory of such diagrams is equivalent as a bicategory
to a simpler bicategory.  Therefore, to talk precisely about such
equivalences, we need a bicategory that has homomorphisms as
objects.  Its arrows are called ``strong transformations'' and its
\(2\)\nb-arrows are called ``modifications''.  The first example we
treat are tight diagrams of spaces, that is, the groupoids in the
diagram are spaces viewed as groupoids and the arrows are ``tight''
groupoid correspondences between them.  A tight groupoid
correspondence between two spaces is just a local homeomorphisms,
and so we get that a tight diagram of groupoid correspondences
between spaces is equivalent to an action of a category on spaces by
local homeomorphisms.  The latter even form a category, that is,
there are only identity \(2\)\nb-arrows.  Our second example is
more interesting: tight diagrams of groups are closely related to
complexes of groups, and strong transformations and modifications
specialise to morphisms between complexes of groups and homotopies
between these morphisms.  Thus complexes of groups become examples
of diagrams of groupoid correspondences.

We also look at diagrams where the category~\(\Cat\) is special.
When~\(\Cat\) is a group, then we get a groupoid graded by a group.
When~\(\Cat\) is a free monoid, then we simply get a groupoid with a
number of groupoid correspondences on it, without any relations.
Something similar happens when~\(\Cat\) is the path category of a
directed graph.  We also study the case when~\(\Cat\) is the free
Abelian monoid \((\N^k,+)\).  Then diagrams are related to
higher-rank graphs.  We describe such a diagram by groupoid
correspondences \(\Bisp_j\), \(j=1,\dotsc,k\), together with
isomorphisms
\(\Bisp_i \Grcomp \Bisp_j \cong \Bisp_j \Grcomp \Bisp_i\) for
\(i<j\), such that certain hexagon diagrams for \(i<j<k\) commute.

In \longref{Section}{sec:actions}, we define the groupoid model of a
diagram of groupoid correspondences.  The idea here is to specify an
étale groupoid by specifying its actions on spaces.  This category,
together with the forgetful functor to spaces, determines the
groupoid uniquely up to isomorphism.  We first define what it means
for a diagram of groupoid correspondences to act on a space.  Then
we require a natural bijection between these diagram actions and
actions of the groupoid model.  Besides the definition, we look at
two easy and one nontrivial example.  The two easy examples are
diagrams where~\(\Cat\) has only identity arrows or where~\(\Cat\)
is a group.  In the first case, the groupoid model is a disjoint
union of groupoids.  In the second case, it is the graded groupoid
that describes the homomorphism to~\(\Grcat\).  The third example is
the equaliser diagram for two groupoid correspondences on the
one-point one-arrow groupoid.  In this case, the actions of the
diagram are identified with the \((m,n)\)-dynamical systems of
Ara--Exel--Katsura~\cite{Ara-Exel-Katsura:Dynamical_systems}.  We
notice that their study of these dynamical systems also produces a
groupoid model for this diagram.

One important ingredient in the groupoid model is the universal
action of the diagram.  This produces the object space of the
groupoid model.  A second ingredient is a certain inverse semigroup
\(\IS(F)\) built from the slices of the diagram~\(F\).  This inverse
semigroup is defined in \longref{Section}{sec:encoding}.  The main
result is that the groupoid model is the transformation groupoid for
the canonical action of \(\IS(F)\) on the underlying space of the
universal action.

The universal action is very easy to describe for tight diagrams.
This is the point of \longref{Section}{sec:limit_tight}, and it is
the reason why we are particularly interested in examples of tight
diagrams in \longref{Section}{sec:diagrams}.  In
\longref{Section}{sec:group_complex} we examine the groupoid model
for a tight diagrams of groupoid correspondences between groups.  We
have already related such diagrams to complexes of groups, and we
find that the fundamental group of such a complex of groups is
related to our groupoid model, but in a nontrivial way.  Our
groupoid model is usually a groupoid with many objects.  What we
find is that the fundamental group of a complex of groups is Morita
equivalent as a groupoid to the groupoid model of a certain tight
diagram of groupoid correspondences between groups.  This diagram,
however, is not the original diagram, we first have to augment the
complex of groups in a certain natural way by the trivial group.

Beyond the tight diagrams, we also describe the groupoid model
rather explicitly for those diagrams \(\Cat \to \Grcat\) where the
category~\(\Cat\) satisfies the ``right Ore conditions''.  We study
such diagrams in \longref{Section}{sec:Ore_shape}.  We describe the
universal action as a projective limit, and then we also describe
the groupoid model in a similar way.  The resulting alternative
construction is due to Albandik~\cite{Albandik:Thesis}.  He uses
this construction to show that the \(\Cst\)\nb-algebra of the
groupoid model is isomorphic to a Cuntz--Pimsner algebra, provided
the diagram is proper.  (Beware, however, that the Cuntz--Pimsner
algebra that comes up in~\cite{Albandik:Thesis} is the absolute one,
which requires the Cuntz--Pimsner covariance condition on the whole
\(\Cst\)\nb-algebra and not just on Katsura's ideal.)

In \longref{Section}{sec:examples_Ore}, we examine the two different
constructions of the groupoid model for diagrams of Ore shape in a
few special cases.  For diagrams of spaces, we get the same results
as in~\cite{Albandik-Meyer:Product}.  For self-similar groups and
self-similar graphs, we get the groupoids already associated to
these situations by
Nekrashevych~\cite{Nekrashevych:Cstar_selfsimilar} and
Exel--Pardo~\cite{Exel-Pardo:Self-similar}.  We also compare the
inverse semigroup used in our construction to the inverse semigroups
used by Nekrashevych and by Exel and Pardo.

Finally, in \longref{Section}{sec:models_vs_limits}, we show that a
groupoid model of a diagram of groupoid correspondences is a bilimit
for the diagram in~\(\Grcat\).  This concept generalises the limit
of a diagram in a category.  The bilimit is only unique up to
equivalence, which in our case means Morita equivalence of
groupoids.  In contrast, the groupoid model is unique up to
isomorphism.  In the thesis of Albandik~\cite{Albandik:Thesis}, the
concept of groupoid model is still missing.  He constructs a
groupoid under some extra assumptions and shows that it is a bilimit
of the given diagram and that its groupoid \(\Cst\)\nb-algebra is
isomorphic to a Cuntz--Pismner algebra attached to the diagram.
That the groupoid model is a bilimit is important because it allows
to apply general results from bicategory theory.  In particular, it
clarifies in what sense the construction of groupoid models is
functorial.

\section{The bicategory of groupoid correspondences}
\label{sec:groupoid_corr}

In this section, we define the class of groupoids and groupoid
correspondences that we work with, and we explain how they form a
bicategory.  Our definitions are close to those
in~\cite{Antunes-Ko-Meyer:Groupoid_correspondences}.  We allow more
objects, arrows and \(2\)\nb-arrows, however, because we drop some
assumptions that are needed
in~\cite{Antunes-Ko-Meyer:Groupoid_correspondences} in order to pass
from groupoids to \(\Cst\)\nb-algebras.  These assumptions are
mostly harmless for the purposes of this article.  It may, however,
happen that the groupoid model of a diagram of groupoid
correspondences fails to be locally compact.  While I do not think
that the groupoid model as defined here is very useful in such
cases, it is desirable to at least talk about this phenomenon.  This
seems justification enough to leave out all extra assumptions that
are not needed for the bicategory structure on groupoid
correspondences.

A topological groupoid~\(\Gr\) consists of topological spaces
\(\Gr\) and \(\Gr^0\subseteq \Gr\) of arrows and objects, continuous range and
source maps \(\rg,\s\colon \Gr \rightrightarrows \Gr^0\), and a
continuous multiplication map
\(\Gr\times_{\s,\Gr^0,\rg} \Gr \to \Gr\), \((g,h)\mapsto g\cdot h\).
We require that each object has a unit arrow and each arrow has an
inverse arrow, such that the usual algebraic identities are
satisfied and the unit map and the inversion are continuous.  We
tacitly assume all groupoids to be \emph{étale}, that is, \(\s\)
and~\(\rg\) are local homeomorphisms.  In particular, \(\s\)
and~\(\rg\) are open.

In an étale groupoid, each arrow \(g\in\Gr\) has an open
neighbourhood \(U\subseteq \Gr\) such that \(\s|_U\) and~\(\rg|_U\)
are homeomorphisms onto open subsets of~\(\Gr^0\).  Such open
subsets are called \emph{slices} (see
\cites{Exel:Inverse_combinatorial,
  Antunes-Ko-Meyer:Groupoid_correspondences}).  Any open subset of a
slice is again a slice.  The units in~\(\Gr\) form a slice, which we
denote by~\(\Gr^0\) and call the \emph{unit slice}.  If~\(U\) is a
slice, so is
\[
  U^* \defeq \setgiven{g^{-1}}{g\in U}.
\]
If \(U\)
and~\(V\) are slices, so is
\[
  U\cdot V\defeq \setgiven{g\cdot h}{g\in U,\ h\in V}.
\]
With this structure, the slices of~\(\Gr\) form an inverse semigroup
(see~\cite{Exel:Inverse_combinatorial}).

\begin{definition}
  An (étale) groupoid~\(\Gr\) is called \emph{locally compact} if
  its object space~\(\Gr^0\) is Hausdorff and locally compact.
\end{definition}

Unlike in~\cite{Antunes-Ko-Meyer:Groupoid_correspondences}, we
usually allow groupoids that are not locally compact.  Let~\(\Gr\)
be a locally compact groupoid.  Each slice \(U\subseteq \Gr\) is
Hausdorff and locally compact because it is homeomorphic to an open
subset in~\(\Gr^0\).  Thus the arrow space~\(\Gr\) is locally
compact and locally Hausdorff.  It may, however, fail to be
Hausdorff.

\begin{definition}[\cite{Antunes-Ko-Meyer:Groupoid_correspondences}*{Definition~2.3}]
  Let~\(\Gr\) be a groupoid.  A (right) \emph{\(\Gr\)\nb-space} is a
  topological space~\(\Bisp\) with a continuous map
  \(\s\colon \Bisp\to \Gr^0\), the \emph{anchor map}, and a
  continuous map
  \[
    \mult\colon \Bisp\times_{\s,\Gr^0,\rg} \Gr\to\Bisp,\qquad
    \Bisp\times_{\s,\Gr^0,\rg}\Gr\defeq
    \setgiven{(x,g)\in \Bisp\times \Gr}{\s(x)=\rg(g)},
  \]
  which we denote multiplicatively as \((x,y)\mapsto x\cdot y\),
  such that
  \begin{enumerate}
  \item \(\s(x\cdot g)=\s(g)\) for \(x\in\Bisp\), \(g\in\Gr\) with
    \(\s(x)=\rg(g)\);
  \item \((x\cdot g_1)\cdot g_2=x\cdot (g_1\cdot g_2)\) for
    \(x\in \Bisp\), \(g_1, g_2\in \Gr\) with \(\s(x)=\rg(g_1)\),
    \(\s(g_1)=\rg(g_2)\);
  \item \(x\cdot \s(x)=x\) for all \(x\in \Bisp\).
  \end{enumerate}
\end{definition}

Left \(\Gr\)\nb-spaces are defined similarly.  We do \emph{not}
require the anchor map to be open.  We always write
\(\s\colon \Bisp\to \Gr^0\) for the anchor map in a right action and
\(\rg\colon \Bisp\to \Gr^0\) for the anchor map in a left action.
We sometimes write \(\s_{\Bisp}\) or~\(\rg_{\Bisp}\) if~\(\Bisp\) is
not clear enough from the context.

\begin{definition}[\cite{Antunes-Ko-Meyer:Groupoid_correspondences}*{Definition~2.4}]
  Let~\(\Bisp\) be a \(\Gr\)\nb-space.  The \emph{orbit
    space}~\(\Bisp/\Gr\) is the quotient~\(\Bisp/{\sim_{\Gr}}\) with
  the quotient topology, where \(x\sim_{\Gr} y\) if there is an
  element \(g\in \Gr\) with \(\s(x)=\rg(g)\) and \(x\cdot g=y\).
\end{definition}

\begin{definition}[\cite{Antunes-Ko-Meyer:Groupoid_correspondences}*{Definition~2.7}]
  \label{def:basic_action}
  A right \(\Gr\)\nb-space~\(\Bisp\) is \emph{basic} if the
  following map is a homeomorphism onto its image with the subspace
  topology from \(\Bisp\times\Bisp\):
  \begin{equation}
    \label{eq:action_map}
    \Bisp\times_{\s,\Gr^0,\rg} \Gr \to \Bisp\times\Bisp,\qquad
    (x,g)\mapsto (x\cdot g,x).
  \end{equation}
\end{definition}

\begin{lemma}[\cite{Antunes-Ko-Meyer:Groupoid_correspondences}*{Lemma~2.12}]
  \label{lem:basic_orbit_lh}
  The orbit space projection \(\Qu\colon \Bisp\to\Bisp/\Gr\)
  for a basic \(\Gr\)\nb-action is a local homeomorphism.
\end{lemma}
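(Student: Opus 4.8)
The plan is to deduce the statement from two properties of $\Qu\colon \Bisp\to \Bisp/\Gr$: that it is an \emph{open} map, and that it is \emph{locally injective}; a continuous map with both properties is a local homeomorphism. Openness does not use the basic hypothesis but only that $\Gr$ is étale: for a slice $V\subseteq \Gr$ the assignment $x\mapsto x\cdot (\rg|_V)^{-1}(\s(x))$ is a well defined continuous map $\s^{-1}(\rg(V))\to \s^{-1}(\s(V))$, and the action axioms show that the analogous map built from~$V^*$ is a two-sided inverse, so it is a homeomorphism between open subsets of~$\Bisp$. Covering~$\Gr$ by slices, for open $U\subseteq \Bisp$ the saturation $\Qu^{-1}(\Qu(U))=U\cdot \Gr$ is the union of the images of the open sets $U\cap \s^{-1}(\rg(V))$ under these homeomorphisms, hence open; by definition of the quotient topology, $\Qu(U)$ is then open.

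For local injectivity I would use the basic hypothesis as follows. Let $\Theta\colon \Bisp\times_{\s,\Gr^0,\rg}\Gr\to \Bisp\times\Bisp$, $(x,g)\mapsto (x\cdot g,x)$, be the action map, with image $R=\Theta(\Bisp\times_{\s,\Gr^0,\rg}\Gr)$; as a set, $R$ is the graph of the relation $\sim_{\Gr}$. Since $\Gr$ is étale, $\Gr^0$ is open in~$\Gr$, so $\Bisp\times_{\s,\Gr^0,\rg}\Gr^0=\setgiven{(x,\s(x))}{x\in\Bisp}$ is open in the domain of~$\Theta$, and because $x\cdot \s(x)=x$ the map~$\Theta$ carries it bijectively onto the diagonal $\Delta_{\Bisp}$. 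As the action is basic, $\Theta$ is a homeomorphism onto~$R$, so $\Delta_{\Bisp}$ is open in~$R$; choose an open set $O\subseteq \Bisp\times\Bisp$ with $R\cap O=\Delta_{\Bisp}$. Given $x\in \Bisp$, pick open neighbourhoods $W_1,W_2$ of~$x$ with $W_1\times W_2\subseteq O$ and set $W\defeq W_1\cap W_2$, so that $W\times W\subseteq O$. If $x_1,x_2\in W$ satisfy $\Qu(x_1)=\Qu(x_2)$, then $(x_2,x_1)=\Theta(x_1,g)\in R$ for a suitable~$g$, while $(x_2,x_1)\in W\times W\subseteq O$; hence $(x_2,x_1)\in R\cap O=\Delta_{\Bisp}$, i.e.\ $x_1=x_2$. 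Thus $\Qu|_W$ is injective.

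For such a~$W$ the map $\Qu|_W\colon W\to \Qu(W)$ is a continuous bijection onto the open set $\Qu(W)$, and it is open because $\Qu$ is open and~$W$ is open in~$\Bisp$; a continuous open bijection is a homeomorphism, so $\Qu$ is a local homeomorphism. The only step that needs an idea rather than bookkeeping is the middle one: recognising that the basic condition is exactly what forces the diagonal to be relatively open in the orbit relation~$R$ (and keeping straight the harmless swap between $(x\cdot g,x)$ and $(x,x\cdot g)$); I do not expect a genuine obstacle beyond that.
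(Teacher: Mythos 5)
Your proof is correct: openness of \(\Qu\) follows from covering \(\Gr\) by slices exactly as you describe, and the basic hypothesis is used precisely to make the diagonal relatively open in the orbit relation, which yields local injectivity; together these give a local homeomorphism. The present paper only cites this lemma from \cite{Antunes-Ko-Meyer:Groupoid_correspondences} without reproducing a proof, and your argument is the standard one for that result, with no gaps.
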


\begin{definition}[compare
  \cite{Antunes-Ko-Meyer:Groupoid_correspondences}*{Definition~3.1}]
  \label{def:Bibundles}
  Let \(\Gr[H]\) and~\(\Gr\) be (étale) groupoids.  An
  \textup{(}étale\textup{)} \emph{groupoid correspondence}
  from~\(\Gr\) to~\(\Gr[H]\), denoted
  \(\Bisp\colon \Gr[H]\leftarrow \Gr\), is a space~\(\Bisp\) with
  commuting actions of~\(\Gr[H]\) on the left and~\(\Gr\) on the
  right, such that the right anchor map \(\s\colon \Bisp\to \Gr^0\)
  is a local homeomorphism and the right \(\Gr\)\nb-action is basic.

  Having commuting actions of \(\Gr[H]\) and~\(\Gr\) means that
  \(\s(h\cdot x)=\s(x)\), \(\rg(x\cdot g) = \rg(x)\), and
  \((h\cdot x) \cdot g = h\cdot (x\cdot g)\) for all \(g\in \Gr\),
  \(x\in\Bisp\), \(h\in \Gr[H]\) with \(\s(h)=\rg(x)\) and
  \(\s(x)=\rg(g)\), where \(\s\colon \Bisp\to\Gr^0\) and
  \(\rg\colon \Bisp\to\Gr[H]^0\) are the anchor maps.
\end{definition}

\begin{definition}[\cite{Antunes-Ko-Meyer:Groupoid_correspondences}*{Definition~3.3}]
  \label{def:proper_regular_tight}
  A correspondence \(\Bisp\colon \Gr[H]\leftarrow \Gr\) is
  \emph{proper} if the map \(\rg_*\colon \Bisp/\Gr\to \Gr[H]^0\)
  induced by~\(\rg\) is proper.  It is \emph{regular} if
  \(\rg_*\colon \Bisp/\Gr\to \Gr[H]^0\) is proper and surjective.
  It is \emph{tight} if~\(\rg_*\) is a homeomorphism.
\end{definition}

\begin{definition}
  \label{def:lc_correspondence}
  Let \(\Gr[H]\) and~\(\Gr\) be locally compact groupoids.  A
  \emph{locally compact groupoid correspondence}
  \(\Bisp\colon \Gr[H]\leftarrow \Gr\) is a groupoid
  correspondence~\(\Bisp\) with the extra property
  that~\(\Bisp/\Gr\) is Hausdorff.
\end{definition}

The ``groupoids'' and ``groupoid correspondences'' as defined
in~\cite{Antunes-Ko-Meyer:Groupoid_correspondences} are the
``locally compact groupoids'' and the ``locally compact groupoid
correspondences'' in the notation in this article.  As shown in
\cite{Antunes-Ko-Meyer:Groupoid_correspondences}*{Proposition~2.19},
an action of a locally compact groupoid is basic with Hausdorff
object space as in \longref{Definition}{def:lc_correspondence} if
and only if it is ``free'' and ``proper''.

The following example shows that the homomorphism from the groupoid
correspondence bicategory to the \(\Cst\)\nb-correspondence
bicategory breaks down if we allow \(\Bisp/\Gr\) to be
non-Hausdorff.

\begin{example}
  \label{exa:groupoid_corr_no_Cstar_corr}
  Let \(\Gr[H]\) be the one-point space, viewed as a groupoid, and
  let~\(\Gr\) be the space~\([0,1]\).  Let~\(\Bisp\) be the locally
  Hausdorff space obtained by identifying two copies of~\([0,1]\)
  along~\([0,1)\), so that only the point~\(1\) becomes doubled.
  This is a groupoid correspondence \(\Gr[H]\leftarrow\Gr\) because
  any action of the groupoid~\(\Gr\) is basic.  This correspondence
  fails to be locally compact, however.  The space
  \(\Bisp=\Bisp/\Gr\) is not Hausdorff and so the \(\Gr\)\nb-action
  on~\(\Bisp\) is not proper.  By the way, this correspondence is
  proper, that is, the map \(\rg\colon \Bisp \to \Gr^0=\pt\) is
  proper.  We now use the notation
  of~\cite{Antunes-Ko-Meyer:Groupoid_correspondences} to explain
  why~\(\Bisp\) does not induce a \(\Cst\)\nb-correspondence between
  \(\Gr[H]\) and~\(\Gr\).
  
  The characteristic functions of the two copies of \([0,1]\)
  in~\(\Bisp\) belong to~\(\ContS(\Bisp)\).  Their inner product is
  the characteristic function of~\([0,1)\), which is not in
  \(\ContS([0,1]) = \Cont([0,1])\).  Hence the formula for the inner
  product on \(\ContS([0,1])\) in
  \cite{Antunes-Ko-Meyer:Groupoid_correspondences}*{(7.6)} breaks
  down.  In fact, there is no \(\Cst\)\nb-correspondence that could
  adequately describe~\(\Bisp\).  The left action of
  \(\Cst(\Gr) \cong \C\) in a correspondence is unique because~\(1\)
  must act identically.  Hence such a \(\Cst\)\nb-correspondence
  would be a Hilbert module over~\(\Cont([0,1])\).  This is
  equivalent to a
  continuous field of Hilbert spaces over~\([0,1]\).  Its fibres
  should have dimension~\(1\) over \(t\in[0,1)\) and~\(2\) at
  \(t=1\) because the fibres of~\(\Bisp\) have that many points.
  For a continuous field of Hilbert spaces, however, the dimension
  function is always semicontinuous in the other direction, that is,
  the set of points with fibre dimension~\(2\) is open.
\end{example}

\begin{remark}
  \label{rem:tight_lc}
  Any tight correspondence \(\Bisp\colon \Gr[H]^0 \leftarrow \Gr\)
  between locally compact groupoids \(\Gr[H]\) and~\(\Gr\) is
  locally compact because then \(\Bisp/\Gr \cong \Gr[H]^0\) is
  Hausdorff.
\end{remark}

\begin{definition}[\cite{Antunes-Ko-Meyer:Groupoid_correspondences}*{Definition~7.2}]
  \label{def:correspondence_slices}
  Let \(\Bisp\colon \Gr[H]\leftarrow \Gr\) be a groupoid
  correspondence.  A \emph{slice} of~\(\Bisp\) is an open subset
  \(\U\subseteq \Bisp\) such that both \(\s\colon \Bisp \to \Gr^0\)
  and the orbit space projection \(\Qu\colon \Bisp\prto \Bisp/\Gr\)
  are injective on~\(\U\).  Let \(\Bis(\Bisp)\) be the set of all
  slices of~\(\Bisp\).
\end{definition}

\begin{remark}
  \label{rem:slice_tight}
  Let~\(\Bisp\) be a tight groupoid correspondence.  Then an open
  subset \(U\subseteq \Bisp\) is a slice if and only if \(\s|_U\)
  and~\(\rg|_U\) are injective.  This is because
  \(\rg = \rg_*\circ \Qu\) for the homeomorphism
  \(\rg_*\colon \Bisp/\Gr \congto \Gr[H]^0\).
\end{remark}

Let \(\Bisp\colon \Gr[H]\leftarrow \Gr\) be a groupoid
correspondence.  Then~\(\s\) is a local homeomorphism by assumption
and~\(\Qu\) is one by \longref{Lemma}{lem:basic_orbit_lh}.  This
implies that any element of~\(\Bisp\) has a slice as an open
neighbourhood.  Even more, since any open subset of a slice is again
a slice, the slices form a basis for the topology of~\(\Bisp\).

\begin{lemma}
  \label{Hausdorff_orbit}
  If~\(\Bisp\) is a locally compact groupoid correspondence, then
  the orbit space~\(\Bisp/\Gr\) is Hausdorff and locally compact.
\end{lemma}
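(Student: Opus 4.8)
The plan is to reduce the statement to the already‑known facts that the anchor map $\s\colon \Bisp\to\Gr^0$ is a local homeomorphism, that the orbit projection $\Qu\colon\Bisp\to\Bisp/\Gr$ is a local homeomorphism (\longref{Lemma}{lem:basic_orbit_lh}), and that $\Gr^0$ and $\Bisp/\Gr$ are Hausdorff — the latter by the definition of a locally compact groupoid correspondence (\longref{Definition}{def:lc_correspondence}). Local compactness of $\Bisp/\Gr$ is the quick half: the slices of $\Bisp$ form a basis of its topology, and for a slice $\U$ the restriction $\Qu|_\U$ is a homeomorphism onto the open set $\Qu(\U)\subseteq\Bisp/\Gr$; since $\U$ is also carried homeomorphically by $\s$ onto an open subset of the Hausdorff, locally compact space $\Gr^0$, it is itself Hausdorff and locally compact, hence so is $\Qu(\U)$. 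As these sets $\Qu(\U)$ cover $\Bisp/\Gr$ and $\Bisp/\Gr$ is already assumed Hausdorff, it is locally compact. (Hausdorffness is where we genuinely use \longref{Definition}{def:lc_correspondence}; without it the conclusion fails, cf.\ \longref{Example}{exa:groupoid_corr_no_Cstar_corr}.)

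So the only real content is already built into the definition, and the lemma is essentially a repackaging for later reference. The one point that deserves care is the claim that a slice $\U$ is Hausdorff and locally compact: this is because $\s|_\U\colon\U\to\s(\U)$ is a homeomorphism onto an open subset of $\Gr^0$ — indeed $\U$ is contained in a set on which $\s$ restricts to a local homeomorphism, and on a slice $\s$ is by definition injective, hence a homeomorphism onto its (open) image — and open subsets of Hausdorff locally compact spaces are again Hausdorff and locally compact. I would then note that $\Qu|_\U$ is likewise a homeomorphism onto $\Qu(\U)$ (injectivity of $\Qu$ on a slice is part of \longref{Definition}{def:correspondence_slices}, and $\Qu$ is an open local homeomorphism by \longref{Lemma}{lem:basic_orbit_lh}), so $\Qu(\U)\cong\U$ inherits both properties, and such $\Qu(\U)$ form an open cover of $\Bisp/\Gr$.

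The main (and only) obstacle is bookkeeping: making sure that "locally compact" for a possibly non‑Hausdorff space is being used consistently — here $\Bisp$ and $\Bisp/\Gr$ are a priori only locally Hausdorff — and that the passage to $\Bisp/\Gr$ being \emph{globally} Hausdorff is exactly the extra hypothesis supplied by \longref{Definition}{def:lc_correspondence} rather than something to be proved. Once that is pinned down, the argument is a two‑line cover‑by‑slices argument with no further subtlety.
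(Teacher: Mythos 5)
Your proof is correct and follows the same route as the paper: Hausdorffness of \(\Bisp/\Gr\) is taken directly from \longref{Definition}{def:lc_correspondence}, and local compactness is obtained by covering \(\Bisp/\Gr\) with the open sets \(\Qu(\U)\cong\s(\U)\subseteq\Gr^0\) for slices \(\U\). No gaps.
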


\begin{proof}
  By assumption, \(\Bisp/\Gr\) is Hausdorff and~\(\Gr^0\) is locally
  compact.  If \(U\subseteq \Bisp\) is a slice, then \(\s|_U\)
  and~\(\Qu|_U\) are homeomorphisms \(U\congto \s(U)\) and
  \(U\congto \Qu(U)\).  Then \(\s(U) \cong \Qu(U)\), and this
  implies that~\(\Bisp/\Gr\) is locally compact because~\(\Gr^0\)
  is.
\end{proof}

The following example shows that we should not require~\(\Bisp\)
itself to be Hausdorff.

\begin{example}
  \label{exa:identity_corr}
  The \emph{identity correspondence}~\(1_{\Gr}\) for a
  groupoid~\(\Gr\) is the arrow space of~\(\Gr\) with the obvious
  left and right actions of~\(\Gr\) by multiplication.  This need
  not be Hausdorff if~\(\Gr\) is a locally compact groupoid.
\end{example}

\begin{deflem}[\cite{Antunes-Ko-Meyer:Groupoid_correspondences}*{Definition
    and Lemma~3.4}]
  \label{def:innerprodofcorr}
  Let~\(\Bisp\) be a space with a basic right \(\Gr\)\nb-action.
  Let \(\Qu\colon \Bisp\to \Bisp/\Gr\) be the orbit space
  projection.  The image of the map~\eqref{eq:action_map} is the
  subset
  \(\Bisp \times_{\Bisp/\Gr} \Bisp = \Bisp
  \times_{\Qu,\Bisp/\Gr,\Qu} \Bisp\) of all
  \((x_1,x_2)\in\Bisp\times\Bisp\) with \(\Qu(x_1)=\Qu(x_2)\).  The
  inverse to the map in~\eqref{eq:action_map} induces a continuous
  map
  \begin{equation}
    \label{eq:innprod_correspondence}
    \Bisp \times_{\Bisp/\Gr} \Bisp \congto
    \Bisp \times_{\s,\Gr^0,\rg} \Gr \xrightarrow{\pr_2} \Gr,\qquad
    (x_1,x_2)\mapsto \braket{x_2}{x_1}.
  \end{equation}
  That is, \(\braket{x_1}{x_2}\) is defined for \(x_1,x_2\in\Bisp\)
  with \(\Qu(x_1)=\Qu(x_2)\) in~\(\Bisp/\Gr\), and it is the unique
  \(g\in\Gr\) with \(\s(x_1) = \rg(g)\) and \(x_2 = x_1 g\).
  Conversely, if \(g\in\Gr\) with \(x_1= x_2g\) for
  \(x_1,x_2\in\Bisp\) with \(\Qu(x_1)=\Qu(x_2)\) is unique and
  depends continuously on
  \((x_1,x_2) \in\Bisp\times_{\Bisp/\Gr}\Bisp\), then the right
  \(\Gr\)\nb-action on~\(\Bisp\) is basic.
\end{deflem}

\begin{proposition}[\cite{Antunes-Ko-Meyer:Groupoid_correspondences}*{Proposition~3.6}]
  \label{pro:innprod}
  Let \(\Bisp\colon \Gr[H]\leftarrow\Gr\) be a groupoid
  correspondence.  The map in~\eqref{eq:innprod_correspondence} is a
  local homeomorphism.  It has the following properties:
  \begin{enumerate}
  \item \label{en:innprod_1}%
    \(\rg(\braket{x_1}{x_2}) = \s(x_1)\),
    \(\s(\braket{x_1}{x_2}) = \s(x_2)\), and
    \(x_2 = x_1\cdot \braket{x_1}{x_2}\) for all \(x_1,x_2\in\Bisp\)
    with \(\Qu(x_1)= \Qu(x_2)\);
  \item \label{en:innprod_2}%
    \(\braket{x}{x} = \s(x)\) for all \(x\in\Bisp\);
  \item \label{en:innprod_3}%
    \(\braket{x_1}{x_2} = \braket{x_2}{x_1}^{-1}\) for all
    \(x_1,x_2\in\Bisp\) with \(\Qu(x_1)= \Qu(x_2)\);
  \item \label{en:innprod_4}%
    \(\braket{h x_1 g_1}{h x_2 g_2} = g_1^{-1}
    \braket{x_1}{x_2} g_2\) for all \(h\in\Gr[H]\),
    \(x_1,x_2\in\Bisp\), \(g_1,g_2\in\Gr\) with
    \(\s(h) = \rg(x_1) = \rg(x_2)\),
    \(\s(x_1)=\rg(g_1)\), \(\s(x_2)=\rg(g_2)\),
    \(\Qu(x_1)=\Qu(x_2)\).
  \end{enumerate}
\end{proposition}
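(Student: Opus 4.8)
The plan is to deduce everything from the characterisation of the inner product in \longref{Definition and Lemma}{def:innerprodofcorr}: for $x_1,x_2\in\Bisp$ with $\Qu(x_1)=\Qu(x_2)$, the element $\braket{x_1}{x_2}$ is the \emph{unique} $g\in\Gr$ with $\s(x_1)=\rg(g)$ and $x_2 = x_1\cdot g$. Each of parts \ref{en:innprod_1}--\ref{en:innprod_4} will then follow by exhibiting an explicit candidate in~$\Gr$ with this defining property and invoking uniqueness; the only genuine work is keeping track of which products are defined.

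For the claim that~\eqref{eq:innprod_correspondence} is a local homeomorphism, I would use the factorisation displayed in its definition: the homeomorphism $\Bisp\times_{\Bisp/\Gr}\Bisp\congto\Bisp\times_{\s,\Gr^0,\rg}\Gr$, which is the inverse of~\eqref{eq:action_map} and is a homeomorphism precisely because the right \(\Gr\)\nb-action is basic, followed by $\pr_2\colon\Bisp\times_{\s,\Gr^0,\rg}\Gr\to\Gr$. The space $\Bisp\times_{\s,\Gr^0,\rg}\Gr$ is the pullback of $\s\colon\Bisp\to\Gr^0$ along $\rg\colon\Gr\to\Gr^0$, so $\pr_2$ is obtained from~$\s$ by base change; since $\s$ is a local homeomorphism by the definition of a groupoid correspondence and local homeomorphisms are stable under pullback, $\pr_2$, and hence the composite, is a local homeomorphism.

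For the algebraic identities, write $g\defeq\braket{x_1}{x_2}$, so that $x_2=x_1\cdot g$ and $\rg(g)=\s(x_1)$. Then \ref{en:innprod_1} is immediate once we note $\s(g)=\s(x_1\cdot g)=\s(x_2)$ by the first right-action axiom. For \ref{en:innprod_2} one checks that the unit $\s(x)\in\Gr^0\subseteq\Gr$ has the defining property of $\braket{x}{x}$, using $x\cdot\s(x)=x$. For \ref{en:innprod_3} one checks that $g^{-1}$ has the defining property of $\braket{x_2}{x_1}$: here $\rg(g^{-1})=\s(g)=\s(x_2)$ and $x_2\cdot g^{-1}=x_1\cdot(g\cdot g^{-1})=x_1\cdot\rg(g)=x_1\cdot\s(x_1)=x_1$, using right-action associativity and then $x_1\cdot\s(x_1)=x_1$.

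Part \ref{en:innprod_4} is where the bookkeeping is heaviest, and I expect it to be the main (though still routine) obstacle. Under the stated hypotheses all of $h\cdot x_i\cdot g_i$ are defined, and $g'\defeq g_1^{-1}\cdot g\cdot g_2$ is a well-defined arrow of~$\Gr$ because $\s(g_1^{-1})=\rg(g_1)=\s(x_1)=\rg(g)$ and $\s(g)=\s(x_2)=\rg(g_2)$. Using that the left \(\Gr[H]\)\nb-action and the right \(\Gr\)\nb-action commute, together with right-action associativity and the relation $g_1 g_1^{-1}=\rg(g_1)=\rg(g)$, one computes $(h\cdot x_1\cdot g_1)\cdot g' = h\cdot x_2\cdot g_2$; this in particular shows $\Qu(h x_1 g_1)=\Qu(h x_2 g_2)$, so the left-hand side of~\ref{en:innprod_4} is defined. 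Since moreover $\rg(g')=\rg(g_1^{-1})=\s(g_1)=\s(h\cdot x_1\cdot g_1)$, uniqueness gives $\braket{h x_1 g_1}{h x_2 g_2}=g'=g_1^{-1}\braket{x_1}{x_2}g_2$, as claimed. Apart from this indexing of source and range conditions there is no conceptual difficulty, once the characterisation of the inner product and the pullback-stability of local homeomorphisms are in hand.
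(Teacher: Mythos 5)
Your proof is correct: the local-homeomorphism claim follows exactly as you say from the basic-action homeomorphism together with pullback stability of local homeomorphisms applied to $\pr_2$ (this is the same base-change argument the paper itself uses elsewhere, e.g.\ in the proof of \longref{Lemma}{lem:slice_acts}), and parts \ref{en:innprod_1}--\ref{en:innprod_4} all follow from the uniqueness characterisation in \longref{Definition and Lemma}{def:innerprodofcorr} by exhibiting the obvious candidates, with the source/range bookkeeping in \ref{en:innprod_4} checked correctly. The paper does not reproduce a proof of this statement (it is imported from \cite{Antunes-Ko-Meyer:Groupoid_correspondences}), but your argument is the standard one and complete.
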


We turn to the composition of groupoid correspondences.  Let
\(\Gr[H]\), \(\Gr\) and~\(\Gr[K]\) be étale groupoids.  Let
\(\Bisp\colon \Gr[H]\leftarrow \Gr\) and
\(\Bisp[Y]\colon \Gr\leftarrow \Gr[K]\) be groupoid correspondences.
Let
\[
  \Bisp\times_{\Gr^0}\Bisp[Y] =
  \Bisp\times_{\s,\Gr^0,\rg}\Bisp[Y]
  \defeq \setgiven{(x,y)\in \Bisp\times \Bisp[Y]}{\s(x)=\rg(y)}.
\]
We let~\(\Gr\) act on \(\Bisp\times_{\Gr^0}\Bisp[Y]\) by the
\emph{diagonal action}
\[
  g\cdot (x,y)\defeq (x\cdot g^{-1},g\cdot y)
\]
for \(g\in \Gr\), \(x\in\Bisp\), and \(y\in\Bisp[Y]\) with
\(\s(g) = \s(x) = \rg(y)\).  Let~\(\Bisp\Grcomp_{\Gr}\Bisp[Y]\) be
the orbit space of this action, and let \([x,y]\) denote the image
of \((x,y)\in \Bisp\times_{\Gr^0}\Bisp[Y]\) in
\(\Bisp\Grcomp_{\Gr}\Bisp[Y]\).  The maps \(\rg(x,y)\defeq \rg(x)\)
and \(\s(x,y)\defeq \s(y)\) on \(\Bisp\times_{\Gr^0}\Bisp[Y]\)
induce maps \(\rg\colon \Bisp\Grcomp_{\Gr}\Bisp[Y]\to \Gr[H]^0\) and
\(\s\colon \Bisp\Grcomp_{\Gr}\Bisp[Y]\to \Gr[K]^0\).  These are the
anchor maps for commuting actions of~\(\Gr[H]\) on the left
and~\(\Gr[K]\) on the right, which we define as:
\[
  h\cdot[x,y]\defeq [h \cdot x,y],\qquad
  [x,y]\cdot k\defeq [x,y\cdot k]
\]
for all \(h\in \Gr[H]\), \(x\in\Bisp\), \(y\in\Bisp[Y]\),
\(k\in\Gr[K]\) with \(\s(h) = \rg(x)\), \(\s(x)=\rg(y)\), and
\(\s(y) = \rg(k)\).

\begin{proposition}[compare
\cite{Antunes-Ko-Meyer:Groupoid_correspondences}*{Proposition~5.7}]
  \label{pro:groupoid_correspondences_composition}
  The actions of \(\Gr[H]\) and~\(\Gr[K]\) on
  \(\Bisp\Grcomp_{\Gr}\Bisp[Y]\) are well defined and turn this into
  a groupoid correspondence \(\Gr[H]\leftarrow \Gr[K]\).  If both
  correspondences \(\Bisp\) and~\(\Bisp[Y]\) are proper, tight or
  locally compact, then so is \(\Bisp\Grcomp_{\Gr}\Bisp[Y]\).
\end{proposition}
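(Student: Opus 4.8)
The plan is to verify, in order, (i) that the \(\Gr[H]\)- and \(\Gr[K]\)-actions on \(\Bisp\Grcomp_\Gr\Bisp[Y]\) are well defined, continuous and commute, (ii) that the right anchor map \(\s\colon \Bisp\Grcomp_\Gr\Bisp[Y]\to\Gr[K]^0\) is a local homeomorphism, and (iii) that the right \(\Gr[K]\)-action is basic; and then to deduce (iv) the inheritance of ``proper'', ``tight'' and ``locally compact'' from a description of the orbit space \((\Bisp\Grcomp_\Gr\Bisp[Y])/\Gr[K]\).

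For (i): since \(g\cdot(x,y)=(x\cdot g^{-1},g\cdot y)\), one has \([x\cdot g^{-1},g\cdot y]=[x,y]\) by definition, so \(h\cdot[x\cdot g^{-1},g\cdot y]=[h\cdot x\cdot g^{-1},g\cdot y]=[h\cdot x,y]\) because \((h\cdot x\cdot g^{-1},g\cdot y)=g\cdot(h\cdot x,y)\); here the identity \(\rg(x\cdot g)=\rg(x)\) is used so that \(h\cdot x\) still makes sense after substituting \(x\cdot g^{-1}\) for~\(x\). The right \(\Gr[K]\)\nb-action is handled symmetrically, and commutativity of the two actions together with the remaining action axioms are checked the same way on representatives; continuity of everything follows because the orbit projection \(q\colon \Bisp\times_{\Gr^0}\Bisp[Y]\to\Bisp\Grcomp_\Gr\Bisp[Y]\) is open and the relevant composites with~\(q\) are manifestly continuous. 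I expect (i) to be routine.

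The first substantive point is that \(q\) is a local homeomorphism; to see this I would check that the diagonal \(\Gr\)\nb-action on \(\Bisp\times_{\Gr^0}\Bisp[Y]\) is basic and invoke \longref{Lemma}{lem:basic_orbit_lh}: if \((x_1,y_1)\) and \((x_2,y_2)\) lie in one \(\Gr\)\nb-orbit, the element \(g\) with \((x_1,y_1)=g\cdot(x_2,y_2)\) is unique since it is already determined by \(x_1=x_2\cdot g^{-1}\) and the right \(\Gr\)\nb-action on~\(\Bisp\) being basic, and it depends continuously on \((x_1,x_2)\) through the inner product of \longref{Definition and Lemma}{def:innerprodofcorr} (by \longref{Proposition}{pro:innprod}), so the converse half of \longref{Definition and Lemma}{def:innerprodofcorr} applies. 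For (ii): \(\s\circ q=\s_{\Bisp[Y]}\circ\pr_2\), where \(\pr_2\colon \Bisp\times_{\Gr^0}\Bisp[Y]\to\Bisp[Y]\) is the pullback of the local homeomorphism \(\s\colon \Bisp\to\Gr^0\) along \(\rg\colon \Bisp[Y]\to\Gr^0\) and hence a local homeomorphism; as \(q\) is a surjective local homeomorphism, this forces \(\s\) on \(\Bisp\Grcomp_\Gr\Bisp[Y]\) to be a local homeomorphism too. For (iii): \(k\) with \([x_1,y_1]\cdot k=[x_2,y_2]\) is unique --- if \([x_1,y_1]\cdot k=[x_1,y_1]\cdot k'\), the \(\Gr\)\nb-element matching the two representatives must fix~\(x_1\), hence be a unit, so \(y_1\cdot k=y_1\cdot k'\) and \(k=k'\) by basicness of the \(\Gr[K]\)\nb-action on~\(\Bisp[Y]\) --- and, choosing continuous local sections of~\(q\), one gets \(x_2=x_1\cdot g^{-1}\), \(y_2=g\cdot y_1\cdot k\) with \(g\) as in~(i), so \(k=\braket{g\cdot y_1}{y_2}\) depends continuously on the pair; thus the \(\Gr[K]\)\nb-action is basic by \longref{Definition and Lemma}{def:innerprodofcorr}, completing the proof that \(\Bisp\Grcomp_\Gr\Bisp[Y]\colon\Gr[H]\leftarrow\Gr[K]\) is a groupoid correspondence.

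For (iv) the key step is the \(\Gr\)\nb-equivariant homeomorphism \((\Bisp\Grcomp_\Gr\Bisp[Y])/\Gr[K]\cong(\Bisp\times_{\Gr^0}T)/\Gr\) with \(T\defeq\Bisp[Y]/\Gr[K]\) carrying its induced left \(\Gr\)\nb-action, obtained by carrying out the two commuting quotients in either order and using that \(\Bisp[Y]\to T\) is a local homeomorphism. Writing \(p_T=\rg_*\colon T\to\Gr^0\) and \(\pi\colon (\Bisp\times_{\Gr^0}T)/\Gr\to\Bisp/\Gr\), \([x,t]\mapsto[x]\), I would note that locally over \(\Bisp/\Gr\), via sections of the local homeomorphism \(\Bisp\to\Bisp/\Gr\), the map \(\pi\) is a pullback of~\(p_T\), and that the induced map \(\rg_*\colon(\Bisp\Grcomp_\Gr\Bisp[Y])/\Gr[K]\to\Gr[H]^0\) equals \(\rg_*^{\Bisp}\circ\pi\). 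Thus: if \(\Bisp\) and~\(\Bisp[Y]\) are proper, then \(p_T\) and \(\rg_*^{\Bisp}\) are proper, hence \(\pi\) is proper (properness is local on the base) and so is \(\rg_*^{\Bisp}\circ\pi\) (it is surjective as well if both factors are, which also covers the regular case). If \(\Bisp\) and~\(\Bisp[Y]\) are tight, then \(p_T\) is a homeomorphism, so \(\Bisp\times_{\Gr^0}T\to\Bisp\) is a \(\Gr\)\nb-equivariant homeomorphism, \((\Bisp\times_{\Gr^0}T)/\Gr\cong\Bisp/\Gr\), and \(\rg_*=\rg_*^{\Bisp}\) is a homeomorphism. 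If \(\Bisp\) and~\(\Bisp[Y]\) are locally compact, then \(\Bisp/\Gr\) is Hausdorff and so is~\(T\) by \longref{Lemma}{Hausdorff_orbit}; the diagonal \(\Gr\)\nb-action on \(\Bisp\times_{\Gr^0}T\) is basic, so its orbit projection is an open local homeomorphism, and \((\Bisp\times_{\Gr^0}T)/\Gr\) is Hausdorff because the orbit relation \(\setgiven{((x_1,t_1),(x_2,t_2))}{\Qu(x_1)=\Qu(x_2),\ t_1=\braket{x_1}{x_2}\cdot t_2}\) is closed in \((\Bisp\times_{\Gr^0}T)^2\) (the first condition by Hausdorffness of \(\Bisp/\Gr\), the second by that of~\(T\)); together with local compactness of \(\Gr[H]^0\) and~\(\Gr[K]^0\), this gives that \(\Bisp\Grcomp_\Gr\Bisp[Y]\) is locally compact. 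The step I expect to be the main obstacle is this last one --- pinning down the orbit-space identification, the local description of~\(\pi\), and the closedness of the orbit relation --- together with the continuity of~\(k\) in~(iii); everything else is direct checking on representatives.
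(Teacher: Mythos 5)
Your proof is correct, but it is worth pointing out that it does not follow the paper's route at all: the paper's entire proof of this proposition is a one-line deferral to \cite{Antunes-Ko-Meyer:Groupoid_correspondences}*{Proposition~5.7}, together with the assertion that the proof given there never uses local compactness. What you have written is a self-contained reconstruction of that argument, and it is sound: the well-definedness checks in (i), the observation that the diagonal \(\Gr\)\nb-action on \(\Bisp\times_{\Gr^0}\Bisp[Y]\) is basic (so that \(q\) is a local homeomorphism by \longref{Lemma}{lem:basic_orbit_lh}), the factorisation \(\s\circ q=\s_{\Bisp[Y]}\circ\pr_2\) through a pullback of a local homeomorphism, and the uniqueness-plus-continuity criterion of \longref{Definition and Lemma}{def:innerprodofcorr} for basicness of the \(\Gr[K]\)\nb-action all go through exactly as you describe. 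Your step (iv) is the genuinely useful part: the identification \((\Bisp\Grcomp_{\Gr}\Bisp[Y])/\Gr[K]\cong(\Bisp\times_{\Gr^0}T)/\Gr\) with \(T=\Bisp[Y]/\Gr[K]\), the local description of \(\pi\) as a pullback of \(p_T\) over slices of \(\Bisp\), and the closedness of the orbit relation (using Hausdorffness of \(\Bisp/\Gr\) and of \(T\), and continuity of \((x_1,x_2)\mapsto\braket{x_1}{x_2}\)) give exactly the verification that properness, tightness and Hausdorffness of the orbit space are inherited without any local compactness hypothesis --- which is precisely the claim the paper leaves to the reader. The only points where you should be slightly more careful are cosmetic: fix which of the equivalent notions of ``proper map'' you use before invoking ``properness is local on the base'' (the paper works with possibly non-Hausdorff, non-locally-compact spaces, so the universally-closed or closed-with-compact-fibres formulation is the safe one), and in (iii) note explicitly that the continuity of \(k=\braket{g\cdot y_1}{y_2}\) is being checked on the fibre product over the orbit space, as the converse half of \longref{Definition and Lemma}{def:innerprodofcorr} requires.
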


\begin{proof}
  When all groupoids and groupoid correspondences are locally
  compact, the assertions are those in
  \cite{Antunes-Ko-Meyer:Groupoid_correspondences}*{Proposition~5.7}.
  Its proof also shows the assertions if we do not assume local
  compactness.
\end{proof}

If \(\Gr,\Gr[H]\) are groupoids and
\(\Bisp,\Bisp[Y]\colon \Gr[H] \leftleftarrows \Gr\) are groupoid
correspondences, then we take continuous
\(\Gr[H]\)-\(\Gr\)-equivariant maps \(\Bisp\to\Bisp[Y]\) as
\(2\)\nb-arrows \(\Bisp\Rightarrow\Bisp[Y]\).  These are
automatically local homeomorphisms by
\cite{Antunes-Ko-Meyer:Groupoid_correspondences}*{Lemma~6.1}.
In~\cite{Antunes-Ko-Meyer:Groupoid_correspondences}, the
\(2\)\nb-arrows are assumed to be injective as maps.  This is
important for the passage to groupoid \(\Cst\)\nb-algebras.
Therefore, we assume this in the locally compact case.  But there is
no need to assume this when working with general topological
groupoids.

Let \(\Bisp\colon \Gr[H]\leftarrow \Gr\) be a groupoid correspondence.
The canonical \(\Gr[H],\Gr\)\nb-equivariant homeomorphisms
\begin{alignat}{2}
  \label{eq:unitor_left}
  1_{\Gr[H]}\Grcomp_{\Gr[H]}\Bisp&\to \Bisp,&\qquad
  [h,x]&\mapsto h\cdot x,\\
  \label{eq:unitor_right}
  \Bisp\Grcomp_{\Gr} 1_{\Gr}&\to \Bisp,&\qquad
  [x,g]&\mapsto x\cdot g,
\end{alignat}
in \cite{Antunes-Ko-Meyer:Groupoid_correspondences}*{Lemma~6.3}
witness that~\(1_{\Gr}\) behaves like a unit arrow on~\(\Gr\).  With
the canonical homeomorphisms in
\cite{Antunes-Ko-Meyer:Groupoid_correspondences}*{Lemma~6.4} as
associators, we get the bicategory~\(\Grcat\) of groupoid
correspondences.  The tight and the proper groupoid correspondences
form subbicategories
\(\Grcat_\tight\subseteq\Grcat_\prop\subseteq\Grcat\), respectively.
We define \(\Grcat_\lc\subseteq \Grcat\) to be the subcategory with
locally compact groupoids as objects, locally compact groupoid
correspondences between them as arrows, and injective equivariant
maps as \(2\)\nb-arrows.  In addition, there are subbicategories
\(\Grcat_{\lc,\tight} \subseteq \Grcat_{\lc,\prop} \subseteq
\Grcat_\lc\) of tight or proper locally compact groupoid
correspondences.

The following theorem is not yet proven
in~\cite{Antunes-Ko-Meyer:Groupoid_correspondences}.

\begin{theorem}
  \label{the:groupoid_equivalence}
  The equivalences in~\(\Grcat\) are exactly the Morita equivalences
  of groupoids.  That is, a groupoid correspondence \(\Bisp\colon
  \Gr[H]\leftarrow \Gr\) is an equivalence if and only if both actions
  are basic and both anchor maps induce homeomorphisms \(\Bisp/\Gr
  \cong \Gr[H]^0\) and \(\Gr[H]{\backslash} \Bisp \cong \Gr^0\).
\end{theorem}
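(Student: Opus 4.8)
The plan is to prove the two implications separately. The harder and more instructive direction is to show that an equivalence in $\Grcat$ is a Morita equivalence, since this requires unpacking what an equivalence in a bicategory means. Recall that $\Bisp\colon \Gr[H]\leftarrow \Gr$ is an equivalence if there is a groupoid correspondence $\Bisp[Y]\colon \Gr\leftarrow\Gr[H]$ together with $2$-arrows (hence, by our conventions, equivariant homeomorphisms, at least after noting injectivity and surjectivity) $\Bisp[Y]\Grcomp_{\Gr[H]}\Bisp \cong 1_{\Gr}$ and $\Bisp\Grcomp_{\Gr}\Bisp[Y]\cong 1_{\Gr[H]}$. So first I would fix such a quasi-inverse $\Bisp[Y]$ and the two witnessing isomorphisms.

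First I would establish that the right $\Gr[H]$-action on $\Bisp$ is also basic and that $\rg\colon\Bisp\to\Gr[H]^0$ descends to a homeomorphism $\Gr[H]\backslash\Bisp\cong\Gr^0$; by symmetry (swapping the roles of $\Bisp$ and $\Bisp[Y]$) this will also give that the left $\Gr$-action is handled, i.e. that $\s_*\colon\Bisp[Y]/\Gr[H]\to\Gr^0$ — equivalently $\rg_*$ for $\Bisp$ on the other side — is a homeomorphism. The key computational step is to use the isomorphism $\Bisp[Y]\Grcomp_{\Gr[H]}\Bisp\cong 1_{\Gr}=\Gr$ to produce, for each $g\in\Gr$, a canonical pair $(y,x)\in\Bisp[Y]\times_{\Gr[H]^0}\Bisp$ with $[y,x]\mapsto g$; tracking the $\Gr$-actions on both sides shows the left $\Gr$-action on $\Bisp[Y]$ and the right $\Gr$-action on $\Bisp$ are ``mutually inverse'' in a precise sense. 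Concretely, I expect to show: the map $\Bisp\times_{\s,\Gr^0,\rg}\Bisp[Y]'\to$ (something) induced by the inner product of Definition and Lemma~\ref{def:innerprodofcorr} identifies the $\Gr[H]$-orbit equivalence relation on $\Bisp$ with the fibres of $\rg$, which is exactly basicness of the $\Gr[H]$-action plus the statement that $\rg_*\colon\Gr[H]\backslash\Bisp\to\Gr^0$ is injective; surjectivity of $\rg_*$ comes from the fact that $\rg(\Bisp)\supseteq\rg(\Bisp\cdot\Gr)$ must be all of $\Gr^0$ because $\s_*\colon\Bisp/\Gr\cong\Gr[H]^0$ is onto and the composition with $\Bisp[Y]$ recovers all units of $\Gr$. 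Finally, that $\rg_*$ is open (hence a homeomorphism, being a continuous open bijection) follows because $\rg$ factors through the orbit map $\Qu$ for the $\Gr[H]$-action, which is a local homeomorphism by Lemma~\ref{lem:basic_orbit_lh} once we know the action is basic.

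For the converse, suppose both actions on $\Bisp$ are basic and $\rg_*\colon\Bisp/\Gr\cong\Gr[H]^0$, $\s_*\colon\Gr[H]\backslash\Bisp\cong\Gr^0$ are homeomorphisms. I would take $\Bisp[Y]$ to be the ``conjugate'' correspondence $\conj{\Bisp}$: the same space with the actions swapped, i.e.\ $h\cdot\conj{x}\defeq\conj{x\cdot h^{-1}}$ and $\conj{x}\cdot g\defeq\conj{g^{-1}\cdot x}$, with anchor maps swapped. One checks $\conj{\Bisp}\colon\Gr\leftarrow\Gr[H]$ is a groupoid correspondence: the right $\Gr[H]$-action is basic by hypothesis, and $\rg_{\conj{\Bisp}}=\s_{\Bisp}$ is a local homeomorphism by assumption. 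Then I would construct the equivariant homeomorphisms $\Bisp\Grcomp_{\Gr}\conj{\Bisp}\xrightarrow{\sim}1_{\Gr[H]}$ and $\conj{\Bisp}\Grcomp_{\Gr[H]}\Bisp\xrightarrow{\sim}1_{\Gr}$ explicitly: send $[x_1,\conj{x_2}]$ (with $\s(x_1)=\rg_{\conj\Bisp}(\conj{x_2})=\s(x_2)$, so $\Qu(x_1)=\Qu(x_2)$ iff they lie in the same $\Gr$-orbit — wait, here $\s(x_1)=\s(x_2)$ and we are composing over $\Gr$, so actually $[x_1,\conj{x_2}]$ with the diagonal $\Gr$-action) to the unique $h\in\Gr[H]$ with $h\cdot x_2 = x_1$, which exists and is unique precisely because $\rg_*$ is injective (giving $\rg(x_1)=\rg(x_2)$ forces the same $\Gr[H]$-orbit after descending the right way — this is where I must be careful about which quotient is which) and the $\Gr[H]$-action is basic (giving uniqueness and continuity of $h$). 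This is essentially the inner-product map of Definition and Lemma~\ref{def:innerprodofcorr} applied to the left action. Symmetrically for the other composite using $\s_*$. Checking equivariance and that these are homeomorphisms onto $\Gr[H]$, resp.\ $\Gr$ (with their tautological bimodule structures), together with compatibility with the unitors~\eqref{eq:unitor_left}–\eqref{eq:unitor_right}, is then routine.

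I expect the main obstacle to be bookkeeping: keeping straight the four actions (left/right $\Gr$, left/right $\Gr[H]$) and the four quotients ($\Bisp/\Gr$, $\Gr[H]\backslash\Bisp$, and the analogues for $\conj\Bisp$), and correctly matching ``basic $\Gr[H]$-action on $\Bisp$'' with ``$\rg_*$ a homeomorphism'' via the inner-product characterisation in Definition and Lemma~\ref{def:innerprodofcorr}. The conceptual content is light — it is the standard fact that in a bicategory of bimodule-like objects, equivalences are the invertible bimodules — but the étale/topological hypotheses (local homeomorphism, basicness) must be verified by hand, and the asymmetry in Definition~\ref{def:Bibundles} (only the right anchor map is assumed a local homeomorphism and only the right action basic) means the forward direction genuinely has to \emph{derive} the left-hand analogues rather than assume them.
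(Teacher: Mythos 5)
Your overall plan coincides with the paper's. For the converse you take the conjugate correspondence $\conj{\Bisp}$ and build both witnessing isomorphisms from the inner product of \longref{Definition and Lemma}{def:innerprodofcorr}; this is exactly what the paper does. For the forward direction you fix a quasi-inverse $\Bisp[Y]$ with invertible $2$\nb-arrows $\varphi\colon \Bisp\Grcomp\Bisp[Y]\congto\Gr[H]$ and $\psi\colon\Bisp[Y]\Grcomp\Bisp\congto\Gr$ and try to extract the left-sided properties of~$\Bisp$ from them, which is also the paper's strategy. The difference is organisational, and it matters. The paper first proves that $\rg_{\Bisp[Y],*}\colon\Bisp[Y]/\Gr[H]\to\Gr^0$ and $\rg_{\Bisp,*}\colon\Bisp/\Gr\to\Gr[H]^0$ are homeomorphisms (because the composite $\Gr^0\cong(\Bisp[Y]\Grcomp\Bisp)/\Gr\to\Bisp[Y]/\Gr[H]\to\Gr^0$ is the identity with surjective first factor), and then constructs an explicit flip homeomorphism $\tau\colon\Bisp\congto\Bisp[Y]$, characterised by $\varphi[x,\tau(x)]=1_{\rg(x)}$, satisfying $\rg\circ\tau=\s$, $\s\circ\tau=\rg$ and $\tau(hxg)=g^{-1}\tau(x)h^{-1}$. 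Once $\tau$ exists, basicness of the left action on~$\Bisp$ and the homeomorphism $\Gr[H]\backslash\Bisp\cong\Gr^0$ are just the right-sided properties of~$\Bisp[Y]$ (which hold by definition of a groupoid correspondence, plus the homeomorphism already established) transported across~$\tau$. Your plan to verify these left-sided statements directly via an inner-product argument is the same computation done without this symmetry, and it is precisely where your sketch goes soft: ``tracking the actions shows they are mutually inverse in a precise sense'' is the statement to be proven, not an argument. I would isolate $\tau$ as a lemma; it is the one genuinely non-obvious step.

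Two side-confusions should also be repaired, since they sit exactly at the delicate point. The $\Gr[H]$\nb-orbit relation on~$\Bisp$ is controlled by~$\s$ (which is $\Gr[H]$\nb-invariant), not by~$\rg$; the map that must be shown to be a homeomorphism on that quotient is $\s_*\colon\Gr[H]\backslash\Bisp\to\Gr^0$, and the expression ``$\rg_*\colon\Gr[H]\backslash\Bisp\to\Gr^0$'' does not typecheck (likewise $\rg(\Bisp)\subseteq\Gr[H]^0$, not $\Gr^0$). In the converse, the anchor map of $\conj{\Bisp}$ that the correspondence axioms require to be a local homeomorphism is the right one, $\s_{\conj{\Bisp}}=\rg_{\Bisp}$; this is not an assumption but follows because $\rg_{\Bisp}=\rg_{\Bisp,*}\circ\Qu$ with $\rg_{\Bisp,*}$ a homeomorphism by hypothesis and $\Qu$ a local homeomorphism by \longref{Lemma}{lem:basic_orbit_lh}. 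Neither slip is fatal, but as written the forward direction is a plan rather than a proof.
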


\begin{proof}
  Let~\(\Bisp[Y]\) be a groupoid correspondence that is inverse
  to~\(\Bisp\) up to isomorphism.  That is, there are equivariant
  homeomorphisms
  \(\varphi\colon \Bisp \Grcomp \Bisp[Y] \congto \Gr[H]\) and
  \(\psi\colon \Bisp[Y] \Grcomp \Bisp \congto \Gr\).  Then the maps
  \(\rg_{\Bisp}\colon \Bisp \to \Gr[H]^0\),
  \(\s_{\Bisp[Y]}\colon \Bisp[Y] \to \Gr[H]^0\),
  \(\rg_{\Bisp[Y]}\colon \Bisp[Y] \to \Gr^0\) and
  \(\s_{\Bisp}\colon \Bisp \to \Gr^0\) are surjective because the
  range and source maps on \(\Gr[H]\) and~\(\Gr\) are surjective.
  Next, \cite{Antunes-Ko-Meyer:Groupoid_correspondences}*{Lemma~5.4}
  provides a homeomorphism
  \(\Bisp[Y] \Grcomp (\Bisp/\Gr) \cong (\Bisp[Y] \Grcomp \Bisp)/\Gr
  \cong \Gr/\Gr \cong \Gr^0\).  The coordinate projection
  \(\Bisp[Y] \times_{\s,\Gr[H]^0,\rg} \Bisp \to \Bisp[Y]\) induces a
  canonical map \(\Bisp[Y] \Grcomp \Bisp \to \Bisp[Y]/\Gr[H]\).  It
  is surjective because~\(\rg_{\Bisp}\) is surjective.  It descends
  to a surjective continuous map~\(f\) on the orbit spaces from
  \((\Bisp[Y] \Grcomp \Bisp)/\Gr \cong \Gr/\Gr \cong \Gr^0\) to
  \(\Bisp[Y]/\Gr[H]\).  The composite map
  \(\rg_{\Bisp[Y],*}\circ f\colon \Gr^0 \to \Bisp[Y]/\Gr[H] \to
  \Gr^0\) is the identity map on~\(\Gr^0\) because
  \(\rg[y,x] \defeq \rg_{\Bisp[Y]}(y)\) for all
  \([y,x] \in \Bisp[Y] \Grcomp \Bisp\).  Since~\(f\) is surjective
  and \(f \rg_{\Bisp[Y],*} f = f\), it follows that
  \(\rg_{\Bisp[Y],*}\colon \Bisp[Y]/\Gr[H] \to \Gr^0\) and~\(f\) are
  homeomorphisms inverse to each other.  The same argument shows
  that \(\rg_{\Bisp,*}\colon \Bisp/\Gr \to \Gr[H]^0\) is a
  homeomorphism.

  Next we construct a homeomorphism
  \(\tau\colon \Bisp\congto \Bisp[Y]\) that satisfies
  \(\rg\circ\tau = \s\), \(\s\circ\tau = \rg\), and
  \(\tau(h x g) = g^{-1} \tau(x) h^{-1}\).  Since the
  right actions on \(\Bisp\) and~\(\Bisp[Y]\) are basic and the left
  anchor maps induce homeomorphisms, it then follows that both the
  right and the left action on~\(\Bisp\) are basic and that both the
  left and the right anchor maps for~\(\Bisp\) descend to
  homeomorphisms \(\Bisp/\Gr[H] \congto \Gr^0\) and
  \(\Gr\backslash\Bisp \congto \Gr[H]^0\).  This says that~\(\Bisp\)
  is a Morita equivalence.  It remains to construct the
  homeomorphism~\(\tau\).

  Let \(x\in\Bisp\).  We claim that there is a unique element
  \(y \in \Bisp[Y]\) with \(\s(x) = \rg(y)\) and
  \(\varphi[x,y]=1_{\rg(x)}\) for the homeomorphism
  \(\varphi\colon \Bisp \Grcomp \Bisp[Y] \congto \Gr[H]\).
  Since~\(\rg_{\Bisp[Y],*}\) is surjective, there is
  \(y_1\in\Bisp[Y]\) with \(\rg(y_1) = \s(x)\).  Then
  \(\s(\varphi[x,y_1]) = \s(y_1)\) and
  \(\rg(\varphi[x,y_1]) = \rg(x)\), so that
  \(y_1\cdot \varphi[x,y_1]^{-1}\) exists and
  \(\varphi[x,y_1\cdot \varphi[x,y_1]^{-1}]= \varphi[x,y_1]\cdot
  \varphi[x,y_1]^{-1}= 1_{\rg(x)}\).  This proves the existence of
  an element \(y \in \Bisp[Y]\) with \(\s(x) = \rg(y)\) and
  \(\varphi[x,y]=1_{\rg(x)}\).  Let \(y_2\in\Bisp[Y]\) be another
  element with these two properties.  Since~\(\varphi\) is
  injective, it follows that \([x,y] = [x,y_2]\) in
  \(\Bisp\Grcomp \Bisp[Y]\).  That is, there is \(g\in \Gr\) with
  \(\rg(g) = \s(x)\) and \([x,y_2] = [x g, g^{-1} y]\).  Since the
  right \(\Gr\)\nb-action on~\(\Bisp\) is basic, this is only
  possible for \(g=1_{\s(x)}\).  Then \(y=y_2\).  So the
  element~\(y\) above is unique.  We let
  \(\tau\colon \Bisp\to\Bisp[Y]\) be the map that sends~\(x\) to
  this unique~\(y\).  The map~\(\tau\) satisfies
  \(\rg\circ\tau = \s\), \(\s\circ\tau = \rg\), and
  \(\tau(h\cdot x\cdot g) = g^{-1} \tau(x) h^{-1}\) because
  \(\varphi(x g,g^{-1} \tau(x)) = \varphi(x,\tau(x)) = 1_{\rg(x)}\)
  and
  \(\varphi(h x, \tau(x) h^{-1}) = h \varphi(x,\tau(x)) h^{-1} = h
  1_{\rg(x)} h^{-1} = 1_{\rg(h)}\).

  Since~\(\s_{\Bisp}\) is surjective, an argument as above shows
  that for any \(y\in\Bisp[Y]\) there is \(x\in\Bisp\) with
  \(\varphi[x,y]=1_{\s(y)}\) or, equivalently, \(\tau(x)=y\).
  Assume that \(\tau(x_1) = \tau(x_2) = y\).  Then
  \([x_1,y]= [x_2,y]\).  Then there is \(g\in \Gr\) with
  \((x_2,y) = (x_1 g,g^{-1} y)\).  In particular, \(g^{-1} y = y\).
  There is \(x_3 \in\Bisp\) with \(\rg(x_3) = \s(y)\).  Then
  \(g^{-1}(y,x_3) = (y,x_3)\).  Since there is a
  \(\Gr\)\nb-equivariant map
  \(\psi\colon \Bisp[Y] \Grcomp \Bisp \congto \Gr\), this is only
  possible if \(g=1_{\rg(y)}\).  And then \(x_1 = x_2\).  Thus the
  map~\(\tau\) is also injective.

  Since~\(\rg_{\Bisp[Y],*}\) is a homeomorphism and the orbit space
  projection \(\Bisp[Y] \to \Bisp[Y]/\Gr\) is a local homeomorphism,
  it follows that~\(\rg_{\Bisp[Y]}\) is a local homeomorphism.
  Therefore, the choice of~\(y\) in the proof above may be done in a
  continuous fashion depending on~\(x\).  Therefore, the
  map~\(\tau\) is continuous.  The map~\(\s_{\Bisp[X]}\) is a local
  homeomorphism by assumption.  The bijection~\(\tau\) intertwines
  the local homeomorphisms \(\s_{\Bisp[X]}\) and~\(\rg_{\Bisp[Y]}\).
  This forces it to be a homeomorphism because it is also
  continuous.

  Finally, we must prove the converse, namely, that a groupoid
  correspondence~\(\Bisp\) is an equivalence provided both actions
  on~\(\Bisp\) are proper and the anchor maps induce homeomorphisms
  \(\Bisp/\Gr \cong \Gr[H]^0\) and
  \(\Gr[H]\backslash \Bisp \cong \Gr^0\).  Under these assumptions,
  the same set~\(\Bisp\) with
  \(g\cdot x\cdot h \defeq h^{-1} x g^{-1}\) is a groupoid
  correspondence \(\Bisp[Y]\colon \Gr \leftarrow \Gr[H]\).
  \longref{Proposition}{pro:innprod} gives a map
  \(\Bisp \times \Bisp \to \Gr\), \((x,y)\mapsto \braket{y}{x}\),
  which is uniquely determined by \(y\cdot \braket{y}{x} = x\).  The
  properties of this map imply that it descends to a homeomorphism
  \(\Bisp[Y] \Grcomp \Bisp \congto \Gr\).  Exchanging left and right
  gives a similar homeomorphism
  \(\Bisp \Grcomp \Bisp[Y] \congto \Gr[H]\).  Thus~\(\Bisp\) is an
  equivalence in~\(\Grcat\).
\end{proof}

\section{Diagrams of groupoid correspondences}
\label{sec:diagrams}

A diagram in a category~\(\Cat[D]\) is defined as a functor from a
small category to~\(\Cat[D]\).  Analogously, a diagram in a
bicategory~\(\Cat[D]\) is a homomorphism from a small
bicategory~\(\Cat\) to~\(\Cat[D]\).  We are interested in such
diagrams in the bicategory~\(\Grcat\) of groupoid correspondences.
We shall mostly study the case when the domain~\(\Cat\) is a
category, viewed as a bicategory with only identity \(2\)\nb-arrows.

Diagrams in a category form a category with natural
transformations as arrows.  Similarly, diagrams in a bicategory form a
bicategory; its arrows and \(2\)\nb-arrows are called
transformations and modifications.  These concepts are defined
succinctly in~\cite{Leinster:Basic_Bicategories}, including some
technical fine print.  Besides the homomorphisms of bicategories,
there are also ``morphisms'', where certain \(2\)\nb-arrows are not
required to be invertible.  And we shall actually restrict attention
to those homomorphisms that are strictly unital because this
assumption is no great loss of generality and simplifies the
discussion.  Similarly, there are different flavours of
transformations, and we shall only use the ``strong'' ones.  The
following propositions describe strictly unital homomorphisms
\(\Cat\to\Grcat\) for a category~\(\Cat\), strong transformations
between them, and modifications between these strong
transformations.  They may also be taken as definitions.  If we
allow~\(\Cat\) to be a \emph{bi}category, then the general
definitions in \cites{Johnson-Yau:2-Dim, Leinster:Basic_Bicategories} do not simplify
any more.  So there is no point in repeating them here.

After the definition, we examine some classes of diagrams,
identifying them with generalised dynamical systems that were
already considered in previous work.  First, we consider tight
diagrams of spaces, viewed as groupoids.  These are equivalent to
actions of categories on spaces by topological correspondences.
Secondly, we consider tight diagrams of groups.  These turn out to
be more general than complexes of groups.  Our conventions are,
however, somewhat different, so that formulas must be adapted a bit
to carry them over, and some assumptions on the shape of complexes
of groups are not needed for our diagrams.

Third, we look at group actions on groupoids.  These turn out to be
the same as actions by Morita equivalences.  These were already
studied in~\cite{Buss-Meyer:Actions_groupoids} in the locally
compact case, and they may be described through \(G\)\nb-graded
groupoids, where~\(G\) is the group that acts.  Our fourth class of
examples are actions of free monoids.  Here it suffices to specify
the action on the generators, which need not satisfy any relations.
The fifth class of examples are path categories of directed graphs,
which are treated in the same way, except that the free generators
of our category may now have different range and source objects.
Finally, the sixth class of examples are actions of free commutative
monoids.  These are more complicated because the commutativity
relation must be implemented in the diagram.  Our proofs are
somewhat sketchy here because analogous statements are known in the
theory of higher-rank graphs.

\begin{proposition}
  \label{pro:diagrams_in_Grcat}
  A \emph{\(\Cat\)\nb-shaped diagram} in~\(\Grcat\) or a
  \emph{strictly unital homomorphism} \(\Cat\to\Grcat\) consists of
  \begin{enumerate}
    \item groupoids~\(\Gr_x\) for all objects~\(x\) of~\(\Cat\);
    \item correspondences \(\Bisp_g\colon \Gr_x\leftarrow \Gr_y\) for all
    arrows \(g\colon x\leftarrow y\) in~\(\Cat\);
    \item isomorphisms of correspondences \(\mu_{g,h}\colon
    \Bisp_g\Grcomp_{\Gr_y} \Bisp_h\congto \Bisp_{g h}\) for all pairs of
    composable arrows \(g\colon z\leftarrow y\), \(h\colon y\leftarrow
    x\) in~\(\Cat\);
  \end{enumerate}
  such that
  \begin{enumerate}[label=\textup{(\ref*{pro:diagrams_in_Grcat}.\arabic*)},
    leftmargin=*,labelindent=0em]
    \item \label{en:diagrams_in_Grcat_1} \(\Bisp_x\) for an object~\(x\)
    of~\(\Cat\) is the identity correspondence~\(\Gr_x\) on~\(\Gr_x\);
    \item \label{en:diagrams_in_Grcat_2}
    \(\mu_{g,y}\colon \Bisp_g \Grcomp_{\Gr_y} \Gr_y \congto \Bisp_g\)
    and
    \(\mu_{x,g}\colon \Gr_x \Grcomp_{\Gr_x} \Bisp_g \congto \Bisp_g\)
    for an arrow \(g\colon x\leftarrow y\)
    in~\(\Cat\)
    are the canonical isomorphisms described in
    \eqref{eq:unitor_left}--\eqref{eq:unitor_right};
    \item \label{en:diagrams_in_Grcat_3} for all composable arrows
    \(g_{01}\colon x_0\leftarrow x_1\), \(g_{12}\colon x_1\leftarrow
    x_2\), \(g_{23}\colon x_2\leftarrow x_3\) in~\(\Cat\), the
    following diagram commutes:
    \begin{equation}
      \label{eq:coherence_category-diagram}
      \begin{tikzpicture}[yscale=1.5,xscale=3,baseline=(current bounding
      box.west)]
        \node (m-1-1) at (144:1)
        {\((\Bisp_{g_{01}}\Grcomp_{\Gr_{x_1}} \Bisp_{g_{12}})
          \Grcomp_{\Gr_{x_2}} \Bisp_{g_{23}}\)};
        \node (m-1-1b) at (216:1) {\(\Bisp_{g_{01}}\Grcomp_{\Gr_{x_1}}
          (\Bisp_{g_{12}}\Grcomp_{\Gr_{x_2}} \Bisp_{g_{23}})\)};
        \node (m-1-2) at (72:1)
        {\(\Bisp_{g_{02}}\Grcomp_{\Gr_{x_2}}\Bisp_{g_{23}}\)};
        \node (m-2-1) at (288:1)
        {\(\Bisp_{g_{01}}\Grcomp_{\Gr_{x_1}}\Bisp_{g_{13}}\)};
        \node (m-2-2) at (0:.8) {\(\Bisp_{g_{03}}\)};
        \draw[dar] (m-1-1) -- node[swap] {\(\scriptstyle\cong\)} node
        {\scriptsize\textup{associator}} (m-1-1b);
        \draw[dar] (m-1-1.north) -- node[very near end]
        {\(\scriptstyle\mu_{g_{01},g_{12}}\Grcomp_{\Gr_{x_2}}\id_{\Bisp_{g_{23}}}\)}
         (m-1-2.west);
        \draw[dar] (m-1-1b.south) -- node[swap,very near end]
        {\(\scriptstyle\id_{\Bisp_{g_{01}}}\Grcomp_{\Gr_{x_1}}\mu_{g_{12},g_{23}}\)}
        (m-2-1.west);
        \draw[dar] (m-1-2.south) -- node[inner sep=0pt]
        {\(\scriptstyle\mu_{g_{02},g_{23}}\)} (m-2-2);
        \draw[dar] (m-2-1.north) -- node[swap,inner sep=1pt]
        {\(\scriptstyle\mu_{g_{01},g_{13}}\)} (m-2-2);
      \end{tikzpicture}
    \end{equation}
    here \(g_{02}\defeq g_{01}\circ g_{12}\), \(g_{13}\defeq
    g_{12}\circ g_{23}\), and \(g_{03}\defeq g_{01}\circ g_{12}\circ
    g_{23}\).
    The diagram~\eqref{eq:coherence_category-diagram} commutes
    automatically if one of the arrows \(g_{01}\), \(g_{12}\)
    or~\(g_{23}\) is an identity arrow.
  \end{enumerate}
\end{proposition}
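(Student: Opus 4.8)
The plan is to treat the statement as a pure translation exercise: unpack the definition of a strictly unital homomorphism of bicategories, as recorded in \cites{Johnson-Yau:2-Dim, Leinster:Basic_Bicategories}, and match it term by term with the data~(1)--(3) and the axioms \ref{en:diagrams_in_Grcat_1}--\ref{en:diagrams_in_Grcat_3}, checking that the dictionary works in both directions. First I would recall that a homomorphism $F\colon \Cat \to \Grcat$ assigns an object $\Gr_x \defeq F(x)$ of~$\Grcat$ to each object~$x$ of~$\Cat$, a $1$\nb-arrow $\Bisp_g \defeq F(g)\colon \Gr_x \leftarrow \Gr_y$ to each arrow $g\colon x \leftarrow y$, and, since~$\Cat$ is a category and therefore has only identity $2$\nb-arrows, no further data on $2$\nb-cells. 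The multiplicativity constraint of~$F$ gives, for each composable pair $g\colon z \leftarrow y$, $h\colon y \leftarrow x$, a $2$\nb-arrow $\mu_{g,h}\colon \Bisp_g \Grcomp_{\Gr_y} \Bisp_h \Rightarrow \Bisp_{gh}$ in~$\Grcat$; since~$F$ is a \emph{homomorphism} rather than a mere lax morphism, this $2$\nb-arrow is invertible, that is, an isomorphism of groupoid correspondences. This is item~(3).

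Next I would deal with units. Strict unitality means precisely that the unit constraint $1_{\Gr_x} \Rightarrow F(1_x)$ is the identity, so $\Bisp_x = F(1_x) = 1_{\Gr_x}$, which is~\ref{en:diagrams_in_Grcat_1}. The unit coherence axioms of a homomorphism then assert that $\mu_{g,1_y}$ and $\mu_{1_x,g}$, composed with this identity unit constraint, equal the right and left unitors of~$\Grcat$; since the latter are the canonical equivariant homeomorphisms of \cite{Antunes-Ko-Meyer:Groupoid_correspondences}*{Lemma~6.3} recalled in \eqref{eq:unitor_left}--\eqref{eq:unitor_right}, this forces $\mu_{g,y}$ and $\mu_{x,g}$ to be those maps, which is~\ref{en:diagrams_in_Grcat_2}.

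The remaining axiom of a homomorphism is the associativity coherence, which in general is a hexagon involving the compositors~$\mu$, the associator of~$\Grcat$ (the canonical homeomorphism of \cite{Antunes-Ko-Meyer:Groupoid_correspondences}*{Lemma~6.4}), and the comparison $2$\nb-cell $F(a_{\Cat})$ for the associator of the source bicategory. Because~$\Cat$ is an ordinary category, its associator is the identity, so the hexagon degenerates to the pentagon \eqref{eq:coherence_category-diagram}; spelling this out is where one must pay attention to the variance convention ($g\colon x \leftarrow y$) and to the order of the arguments of~$\mu_{g,h}$, but it is otherwise mechanical. For the final clause, if (say) $g_{12}$ is an identity arrow, then $\Bisp_{g_{12}}$ is an identity correspondence, the two outer legs $\mu_{g_{02},g_{23}}$ and $\mu_{g_{01},g_{13}}$ of \eqref{eq:coherence_category-diagram} become the same map, and the two inner legs $\mu_{g_{01},g_{12}}\Grcomp\id$ and $\id\Grcomp\mu_{g_{12},g_{23}}$ become the right and left unitors by~\ref{en:diagrams_in_Grcat_2}; commutativity of the pentagon is then exactly the triangle identity of the bicategory~$\Grcat$. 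The cases where $g_{01}$ or $g_{23}$ is an identity follow similarly from the triangle identity and the standard coherence among the unit constraints, exactly as the unit components of any pseudofunctor carry no information.

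The main ``obstacle'' is thus not mathematical depth but bookkeeping: keeping the variance and composition-order conventions of~$\Grcat$ aligned with the general bicategorical formulas, and invoking Mac~Lane-type coherence carefully for the automatic-commutativity clause. Everything else is a direct comparison with the definitions in \cites{Johnson-Yau:2-Dim, Leinster:Basic_Bicategories}.
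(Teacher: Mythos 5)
Your proposal is correct and matches the paper's intent: the paper gives no explicit proof, stating just before the proposition that these descriptions are obtained by specialising the general definitions in \cites{Johnson-Yau:2-Dim, Leinster:Basic_Bicategories} and ``may also be taken as definitions.'' Your term-by-term unpacking, including the reduction of the automatic-commutativity clause to the triangle identity and unit coherence, is exactly the translation the paper leaves implicit.
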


The maps~\(\mu_{g,h}\) are determined by the composite maps
\begin{equation}
  \label{eq:diagram_as_mult}
  \Bisp_g\times_{\s,\rg} \Bisp_h
  \prto \Bisp_g\Grcomp \Bisp_h
  \to \Bisp_{g h}
\end{equation}
for composable arrows \(g,h\) in~\(\Cat\), which we write
multiplicatively:
\(\gamma\cdot \eta\defeq \mu_{g,h}[\gamma,\eta]\in \Bisp_{g\cdot
  h}\) for \(\gamma\in \Bisp_g\), \(\eta\in\Bisp_h\) with
\(\s(\gamma)=\rg(\eta)\), where \([\gamma,\eta]\) denotes the image
of \((\gamma,\eta)\) in \(\Bisp_g\Grcomp \Bisp_h\).  If \(g=\rg(h)\)
or \(h=\s(g)\), this multiplication is the left
\(\Gr_{\rg(h)}\)\nb-action on~\(\Bisp_h\) and the right
\(\Gr_{\s(g)}\)\nb-action on~\(\Bisp_g\) by
\ref{en:diagrams_in_Grcat_1} and~\ref{en:diagrams_in_Grcat_2}.  Thus
a \(\Cat\)\nb-shaped diagram is determined by spaces~\(\Gr_x^0\) for
\(x\in\Cat^0\) and~\(\Bisp_g\) for \(g\in\Cat\) with continuous maps
\(\rg\colon \Bisp_g \to \Gr_{\rg(g)}\) and local homeomorphisms
\(\s\colon \Bisp_g \to \Gr_{\s(g)}\) for all \(g\in\Cat\) and with
associative multiplication maps as in~\eqref{eq:diagram_as_mult};
the associativity is equivalent to the condition
\ref{en:diagrams_in_Grcat_3}.  In addition, we need to assume that
\(\Gr_x^0\) and \(\Gr_x\defeq \Bisp_x\) with the given range and
source maps and multiplication form an étale groupoid for each~\(x\)
and that the right actions of these groupoids on the
spaces~\(\Bisp_g\) defined by the multiplication maps are basic.

\begin{proposition}
  \label{pro:trafo_category-diagram}
  Let \((\Gr_x^0,\Bisp_g^0,\mu_{g,h}^0)\) and
  \((\Gr_x^1,\Bisp_g^1,\mu_{g,h}^1)\) be two \(\Cat\)\nb-shaped
  diagrams in~\(\Grcat\).  A \emph{strong transformation} between
  them consists of
  \begin{enumerate}
  \item groupoid correspondences
    \(\Bisp[Y]_x\colon \Gr_x^1 \leftarrow \Gr_x^0\) for all
    objects~\(x\) of~\(\Cat\);
  \item isomorphisms of correspondences
    \(V_g\colon \Bisp_g^1\Grcomp_{\Gr^1_y} \Bisp[Y]_y \congto
    \Bisp[Y]_x\Grcomp_{\Gr^0_x} \Bisp_g^0\) for all arrows
    \(g\colon x\leftarrow y\) in~\(\Cat\);
  \end{enumerate}
  such that
  \begin{enumerate}[label=\textup{(\ref*{pro:trafo_category-diagram}.\arabic*)},
    leftmargin=*,labelindent=0em]
    \item for each object~\(x\)
    of~\(\Cat\), the isomorphism
    \(V_x\colon \Gr^1_x \Grcomp_{\Gr^1_x}
    \Bisp[Y]_x\congto\Bisp[Y]_x\Grcomp_{\Gr^0_x} \Gr^0_x \)
    is the canonical one mapping
    \([g_1,y\cdot g_2]\) to \([g_1\cdot y, g_2]\);
    \item for each pair of composable arrows \(g\colon x\leftarrow y\),
    \(h\colon y\leftarrow z\) in~\(\Cat\), the following diagram
    commutes:
    \begin{equation}
      \label{eq:trafo_category-diagram}
      \begin{tikzpicture}[yscale=1.5,xscale=3,baseline=(current bounding
      box.west)]
        \node (cbb) at (144:1)
        {\(\Bisp[Y]_x\Grcomp_{\Gr^0_x} \Bisp^0_g\Grcomp_{\Gr^0_y}
        \Bisp^0_h\)};
        \node (cb) at (216:1)
        {\(\Bisp[Y]_x\Grcomp_{\Gr^0_x} \Bisp^0_{g h}\)};
        \node (acb) at (72:1)
        {\(\Bisp^1_g\Grcomp_{\Gr^1_y} \Bisp[Y]_y \Grcomp_{\Gr^0_y}
        \Bisp^0_h\)};
        \node (ac) at (288:1)
        {\(\Bisp^1_{g h}\Grcomp_{\Gr^1_z} \Bisp[Y]_z\)};
        \node (aac) at (0:.8)
        {\(\Bisp^1_g\Grcomp_{\Gr^1_y} \Bisp^1_h \Grcomp_{\Gr^1_z}
        \Bisp[Y]_z\)};
        \draw[dar] (cbb) -- node
        {\(\scriptstyle\id_{\Bisp[Y]_x}\Grcomp_{\Gr^0_x} \mu^0_{g,h}\)} (cb);
        \draw[dar] (acb.west) -- node[swap,very near start]
        {\(\scriptstyle V_g\Grcomp_{\Gr^0_y} \id_{\Bisp^0_h}\)} (cbb.north);
        \draw[dar] (ac) -- node[near start] {\(\scriptstyle V_{g h}\)} (cb.south);
        \draw[dar] (aac.north) -- node[swap]
        {\(\scriptstyle\id_{\Bisp^1_g}\Grcomp_{\Gr^1_y} V_h\)} (acb.south);
        \draw[dar] (aac.south) -- node
        {\(\scriptstyle\mu^1_{g,h} \Grcomp_{\Gr^1_z} \id_{\Bisp[Y]_z}\)} (ac.north);
      \end{tikzpicture}
    \end{equation}
    Here we left out associators to keep the diagram readable.
    The diagram~\eqref{eq:trafo_category-diagram} commutes
    automatically if \(g\) or~\(h\) is an identity arrow.
  \end{enumerate}
\end{proposition}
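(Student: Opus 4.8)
The plan is to derive the proposition by unpacking the general definition of a strong (pseudonatural) transformation between homomorphisms of bicategories --~as recorded in \cites{Leinster:Basic_Bicategories, Johnson-Yau:2-Dim}~-- in the special case where the source bicategory is the category~\(\Cat\) and the target is~\(\Grcat\), and where both homomorphisms are strictly unital. First I would recall that such a transformation~\(\alpha\) between strictly unital homomorphisms \(F^0, F^1\colon \Cat\to\Grcat\) consists, for each object~\(x\) of~\(\Cat\), of an arrow \(\alpha_x\) of~\(\Grcat\) --~that is, a groupoid correspondence \(\Bisp[Y]_x\colon \Gr^1_x\leftarrow\Gr^0_x\)~-- and, for each arrow \(g\colon x\leftarrow y\) of~\(\Cat\), of an invertible \(2\)\nb-arrow of~\(\Grcat\). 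Matching up sources and targets using the description of composition in~\(\Grcat\) from \longref{Section}{sec:groupoid_corr}, this \(2\)\nb-arrow is an isomorphism of correspondences
\[
  V_g\colon \Bisp^1_g\Grcomp_{\Gr^1_y} \Bisp[Y]_y
  \congto \Bisp[Y]_x\Grcomp_{\Gr^0_x} \Bisp^0_g .
\]
This matches the data in~(1)--(2), and it then remains to translate the three coherence axioms governing a strong transformation: naturality with respect to \(2\)\nb-arrows of the source, compatibility with units, and compatibility with composition.

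Next I would go through these axioms in turn. Since~\(\Cat\) is a category, it has only identity \(2\)\nb-arrows, so the naturality axiom is vacuous. For the unit axiom I would use that \(F^0\) and~\(F^1\) are strictly unital, so \(F^i(1_x)\) is the identity correspondence~\(\Gr^i_x\) and the corresponding unit coherence cells are identities; I expect the axiom to collapse to the statement that \(V_x\) is the canonical isomorphism assembled from the unitors \eqref{eq:unitor_left}--\eqref{eq:unitor_right} of~\(\Grcat\), which is condition~\textup{(\ref*{pro:trafo_category-diagram}.1)}. The composition axiom is the standard coherence condition relating \(V_{g h}\) to \(V_g\), \(V_h\), the structure isomorphisms \(\mu^0_{g,h}\), \(\mu^1_{g,h}\), and the associators of~\(\Grcat\); transcribing it, with the associators of~\(\Grcat\) (the canonical homeomorphisms of \cite{Antunes-Ko-Meyer:Groupoid_correspondences}*{Lemma~6.4}) absorbed into the notation, should give precisely the commuting diagram~\eqref{eq:trafo_category-diagram}. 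For the final assertion of the proposition I would observe that when \(g\) or~\(h\) is an identity arrow, \(\mu^i_{g,h}\) is one of the unitors \eqref{eq:unitor_left}--\eqref{eq:unitor_right} by \longref{Proposition}{pro:diagrams_in_Grcat}, and the relevant copy of~\(V\) for the identity arrow is also a unitor by condition~\textup{(\ref*{pro:trafo_category-diagram}.1)}, so \eqref{eq:trafo_category-diagram} degenerates to a triangle coherence identity that holds in~\(\Grcat\).

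I do not expect any genuine obstacle here: the entire argument is bookkeeping rather than conceptual, which is why the proposition itself offers the statement as an alternative definition. The one thing that really needs care is the direction and normalisation conventions --~one must fix the orientation of~\(V_g\) consistently with the convention that arrows of~\(\Grcat\) are written \(\Gr[H]\leftarrow\Gr\), and keep track of exactly where the associators of~\(\Grcat\) sit in the composition axiom before they get suppressed in~\eqref{eq:trafo_category-diagram}. Once those conventions are pinned down, the verification reduces to comparing diagrams.
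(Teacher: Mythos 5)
Your proposal is correct and matches the paper's own (implicit) justification: the paper offers no written proof but explicitly frames this proposition as the specialisation of the general notion of strong transformation from \cites{Leinster:Basic_Bicategories, Johnson-Yau:2-Dim} to strictly unital homomorphisms out of a category, which is exactly the unpacking you carry out. Your observations that the naturality axiom is vacuous, that the unit axiom collapses to condition~(3.3.1), and that the identity-arrow case of \eqref{eq:trafo_category-diagram} reduces to unitor/triangle coherence are all the right bookkeeping.
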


We shall not use transformations that are not strong.

\begin{proposition}
  \label{pro:modification_category-diagram}
  Let \((\Gr_x^0,\Bisp_g^0,\mu_{g,h}^0)\) and
  \((\Gr_x^1,\Bisp_g^1,\mu_{g,h}^1)\) be \(\Cat\)\nb-shaped diagrams
  in~\(\Grcat\) and let \((\Bisp[Y]^1_x,V^1_g)\) and
  \((\Bisp[Y]^2_x,V^2_g)\) be strong transformations between them.  A
  \emph{modification} from \((\Bisp[Y]^1_x,V^1_g)\) to
  \((\Bisp[Y]^2_x,V^2_g)\) consists of continuous maps of groupoid
  correspondences \(W_x\colon \Bisp[Y]^1_x \to \Bisp[Y]^2_x\) for
  all objects~\(x\) in~\(\Cat\) such that the diagrams
  \begin{equation}
    \label{eq:modification_category-diagram}
    \begin{tikzpicture}[yscale=1.3,xscale=4,baseline=(current bounding
    box.west)]
      \node (add) at (0,1) {\(\Bisp[Y]^1_x \Grcomp_{\Gr^0_x}\Bisp^0_g\)};
      \node (ad) at (1,1) {\(\Bisp[Y]^2_x\Grcomp_{\Gr^0_x} \Bisp^0_g\)};
      \node (ddc) at (0,0) {\(\Bisp^1_g\Grcomp_{\Gr^1_y} \Bisp[Y]^1_y\)};
      \node (dc) at (1,0) {\(\Bisp^1_g\Grcomp_{\Gr^1_y} \Bisp[Y]^2_y\)};

      \draw[dar] (add) -- node {\(\scriptstyle W_x\Grcomp_{\Gr^0_x} \id_{\Bisp^0_g}\)}
      (ad);
      \draw[dar] (ddc) -- node {\(\scriptstyle V^1_g\)} (add);
      \draw[dar] (dc) -- node[swap] {\(\scriptstyle V^2_g\)} (ad);
      \draw[dar] (ddc) -- node[swap] {\(\scriptstyle \id_{\Bisp^1_g}\Grcomp_{\Gr^1_y}
      W_y\)}
      (dc);
    \end{tikzpicture}
  \end{equation}
  commute for all arrows \(g\colon x\leftarrow y\) in~\(\Cat\).  This
  diagram commutes automatically if~\(g\) is an identity arrow.
\end{proposition}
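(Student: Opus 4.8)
The plan is to derive this statement by unwinding the general bicategorical definition of a modification between ``strong'' (pseudonatural) transformations of homomorphisms of bicategories, as recorded in \cites{Leinster:Basic_Bicategories, Johnson-Yau:2-Dim}, and specialising it in two ways: the target bicategory is~\(\Grcat\), so that its \(2\)\nb-arrows are continuous equivariant maps of groupoid correspondences, and the domain~\(\Cat\) is an ordinary category, so that every axiom indexed by a nonidentity \(2\)\nb-arrow of the domain is vacuous. As with the two preceding propositions, the content is bookkeeping rather than a genuine argument, and the statement may equally be read as a definition.

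First I would recall the general shape of a modification: for homomorphisms \(F,G\colon \Cat\to\Cat[B]\) and strong transformations \(\sigma,\tau\colon F\Rightarrow G\), a modification \(\sigma\Rrightarrow\tau\) assigns to each object~\(x\) of~\(\Cat\) a \(2\)\nb-arrow \(\Gamma_x\colon \sigma_x\Rightarrow\tau_x\) between the component \(1\)\nb-arrows, subject to one coherence square for each \(1\)\nb-arrow of~\(\Cat\). Taking \(F,G\) to be the two diagrams \((\Gr_x^0,\Bisp_g^0,\mu_{g,h}^0)\) and \((\Gr_x^1,\Bisp_g^1,\mu_{g,h}^1)\) and \(\sigma,\tau\) the two strong transformations, the component \(1\)\nb-arrows are the correspondences \(\Bisp[Y]^j_x\colon \Gr^1_x\leftarrow\Gr^0_x\), and a \(2\)\nb-arrow between \(\Bisp[Y]^1_x\) and \(\Bisp[Y]^2_x\) in~\(\Grcat\) is exactly a continuous \(\Gr^1_x\)-\(\Gr^0_x\)-equivariant map — automatically a local homeomorphism by \cite{Antunes-Ko-Meyer:Groupoid_correspondences}*{Lemma~6.1}. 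So the datum \(\Gamma_x\) is precisely a map \(W_x\colon \Bisp[Y]^1_x\to\Bisp[Y]^2_x\) as in the statement.

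Next I would spell out the coherence square for an arrow \(g\colon x\leftarrow y\). Horizontal composition in~\(\Grcat\) is the composition~\(\Grcomp\) of correspondences, the images \(F(g),G(g)\) are \(\Bisp^0_g,\Bisp^1_g\), and the component \(2\)\nb-arrows of the transformations are the isomorphisms \(V^j_g\colon \Bisp^1_g\Grcomp_{\Gr^1_y}\Bisp[Y]^j_y\congto\Bisp[Y]^j_x\Grcomp_{\Gr^0_x}\Bisp^0_g\). Whiskering~\(W_y\) with \(\id_{\Bisp^1_g}\) on the left and~\(W_x\) with \(\id_{\Bisp^0_g}\) on the right, the general modification axiom relating \(\Gamma_x,\Gamma_y\) to \(V^1_g,V^2_g\) becomes precisely the commuting square~\eqref{eq:modification_category-diagram}; unlike in~\eqref{eq:trafo_category-diagram}, no associators appear, since only binary composites of correspondences occur. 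For the final assertion I would check that the square commutes automatically when \(g=\id_x\): strict unitality gives \(\Bisp^0_g=\Gr^0_x\) and \(\Bisp^1_g=\Gr^1_x\), the first coherence condition on a strong transformation forces \(V^1_x\) and \(V^2_x\) to be the same canonical unitor, and chasing \([g_1,y]\) around the square — using that \(W_x\) is left \(\Gr^1_x\)\nb-equivariant and intertwines the right anchor maps — sends both composites to the class \([g_1\cdot W_x(y),\s(y)]\).

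I expect the main obstacle to be notational rather than conceptual: one must respect the ``backwards'' convention \(g\colon x\leftarrow y\) for correspondences so that the whiskerings in the abstract axiom land on the correct side and reproduce the explicit arrows of~\eqref{eq:modification_category-diagram}, and one must confirm that the associators and unitors of the two bicategories that are suppressed in the displayed square really do cancel — which they do, precisely because~\(\Cat\) is a category and the composites in sight are binary. Once these identifications are in place, every remaining verification is a routine element chase.
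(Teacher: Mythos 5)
Your proposal is correct and matches the paper's treatment: the paper offers no separate proof, stating explicitly that these propositions are the specialisations of the general definitions in \cites{Leinster:Basic_Bicategories, Johnson-Yau:2-Dim} and ``may also be taken as definitions''. Your unwinding of the modification axiom and the element chase verifying that the square commutes automatically for identity arrows are exactly the routine checks the paper leaves implicit.
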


Two modifications \((\Bisp[Y]^1_x,V^1_g)\to (\Bisp[Y]^2_x,V^2_g)\to
(\Bisp[Y]^3_x,V^3_g)\) compose in the obvious way to a modification
\((\Bisp[Y]^1_x,V^1_g)\to (\Bisp[Y]^3_x,V^3_g)\).  This composition is
associative and unital, so the strong transformations between two diagrams
and the modifications between them form a category.

Transformations between diagrams are composed as follows.  Describe
\(\Cat\)\nb-shaped diagrams in~\(\Grcat\) by
\((\Gr_x^0,\Bisp_g^0,\mu_{g,h}^0)\),
\((\Gr_x^1,\Bisp_g^1,\mu_{g,h}^1)\) and
\((\Gr_x^2,\Bisp_g^2,\mu_{g,h}^2)\), and strong transformations between them
by \((\Bisp[Y]^{10}_x,V^{10}_g)\) and \((\Bisp[Y]^{21}_x,V^{21}_g)\) as
above.  The composite transformation is defined by \(\Bisp[Y]^{20}_x
\defeq \Bisp[Y]^{21}_x \Grcomp_{\Gr_x^1} \Bisp[Y]^{10}_x\) for
objects~\(x\) of~\(\Cat\) and
\begin{multline*}
  V^{20}_g\colon
  \Bisp_g^2 \Grcomp_{\Gr_y^2} \Bisp[Y]_y^{20}
  = \Bisp_g^2 \Grcomp_{\Gr_y^2} \Bisp[Y]_y^{21} \Grcomp_{\Gr_y^1}
  \Bisp[Y]_y^{10}
  \xrightarrow{V^{21}_g\Grcomp_{\Gr_y^1} \id_{\Bisp[Y]_y^{10}}}
  \Bisp[Y]_x^{21} \Grcomp_{\Gr_x^1} \Bisp_g^1 \Grcomp_{\Gr_y^1}
  \Bisp[Y]_y^{10}
  \\\xrightarrow{\id_{\Bisp[Y]_x^{21}} \Grcomp_{\Gr_x^1} V^{10}_g}
  \Bisp[Y]_x^{21} \Grcomp_{\Gr_x^1} \Bisp[Y]_x^{10} \Grcomp_{\Gr_x^0}
  \Bisp_g^0
  = \Bisp[Y]_x^{20} \Grcomp_{\Gr_x^0} \Bisp_g^0
\end{multline*}
for all arrows \(g\colon x\leftarrow y\) in~\(\Cat\).  Here we left
out associators to make the construction more readable.  These
\((\Bisp[Y]^{20}_x,V^{20}_g)\) indeed form a strong transformation.

There is also a horizontal product of modifications between
composable transformations, and there are canonical associators and
unit transformations.  This extra structure turns the
\(\Cat\)\nb-shaped diagrams in~\(\Grcat\), strong transformations
and modifications into a bicategory (see
\cite{Johnson-Yau:2-Dim}*{Section~4.4}).  We denote it
by~\(\Grcat^{\Cat}\).  We care about this bicategory because the
groupoid model construction is a homomorphism
\(\Grcat^{\Cat} \to \Grcat\) (see
\longref{Corollary}{cor:groupoid_model_functor}).

\begin{definition}
  Let~\(\Cat\) be a category.  A diagram of groupoid correspondences
  \(\Cat\to\Grcat\) described by the data
  \((\Gr_x,\Bisp_g,\mu_{g,h})\) is \emph{tight} or \emph{proper} if
  all the groupoid correspondences~\(\Bisp_g\) are tight or proper,
  respectively.  It is \emph{locally compact} if all the
  groupoids~\(\Gr_x\) and the correspondences~\(\Bisp_g\) are
  locally compact.

  A strong transformation~\((\Bisp[Y]_x,V_g)\) between two diagrams
  of groupoid correspondences \(\Cat\rightrightarrows\Grcat\) is
  \emph{tight} or \emph{proper} if the groupoid
  correspondences~\(\Bisp[Y]_x\) are tight or proper, respectively.
  It is \emph{locally compact} if both diagrams and all the
  correspondences~\(\Bisp[Y]_x\) are locally compact.
\end{definition}

The composite of two tight, proper or locally compact strong
transformations remains tight, proper or locally compact,
respectively, by the second sentence in
\longref{Proposition}{pro:groupoid_correspondences_composition}.
The identity transformation on a diagram of groupoids is tight and
\emph{a fortiori} proper, and it is locally compact if and only if
the diagram is locally compact.  Hence there are two subbicategories
of~\(\Grcat^{\Cat}\) that have all diagrams \(\Cat\to\Grcat\) as
objects, but only the tight or proper strong transformations as
arrows.  When we restrict to tight or proper diagrams and
transformations, we get the bicategories \(\Grcat_\tight^{\Cat}\)
and \(\Grcat_\prop^{\Cat}\), respectively, of all homomorphisms
\(\Cat\to\Grcat_\tight\) or \(\Cat\to\Grcat_\prop\).  Similarly,
there is a subbicategory \(\Grcat_\lc^{\Cat}\) of locally compact
diagrams \(\Cat\to\Grcat_\lc\) with locally compact transformations
between them as arrows and all modifications among those as
\(2\)\nb-arrows.

To make diagrams of groupoid correspondences more concrete, we now
describe some subbicategories of~\(\Grcat^{\Cat}\).  This means that
we specify a simpler bicategory that is equivalent to it.

\subsection{Some diagrams of spaces}
\label{sec:space_diagram}

Let \(\Gr_x\) and~\(\Gr_y\) be topological spaces, viewed as étale
groupoids with only identity arrows.  Then a groupoid correspondence
\(\Gr_x\leftarrow \Gr_y\) is a topological space~\(\Bisp\) with a
continuous map \(\rg\colon \Bisp\to\Gr_x=\Gr_x^0\) and a local
homeomorphism \(\s\colon \Bisp\to\Gr_y=\Gr_y^0\) (in the locally
compact case, this is
\cite{Antunes-Ko-Meyer:Groupoid_correspondences}*{Example~4.1}).
The groupoid correspondence is proper or tight if and only if the
map~\(\rg\) is proper or a homeomorphism, respectively.  The
correspondence is locally compact if and only if \(\Gr_x\)
and~\(\Gr_y\) are locally compact and~\(\Bisp\) is Hausdorff.  If
\(\Gr_x=\Gr_y\) and the correspondence is locally compact, then it
is called a ``topological graph''; this is the data used by
Katsura~\cite{Katsura:class_I} to define a topological graph
\(\Cst\)\nb-algebra.  We get an ordinary graph \(\Cst\)\nb-algebra
if~\(\Gr_x\) carries the discrete topology.

Let~\(\Cat\) be a monoid, viewed as a category with unique
object~\(x\).  Let \(F\colon \Cat\to\Grcat_\lc\) be a diagram such
that \(\Gr_x = F(x)\) is a locally compact space, viewed as an étale
locally compact groupoid with only identity arrows.  The
diagram~\(F\) is the same as an action of the monoid~\(\Cat\) on the
topological space~\(\Gr_x\) by topological correspondences as
studied by Albandik and the author~\cite{Albandik-Meyer:Product}.
If \(\Cat=(\N^k,+)\), this becomes equivalent to a topological
rank-\(k\) graph as defined by
Yeend~\cite{Yeend:Topological-higher-rank-graphs}.  If
\(\Cat=(\N,+)\), we may replace the diagram by a single
correspondence (see \longref{Lemma}{lem:action_one_correspondence}).
This gives a topological graph as above.

Let~\(\Cat\) be any category and let \(F\colon \Cat\to\Grcat_\lc\)
be a diagram such that \(\Gr_x = F(x)\) for \(x\in\Cat^0\) is a
\emph{discrete} space, viewed as an étale groupoid with the discrete
topology and only identity arrows.  It is proven
in~\cite{Antunes-Ko-Meyer:Groupoid_correspondences} that~\(F\)
becomes equivalent to a discrete Conduché fibration as defined by
Brown and Yetter~\cite{Brown-Yetter:Conduche}.

Each of the examples above has been used by \(\Cst\)\nb-algebraists
to define interesting generalisations of graph \(\Cst\)\nb-algebras.

We now specialise to the tight case.  We will see later that the
groupoid model is easier to construct in that case, and tight
diagrams of groupoid correspondences between spaces are the easiest
class of tight diagrams.  To study tight diagrams, we first prove
that the subbicategory of tight groupoid correspondences between
spaces is equivalent to an ordinary category, made a bicategory with
only identity \(2\)\nb-arrows.  This allows to simplify diagrams of
such correspondences.

\begin{proposition}
  \label{pro:tight_diagrams_spaces}
  The bicategory of tight groupoid correspondences between spaces is
  equivalent as a bicategory to the opposite category~\(\Cat[S]\) of
  spaces with local homeomorphisms as arrows, and only identity
  \(2\)\nb-arrows.
\end{proposition}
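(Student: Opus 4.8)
The plan is to construct an explicit biequivalence between the bicategory $\mathcal{B}$ of spaces, tight groupoid correspondences, and equivariant maps, on the one hand, and the category $\mathcal{S}^{\mathrm{op}}$ of spaces with local homeomorphisms, viewed as a bicategory with only identity $2$-arrows, on the other. On objects, both bicategories have spaces, and the biequivalence is the identity on objects. The essential content is the description of the hom-categories: I claim that for spaces $\Gr_x, \Gr_y$, viewed as groupoids with only identity arrows, a tight groupoid correspondence $\Bisp\colon\Gr_x\leftarrow\Gr_y$ is ``the same as'' a local homeomorphism $\Gr_y\to\Gr_x$, and that there is a unique $2$-arrow between any two such correspondences.

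First I would unpack what a groupoid correspondence $\Bisp\colon \Gr_x\leftarrow\Gr_y$ between spaces is. Since $\Gr_y$ has only identity arrows, the right $\Gr_y$-action on~$\Bisp$ is trivial and the basicness condition is automatic; the data reduces to a space~$\Bisp$ with a continuous map $\rg\colon\Bisp\to\Gr_x$ and a local homeomorphism $\s\colon\Bisp\to\Gr_y$ (this is recalled in \longref{Section}{sec:space_diagram}). The tightness condition says $\rg_*\colon\Bisp/\Gr_y\to\Gr_x$ is a homeomorphism; since the right action is trivial, $\Bisp/\Gr_y=\Bisp$, so tightness means precisely that $\rg\colon\Bisp\to\Gr_x$ is a homeomorphism. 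Hence a tight correspondence is determined, up to the canonical replacement of $\Bisp$ by $\Gr_x$ along~$\rg$, by the local homeomorphism $\s\circ\rg^{-1}\colon\Gr_x\to\Gr_y$. Note the variance: a correspondence $\Gr_x\leftarrow\Gr_y$ gives a local homeomorphism $\Gr_x\to\Gr_y$, which explains why the target is the \emph{opposite} category $\Cat[S]^{\mathrm{op}}$.

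Next I would check the hom-categories are equivalent to discrete sets. Given two tight correspondences $\Bisp,\Bisp[Y]\colon\Gr_x\leftleftarrows\Gr_y$, a $2$-arrow is a continuous map $f\colon\Bisp\to\Bisp[Y]$ with $\rg_{\Bisp[Y]}\circ f=\rg_{\Bisp}$ and $\s_{\Bisp[Y]}\circ f=\s_{\Bisp}$ (the $\Gr_y$-equivariance being vacuous). Since $\rg_{\Bisp},\rg_{\Bisp[Y]}$ are homeomorphisms, $f=\rg_{\Bisp[Y]}^{-1}\circ\rg_{\Bisp}$ is forced, so there is at most one $2$-arrow; and this $f$ is a $2$-arrow precisely when it intertwines the source maps, i.e.\ when $\Bisp$ and~$\Bisp[Y]$ correspond to the \emph{same} local homeomorphism $\Gr_x\to\Gr_y$. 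Thus the hom-category $\mathcal{B}(\Gr_x,\Gr_y)$ is equivalent to the discrete set of local homeomorphisms $\Gr_x\to\Gr_y$, which is $\mathcal{S}^{\mathrm{op}}(\Gr_x,\Gr_y)$. Essential surjectivity on objects is trivial; essential surjectivity on each hom-category follows since every local homeomorphism $\Gr_x\to\Gr_y$ arises from a correspondence (take $\Bisp=\Gr_x$).

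Finally I would verify that composition is respected up to coherent isomorphism. For tight correspondences $\Bisp\colon\Gr_x\leftarrow\Gr_y$ and $\Bisp[Y]\colon\Gr_y\leftarrow\Gr_z$ corresponding to local homeomorphisms $\varphi\colon\Gr_x\to\Gr_y$ and $\psi\colon\Gr_y\to\Gr_z$, the composite $\Bisp\Grcomp_{\Gr_y}\Bisp[Y]$ is the orbit space of the diagonal $\Gr_y$-action; since the action on the $\Bisp$-factor is trivial, $[\gamma,\eta]\mapsto(\rg_{\Bisp}(\gamma),\eta)$ identifies the composite with $\Gr_x\times_{\varphi,\Gr_y,\rg_{\Bisp[Y]}}\Bisp[Y]$, and one reads off that its left anchor map is a homeomorphism onto $\Gr_x$ while its right anchor map becomes $\psi\circ\varphi$ under this identification. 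So the composite tight correspondence corresponds to $\psi\circ\varphi$, matching composition in $\mathcal{S}^{\mathrm{op}}$, and since all hom-categories are essentially discrete, the unit and associativity coherence $2$-isomorphisms are the unique available ones and the coherence conditions hold automatically. Assembling these pieces gives the claimed biequivalence. I expect the only mildly delicate point to be bookkeeping the variance and checking that the canonical unitors/associators of~$\Grcat$ restrict correctly, but since the hom-categories collapse to sets there is essentially no room for these to fail, and no genuine obstacle arises.
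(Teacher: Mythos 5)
Your proposal is correct and follows essentially the same route as the paper's proof: normalise a tight correspondence between spaces to one whose underlying space is the target (so that it is encoded by a single local homeomorphism), observe that $2$\nobreakdash-arrows are forced and exist only between correspondences with the same associated local homeomorphism, and check that composition of correspondences matches composition of local homeomorphisms in reverse order. The paper states these three facts tersely, whereas you supply the verifications (triviality of the right action, automatic basicness, the fibre-product computation of the composite, and the collapse of coherence data), but there is no difference in method.
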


\begin{proof}
  Any tight correspondence \(\Gr[H]\leftarrow\Gr\) for two spaces
  \(\Gr[H]\) and~\(\Gr\) is isomorphic to one of the form
  \[
    \Gr[H]_f\colon
    \Gr[H] \xleftarrow{\id_{\Gr[H]}} \Gr[H] \xrightarrow{f} \Gr
  \]
  for a local homeomorphism \(f\colon \Gr\to\Gr[H]\).  A
  \(2\)\nb-arrow \(\Gr[H]_f \Rightarrow \Gr[H]_{f'}\) between two
  correspondences of this form only exists if \(f=f'\), and then the
  only \(2\)\nb-arrow is the identity \(2\)\nb-arrow.  The
  composition of correspondences is the composition of local
  homeomorphisms in reverse order.  That is, if
  \(f\colon \Gr[H]\to \Gr\) and \(g\colon \Gr\to\Gr[K]\) are local
  homeomorphisms, then there is a unique isomorphism of
  correspondences
  \(\Gr[H]_f \Grcomp_{\Gr} \Gr_g \cong \Gr[H]_{g\circ f}\).  This
  implies the asserted equivalence of bicategories.
\end{proof}

The equivalence of bicategories in
\longref{Proposition}{pro:tight_diagrams_spaces} induces an
equivalence between the corresponding bicategories of diagrams of
any shape~\(\Cat\).  That is, the restriction of
\(\Grcat^{\Cat}_\tight\) to spaces as objects is equivalent to the
bicategory that has functors \(\Cat\to\Cat[S]\) as objects, natural
transformations between them as arrows, and only identity
\(2\)\nb-arrows.  If, in addition, we assume~\(\Cat\) to be a
monoid, we get the classical category of dynamical systems, with a
monoid acting by local homeomorphisms -- except for a reversal in
the order of composition, which is due to conventions.

\subsection{Tight diagrams of groups vs complexes of groups}
\label{sec:tight_group_diagram}

Let \(\Gr[H]\) and~\(\Gr\) be groups.  Groupoid correspondences
\(\Gr[H] \leftarrow \Gr\) are described in
\cite{Antunes-Ko-Meyer:Groupoid_correspondences}*{Example~4.2}.  Up
to a nondegeneracy condition which we need not impose, a proper
groupoid correspondence on a group is the same as a self-similarity
of that group.  Nekrashevych~\cite{Nekrashevych:Cstar_selfsimilar}
defined \(\Cst\)\nb-algebras for such self-similar groups.

Once again, we specialise to a tight groupoid correspondence
\(\Gr[H] \leftarrow \Gr\).  Then there is a group homomorphism
\(\varphi\colon \Gr[H]\to \Gr\) so that the correspondence is
isomorphic to~\({}_\varphi \Gr\), the group~\(\Gr\) equipped with
the obvious right \(\Gr\)\nb-action by multiplication and the left
\(\Gr[H]\)\nb-action \(h\cdot g \defeq \varphi(h)g\) for
\(h\in \Gr[H]\), \(g\in\Gr\).  We are going to enrich this to an
equivalence of bicategories and use it to simplify diagrams of tight
groupoid correspondences between groups.  The resulting notions are
closely related to standard notions from the theory of complexes of
groups (see~\cite{Bridson-Haefliger}*{Chapter
  III.\(\mathcal{C}\).2}).

Let \(\varphi,\varphi'\colon \Gr[H]\rightrightarrows \Gr\) be group
homomorphisms.  The \(2\)\nb-arrows
\({}_\varphi\Gr \Rightarrow {}_{\varphi'} \Gr\)
between the tight correspondences defined by \(\varphi\) and~\(\varphi'\)
are maps of the form
\(g\mapsto u g\)
for some group element \(u\in\Gr\)
with \(u\varphi(h) = \varphi'(h) u\)
for all \(h\in \Gr[H]\);
we briefly write \(u\colon \varphi\Rightarrow \varphi'\).
Groups with homomorphisms as arrows and elements
\(u\colon \varphi\Rightarrow \varphi'\)
as above as \(2\)\nb-arrows
form a \(2\)\nb-category,
that is, a bicategory where the associators and unit transformations
are trivial.  The opposite of this \(2\)\nb-category
is equivalent to the bicategory of groups with tight groupoid
correspondences because for two group homomorphisms
\(\varphi\colon \Gr[H]\to\Gr\),
\(\psi\colon \Gr\to\Gr[K]\),
there is a canonical isomorphism of correspondences
\[
{}_\varphi \Gr \Grcomp {}_\psi \Gr[K] \cong
{}_{\psi\circ\varphi}\Gr[K],\qquad
(g,k)\mapsto \psi(g)\cdot k.
\]
Hence the restriction of \(\Grcat^{\Cat}_\tight\) to groups as objects
is equivalent to the \(2\)\nb-category of \(\Cat\)\nb-shaped diagrams
in the \(2\)\nb-category of groups just described.

We make this more explicit.  Consider a \(\Cat\)\nb-shaped diagram
of tight correspondences \((\Gr_x,\Bisp_g,\mu_{g,h})\) as in
\longref{Proposition}{pro:diagrams_in_Grcat}, where all~\(\Gr_x\)
are groups.  For each arrow \(g\in\Cat\), there are a group
homomorphism \(\varphi_g\colon \Gr_{\rg(g)} \to \Gr_{\s(g)}\) and an
isomorphism \(V_g\colon \Bisp_g \cong {}_{\varphi_g}\Gr_{\s(g)}\).
For composable arrows \(g\colon x\leftarrow y\) and
\(h\colon y\leftarrow z\), the isomorphism
\[
  {}_{\varphi_h \circ \varphi_g}\Gr_z
  \xrightarrow[\cong]{\mathrm{can.}} {}_{\varphi_g}\Gr_y \Grcomp {}_{\varphi_h}\Gr_z
  \xrightarrow[\cong]{V_g^{-1} \Grcomp V_h^{-1}} \Bisp_g \Grcomp \Bisp_h
  \xrightarrow[\cong]{\mu_{g,h}} \Bisp_{g h}
  \xrightarrow[\cong]{V_{g h}} {}_{\varphi_{g h}}\Gr_z
\]
corresponds to a \(2\)\nb-arrow
\(u_{g,h} \colon \varphi_h\circ \varphi_g \Rightarrow \varphi_{g
  h}\), that is, a group element \(u_{g,h} \in \Gr_z\) with
\(u_{g,h} \varphi_h(\varphi_g(\gamma)) = \varphi_{g h}(\gamma)
u_{g,h}\) for all \(\gamma\in \Gr_x\).  The associativity
condition~\eqref{eq:coherence_category-diagram} for the original
diagram~\((\Gr_x,\Bisp_g,\mu_{g,h})\) becomes the diagram
\[
  \begin{tikzcd}[column sep=large]
    \varphi_k\circ \varphi_h \circ \varphi_g
    \ar[r, Rightarrow, "\varphi_k(u_{g,h})"]
    \ar[d, Rightarrow, "u_{h,k}"'] &
    \varphi_k\circ \varphi_{g h}
    \ar[d, Rightarrow, "u_{g h,k}"] \\
    \varphi_{h k}\circ \varphi_g
    \ar[r, Rightarrow, "u_{g, h k}"]
    &
    \varphi_{g h k},
  \end{tikzcd}
\]
which translates to the condition
\begin{equation}
  \label{eq:group_diagram_cocycle}
  u_{g h,k}\cdot \varphi_k(u_{g,h}) = u_{g,h k} \cdot u_{h,k}
\end{equation}
for all composable arrows \(g,h,k\) in~\(\Cat\).  The requirement
\(\Bisp_x = \Gr_x\) for objects~\(x\) of~\(\Cat\) allows us to
choose \(\varphi_x = \id_{\Gr_x}\) for all objects~\(x\) of~\(\Cat\)
(which we do).  Then the condition on the isomorphisms
\(\mu_{\rg(g),g}\) and~\(\mu_{g,\s(g)}\) translates to the
normalisation conditions \(u_{\rg(g),g} = 1\) and \(u_{g,\s(g)}=1\)
for all arrows~\(g\) in~\(\Cat\).

The data \((\Gr_x,\varphi_g,u_{g,h})\) is an equivalent way to
describe our original diagram; here~\(\Gr_x\) are groups for
objects~\(x\) of~\(\Cat\),
\(\varphi_g\colon \Gr_{\rg(g)} \to \Gr_{\s(g)}\) are group
homomorphisms for \(g\in\Cat\), and \(u_{g,h} \in \Gr_{\s(g)}\) for
composable \(g,h\) in~\(\Cat\) satisfy
\(\Ad(u_{g,h})\circ \varphi_h \circ \varphi_g = \varphi_{g h}\) and
the cocycle conditions~\eqref{eq:group_diagram_cocycle}.  In
addition, we require the normalisation conditions
\(\varphi_x=\id_{\Gr_x}\) for unit arrows, and \(u_{g,\s(g)} = 1\)
and \(u_{\rg(g),g} = 1\) for each arrow \(g\in\Cat\).  This is very
close to a \emph{complex of groups} as defined in
\cite{Bridson-Haefliger}*{Chapter III.\(\mathcal{C}\).2}.  The
definition of a complex of groups requires, in addition, that all
group homomorphisms~\(\varphi_g\) should be injective and that the
underlying category~\(\Cat\) should have \emph{no loops}, that is,
\(\s(g)\neq \rg(g)\) for all \(g\in\Cat\), and \(g\cdot h\) is only
a unit arrow if both \(g\) and~\(h\) are units.  Furthermore, we
reverse the order of composition of group homomorphisms, so that we
replace~\(\Cat\) by the opposite category~\(\Cat^\op\), and our
group elements~\(u_{g,h}\) are the inverses of the elements used
in~\cite{Bridson-Haefliger}.  After these notational changes,
\eqref{eq:group_diagram_cocycle} becomes the cocycle condition
required for a complex of groups.

Let \((\Bisp[Y]_x,V_g)\) describe a tight strong transformation from the
tight diagram of group correspondences associated to the data
\((\Gr^1_x,\varphi^1_g,u^1_{g,h})\) to a similar diagram for
\((\Gr^2_x,\varphi^2_g,u^2_{g,h})\).  Then there are group
homomorphisms \(\psi_x\colon \Gr^2_x\to\Gr^1_x\) with \(\Bisp[Y]_x
\cong {}_{\psi_x} \Gr^1_x\).  The isomorphism~\(V_g\) for \(g\colon
x\leftarrow y\) induces an isomorphism
\[
{}_{\psi_y \circ \varphi^2_g} \Gr^1_y  \cong
{}_{\varphi^2_g} \Gr^2_x \Grcomp \Bisp[Y]_y
\xrightarrow[\cong]{V_g}
\Bisp[Y]_x \Grcomp {}_{\varphi^1_g} \Gr^1_y
\cong {}_{\varphi^1_g \circ \psi_x} \Gr^1_y,
\]
which corresponds to a \(2\)\nb-arrow
\(v_g \colon \psi_y \circ \varphi^2_g \Rightarrow \varphi^1_g \circ
\psi_x\)
for some \(v_g\in \Gr^1_y\).
The coherence condition~\eqref{eq:trafo_category-diagram} for a
strong transformation becomes
\[
\begin{tikzpicture}
  \matrix[cd, row sep=4ex, column sep = 3em] (m) {
    \psi_z \circ \varphi^2_h \circ \varphi^2_g &
    \varphi^1_h \circ \psi_y \circ \varphi^2_g &
    \varphi^1_h \circ \varphi^1_g \circ \psi_x \\
    \psi_z \circ \varphi^2_{g h} & &
    \varphi^1_{g h} \circ \psi_x, \\
  };
  \draw[dar] (m-1-1) -- node {\(\scriptstyle v_h\)} (m-1-2);
  \draw[dar] (m-1-2) -- node {\(\scriptstyle \varphi^1_h(v_g)\)} (m-1-3);
  \draw[dar] (m-1-1) -- node[swap] {\(\scriptstyle \psi_z(u^2_{g,h})\)} (m-2-1);
  \draw[dar] (m-1-3) -- node {\(\scriptstyle u^1_{g,h}\)} (m-2-3);
  \draw[dar] (m-2-1) -- node {\(\scriptstyle v_{g h}\)} (m-2-3);
\end{tikzpicture}
\]
which is equivalent to the condition
\begin{equation}
  \label{eq:group_trafo_cocycle}
  u^1_{g,h} \cdot \varphi^1_h(v_g)\cdot v_h
  = v_{g h} \cdot \psi_z (u^2_{g,h})
\end{equation}
for composable arrows \(h\colon z\leftarrow y\), \(g\colon
y\leftarrow x\) in~\(\Cat\).  The normalisation condition on~\(V_x\)
for an object~\(x\) translates to \(v_x=1\).

The data~\((\psi_x,v_g)\) is an equivalent way to describe tight
strong transformations between tight diagrams of groups of the special form
above.  Here \(\psi_x\colon \Gr_x^2 \to \Gr_x^1\) is a group
homomorphism for each object~\(x\), and the elements \(v_g\in
\Gr_{\s(g)}\) satisfy
\(\Ad(v_g)\circ \psi_{\s(g)} \circ \varphi^2_g = \varphi^1_g
\circ \psi_{\rg(g)}\) and the
condition~\eqref{eq:group_trafo_cocycle}, in addition to \(v_x = 1\)
for unit arrows.  When \((\Gr_x^i,\varphi_g^i,u_{g,h}^i)\) for
\(i=1,2\) come from complexes of groups as in
\cite{Bridson-Haefliger}*{Chapter III.\(\mathcal{C}\).2}, then such
strong transformations are equivalent to morphisms of complexes of groups
over the identity morphism on~\(\Cat\).

A modification~\((W_x)\) between two tight strong transformations
\((\psi^1_x,v^1_g)\) and \((\psi^2_x,v^2_g)\) of the special form
above is associated to elements \(w_x\in \Gr_x\) with \(w_x \colon
\psi^1_x \Rightarrow \psi^2_x\); the coherence
condition~\eqref{eq:modification_category-diagram} becomes
\[
\begin{tikzpicture}
  \matrix[cd, row sep=4ex, column sep = 3em] (m) {
    \psi^1_y\circ \varphi^2_g &
    \psi^2_y\circ \varphi^2_g \\
    \varphi^1_g\circ \psi^1_x &
    \varphi^1_g\circ \psi^2_x, \\
  };
  \draw[dar] (m-1-1) -- node {\(\scriptstyle w_y\)} (m-1-2);
  \draw[dar] (m-1-1) -- node[swap] {\(\scriptstyle v^1_g\)} (m-2-1);
  \draw[dar] (m-2-1) -- node {\(\scriptstyle \varphi^1_g(w_x)\)} (m-2-2);
  \draw[dar] (m-1-2) -- node {\(\scriptstyle v^2_g\)} (m-2-2);
\end{tikzpicture}
\]
which translates to the conditions
\begin{equation}
  \label{eq:group_modification_cocycle}
  \varphi^1_g(w_x)\cdot v^1_g = v^2_g\cdot w_y
\end{equation}
for all arrows \(g\colon x\leftarrow y\).  In the setting of complexes
of groups, these modifications are equivalent to homotopies between
morphisms of complexes of groups.

Summing up, diagrams in the bicategory of groups and tight groupoid
correspondences specialise to complexes of groups, strong
transformations between them specialise to morphisms of complexes of
groups, and modifications specialise to homotopies between such
morphisms.

\subsection{Group actions on groupoids}
\label{sec:group_action}

Let~\(\Cat\) be a group viewed as a category with only one object.
Describe a diagram \(F\colon \Cat\to\Grcat\) by a groupoid~\(\Gr\),
groupoid correspondences \(\Bisp_g\colon \Gr\leftarrow\Gr\) for
\(g\in\Cat\), and isomorphisms
\(\mu_{g,h}\colon \Bisp_g \Grcomp_{\Gr} \Bisp_h \congto \Bisp_{g
  h}\) for \(g,h\in \Cat\) as in
\longref{Proposition}{pro:diagrams_in_Grcat}.  The groupoid
correspondences~\(\Bisp_g\) are equivalences in the
bicategory~\(\Grcat\).  Hence they are groupoid equivalences in the
usual sense by \longref{Theorem}{the:groupoid_equivalence}.  Thus
our diagrams are a special case of inverse semigroup actions on
groupoids by partial equivalences, as studied
in~\cite{Buss-Meyer:Actions_groupoids}.  We recall the convenient
description of such actions through \(\Cat\)\nb-graded groupoids
(see \cite{Buss-Meyer:Actions_groupoids}*{Theorem~3.14}).

A \emph{\(\Cat\)\nb-graded groupoid} for a group~\(\Cat\) is a
groupoid \(\Gr[L]\) with a disjoint union decomposition
\(\Gr[L] = \bigsqcup_{g\in\Cat} \Gr[L]_g\) as a topological space,
such that \(\Gr[L]_g \cdot \Gr[L]_h = \Gr[L]_{g h}\) and
\(\Gr[L]_g^{-1} = \Gr[L]_{g^{-1}}\) (the last condition is, in fact,
redundant).  Then \(\Gr\defeq \Gr[L]_1\) is a clopen subgroupoid,
and each~\(\Gr[L]_g\) is a groupoid equivalence
\(\Gr[L]_1 \leftarrow \Gr[L]_1\).  The multiplication in~\(\Gr[L]\)
provides isomorphisms
\(\Gr[L]_g \Grcomp_{\Gr} \Gr[L]_h \cong \Gr[L]_{g h}\), which form
an action of~\(\Cat\) on~\(\Gr\) by groupoid equivalences.
Conversely, any action of~\(\Cat\) on a groupoid~\(\Gr\) by groupoid
equivalences is of this form for a \(\Cat\)\nb-graded groupoid,
which is unique up to a canonical grading-preserving isomorphism.
The groupoid~\(\Gr[L]\) is called the \emph{transformation groupoid}
(its construction in
\cite{Buss-Meyer:Actions_groupoids}*{Section~3.2} simplifies
if~\(\Cat\) is a group).

In particular, if~\(\Gr\) is also a group, then an action of~\(\Cat\)
on~\(\Gr\) by equivalences is the same as a group extension \(\Gr
\into \Gr[L] \prto \Cat\); here~\(\Gr[L]\) is the transformation
groupoid (see \cite{Buss-Meyer:Actions_groupoids}*{Theorem~3.15}).
This case is also contained in \longref{Section}{sec:tight_group_diagram}.

If~\(\Gr\) is a locally compact space, viewed as a groupoid, then
any groupoid equivalence \(\Gr\leftarrow\Gr\) is isomorphic to one
coming from a homeomorphism of the space~\(\Gr\), and the
multiplication isomorphisms are unique if they exist.  Thus a group
action on a space by equivalences is equivalent to an action by
homeomorphisms (see
\cite{Buss-Meyer:Actions_groupoids}*{Theorem~3.18}).  The
transformation groupoid \(\Gr\rtimes\Cat\) is the usual one.

\subsection{Free monoid actions}
\label{sec:free_monoid_action}

Now let~\(\Cat\) be the free
monoid on a set of generators~\(S\).  Already the case of a single
generator is interesting.  It gives the monoid~\((\N,+)\).  We are
going to describe \(S\)\nb-diagrams, \(S\)\nb-transformations and
\(S\)\nb-modifications so that the resulting bicategory is equivalent
to~\(\Grcat^{\Cat}\).

An \emph{\(S\)\nb-diagram} consists of a groupoid~\(\Gr\) and groupoid
correspondences~\(\Bisp_g\) for \(g\in S\).  An
\emph{\(S\)\nb-transformation} between two \(S\)\nb-diagrams
\((\Gr^1,\Bisp^1_g)\) and \((\Gr^2,\Bisp^2_g)\) consists of a groupoid
correspondence \(\Bisp[Y]\colon \Gr^2\leftarrow\Gr^1\) and
isomorphisms of groupoid correspondences \(V_g\colon \Bisp^2_g \Grcomp
\Bisp[Y] \congto \Bisp[Y] \Grcomp \Bisp^1_g\) for \(g\in S\).  An
\emph{\(S\)\nb-modification} between two \(S\)\nb-transformations
\((\Bisp[Y]^1,V^1_g)\) and \((\Bisp[Y]^2,V^2_g)\) from
\((\Gr^1,\Bisp^1_g)\) to \((\Gr^2,\Bisp^2_g)\) is a continuous map of
groupoid correspondences \(W\colon \Bisp[Y]^1 \to \Bisp[Y]^2\)
that intertwines the isomorphisms \(V^1_g\) and~\(V^2_g\) for \(g\in
S\), that is, the following diagram commutes:
\[
\begin{tikzpicture}[baseline=(current bounding box.west)]
  \matrix[cd, row sep=4ex, column sep = 3em] (m) {
    \Bisp_g^2 \Grcomp \Bisp[Y]^1 &
    \Bisp[Y]^1\Grcomp \Bisp_g^1 \\
    \Bisp_g^2 \Grcomp \Bisp[Y]^2 &
    \Bisp[Y]^2\Grcomp \Bisp_g^1 \\
  };
  \draw[dar] (m-1-1) -- node {\(\scriptstyle V_g^1\)} (m-1-2);
  \draw[dar] (m-1-1) -- node[swap] {\(\scriptstyle 1\Grcomp W\)} (m-2-1);
  \draw[dar] (m-2-1) -- node {\(\scriptstyle V_g^2\)} (m-2-2);
  \draw[dar] (m-1-2) -- node {\(\scriptstyle W\Grcomp 1\)} (m-2-2);
\end{tikzpicture}
\]

The bicategory structure on the above objects, arrows and
\(2\)\nb-arrows is defined in the obvious way.  Let~\(\Grcat^S\) denote
this bicategory.

\begin{proposition}
  \label{pro:free_monoid_bicategory}
  Restriction to the generators defines an equivalence of bicategories
  \(\Grcat^{\Cat} \cong\Grcat^S\).
\end{proposition}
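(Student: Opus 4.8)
The plan is to apply the standard recognition criterion for an equivalence of bicategories (see~\cite{Johnson-Yau:2-Dim}): a strictly unital homomorphism is an equivalence if and only if it is essentially surjective on objects and restricts to an equivalence of categories on every hom-category.  We apply this to the restriction homomorphism $R\colon \Grcat^{\Cat}\to\Grcat^S$ sending a \(\Cat\)\nb-shaped diagram $(\Gr,\Bisp_g,\mu_{g,h})$ to the \(S\)\nb-diagram $(\Gr,\Bisp_s)_{s\in S}$, a strong transformation $(\Bisp[Y],V_g)$ to $(\Bisp[Y],V_s)_{s\in S}$, and a modification to itself; here we use that~\(\Cat\) has a single object, so the groupoid in a diagram, the correspondence in a transformation and the map in a modification carry no index.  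That~$R$ is a strictly unital homomorphism is immediate from \longref{Proposition}{pro:diagrams_in_Grcat}, \longref{Proposition}{pro:trafo_category-diagram} and \longref{Proposition}{pro:modification_category-diagram}.  The content of the statement is that the data discarded by~$R$ --~the correspondences~\(\Bisp_w\) for non-generating words~\(w\), the isomorphisms~\(\mu_{g,h}\), and the isomorphisms~\(V_g\)~-- can be recovered uniquely up to coherent isomorphism, which is exactly what the coherence theorem for bicategories supplies.

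\emph{Essential surjectivity.}  Given an \(S\)\nb-diagram $(\Gr,\Bisp_s)_{s\in S}$, I would extend it to a \(\Cat\)\nb-shaped diagram.  Since~\(\Cat\) is free, every arrow is uniquely a word $w=s_1\dotsm s_n$ with $s_i\in S$; put $\Bisp_w\defeq(\dotsb(\Bisp_{s_1}\Grcomp\Bisp_{s_2})\Grcomp\dotsb)\Grcomp\Bisp_{s_n}$ for $n\ge1$ and $\Bisp_1\defeq 1_{\Gr}$ for the empty word.  For composable words $w,v$, both $\Bisp_w\Grcomp\Bisp_v$ and $\Bisp_{wv}$ are iterated composites of the same string of correspondences with different bracketings, so coherence in~\(\Grcat\) provides a canonical isomorphism~\(\mu_{w,v}\) between them built from associators (and from the unitors~\eqref{eq:unitor_left}--\eqref{eq:unitor_right} when $w$ or $v$ is empty, as forced by~\ref{en:diagrams_in_Grcat_2}).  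The hexagon~\eqref{eq:coherence_category-diagram} is then built entirely from associators and unitors of~\(\Grcat\), hence commutes by the same theorem.  This yields a \(\Cat\)\nb-shaped diagram whose image under~$R$ is the given \(S\)\nb-diagram.

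\emph{Local equivalence.}  Fix \(\Cat\)\nb-shaped diagrams $\mathcal D^1=(\Gr^1,\Bisp^1_g,\mu^1_{g,h})$ and $\mathcal D^2=(\Gr^2,\Bisp^2_g,\mu^2_{g,h})$.  On modifications, $R$ is a bijection: an \(S\)\nb-modification between two \(S\)\nb-transformations is a single map~$W$ making~\eqref{eq:modification_category-diagram} commute for all $g\in S$, and pasting the squares for $s_1,\dotsc,s_n$ together with the transformation coherence~\eqref{eq:trafo_category-diagram} shows this is equivalent to the square commuting for every $g\in\Cat$, i.e.\ to a \(\Cat\)\nb-modification.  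For essential surjectivity of~$R$ on hom-categories, take an \(S\)\nb-transformation $(\Bisp[Y],V_s)_{s\in S}$ from $R\mathcal D^1$ to $R\mathcal D^2$; for a word $w=s_1\dotsm s_n$ I would define $V_w\colon \Bisp^2_w\Grcomp\Bisp[Y]\to\Bisp[Y]\Grcomp\Bisp^1_w$ by first rewriting the source as $\Bisp^2_{s_1}\Grcomp\dotsb\Grcomp\Bisp^2_{s_n}\Grcomp\Bisp[Y]$ using the structure isomorphisms of~$\mathcal D^2$ and the associators, then transporting~$\Bisp[Y]$ to the left through each factor by~$V_{s_i}$, then reassembling with the structure isomorphisms of~$\mathcal D^1$, and putting $V_1$ equal to the canonical unitor comparison.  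The coherence square~\eqref{eq:trafo_category-diagram} for $V_w$ and $V_v$ then follows by induction on the length of~$w$ from naturality of the associators and the hexagons~\eqref{eq:coherence_category-diagram} for~$\mathcal D^1$ and~$\mathcal D^2$.  Restricting $(\Bisp[Y],V_w)_w$ to~$S$ recovers the original transformation, so~$R$ is essentially surjective on this hom-category and hence an equivalence there.  The recognition criterion now gives that~$R$ is an equivalence of bicategories.

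The main obstacle is purely organisational: once the coherence theorem for bicategories is invoked, each verification of~\eqref{eq:coherence_category-diagram}, \eqref{eq:trafo_category-diagram} and~\eqref{eq:modification_category-diagram} reduces to a routine pasting argument.  The one point that genuinely needs care is the treatment of the empty word, which must be arranged so that the strict-unitality normalisations in \longref{Proposition}{pro:diagrams_in_Grcat} and the corresponding normalisation conditions for transformations and modifications hold on the nose.
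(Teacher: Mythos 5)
Your proposal is correct and follows essentially the same route as the paper: both invoke the Whitehead theorem for bicategories, extend an \(S\)\nb-diagram to a \(\Cat\)\nb-shaped diagram by composing the generator correspondences and appealing to MacLane's Coherence Theorem for the choice of bracketing and for~\eqref{eq:coherence_category-diagram}, extend an \(S\)\nb-transformation word by word through the factors~\(V_{s_i}\), and observe that modifications carry the same data in both bicategories. The only cosmetic difference is that the paper phrases the hom-category condition as ``unique extension of transformations plus bijection on modifications'' rather than ``equivalence on each hom-category'', which amounts to the same thing.
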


\begin{proof}
  Restriction means that we restrict the data of diagrams and strong
  transformations to the unique object and to the generators~\(S\)
  among the arrows.  This is a strict homomorphism of bicategories
  by the definition of the compositions in \(\Grcat^{\Cat}\)
  and~\(\Grcat^S\).  By the Whitehead Theorem for Bicategories (see
  \cite{Johnson-Yau:2-Dim}*{Chapter~7}), this is an equivalence of
  bicategories if the following more precise statements hold:
  \begin{enumerate}
    \item any \(S\)\nb-diagram extends to a diagram \(\Cat\to\Grcat\);
    \item for two diagrams
      \(F^1,F^2\colon \Cat\rightrightarrows \Grcat\), any
      \(S\)\nb-transformation between their restrictions to~\(S\)
      extends \emph{uniquely} to a strong transformation
      \(F^1\to F^2\);
    \item for two diagrams
      \(F^1,F^2\colon \Cat\rightrightarrows \Grcat\) and strong
      transformations
      \(\Phi^1,\Phi^2\colon F^1\rightrightarrows F^2\), any
      \(S\)\nb-modification between the restrictions of \(\Phi^1\)
      and~\(\Phi^2\) to~\(S\) is also a modification
      \(\Phi^1\Rightarrow \Phi^2\).
  \end{enumerate}
  To prove~(1), we write an arbitrary arrow~\(g\) in~\(\Cat\) in the
  unique way as a word in the generators, \(g = a_1 a_2 \dotsm
  a_\ell\) with \(\ell\in\N\), \(a_1,\dotsc,a_\ell\in S\).  Then we
  let \(\Bisp_g\) be the composite of the groupoid
  correspondences~\(\Bisp_{a_i}\) for \(i=1,\dotsc,\ell\).  Formally,
  this depends on the way we put parentheses in the composition.
  Since we work in a bicategory, any two parallel maps built out of
  associators are equal by MacLane's Coherence Theorem.  Therefore,
  different ways of putting parentheses
  are linked by a unique isomorphism built out of associators.  So the
  choice of the parentheses does not matter.  The
  composition in~\(\Cat\) is the concatenation of words.  By
  definition, there are canonical multiplication maps \(\Bisp_g \Grcomp
  \Bisp_h \congto \Bisp_{g h}\) for \(g,h\in\Cat\).  More precisely,
  this isomorphism is the unique one constructed out of associators.  The
  correspondence~\(\Bisp_1\) is the identity correspondence and the
  multiplication maps \(\mu_{1,g}\) and~\(\mu_{g,1}\) are the unit
  transformations by construction.  The
  diagram~\eqref{eq:coherence_category-diagram} commutes for all
  triples of composable arrows because the maps either way are built
  out of associators.  Hence the data above is a \(\Cat\)\nb-shaped
  diagram.  It extends the given \(S\)\nb-diagram by construction.

  Let \((\Gr,\Bisp_g,\mu_{g,h})\) be any diagram.  The
  multiplication maps~\(\mu\) provide a unique isomorphism
  between~\(\Bisp_g\) for a word \(g = a_1 \dotsm a_\ell\) and the
  composite of the groupoid correspondences \(\Bisp_{a_i}\),
  \(i=1,\dotsc,\ell\).  The coherence condition for a
  diagram~\eqref{eq:coherence_category-diagram} ensures that this
  isomorphism does not depend on how we put parentheses in the
  decomposition of~\(g\).  Now consider two diagrams
  \((\Gr^i,\Bisp_g^i,\mu_{g,h}^i)\) for \(i=1,2\) and
  \(S\)\nb-transformation \((\Bisp[Y],V_a)\) between their
  restrictions to~\(S\).  Then we may well define isomorphisms
  \(\Bisp^2_g \Grcomp\Bisp[Y] \congto \Bisp[Y] \Grcomp\Bisp_g^1\)
  for all \(g\in \Cat\) by first identifying~\(\Bisp_g\) with a
  composite of~\(\Bisp_a\) for generators~\(a\) and then composing
  the isomorphisms in the \(S\)\nb-transformation for each
  factor~\(\Bisp_a\).  For instance, for the word~\(a_1 a_2\) of
  length~\(2\), the isomorphism is
  \[
    \Bisp^2_{a_1 a_2} \Grcomp \Bisp[Y]
    \cong \Bisp^2_{a_1} \Grcomp \Bisp^2_{a_2} \Grcomp \Bisp[Y]
    \xrightarrow{\id \Grcomp V_{a_2}}
    \Bisp^2_{a_1} \Grcomp \Bisp[Y] \Grcomp \Bisp^1_{a_2}
    \xrightarrow{V_{a_1}\Grcomp \id}
    \Bisp[Y] \Grcomp \Bisp^1_{a_1} \Grcomp \Bisp^1_{a_2}
    \cong \Bisp[Y] \Grcomp \Bisp^1_{a_1 a_2}.
  \]
  Associators are needed as well to apply~\(V_a\)
  for generators~\(a\) to parts of an expression.  The associators do
  not destroy commuting diagrams by MacLane's Coherence Theorem.
  The isomorphism~\(V_1\) is the canonical one by construction,
  and~\eqref{eq:trafo_category-diagram} commutes because both maps in
  that diagram differ at most by associators.  Hence this defines a
  strong transformation between \(\Cat\)\nb-shaped diagrams.  Moreover, it is
  the only strong transformation that extends the given
  \(S\)\nb-transformation.

  Finally, modifications of \(S\)\nb-diagrams and \(\Cat\)\nb-shaped
  diagrams are determined by the same data, namely, a map of
  groupoid correspondences
  \(\Bisp[Y]^1 \to \Bisp[Y]^2\).  A routine computation shows that
  this satisfies the coherence conditions
  \eqref{eq:modification_category-diagram} for all elements
  \(g\in\Cat\) once this holds for generators \(a\in S\).
\end{proof}

In particular, for \(\Cat=(\N,+)\), a \(\Cat\)\nb-shaped diagram of
groupoid correspondences is equivalent to a groupoid~\(\Gr\) with one
self-correspondence \(\Bisp\colon \Gr\leftarrow\Gr\); this type
of diagrams has been considered most frequently.  A
strong transformation between such pairs \((\Gr^i,\Bisp^i)\), \(i=1,2\), is a
groupoid correspondence \(\Bisp[Y]\colon \Gr^2\leftarrow\Gr^1\) with
an isomorphism \(V\colon \Bisp^2\Grcomp\Bisp[Y] \congto
\Bisp[Y]\Grcomp \Bisp^1\).  A modification between two
strong transformations \((\Bisp[Y]^i,V^i)\), \(i=1,2\), is a map of
correspondences \(W\colon \Bisp[Y]^1 \to \Bisp[Y]^2\) that
intertwines \(V^1\) and~\(V^2\).

\subsection{Path categories of directed graphs}
\label{sec:path_categories}

Let~\(\Gamma\) be a directed graph, consisting of sets \(E\)
and~\(V\) of edges and vertices with maps
\(\rg,\s\colon E\rightrightarrows V\) that send an edge to its range
and source, respectively.  Let~\(\Cat_\Gamma\) be the path category
of~\(\Gamma\); that is, \(\Cat_\Gamma^0=V\) and~\(\Cat_\Gamma^1\)
consists of paths in~\(\Gamma\), with concatenation of paths as
composition.  We now extend the description of diagrams for free
monoids in \longref{Section}{sec:free_monoid_action} to path
categories, where~\(E\) replaces the set of generators.

A \emph{\(\Gamma\)\nb-diagram}
of groupoid correspondences consists of groupoids~\(\Gr_v\)
for \(v\in V\)
and groupoid correspondences
\(\Bisp_e\colon \Gr_{\rg(e)}\leftarrow\Gr_{\s(e)}\)
for \(e\in E\).
A \emph{\(\Gamma\)\nb-transformation}
between two such \(\Gamma\)\nb-diagrams
\((\Gr^i_v,\Bisp^i_e)\),
\(i=1,2\),
consists of groupoid correspondences
\(\Bisp[Y]_v\colon \Gr^2_v\leftarrow \Gr^1_v\)
for \(v\in V\)
and isomorphisms of groupoid correspondences
\(V_e\colon \Bisp^2_e \Grcomp \Bisp[Y]_{\s(e)} \congto
\Bisp[Y]_{\rg(e)} \Grcomp \Bisp^1_e \)
for all \(e\in E\).
A \emph{\(\Gamma\)\nb-modification}
between two such \(\Gamma\)\nb-transformations
\((\Bisp[Y]_v^i,V_e^i)\),
\(i=1,2\),
consists of maps of groupoid correspondences
\(W_v\colon \Bisp[Y]_v^1 \to \Bisp[Y]_v^2\)
that intertwine the isomorphisms \(V_e^1\)
and~\(V_e^2\)
for \(e\in E\).
Let~\(\Grcat^\Gamma\)
by the bicategory with \(\Gamma\)\nb-diagrams,
\(\Gamma\)\nb-transformations
and \(\Gamma\)\nb-modifications
as objects, arrows and \(2\)\nb-arrows,
respectively, with the obvious composition of arrows and horizontal
and vertical composition of \(2\)\nb-arrows.

\begin{proposition}
  \label{pro:path_category_bicategory}
  Restriction to the generators is an equivalence of bicategories
  \(\Grcat^{\Cat_\Gamma} \cong\Grcat^\Gamma\).
\end{proposition}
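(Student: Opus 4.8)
The plan is to follow the proof of \longref{Proposition}{pro:free_monoid_bicategory} essentially verbatim, with the edge set~\(E\) playing the role of the generating set~\(S\) and with the range and source maps \(\rg,\s\colon E\rightrightarrows V\) carried along throughout.  Restricting a \(\Cat_\Gamma\)\nb-shaped diagram, a strong transformation, or a modification to the objects \(\Cat_\Gamma^0 = V\) and to the edges \(e\in E\) among the arrows is a strict homomorphism of bicategories \(\Grcat^{\Cat_\Gamma}\to\Grcat^\Gamma\), by the definitions of the compositions in the two bicategories.  By the Whitehead Theorem for Bicategories (see \cite{Johnson-Yau:2-Dim}*{Chapter~7}), this restriction is an equivalence once the following three statements hold: (1)~every \(\Gamma\)\nb-diagram extends to a diagram \(\Cat_\Gamma\to\Grcat\); (2)~for diagrams \(F^1,F^2\colon \Cat_\Gamma\rightrightarrows\Grcat\), every \(\Gamma\)\nb-transformation between their restrictions to~\(\Gamma\) extends \emph{uniquely} to a strong transformation \(F^1\to F^2\); and (3)~for strong transformations \(\Phi^1,\Phi^2\colon F^1\rightrightarrows F^2\), every \(\Gamma\)\nb-modification between their restrictions is also a modification \(\Phi^1\Rightarrow\Phi^2\).

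To prove~(1), I would write a nonidentity arrow of~\(\Cat_\Gamma\) in the unique way as a path \(g = e_1 e_2\dotsm e_\ell\) with \(e_i\in E\) and \(\s(e_i)=\rg(e_{i+1})\) for \(1\le i<\ell\), and send the identity arrow at a vertex~\(v\) (the empty path) to the identity correspondence~\(\Gr_v\).  For a path~\(g\) as above we let~\(\Bisp_g\) be the composite \(\Bisp_{e_1}\Grcomp\Bisp_{e_2}\Grcomp\dotsb\Grcomp\Bisp_{e_\ell}\), which is defined because \(\Gr_{\s(e_i)} = \Gr_{\rg(e_{i+1})}\), and which is a correspondence \(\Gr_{\rg(g)}\leftarrow\Gr_{\s(g)}\) by \longref{Proposition}{pro:groupoid_correspondences_composition}.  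As in the case of the free monoid, the parenthesisation does not matter by MacLane's Coherence Theorem, concatenation of paths provides canonical isomorphisms \(\mu_{g,h}\colon \Bisp_g\Grcomp\Bisp_h\congto\Bisp_{g h}\) built out of associators, and the coherence hexagon~\eqref{eq:coherence_category-diagram} commutes because the two ways around it are built out of associators.

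For~(2), given diagrams \((\Gr^i_v,\Bisp^i_e,\mu^i)\), \(i=1,2\), and a \(\Gamma\)\nb-transformation \((\Bisp[Y]_v,V_e)\) between their restrictions, I would define, for a path \(g = e_1\dotsm e_\ell\), the isomorphism \(V_g\colon \Bisp^2_g\Grcomp\Bisp[Y]_{\s(g)}\congto\Bisp[Y]_{\rg(g)}\Grcomp\Bisp^1_g\) by sliding the factor~\(\Bisp[Y]\) past \(\Bisp^2_{e_\ell},\dotsc,\Bisp^2_{e_1}\) one at a time, using the isomorphisms~\(V_{e_i}\) and associators, exactly as written out for words of length~\(2\) in the proof of \longref{Proposition}{pro:free_monoid_bicategory}; the only new feature is that one must keep track of the intermediate vertices \(\s(e_i)=\rg(e_{i+1})\) over which the successive composites are taken.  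That~\(V_g\) does not depend on the parenthesisation, that the coherence square~\eqref{eq:trafo_category-diagram} commutes, that the normalisation on identity arrows holds, and that this is the only strong transformation extending \((\Bisp[Y]_v,V_e)\), all follow from MacLane's Coherence Theorem as in the free monoid case.  For~(3), a \(\Gamma\)\nb-modification and a \(\Cat_\Gamma\)\nb-modification consist of the same data, namely maps of correspondences \(W_v\colon \Bisp[Y]^1_v\to\Bisp[Y]^2_v\), and a short induction on the length of a path shows that the square~\eqref{eq:modification_category-diagram} commutes for a path once it commutes for each of its edges.

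The hard part here is essentially bookkeeping, not mathematics: the one genuine difference from \longref{Proposition}{pro:free_monoid_bicategory} is that the arrows of~\(\Cat_\Gamma\) are no longer all endomorphisms of a single object, so the maps \(\rg,\s\colon E\rightrightarrows V\) have to be threaded through every construction, and one has to check at each step that the composite correspondence \(\Bisp_{e_1}\Grcomp\dotsb\Grcomp\Bisp_{e_\ell}\) and the slid\nb-through isomorphism~\(V_g\) are formed over the correct groupoids.  Since nothing new happens beyond this, I would keep the write-up short and refer back to the argument for the free monoid for the details of the coherence verifications.
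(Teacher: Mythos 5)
Your proposal is correct and matches the paper exactly: the paper simply records the three precise statements (extension of diagrams, unique extension of transformations, bijection on modifications) and declares that the proof is the same as for \longref{Proposition}{pro:free_monoid_bicategory}, which is precisely the adaptation you carry out. The only difference is that you spell out the bookkeeping of range and source maps, which the paper leaves implicit.
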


More precisely, any \(\Gamma\)\nb-diagram extends to a diagram
\(\Cat_\Gamma\to\Grcat\); any \(\Gamma\)\nb-\alb{}transformation
between the restrictions of two diagrams \(\Cat_\Gamma\to\Grcat\)
extends uniquely to a strong transformation between \(\Cat_\Gamma\)\nb-shaped
diagrams; and restriction to~\(\Gamma\) is a bijection from
\(\Cat_\Gamma\)\nb-modifications to \(\Gamma\)\nb-modifications.  The
proof is
the same as for \longref{Proposition}{pro:free_monoid_bicategory}.

For instance, let
\(\Gamma=* \leftarrow * \rightarrow *\).
A \(\Gamma\)\nb-diagram
in~\(\Grcat\)
consists of three groupoids \(\Gr_1,\Gr_2,\Gr[H]\)
that are linked by groupoid correspondences
\[
\Gr_1\xleftarrow{\Bisp_1} \Gr[H] \xrightarrow{\Bisp_2} \Gr_2.
\]
Or let
\(\Gamma=* \rightrightarrows *\).  Then
a \(\Gamma\)\nb-diagram
in~\(\Grcat\)
consists of two groupoids \(\Gr\) and~\(\Gr[H]\)
that are linked by two parallel groupoid correspondences
\[
  \begin{tikzcd}
    \Gr[H] \ar[r, shift left, "\Bisp_1"]
    \ar[r, shift right, "\Bisp_2"'] &
    \Gr.
  \end{tikzcd}
\]
This is an equaliser diagram.  We will study diagrams of this shape
in \longref{Section}{sec:universal_nm}.

\subsection{Free commutative monoid actions}
\label{sec:free_commutative_monoid_action}

Let~\(\Cat\) be the free \emph{commutative} monoid on a set of
generators~\(S\).  We are going to describe \(\Cat\)\nb-shaped
diagrams using the generating set~\(S\).

A \emph{commutative \(S\)\nb-diagram} consists of a groupoid~\(\Gr\),
groupoid correspondences~\(\Bisp_g\) for \(g\in S\), and isomorphisms
of groupoid correspondences \(\Sigma_{g,h} \colon \Bisp_g \Grcomp
\Bisp_h \congto \Bisp_h \Grcomp \Bisp_g\) for \(g,h\in S\), such that
\(\Sigma_{g,g}\) is the identity for all \(g\in S\),
\(\Sigma_{g,h}\Sigma_{h,g}\) is the identity for all \(g,h\in S\), and
the following diagrams commute for all \(g,h,k\in S\):
\begin{equation}
  \label{eq:braiding_hexagon}
  \begin{tikzpicture}[baseline=(current bounding box.west)]
    \matrix[cd, row sep=4ex, column sep = 3em] (m) {
      & \Bisp_g \Bisp_k \Bisp_h &&
      \Bisp_k \Bisp_g \Bisp_h \\
      \Bisp_g \Bisp_h \Bisp_k && &&
      \Bisp_k \Bisp_h \Bisp_g \\
      & \Bisp_h \Bisp_g \Bisp_k &&
      \Bisp_h \Bisp_k \Bisp_g \\
    };
    \begin{scope}[cdar]
      \draw (m-1-2) -- node {\(\scriptstyle \Sigma_{g,k}\Grcomp 1\)} (m-1-4);
      \draw (m-1-4) -- node {\(\scriptstyle 1\Grcomp\Sigma_{g,h}\)} (m-2-5);
      \draw (m-3-4) -- node[swap] {\(\scriptstyle \Sigma_{h,k}\Grcomp 1\)} (m-2-5);
      \draw (m-2-1) -- node {\(\scriptstyle 1\Grcomp \Sigma_{h,k}\)} (m-1-2);
      \draw (m-2-1) -- node[swap] {\(\scriptstyle \Sigma_{g,h}\Grcomp 1\)} (m-3-2);
      \draw (m-3-2) -- node[swap] {\(\scriptstyle 1\Grcomp\Sigma_{g,k}\)} (m-3-4);
    \end{scope}
  \end{tikzpicture}
\end{equation}
Here we dropped the \(\Grcomp\)\nb-sign for composition of groupoid
correspondences.

A \emph{commutative \(S\)\nb-transformation} between two such diagrams
\((\Gr^i,\Bisp^i_g,\Sigma^i_{g,h})\) for \(i=1,2\), consists of a
groupoid correspondence \(\Bisp[Y]\colon \Gr^2\leftarrow \Gr^1\) and
isomorphisms \(V_g\colon \Bisp^2_g \Grcomp \Bisp[Y] \congto
\Bisp[Y]\Grcomp \Bisp^1_g\), such that the following diagrams commute
for all \(g,h\in S\):
\begin{equation}
  \label{eq:trafo_commutative_S-diagram}
  \begin{tikzpicture}[baseline=(current bounding box.west)]
    \matrix[cd, row sep=4ex, column sep = 3em] (m) {
      & \Bisp^2_g \Bisp[Y] \Bisp^1_h &&
      \Bisp[Y] \Bisp^1_g \Bisp^1_h \\
      \Bisp^2_g \Bisp^2_h \Bisp[Y] && &&
      \Bisp[Y] \Bisp^1_h \Bisp^1_g \\
      & \Bisp^2_h \Bisp^2_g \Bisp[Y]  &&
      \Bisp^2_h \Bisp[Y] \Bisp^1_g \\
    };
    \begin{scope}[cdar]
      \draw (m-1-2) -- node {\(\scriptstyle V_g\Grcomp 1\)} (m-1-4);
      \draw (m-1-4) -- node {\(\scriptstyle 1\Grcomp\Sigma^1_{g,h}\)} (m-2-5);
      \draw (m-3-4) -- node[swap] {\(\scriptstyle V_h\Grcomp 1\)} (m-2-5);
      \draw (m-2-1) -- node {\(\scriptstyle 1\Grcomp V_h\)} (m-1-2);
      \draw (m-2-1) -- node[swap] {\(\scriptstyle \Sigma^2_{g,h}\Grcomp 1\)} (m-3-2);
      \draw (m-3-2) -- node[swap] {\(\scriptstyle 1\Grcomp V_g\)} (m-3-4);
    \end{scope}
  \end{tikzpicture}
\end{equation}
A \emph{commutative \(S\)\nb-modification} between two such
commutative \(S\)\nb-transformations \((\Bisp[Y]^i,V_g^i)\),
\(i=1,2\), is a continuous map of groupoid correspondences \(W\colon
\Bisp[Y]^1 \to \Bisp[Y]^2\) that intertwines the isomorphisms
\(V^1_g\) and~\(V^2_g\) for all \(g\in S\).  This data also defines a
bicategory in an obvious way.  We claim that this bicategory is
equivalent to~\(\Gr^{\Cat}\), where~\(\Cat\) is the free commutative
monoid on~\(S\).  The equivalence restricts a \(\Cat\)\nb-shaped
diagram to~\(S\) and lets~\(\Sigma_{g,h}\) be the canonical
isomorphism
\[
\Bisp_g \Grcomp \Bisp_h \cong \Bisp_{g h} = \Bisp_{h g} \cong
\Bisp_h \Grcomp \Bisp_g.
\]
A direct computation shows that these maps
satisfy~\eqref{eq:braiding_hexagon} and the other conditions for a
commutative \(S\)\nb-diagram.  For strong transformations and modifications,
the restriction is the obvious one, and direct computations show that
this satisfies~\eqref{eq:trafo_commutative_S-diagram}.

Now totally order the set of generators.  It suffices to take
only~\(\Sigma_{g,h}\) for \(g<h\): the others are determined because
\(\Sigma_{g,g}=\id\) and \(\Sigma_{h,g} = \Sigma_{g,h}^{-1}\).
And~\eqref{eq:braiding_hexagon} commutes for all triples~\(g,h,k\),
once it does for \(g<h<k\).  Thus a commutative \(S\)\nb-diagram is
given by a groupoid~\(\Gr\), groupoid correspondences~\(\Bisp_g\) for
\(g\in S\), and isomorphisms \(\Sigma_{g,h}\colon \Bisp_g\circ\Bisp_h
\congto \Bisp_h \circ\Bisp_g\) for \(g,h\in S\) with \(g<h\) such
that~\eqref{eq:braiding_hexagon} commutes for \(g<h<k\).  And in the
definition of a commutative \(S\)\nb-transformation, it suffices to
require~\eqref{eq:trafo_commutative_S-diagram} to commute for \(g,h\in
S\) with \(g<h\).

Analogous statements to those in
\longref{Section}{sec:free_monoid_action} hold: (1)~any commutative
\(S\)\nb-diagram extends to a \(\Cat\)\nb-shaped diagram, (2)~any
commutative \(S\)\nb-transformation between restrictions to~\(S\)
extends uniquely to a strong transformation of \(\Cat\)\nb-shaped
diagrams, and (3)~restriction gives a bijection on the level of
modifications and commutative \(S\)\nb-modifications.  Statement~(1)
is proved in a different language in
\cite{Fowler-Sims:Product_Artin}*{Theorems 2.1--2}, and
\cite{Fowler-Sims:Product_Artin}*{Proposition 2.8} almost
proves~(2), except that the groupoid correspondence~\(\Bisp[Y]\) is
taken to be the identity correspondence.  We omit further details.

For a single generator, the free monoid~\((\N,+)\) is also the free
Abelian monoid, and we get the same statement as
in \longref{Section}{sec:free_monoid_action} for that case.  If \(S=\{1,2\}\)
with
\(1<2\), then there are no \(g,h,k\in S\) with \(g<h<k\), so that the
condition~\eqref{eq:braiding_hexagon} is trivial.  Hence any
isomorphism~\(\Sigma_{1,2}\) may be chosen.

The above discussion describes, in particular, actions of the free
commutative monoids~\((\N^k,+)\) on spaces and groups by topological
correspondences or self-similarities of groups, respectively.  The
first leads to the data of a higher-rank topological graph as
in~\cite{Yeend:Groupoid_models}, with arbitrary spaces instead of
locally compact spaces.  As far as we know, several commuting
self-similarities of groups have not yet been studied in general.

\section{Actions of diagrams and groupoid models}
\label{sec:actions}

In this section, we define the groupoid model of a diagram of
groupoid correspondences.  This is the main concept of this article.
We also prove some basic properties of groupoid models and
illustrate them by three examples.  The definition is based on the
category of actions of a diagram on topological spaces.  The first
two examples are rather trivial, namely, disjoint unions of
groupoids and groupoids graded by a group.  The third example is
much more interesting and leads to the \((m,n)\)-dynamical systems
studied by
Ara--Exel--Katsura~\cite{Ara-Exel-Katsura:Dynamical_systems}.

\subsection{Actions of correspondences and diagrams}
\label{sec:act_corr}

\begin{definition}
  \label{gpcorraction1}
  Let~\(\Gr\) be a groupoid, let \(\Bisp \colon \Gr \leftarrow \Gr\) be
  a groupoid correspondence, and let~\(Y\) be a topological space.
  An action of~\(\Bisp\) on~\(Y\) consists of
  \begin{itemize}
  \item a left \(\Gr\)\nb-action on~\(Y\); let
    \(\rg_Y\colon Y \to \Gr^0\) be its anchor map;
  \item an open, continuous, surjective map
    \(\alpha\colon \Bisp \times_{\s,\Gr^0,\rg_Y} Y \to Y\), denoted
    multiplicatively as \(\alpha(\gamma,y) = \gamma\cdot y\) for
    \(\gamma \in \Bisp\) and \(y \in Y\);
  \end{itemize}
  such that
  \begin{enumerate}[label=\textup{(\ref*{gpcorraction1}.\arabic*)},
    leftmargin=*,labelindent=0em]
  \item \label{en:gpaction1b}%
    \(x \cdot (g \cdot y) = (x \cdot g) \cdot y\) for all
    \(x\in \Bisp\), \(g\in\Gr\), \(y\in Y\) with \(\s(x) = \rg(g)\)
    and \(\s(g) = \rg_Y(y)\);
  \item \label{en:gpaction1}%
    \(\rg(x\cdot y) = \rg(x)\) for any \(x\in \Bisp\) and \(y \in Y\)
    with \(\s(x) = r_Y(y)\), and
    \(g \cdot (x \cdot y) = (g \cdot x) \cdot y\) for all \(g \in \Gr\),
    \(x\in \Bisp\), \(y\in Y\) with \(\s(g) = \rg(x)\) and
    \(\s(x) = r_Y(y)\);
  \item \label{en:gpaction2}%
    if \(\gamma\cdot y = \gamma'\cdot y'\) for
    \(\gamma,\gamma'\in \Bisp\) and \(y,y'\in Y\), then there is
    \(\eta\in \Gr\) with \(\gamma' = \gamma\cdot \eta\) and
    \(y = \eta\cdot y'\); equivalently, \(\Qu(\gamma)=\Qu(\gamma')\)
    for the orbit space projection \(\Qu\colon \Bisp\to \Bisp/\Gr\)
    --~so that \(\braket{\gamma}{\gamma'}\) is defined~-- and
    \(y = \braket{\gamma}{\gamma'} y'\).
  \end{enumerate}
\end{definition}

\begin{lemma}
  \label{homeo}
  The multiplication map~\(\alpha\) descends to a \(\Gr\)-equivariant
  homeomorphism \(\dot\alpha\colon \Bisp \Grcomp_{\Gr} Y \to Y\).
\end{lemma}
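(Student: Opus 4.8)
The map to construct is $\dot\alpha([\gamma,y]) = \gamma\cdot y$, where $[\gamma,y]$ denotes the class of $(\gamma,y)\in\Bisp\times_{\s,\Gr^0,\rg_Y} Y$ under the diagonal $\Gr$\nb-action $g\cdot(\gamma,y) = (\gamma\cdot g^{-1},\, g\cdot y)$ (defined when $\s(g)=\s(\gamma)=\rg_Y(y)$), so that $\Bisp\Grcomp_\Gr Y$ is the orbit space of this action, with orbit projection $\Qu\colon \Bisp\times_{\s,\Gr^0,\rg_Y} Y \to \Bisp\Grcomp_\Gr Y$. The plan is to show in turn that $\alpha$ descends along~$\Qu$, that the descended map $\dot\alpha$ is $\Gr$\nb-equivariant, that it is bijective, and that it is open; the last three steps are short, and the content is really in the first and the last.

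First I would check that $\alpha$ is constant on diagonal orbits: for $\s(g)=\s(\gamma)=\rg_Y(y)$,
\[
  \alpha(\gamma\cdot g^{-1},\, g\cdot y)
  = \bigl((\gamma\cdot g^{-1})\cdot g\bigr)\cdot y
  = \bigl(\gamma\cdot (g^{-1} g)\bigr)\cdot y
  = \gamma\cdot y
  = \alpha(\gamma,y),
\]
where the first equality uses the definition of~$\alpha$ together with condition~\ref{en:gpaction1b}, the second is associativity of the right $\Gr$\nb-action on~$\Bisp$, and the last two use $g^{-1} g = \s(\gamma)$ and the unit law $\gamma\cdot\s(\gamma)=\gamma$. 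Hence $\alpha$ factors as $\alpha = \dot\alpha\circ\Qu$ for a unique map $\dot\alpha\colon \Bisp\Grcomp_\Gr Y\to Y$, and $\dot\alpha$ is continuous because $\Qu$ carries the quotient topology.

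Next, $\Gr$ acts on~$\Bisp\Grcomp_\Gr Y$ on the left by $g\cdot[\gamma,y] = [g\cdot\gamma,\,y]$ with anchor map $[\gamma,y]\mapsto\rg(\gamma)$; condition~\ref{en:gpaction1} says exactly that $\rg_Y(\gamma\cdot y) = \rg(\gamma)$ and $(g\cdot\gamma)\cdot y = g\cdot(\gamma\cdot y)$, that is, that $\dot\alpha$ is $\Gr$\nb-equivariant. Surjectivity of~$\dot\alpha$ is inherited from that of~$\alpha$. For injectivity, suppose $\gamma\cdot y = \gamma'\cdot y'$; condition~\ref{en:gpaction2} produces $\eta\in\Gr$ with $\gamma' = \gamma\cdot\eta$ and $y = \eta\cdot y'$, and a direct check shows that $g=\eta^{-1}$ sends $(\gamma,y)$ to $(\gamma',y')$ under the diagonal action (using $\eta^{-1}\eta = \s(\eta) = \rg_Y(y')$ and the unit law for the left $\Gr$\nb-action on~$Y$), so $[\gamma,y]=[\gamma',y']$ and $\dot\alpha$ is injective.

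Finally, for any open $W\subseteq\Bisp\Grcomp_\Gr Y$ the set $\Qu^{-1}(W)$ is open and, since~$\Qu$ is surjective, $\dot\alpha(W) = \dot\alpha(\Qu(\Qu^{-1}(W))) = \alpha(\Qu^{-1}(W))$, which is open because $\alpha$ is an open map. Thus $\dot\alpha$ is a continuous, open bijection, hence a homeomorphism. I do not expect any serious obstacle here; the one point worth flagging is that the final step rests entirely on the openness of~$\alpha$ built into \longref{Definition}{gpcorraction1} — dropping it, $\dot\alpha$ would still be a continuous $\Gr$\nb-equivariant bijection but need not be open — and everything else is bookkeeping with the unit and associativity laws of the groupoid actions involved.
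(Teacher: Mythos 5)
Your proof is correct and follows essentially the same route as the paper's: constancy on diagonal orbits via condition~\ref{en:gpaction1b}, equivariance via~\ref{en:gpaction1}, injectivity via~\ref{en:gpaction2}, and the conclusion that a continuous, open, surjective, injective map is a homeomorphism. The only difference is that you spell out the openness of $\dot\alpha$ explicitly through $\dot\alpha(W)=\alpha(\Qu^{-1}(W))$, where the paper simply notes that $\dot\alpha$ inherits openness from~$\alpha$.
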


\begin{proof}
  Condition~\ref{en:gpaction1b} implies
  \(xh^{-1} \cdot hy = (xh^{-1}\cdot h) \cdot y = x \cdot y\) for any
  \(x \in \Bisp\), \(h \in \Gr\), \(y \in Y\) with \(\s(x) = \rg(h)\) and
  \(\s(h) = r_Y(y)\).  Therefore, all the elements in the same class
  \([x, y] \in \Bisp \Grcomp_{\Gr} Y\) have the same image
  under~\(\alpha\).  This gives a well defined map
  \(\dot\alpha\colon \Bisp \Grcomp_{\Gr} Y \to Y\).  Let \(k \in \Gr\)
  and \([x, y] \in \Bisp \Grcomp_{\Gr} Y\).  Then~\ref{en:gpaction1}
  implies
  \(\dot\alpha(k \cdot [x, y]) = \dot\alpha[kx, y] = (k \cdot x)
  \cdot y = k \cdot (x \cdot y) = k \cdot \dot\alpha[x, y]\).  This
  shows that~\(\dot\alpha\) is \(\Gr\)\nb-equivariant.  Next, we
  show that~\(\dot\alpha\) is injective.  Suppose that
  \(x_1 \cdot y_1 = x_2 \cdot y_2\) for some \(x_1, x_2 \in \Bisp\) and
  \(y_1, y_2 \in Y\) with \(\s(x_1) = r_Y(y_1)\) and \(\s(x_2) = r_Y(y_2)\).
  Then~\ref{en:gpaction2} implies \([x_1, y_1] = [x_2, y_2]\), as
  desired.  Since~\(\alpha\) is open, continuous and surjective, so
  is~\(\dot\alpha\).  Being injective as well, it is a homeomorphism.
\end{proof}

To prepare for actions of diagrams of groupoid correspondences, we
first treat groupoid correspondences with different domain and
codomain by reducing them to the special case above.  Let \(\Gr[H]\)
and~\(\Gr\) be étale groupoids.  Their disjoint union
\(\Gr[H] \sqcup \Gr\) is an étale groupoid as well.

\begin{definition}
  \label{gpcorraction2}
  A groupoid correspondence \(\Bisp \colon \Gr[H] \leftarrow \Gr\)
  induces a groupoid correspondence
  \(\Bisp \colon \Gr[H] \sqcup \Gr \leftarrow \Gr[H] \sqcup \Gr\) as
  follows.  The anchor maps \(r \colon \Bisp \to \Gr[H]^0\) and
  \(s \colon \Bisp \to \Gr^0\) compose with the injections
  \(\Gr[H]^0 \hookrightarrow \Gr[H]^0 \sqcup \Gr^0\) and
  \(\Gr^0 \hookrightarrow \Gr[H]^0 \sqcup \Gr^0\), respectively, and
  give anchor maps
  \(r' \colon \Bisp \hookrightarrow \Gr[H]^0 \sqcup \Gr^0\) and
  \(s' \colon \Bisp \hookrightarrow \Gr[H]^0 \sqcup \Gr^0\).  Let
  \(x\in\Bisp\) and \(\gamma\in \Gr[H] \sqcup \Gr\).  Then
  \(\gamma\cdot x\) is only defined if \(\gamma\in\Gr[H]\) and
  \(x\cdot \gamma\) is only defined if \(\gamma\in\Gr\).  Therefore,
  the left and right actions on~\(\Bisp\) already make~\(\Bisp\) into a
  bispace over \(\Gr[H] \sqcup \Gr\).  This inherits the properties of
  a groupoid correspondence.
\end{definition}

\begin{definition}
  An action of a groupoid correspondence
  \(\Bisp \colon \Gr[H] \leftarrow \Gr\) of étale groupoids \(\Gr[H]\) and
  \(\Gr\) on a topological space~\(Y\) is an action on~\(Y\) of the
  induced groupoid correspondence
  \(\Bisp \colon \Gr[H] \sqcup \Gr \leftarrow \Gr[H] \sqcup \Gr\).
\end{definition}

More concretely, the anchor map \(Y\to \Gr^0 \sqcup \Gr[H]^0\) gives
us a decomposition \(Y = Y_{\Gr} \sqcup Y_{\Gr[H]}\) into clopen
(that is, closed, open) subsets with continuous maps
\(Y_{\Gr} \to \Gr^0\) and \(Y_{\Gr[H]} \to \Gr[H]^0\), and the
action of \(\Gr\sqcup \Gr[H]\) is equivalent to an action of~\(\Gr\)
on~\(Y_{\Gr}\) and an action of~\(\Gr[H]\) on~\(Y_{\Gr[H]}\).  Then
the correspondence~\(\Bisp\) acts by a continuous, open, surjective
map \(\alpha\colon \Bisp_{\s,\Gr^0} \Grcomp Y_{\Gr} \to Y_{\Gr[H]}\) with
the algebraic properties in \longref{Definition}{gpcorraction1}.

Now we may move on to the actions of diagrams of groupoid
correspondences.  Let~\(\Cat\) be a category.  Describe a diagram
\(F\colon \Cat\to\Grcat\) by \((\Gr_x,\Bisp_g,\mu_{g,h})\) as in
\longref{Proposition}{pro:diagrams_in_Grcat}.  We write the
maps~\(\mu_{g,h}\) multiplicatively as
in~\eqref{eq:diagram_as_mult}.

\begin{definition}
  \label{def:diagram_dynamical_system}
  Let~\(Y\) be a space.  An \emph{\(F\)\nb-action} on~\(Y\) consists
  of
  \begin{itemize}
  \item a partition \(Y = \bigsqcup_{x\in\Cat^0} Y_x\) into clopen
    subsets;
    \item continuous maps \(\rg\colon Y_x \to \Gr_x^0\);
    \item open, continuous, surjective maps \(\alpha_g\colon \Bisp_g
    \times_{\s,\Gr_x^0,\rg} Y_x \to Y_{x'}\) for arrows \(g\colon
    x'\leftarrow x\) in~\(\Cat\), denoted multiplicatively as
    \(\alpha_g(\gamma,y) = \gamma\cdot y\);
  \end{itemize}
  such that
  \begin{enumerate}[label=\textup{(\ref*{def:diagram_dynamical_system}.\arabic*)},
    leftmargin=*,labelindent=0em]
    \item \label{en:diagram_dynamical_system1}
    \(\rg(\gamma_2\cdot y) = \rg(\gamma_2)\) and \(\gamma_1\cdot
    (\gamma_2\cdot y) = (\gamma_1 \cdot \gamma_2)\cdot y\) for
    composable arrows \(g_1,g_2\) in~\(\Cat\), \(\gamma_1\in
    \Bisp_{g_1}\), \(\gamma_2\in \Bisp_{g_2}\), and \(y\in
    Y_{\s(g_2)}\) with \(\s(\gamma_1) = \rg(\gamma_2)\),
    \(\s(\gamma_2) = \rg(y)\);
    \item \label{en:diagram_dynamical_system2}
    if \(\gamma\cdot y = \gamma'\cdot y'\) for \(\gamma,\gamma'\in
    \Bisp_g\), \(y,y'\in Y_{\s(g)}\), there is \(\eta\in \Gr_{\s(g)}\)
    with \(\gamma' = \gamma\cdot \eta\) and \(y = \eta\cdot y'\);
    equivalently, \(\Qu(\gamma)=\Qu(\gamma')\) for the orbit space
    projection \(\Qu\colon \Bisp_g \to \Bisp_g/\Gr_{\s(g)}\)
    and \(y = \braket{\gamma}{\gamma'} y'\).
  \end{enumerate}
\end{definition}

\begin{lemma}
  \label{lem:action_pieces}
  The multiplication map~\(\alpha_x\) is a left \(\Gr_x\)\nb-action
  on~\(Y_x\), where~\(x\) denotes the identity morphism on \(x \in \Cat^0\).
\end{lemma}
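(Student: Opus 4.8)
The plan is to verify the three defining properties of a left $\Gr_x$-action directly from the axioms of an $F$-action, after identifying $\Bisp_x$ with $\Gr_x$. First I would recall that by~\ref{en:diagrams_in_Grcat_1} the correspondence $\Bisp_x$ attached to the identity arrow on~$x$ is the identity correspondence on $\Gr_x$, and that, by~\eqref{eq:diagram_as_mult} together with the discussion after it, in the case $g=h=x$ the multiplication $\Bisp_x\times_{\s,\Gr_x^0,\rg}\Bisp_x\to\Bisp_x$ is nothing but the multiplication of the groupoid~$\Gr_x$. Hence $\alpha_x$ is a continuous (in fact open and surjective) map from $\{(g,y)\in\Gr_x\times Y_x : \s(g)=\rg(y)\}$ to~$Y_x$, and this domain is exactly the one required for a left $\Gr_x$-action on~$Y_x$ with anchor map $\rg\colon Y_x\to\Gr_x^0$.

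Next I would read off the algebraic properties. Taking $g_1=g_2=x$ in~\ref{en:diagram_dynamical_system1} (so that $\gamma_1,\gamma_2\in\Gr_x$ and $\gamma_1\gamma_2$ is the groupoid product) immediately yields $\rg(g\cdot y)=\rg(g)$ for all $g\in\Gr_x$, $y\in Y_x$ with $\s(g)=\rg(y)$, as well as the associativity $g_1\cdot(g_2\cdot y)=(g_1g_2)\cdot y$ for composable $g_1,g_2\in\Gr_x$ and $y\in Y_x$. For the remaining unit law, I would exploit the surjectivity of $\alpha_x$ onto~$Y_x$: given $y\in Y_x$, pick $g\in\Gr_x$ and $y'\in Y_x$ with $\s(g)=\rg(y')$ and $g\cdot y'=y$; then $\rg(y)=\rg(g\cdot y')=\rg(g)$ by the property just proved, and therefore
\[
  1_{\rg(y)}\cdot y = 1_{\rg(g)}\cdot(g\cdot y') = (1_{\rg(g)}\cdot g)\cdot y' = g\cdot y' = y
\]
using associativity and the unit law in~$\Gr_x$. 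This is all that is needed.

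I expect the only non-routine step to be this last one, the unit law $1_{\rg(y)}\cdot y=y$: it does not follow from~\ref{en:diagram_dynamical_system1} and~\ref{en:diagram_dynamical_system2} by themselves and really relies on the surjectivity of the maps~$\alpha_g$ that is part of \longref{Definition}{def:diagram_dynamical_system}. Continuity of the action map and of the anchor map is immediate, being part of the $F$-action data. (If desired, one can also remark that~\ref{en:diagram_dynamical_system2} specialised to $g=x$ shows that this left action is basic, though that is not asserted here.)
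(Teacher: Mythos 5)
Your proof is correct, and the first half (range compatibility and associativity from \ref{en:diagram_dynamical_system1} with \(g_1=g_2=x\), plus the identification of \(\Bisp_x\) with \(\Gr_x\)) is exactly the paper's argument. You diverge on the unit law: you derive \(1_{\rg(y)}\cdot y=y\) from the \emph{surjectivity} of \(\alpha_x\), writing \(y=g\cdot y'\) and absorbing the unit into~\(g\), whereas the paper derives it from \ref{en:diagram_dynamical_system2}: since \(\gamma=\gamma\cdot\rg(y)\) in the groupoid, associativity gives \(\gamma\cdot y=\gamma\cdot(\rg(y)\cdot y)\), and then \ref{en:diagram_dynamical_system2} together with the freeness of right multiplication in \(\Gr_x\) forces \(y=\rg(y)\cdot y\). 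Both routes are valid; yours trades the injectivity-type axiom for the surjectivity hypothesis. However, your closing remark that the unit law ``does not follow from \ref{en:diagram_dynamical_system1} and \ref{en:diagram_dynamical_system2} by themselves and really relies on surjectivity'' is not accurate: the paper's argument uses only those two conditions (applied with \(\gamma=1_{\rg(y)}\), say, so no surjectivity is needed to produce a suitable~\(\gamma\)). So surjectivity is sufficient for your argument but not necessary for the conclusion. This is a flaw only in the commentary, not in the proof itself.
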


\begin{proof}
  Pick \(g_1 = g_2 = x\) in~\ref{en:diagram_dynamical_system1}.  We
  have \(\rg(\gamma\cdot y) = \rg(\gamma)\) for any
  \(\gamma \in \Bisp_x = \Gr_x\) and \(y \in Y_x\) with
  \(\s(\gamma) = \rg(y)\), and also
  \(\gamma_1\cdot (\gamma_2\cdot y) = (\gamma_1 \cdot \gamma_2)\cdot
  y\) for any \(\gamma_1\), \(\gamma_2 \in \Gr_x\) with
  \(\s(\gamma_1) = \rg(\gamma_2)\) and \(\s(\gamma_2) = \rg(y)\).
  Note that \(\gamma = \gamma \cdot \rg(y)\) for any
  \(\gamma \in \Gr_x\) with \(\s(\gamma) = \rg(y)\).  Therefore,
  \(\gamma \cdot y = \gamma \cdot \rg(y) \cdot y\) for any
  \(\gamma \in \Gr_x\) such that \(\s(\gamma) = \rg(y)\).  Picking
  \(\gamma'= \gamma \cdot \rg(y)\) in
  \ref{en:diagram_dynamical_system2} shows that
  \(\rg(y) \cdot y = y\) for any \(y \in Y_x\).  In addition, \(r\)
  and~\(\alpha_x\) are continuous by
  \longref{Definition}{def:diagram_dynamical_system}.
\end{proof}

\begin{lemma}
  The multiplication map~\(\alpha_g\) descends to a
  \(\Gr_{\rg(g)}\)\nb-equivariant homeomorphism
  \(\dot\alpha_g\colon \Bisp_g \Grcomp_{\Gr_{\s(g)}} Y_{\s(g)} \to
  Y_{\rg(g)}\).
\end{lemma}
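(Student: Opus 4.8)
The plan is to reduce this statement to Lemma~\ref{homeo} for a single groupoid correspondence with equal domain and codomain. The diagram~\(F\) restricted to a single arrow \(g\colon x'\leftarrow x\) gives a groupoid correspondence \(\Bisp_g\colon \Gr_{x'}\leftarrow\Gr_x\), which by \longref{Definition}{gpcorraction2} induces a groupoid correspondence \(\Bisp_g\colon \Gr_{x'}\sqcup\Gr_x\leftarrow\Gr_{x'}\sqcup\Gr_x\). Similarly, the \(F\)-action on~\(Y\) restricts to data on \(Y_{x}\sqcup Y_{x'}\): left actions of \(\Gr_x\) on~\(Y_x\) and \(\Gr_{x'}\) on~\(Y_{x'}\) (the former by \longref{Lemma}{lem:action_pieces}, the latter likewise, or by restricting to the identity arrow on~\(x'\)), and the map~\(\alpha_g\). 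So the first step is to check that this restricted data is precisely an action of the induced groupoid correspondence \(\Bisp_g\colon \Gr_{x'}\sqcup\Gr_x\leftarrow\Gr_{x'}\sqcup\Gr_x\) on \(Y_x\sqcup Y_{x'}\) in the sense of \longref{Definition}{gpcorraction1}.

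The verification of the axioms is routine. The left action of \(\Gr_{x'}\sqcup\Gr_x\) on \(Y_{x'}\sqcup Y_x\) is the disjoint union of the two actions above. Axiom~\ref{en:gpaction1b} for \(\Bisp_g\) — that \(x\cdot(g\cdot y)=(x\cdot g)\cdot y\) — is the case \(g_1=g\), \(g_2=\s(g)\) of~\ref{en:diagram_dynamical_system1}, using that the multiplication by \(\Gr_{\s(g)}=\Bisp_{\s(g)}\) on~\(\Bisp_g\) from the right is the right \(\Gr_x\)-action on~\(\Bisp_g\) (as noted after~\eqref{eq:diagram_as_mult}). Axiom~\ref{en:gpaction1} — equivariance on the left and \(\rg(\gamma\cdot y)=\rg(\gamma)\) — is the case \(g_2=g\), \(g_1=\rg(g)\) of~\ref{en:diagram_dynamical_system1}, using that the left \(\Gr_{\rg(g)}=\Bisp_{\rg(g)}\)-multiplication on~\(\Bisp_g\) is the left \(\Gr_{x'}\)-action. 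Axiom~\ref{en:gpaction2} is literally~\ref{en:diagram_dynamical_system2}. Openness, continuity and surjectivity of~\(\alpha_g\) are assumed in \longref{Definition}{def:diagram_dynamical_system}. Hence the restricted data is an action of the induced correspondence, and \longref{Lemma}{homeo} applies verbatim: \(\alpha_g\) descends to a \(\Gr_{x'}\sqcup\Gr_x\)-equivariant homeomorphism \(\Bisp_g\Grcomp(Y_x\sqcup Y_{x'})\to Y_x\sqcup Y_{x'}\). Restricting the domain and codomain to the clopen pieces indexed by~\(x\) and~\(x'\) respectively, and noting that \(\Bisp_g\Grcomp_{\Gr_{\s(g)}} Y_{\s(g)}\) is exactly the part of \(\Bisp_g\Grcomp(Y_x\sqcup Y_{x'})\) landing in~\(Y_{x'}\), we obtain the asserted \(\Gr_{\rg(g)}\)-equivariant homeomorphism \(\dot\alpha_g\colon \Bisp_g\Grcomp_{\Gr_{\s(g)}} Y_{\s(g)}\to Y_{\rg(g)}\).

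The only mild subtlety — and the step I would be most careful about — is bookkeeping: making sure that the left \(\Gr_{\s(g)}\)-action on~\(\Bisp_g\) used to form \(\Bisp_g\Grcomp_{\Gr_{\s(g)}} Y_{\s(g)}\) and the right \(\Gr_{x}\)-action implicit in \longref{Lemma}{homeo} are the same, i.e.\ that the identity arrow \(\s(g)=x\) acts on~\(\Bisp_g\) on the right via the basic \(\Gr_x\)-action and that \longref{Definition}{gpcorraction2} feeds exactly this into the induced correspondence. This is guaranteed by \ref{en:diagrams_in_Grcat_1} and~\ref{en:diagrams_in_Grcat_2} together with the discussion following~\eqref{eq:diagram_as_mult}, so no real difficulty arises; the proof is essentially an unwinding of definitions plus one citation of \longref{Lemma}{homeo}.
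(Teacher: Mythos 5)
Your proposal is correct and follows exactly the route the paper takes: the paper's proof is precisely the reduction to Lemma~\ref{homeo} by turning \(\Bisp_g\) into a correspondence on \(\Gr_{\rg(g)}\sqcup\Gr_{\s(g)}\) via Definition~\ref{gpcorraction2} and using Lemma~\ref{lem:action_pieces} for the actions on the pieces. You simply spell out the routine axiom-checking that the paper leaves implicit.
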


\begin{proof}
  This follows from \longref{Lemma}{homeo} by turning
  each~\(\Bisp_g\) into a groupoid correspondence on
  \(\Gr_{\rg(g)} \sqcup \Gr_{\s(g)}\) as in
  \longref{Definition}{gpcorraction2} and then turning~\(\alpha_g\)
  into an action of the latter on the space
  \(Y_{\rg(g)} \sqcup Y_{\s(g)}\);
  \longref{Lemma}{lem:action_pieces} gives the actions of
  \(\Gr_{\rg(g)}\) and~\(\Gr_{\s(g)}\) on \(Y_{\rg(g)}\)
  and~\(Y_{\s(g)}\), respectively.
\end{proof}

\begin{definition}
  \label{def:F-invariant}
  A continuous map \(\varphi\colon Y\to Y'\) between two spaces with
  \(F\)\nb-actions is \emph{\(F\)\nb-equivariant} if
  \(\varphi(Y_x) \subseteq Y'_x\) for all \(x\in \Cat^0\), and
  \(\rg(\varphi(y))=\rg(y)\) and
  \(\varphi(\gamma\cdot y) = \gamma\cdot \varphi(y)\) for all
  \(g\in\Cat\), \(y\in Y_{\s(g)}\) and \(\gamma\in \Bisp_g\) with
  \(\s(\gamma)=\rg(y)\).

  Let \(Y\) and~\(Z\) be spaces and let~\(Y\) carry an
  \(F\)\nb-action.  A continuous map \(\varphi\colon Y\to Z\) is
  \emph{\(F\)\nb-invariant} if
  \(\varphi(\gamma\cdot y) = \varphi(y)\) for all \(g\in\Cat\),
  \(\gamma\in \Bisp_g\), \(y\in Y_{\s(g)}\) with
  \(\s(\gamma)=\rg(y)\).
\end{definition}

\begin{lemma}
  \label{lem:action_one_correspondence}
  A diagram \(F\colon (\N,+) \to \Grcat\) is completely described by
  a groupoid~\(\Gr\) and a single groupoid correspondence~\(\Bisp\)
  from~\(\Gr\) to itself.  An action of~\(F\) is the same as an
  action of the groupoid correspondence~\(\Bisp\) as in
  \longref{Definition}{gpcorraction1}.
\end{lemma}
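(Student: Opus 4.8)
The first assertion is the case $\Cat=(\N,+)$ of \longref{Proposition}{pro:free_monoid_bicategory}, applied with a one-element generating set: restriction to the single generator yields the groupoid $\Gr\defeq\Gr_x$ on the unique object~$x$ and the self-correspondence $\Bisp\defeq\Bisp_1$, the correspondence attached to $n\in\N$ is the $n$\nb-fold composite $\Bisp_n=\Bisp\Grcomp_{\Gr}\dotsm\Grcomp_{\Gr}\Bisp$ (with $\Bisp_0=\Gr$), and the~$\mu_{n,m}$ are the canonical associativity isomorphisms. For the second assertion I plan to set up mutually inverse passages between $F$\nb-actions and actions of~$\Bisp$ on a fixed space~$Y$. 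Since the notions of $F$\nb-equivariant map and $\Bisp$\nb-equivariant map unwind to the same requirement, these passages will automatically be compatible with morphisms, so they give the asserted identification.

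Starting from an $F$\nb-action on~$Y$: because~$\Cat$ has a single object there is no partition and $Y=Y_x$; by \longref{Lemma}{lem:action_pieces} the map~$\alpha_0$ is a left $\Gr$\nb-action on~$Y$ with anchor~$\rg$, and $\alpha\defeq\alpha_1$ is open, continuous and surjective by hypothesis. I would then note that the three axioms of \longref{Definition}{gpcorraction1} are special cases of those in \longref{Definition}{def:diagram_dynamical_system}: condition~\ref{en:gpaction1b} is the associativity clause of~\ref{en:diagram_dynamical_system1} for $(g_1,g_2)=(1,0)$ (using that $\mu_{1,0}$ is the right $\Gr$\nb-action by \longref{Proposition}{pro:diagrams_in_Grcat}); condition~\ref{en:gpaction1} is~\ref{en:diagram_dynamical_system1} for $(g_1,g_2)=(0,1)$; and condition~\ref{en:gpaction2} is~\ref{en:diagram_dynamical_system2} for $g=1$. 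Thus $(\alpha_0,\alpha)$ is an action of~$\Bisp$.

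Conversely, given an action of~$\Bisp$ with left $\Gr$\nb-action~$\alpha_0$ and multiplication~$\alpha$, I would put $Y_x\defeq Y$, $\alpha_1\defeq\alpha$, and define~$\alpha_n$ for $n\ge2$ recursively using \longref{Lemma}{homeo}, which provides a $\Gr$\nb-equivariant homeomorphism $\dot\alpha\colon\Bisp\Grcomp_{\Gr}Y\congto Y$. Composing the structure isomorphism $\Bisp_n\cong\Bisp\Grcomp_{\Gr}\Bisp_{n-1}$, the associator, the map $\id_{\Bisp}\Grcomp_{\Gr}\dot\alpha_{n-1}$ (legitimate since $\dot\alpha_{n-1}$ is $\Gr$\nb-equivariant by induction), and~$\dot\alpha$ gives a $\Gr$\nb-equivariant homeomorphism
\[
  \dot\alpha_n\colon \Bisp_n\Grcomp_{\Gr}Y
  \congto \Bisp\Grcomp_{\Gr}\bigl(\Bisp_{n-1}\Grcomp_{\Gr}Y\bigr)
  \congto \Bisp\Grcomp_{\Gr}Y \congto Y,
\]
and I let~$\alpha_n$ be the composite of the orbit space projection $\Bisp_n\times_{\s,\Gr^0,\rg}Y\prto\Bisp_n\Grcomp_{\Gr}Y$ with~$\dot\alpha_n$. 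Since orbit space projections for étale groupoid actions are open, continuous and surjective, and~$\dot\alpha_n$ is a homeomorphism, $\alpha_n$ has the properties required in \longref{Definition}{def:diagram_dynamical_system}; unwinding the recursion identifies $\alpha_n(\gamma_1\dotsm\gamma_n\cdot y)$ with $\gamma_1\cdot(\dotsm(\gamma_n\cdot y))$ for $\gamma_i\in\Bisp$, the products being formed with the maps~$\mu$. The clause $\rg(\gamma_2\cdot y)=\rg(\gamma_2)$ holds because $\rg[x,y]=\rg(x)$ in a composite, and condition~\ref{en:diagram_dynamical_system2} follows from the injectivity of~$\dot\alpha_n$.

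Finally I would check that the two passages are mutually inverse. An action of~$\Bisp$ is sent to an $F$\nb-action whose data at $n=0,1$ is $(\alpha_0,\alpha)$, so composing with the first passage returns the original action. Conversely, for an $F$\nb-action the key observation is that every element of~$\Bisp_n$ has the form $\gamma_1\dotsm\gamma_n$ with $\gamma_i\in\Bisp$, because the map $\Bisp\times_{\Gr^0}\dotsm\times_{\Gr^0}\Bisp\to\Bisp_n$ assembled from the orbit space projections underlying the~$\mu$ is surjective; then the associativity clause of~\ref{en:diagram_dynamical_system1} forces $\alpha_n(\gamma_1\dotsm\gamma_n\cdot y)=\gamma_1\cdot(\dotsm(\gamma_n\cdot y))$, which is precisely the value produced by the second passage. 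The one step requiring genuine care is verifying the associativity clause of~\ref{en:diagram_dynamical_system1} for a general pair $(n,m)$, i.e.\ $\gamma_1\cdot(\gamma_2\cdot y)=(\gamma_1\gamma_2)\cdot y$ with $\gamma_1\in\Bisp_n$ and $\gamma_2\in\Bisp_m$; here one uses that the~$\mu$ form a coherent family (MacLane's coherence theorem), so that the two ways of collapsing $\Bisp_n\Grcomp_{\Gr}\Bisp_m\Grcomp_{\Gr}Y$ onto~$Y$ coincide.
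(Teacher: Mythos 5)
Your proposal is correct and follows essentially the same route as the paper's proof: the partition is trivial since \((\N,+)\) has one object, the data at \(n=0,1\) gives the \(\Gr\)\nobreakdash-action and the action of \(\Bisp\), and conversely the higher multiplication maps are forced by associativity to be \([\gamma_1,\dotsc,\gamma_n]\cdot y = \gamma_1\cdot(\dotsb\cdot(\gamma_n\cdot y))\). The paper leaves the remaining verifications as ``routine''; you supply them in more detail (via the homeomorphisms \(\dot\alpha_n\) and coherence of the \(\mu_{n,m}\)), but the argument is the same.
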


\begin{proof}
  Since the category of the monoid~\((\N,+)\) has a single
  object~\(*\), the partition in an \(F\)\nb-action is trivial,
  \(Y=Y_*\).  The groupoid~\(\Gr\) acts on~\(Y\), and there is a
  further map \(\Bisp\times_{\s,\Gr^0,\rg} Y\to Y\),
  \((\gamma,y)\mapsto \gamma\cdot y\), describing the action of
  \(\Bisp=\Bisp_1\).  This is an open, continuous, surjective map
  that satisfies property~\ref{en:diagram_dynamical_system2} above.
  Furthermore, \(\rg(\gamma\cdot y) = \rg(\gamma)\) for all
  \((\gamma,y)\in \Bisp\times_{\s,\Gr^0,\rg} Y\), and
  \(\gamma_1\cdot (\gamma_2\cdot y) = (\gamma_1\cdot \gamma_2)\cdot
  y\) if
  \((\gamma_1,\gamma_2,y) \in \Gr\times_{\s,\Gr^0,\rg} \Bisp
  \times_{\s,\Gr^0,\rg} Y\) or
  \((\gamma_1,\gamma_2,y) \in \Bisp\times_{\s,\Gr^0,\rg} \Gr
  \times_{\s,\Gr^0,\rg} Y\).  Then there is a unique way to define
  the multiplication maps \(\Bisp_n\times_{\s,\Gr^0,\rg} Y \to Y\)
  for \(n\ge2\) so that~\ref{en:diagram_dynamical_system1} holds,
  namely,
  \[
  [\gamma_1,\gamma_2,\dotsc,\gamma_n]\cdot y \defeq
  \gamma_1 \cdot (\gamma_2 \cdot (\dotsb \cdot (\gamma_n\cdot y)))
  \]
  for \(\gamma_1,\dotsc,\gamma_n\in\Bisp\), \(y\in Y\) with
  \(\s(\gamma_i) = \rg(\gamma_{i+1})\) for \(i=1,\dotsc,n-1\) and
  \(\s(\gamma_n) = \rg(y)\).  It is routine to check that these maps
  have the properties required in
  \longref{Definition}{def:diagram_dynamical_system}.  Thus we have
  simplified an action of a diagram \(\N\to\Grcat\) to an action of
  the single correspondence~\(\Bisp_1\).
\end{proof}

\begin{lemma}
  \label{lem:action_self-similar_group}
  Let~\(\Gr\) be a group and build a groupoid correspondence
  on~\(\Gr\) from a discrete \(\Gr\)\nb-set~\(A\) with a
  \(1\)\nb-cocycle \(\Gr\times A\to \Gr\), \((g,x)\mapsto g|_x\).
  Extend this to a diagram \(F\colon (\N,+)\to \Grcat\).  Then an
  \(F\)\nb-action on a space~\(Y\) is equivalent to a
  \(\Gr\)\nb-action on~\(Y\) together with a homeomorphism
  \(\Psi\colon A\times Y\congto Y\)
  satisfying
  \begin{equation}
    \label{eq:self-similar_action}
    g\cdot \Psi(a, y) = \Psi(g\cdot a,g|_a\cdot y)
  \end{equation}
  for all \(g\in\Gr\), \(a\in A\), \(y\in Y\).
\end{lemma}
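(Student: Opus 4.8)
The plan is to make the correspondence $\Bisp\colon \Gr\leftarrow\Gr$ and the composite that appears in \longref{Lemma}{homeo} completely explicit, and then simply read off the statement. Recall first that the groupoid correspondence built from the $\Gr$\nb-set~$A$ and the cocycle $(g,a)\mapsto g|_a$ is, as a space, the discrete set $\Bisp = A\times\Gr$ (discrete because $\Gr^0$ is a single point and $\s$ is a local homeomorphism), equipped with the right $\Gr$\nb-action $(a,h)\cdot g\defeq (a,h g)$, which is free and proper and hence basic, and the left $\Gr$\nb-action $g\cdot(a,h)\defeq(g\cdot a,\ g|_a\, h)$; the cocycle identity $(g g')|_a = (g|_{g'\cdot a})(g'|_a)$ is precisely what makes this left action associative and commute with the right one. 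By \longref{Lemma}{lem:action_one_correspondence}, an $F$\nb-action on a space~$Y$ is the same as an action of~$\Bisp$ in the sense of \longref{Definition}{gpcorraction1}, and since $\Gr^0$ is a point, such an action is just a left $\Gr$\nb-action on~$Y$ together with an open, continuous, surjective map $\alpha\colon \Bisp\times Y\to Y$, $(\gamma,y)\mapsto\gamma\cdot y$, subject to conditions~\ref{en:gpaction1b}--\ref{en:gpaction2}. So, with the $\Gr$\nb-action on~$Y$ regarded as given, the task is to match the admissible maps~$\alpha$ with the homeomorphisms~$\Psi$ of the statement.

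The first step is to identify $\Bisp\Grcomp_\Gr Y = (\Bisp\times Y)/\Gr$, where the diagonal action is $g\cdot(\gamma,y) = (\gamma\cdot g^{-1},g\cdot y)$. With $\Bisp = A\times\Gr$, every $\Gr$\nb-orbit in $\Bisp\times Y$ contains a unique point whose $\Gr$\nb-coordinate is~$1$, since $\bigl((a,h),y\bigr)\sim\bigl((a,1),h\cdot y\bigr)$. Hence $(a,y)\mapsto\bigl[(a,1),y\bigr]$ is a bijection $A\times Y\to\Bisp\Grcomp_\Gr Y$; it is a homeomorphism because it is the composite of the clopen embedding $(a,y)\mapsto\bigl((a,1),y\bigr)$ with the orbit projection $\Bisp\times Y\prto\Bisp\Grcomp_\Gr Y$, the latter being a local homeomorphism by \longref{Lemma}{lem:basic_orbit_lh} (the diagonal action is basic, since the right $\Gr$\nb-action on~$\Bisp$ is). Under this identification the left $\Gr$\nb-action $g\cdot[\gamma,y] = [g\cdot\gamma,y]$ on $\Bisp\Grcomp_\Gr Y$ turns into the twisted action $g\cdot(a,y) = (g\cdot a,\ g|_a\cdot y)$ on $A\times Y$.

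Now \longref{Lemma}{homeo} says that~$\alpha$ descends to a $\Gr$\nb-equivariant homeomorphism $\dot\alpha\colon \Bisp\Grcomp_\Gr Y\to Y$. Transported along the identification above, $\dot\alpha$ becomes a homeomorphism $\Psi\colon A\times Y\congto Y$ with $\Psi(a,y) = \alpha\bigl((a,1),y\bigr)$; since~$\dot\alpha$ is well defined on orbits one gets $\alpha\bigl((a,h),y\bigr) = \dot\alpha\bigl[(a,1),h\cdot y\bigr] = \Psi(a,h\cdot y)$, and the $\Gr$\nb-equivariance of~$\dot\alpha$ reads, through the identification, $\Psi(g\cdot a,\ g|_a\cdot y) = g\cdot\Psi(a,y)$, which is~\eqref{eq:self-similar_action}. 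Conversely, given a $\Gr$\nb-action on~$Y$ and a homeomorphism~$\Psi$ satisfying~\eqref{eq:self-similar_action}, I would declare~$\dot\alpha$ to be~$\Psi$ under the identification $\Bisp\Grcomp_\Gr Y\cong A\times Y$ and let~$\alpha$ be the composite of the orbit projection with~$\dot\alpha$; then~$\alpha$ is open, continuous and surjective because the orbit projection is. The axioms are one-line checks: for~\ref{en:gpaction1b} both $x\cdot(g\cdot y)$ and $(x\cdot g)\cdot y$ equal $\Psi(a,(h g)\cdot y)$ when $x = (a,h)$; for~\ref{en:gpaction1} the side $g\cdot(x\cdot y) = g\cdot\Psi(a,h\cdot y)$ equals $\Psi(g\cdot a,\ g|_a\cdot(h\cdot y))$ by~\eqref{eq:self-similar_action}, which is $(g\cdot x)\cdot y$; and for~\ref{en:gpaction2}, if $\Psi(a,h\cdot y) = \Psi(a',h'\cdot y')$ then $a = a'$ and $h\cdot y = h'\cdot y'$ because~$\Psi$ is bijective, so $\eta\defeq h^{-1}h'\in\Gr$ satisfies $(a,h)\cdot\eta = (a',h')$ and $\eta\cdot y' = y$. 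The two constructions are patently mutually inverse, and the same bookkeeping identifies $F$\nb-equivariant maps with continuous maps that commute with the $\Gr$\nb-actions and intertwine the maps~$\Psi$, which yields the asserted equivalence.

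I do not expect a genuine obstacle. The only things demanding care are getting the twist in the induced $\Gr$\nb-action on $\Bisp\Grcomp_\Gr Y\cong A\times Y$ exactly right -- that $g|_a$ sits on the left and carries the subscript~$a$ rather than $g\cdot a$ -- and, in the converse direction, not overlooking the openness of~$\alpha$, which is immediate from the description of~$\alpha$ via the orbit projection but would otherwise rely on~$\Gr$ being discrete.
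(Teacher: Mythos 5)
Your proposal is correct and follows essentially the same route as the paper: reduce to an action of the single correspondence $\Bisp_1=A\times\Gr$ via \longref{Lemma}{lem:action_one_correspondence}, use the Ansatz $(a,g)\cdot y=\Psi(a,g\cdot y)$ forced by associativity, and identify $\Psi$ with the homeomorphism $\dot\alpha_1\colon\Bisp_1\Grcomp_{\Gr}Y\congto Y$ of \longref{Lemma}{homeo}, with condition~\ref{en:gpaction1} translating into~\eqref{eq:self-similar_action}. The only cosmetic difference is that you make the identification $\Bisp_1\Grcomp_{\Gr}Y\cong A\times Y$ and the verification of the converse axioms fully explicit, which the paper leaves terser.
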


\begin{proof}
  As in the previous lemma, an action of~\(F\) on~\(Y\) simplifies
  to an action of the single groupoid correspondence
  \(\Bisp_1\colon \Gr \leftarrow \Gr\) on~\(Y\) built from the given
  \(\Gr\)\nb-set and \(1\)\nb-cocycle as in
  \cite{Antunes-Ko-Meyer:Groupoid_correspondences}*{Example~4.2}.
  That is, \(\Bisp_1 = A\times\Gr\) with the obvious right
  \(\Gr\)\nb-action and
  \(g\cdot (a,h) \defeq (g\cdot a,g|_a\cdot h)\).  We now simplify
  the action of~\(\Bisp_1\) on~\(Y\) to a ``self-similar action''
  of~\(\Gr\).  First, the range map \(Y\to \Gr^0\) becomes vacuous.
  The multiplication map \(\Bisp_1\times Y \to Y\) must map
  \(((a,g),y)\mapsto (a,1)\cdot (g\cdot y)\)
  by~\ref{en:diagram_dynamical_system1}.  So it is equivalent to a
  map \(\Psi\colon A\times Y\to Y\), \((a,y)\mapsto a\cdot y\),
  defined by \(a\cdot y\defeq (a,1)\cdot y\).  The corresponding
  multiplication map \(\Bisp_1\times Y\to Y\) maps
  \((a,g)\cdot y\mapsto a\cdot g\cdot y\).

  The multiplication map \(\Bisp_1\times Y \to Y\) is open,
  continuous and surjective if and only if~\(\Psi\) has these
  properties.  And property~\ref{en:gpaction2} says exactly
  that~\(\Psi\) is injective.  Thus it is a homeomorphism.  In fact,
  it is the homeomorphism
  \(\dot\alpha_1\colon \Bisp_1\Grcomp_{\Gr} Y\congto Y\) in
  \longref{Lemma}{homeo}.  The property~\ref{en:gpaction1b} is built
  into our Ansatz that \((a,g)\cdot y= \Psi(a, g\cdot y)\).  The
  property~\ref{en:gpaction1} is equivalent
  to~\eqref{eq:self-similar_action}.
\end{proof}  

Since~\(A\) is discrete, we may replace~\(\Psi\) above by a bunch of
partial homeomorphisms on~\(Y\).  Namely, let
\(Y_a\defeq \setgiven{\Psi(a,y)}{y\in Y} \subseteq Y\).  These
subsets of~\(Y\) form a clopen partition, that is,
\(Y = \bigsqcup_{a\in A} Y_a\).  The map~\(\Psi\) is equivalent to a
family of homeomorphisms \(\psi_a\colon Y\congto Y_a\),
\(\psi_a(y)\defeq \Psi(a,y)\).  The
condition~\eqref{eq:self-similar_action} is equivalent to
\(g\cdot Y_a = Y_{g\cdot a}\) for all \(g\in \Gr\), \(a\in A\) and
\(g\cdot \psi_a(y) = \psi_{g\cdot a}(g|_a\cdot y)\) for all
\(g\in \Gr\), \(a\in A\), \(y\in Y\).

Already the case where the group~\(\Gr\) is trivial is interesting.
Then an \(F\)\nb-action on~\(Y\) is equivalent to a homeomorphism
\(\Psi\colon A\times Y\congto Y\), or to a clopen partition
\(Y= \bigsqcup_{a\in A} Y_a\) together with homeomorphisms
\(\psi_a\colon Y\congto Y_a\) for all \(a\in A\).

\subsection{The groupoid model of a diagram}
\label{sec:groupoid_model}

Let~\(\Cat\) be a small category and let \(F\colon \Cat\to\Grcat\)
be a diagram of groupoid correspondences.  The actions of~\(F\)
on topological spaces together with the \(F\)\nb-equivariant maps
form a concrete category, that is, the product is the usual
composition of maps.  We use this category to define the universal
\(F\)\nb-action and the groupoid model of~\(F\).  Both are closely
related.

\begin{definition}
  \label{def:universal_F-action}
  An \(F\)\nb-action~\(\Omega\) is \emph{universal} if for any
  \(F\)\nb-action~\(Y\), there is a unique \(F\)\nb-equivariant map
  \(Y\to \Omega\).
\end{definition}

In other words, an \(F\)\nb-action is universal if and only if it is
a terminal object in the category of \(F\)\nb-actions.

\begin{definition}
  \label{def:universal_action}
  A \emph{groupoid model for \(F\)\nb-actions} is an étale
  groupoid~\(\Gr[U]\) with natural bijections between the sets of
  \(\Gr[U]\)\nb-actions and \(F\)\nb-actions on~\(Y\) for all
  spaces~\(Y\).
\end{definition}

The naturality of the bijections means that a continuous map
\(Y\to Y'\) is \(\Gr[U]\)\nb-equivariant if and only if it is
\(F\)\nb-equivariant.  These bijections for all spaces~\(Y\) form an
isomorphism between the categories of \(\Gr[U]\)\nb-actions and
\(F\)\nb-actions.

\begin{proposition}
  \label{pro:universal_F-action}
  Let~\(\Gr[U]\) be a groupoid model for \(F\)\nb-actions.  The
  object space~\(\Gr[U]^0\) of\/~\(\Gr[U]\) carries a canonical
  \(F\)\nb-action, and this is a universal \(F\)\nb-action.
\end{proposition}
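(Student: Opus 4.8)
The plan is to derive the statement from the following elementary fact: for any étale groupoid~$\Gr$, the object space~$\Gr^0$ carries a canonical left $\Gr$\nb-action, and this action is a terminal object in the category of $\Gr$\nb-actions. Granting this, I would apply it to $\Gr = \Gr[U]$ and transport the terminal object across the isomorphism of categories that a groupoid model provides.

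First I would describe the canonical left $\Gr[U]$\nb-action on $\Gr[U]^0$: its anchor map $\rg\colon \Gr[U]^0 \to \Gr[U]^0$ is the identity, and an arrow $g\in\Gr[U]$ with $\s(g) = z$ acts on $z\in\Gr[U]^0$ by $g\cdot z \defeq \rg(g)$. A direct check against the (left-handed) axioms for a groupoid action shows that this is well defined. Then I would verify that it is terminal: given any left $\Gr[U]$\nb-action on a space~$Y$ with anchor map $\rg_Y\colon Y\to\Gr[U]^0$, the map~$\rg_Y$ itself is $\Gr[U]$\nb-equivariant, since $\rg_Y(g\cdot y) = \rg(g) = g\cdot \rg_Y(y)$ and since it trivially intertwines the anchor maps ($\rg_Y$ on~$Y$, the identity on $\Gr[U]^0$). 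Conversely, any $\Gr[U]$\nb-equivariant map $\varphi\colon Y\to\Gr[U]^0$ must intertwine the anchor maps, and as the anchor on $\Gr[U]^0$ is the identity this forces $\varphi = \rg_Y$. Hence $\rg_Y$ is the unique $\Gr[U]$\nb-equivariant map $Y\to\Gr[U]^0$, so the canonical action on $\Gr[U]^0$ is terminal.

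Finally I would invoke the defining property of a groupoid model. By \longref{Definition}{def:universal_action} and the remark following it, the natural bijections between $\Gr[U]$\nb-actions and $F$\nb-actions on all spaces assemble into an isomorphism between the category of $\Gr[U]$\nb-actions and the category of $F$\nb-actions. Transporting the canonical $\Gr[U]$\nb-action on $\Gr[U]^0$ through this isomorphism yields an $F$\nb-action on $\Gr[U]^0$, which is what we call the canonical $F$\nb-action. Since an isomorphism of categories carries terminal objects to terminal objects, this $F$\nb-action is terminal in the category of $F$\nb-actions, hence a universal $F$\nb-action in the sense of \longref{Definition}{def:universal_F-action}.

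I do not expect a genuine obstacle here; the only points needing care are spelling out the left-handed action axioms correctly for the canonical action on $\Gr[U]^0$ and being explicit that ``natural bijection'' in \longref{Definition}{def:universal_action} is intended in the strong sense of an isomorphism of concrete categories rather than a mere equivalence~-- but this is already asserted in the text immediately after that definition.
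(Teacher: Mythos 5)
Your proposal is correct and follows essentially the same route as the paper's proof: show that $\Gr[U]^0$ with its canonical action is terminal among $\Gr[U]$\nb-actions (the anchor map being the unique equivariant map to it), then transport terminality across the isomorphism of categories provided by the definition of a groupoid model. The extra care you take in spelling out the action axioms and the uniqueness argument is consistent with, and slightly more detailed than, the paper's own write-up.
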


\begin{proof}
  Give~\(\Gr[U]^0\) its canonical \(\Gr[U]\)\nb-action.  We claim
  that~\(\Gr[U]^0\) is terminal in the category of
  \(\Gr[U]\)\nb-actions.  Let~\(Y\) be a \(\Gr[U]\)\nb-space.  The
  anchor map \(s \colon Y\to \Gr[U]^0\) is
  \(\Gr[U]\)\nb-equivariant.  It is the only
  \(\Gr[U]\)\nb-equivariant map \(Y\to \Gr[U]^0\) because any
  \(\U\)\nb-equivariant map \(f \colon Y \to \U^0\) must intertwine
  the anchor maps to~\(\Gr[U]^0\).  Thus~\(\Gr[U]^0\) is terminal in
  the category of \(\Gr[U]\)\nb-actions.  By the definition of a
  groupoid model, we may turn a \(\Gr[U]\)\nb-action on a
  space~\(Y\) into an \(F\)\nb-action on~\(Y\), such that this
  defines an isomorphism of categories.  Therefore, it preserves
  terminal objects.
\end{proof}

Our next goal is to prove that groupoid models are unique up to
isomorphism.  This uses the following lemma.

\begin{lemma}
  \label{lem:groupoid_model_invariant_map}
  Let~\(\Gr[U]\) be a groupoid model for a diagram~\(F\).  The
  natural bijection between actions of \(F\) and~\(\Gr[U]\) on~\(Y\)
  also preserves invariant maps, that is, a continuous map
  \(Y\to Z\) for another space~\(Z\) is \(\Gr[U]\)\nb-invariant if
  and only if it is \(F\)\nb-invariant.
\end{lemma}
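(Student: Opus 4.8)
The plan is to reduce the statement about invariant maps to the already-established statement about equivariant maps, by recognising an $F$-invariant (resp.\ $\Gr[U]$-invariant) map $Y \to Z$ as an $F$-equivariant (resp.\ $\Gr[U]$-equivariant) map into a suitable trivial action on~$Z$. First I would observe that any space~$Z$ carries a trivial $\Gr[U]$-action: equip~$Z$ with the anchor map $Z \to \Gr[U]^0$ that factors through a chosen point, or better, work relative to the following cleaner construction. Given a continuous map $f\colon Y \to Z$, the product $Y \times Z$ —~no, more directly: I would use that $Z$ itself with the trivial action has the property that $\Gr[U]$-equivariant maps $Y \to Z$ are exactly the $\Gr[U]$-invariant maps, essentially by definition of the trivial action. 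The subtlety is that ``trivial action'' is not literally available here because actions come with anchor maps to $\Gr[U]^0$, so one cannot put an arbitrary space~$Z$ with a $\Gr[U]$-action without choosing an anchor map; the honest move is to pass through the universal action.

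Concretely, here is the route I would take. Let $\Omega = \Gr[U]^0$ with its canonical $F$-action, equivalently $\Gr[U]$-action, which is universal and terminal by \longref{Proposition}{pro:universal_F-action}. Given a space~$Z$, consider the space $Z \times \Omega$. It carries a $\Gr[U]$-action via the second coordinate (pull back the $\Gr[U]$-action on~$\Omega$ along the projection $\pr_2\colon Z\times\Omega\to\Omega$), hence, under the groupoid model bijection, a corresponding $F$-action; and the projection $\pr_1 \colon Z\times\Omega \to Z$ is both $\Gr[U]$-invariant and $F$-invariant. Now, for a continuous map $f\colon Y \to Z$ with $Y$ carrying an $F$-action (equivalently $\Gr[U]$-action): combining $f$ with the unique $F$-equivariant (equivalently $\Gr[U]$-equivariant) map $q\colon Y\to\Omega$ into the terminal object gives a map $(f,q)\colon Y \to Z\times\Omega$. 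The key observation is that $(f,q)$ is $F$-equivariant if and only if $f$ is $F$-invariant, and likewise $(f,q)$ is $\Gr[U]$-equivariant if and only if $f$ is $\Gr[U]$-invariant: in both cases the second-coordinate condition is automatic because~$q$ is equivariant into~$\Omega$, so the only content is that the first coordinate satisfies $f(\gamma\cdot y)=f(y)$ (for $F$) resp.\ $f(u\cdot y)=f(y)$ (for $\Gr[U]$), which is exactly invariance. Since the groupoid model bijection is an isomorphism of categories, $(f,q)$ is $F$-equivariant if and only if it is $\Gr[U]$-equivariant; chaining the three equivalences yields that $f$ is $F$-invariant if and only if it is $\Gr[U]$-invariant, which is the claim.

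The main obstacle is purely bookkeeping: one must check that the $\Gr[U]$-action on $Z\times\Omega$ pulled back along $\pr_2$ really does correspond, under the groupoid model bijection, to the $F$-action on $Z\times\Omega$ pulled back along $\pr_2$ from the canonical $F$-action on~$\Omega$. This follows from naturality of the bijection applied to the $\Gr[U]$-equivariant map $\pr_2\colon Z\times\Omega\to\Omega$ together with the (trivial) fact that pulling back an action along a continuous map to~$\Omega$ is a functorial operation compatible with the isomorphism of categories; but one should state it carefully. The remaining verifications —~that $(f,q)$ has the claimed equivariance-versus-invariance dichotomy, and that $q$ exists and is equivariant by \longref{Proposition}{pro:universal_F-action}~— are routine. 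I would also remark that one can shortcut the argument slightly by noting that ``$f\colon Y\to Z$ is $F$-invariant'' is equivalent to ``$f$ factors $F$-equivariantly through $Z$ with \emph{some} $F$-action for which $f$ becomes equivariant'', but the explicit $Z\times\Omega$ construction above makes the naturality argument cleanest and avoids any choices.
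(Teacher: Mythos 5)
Your proof is correct, but it takes a genuinely different route from the paper's. The paper reduces invariance to equivariance by restricting the action to the fibres \(f^{-1}(z)\): a map is invariant if and only if the action restricts to each fibre and the inclusions \(f^{-1}(z)\hookrightarrow Y\) are equivariant, and naturality of the bijection transfers this characterisation between \(F\) and \(\Gr[U]\). You instead promote the target: using the terminal object \(\Omega=\Gr[U]^0\) from \longref{Proposition}{pro:universal_F-action}, you equip \(Z\times\Omega\) with the second-coordinate action and characterise invariance of \(f\) as equivariance of \((f,q)\colon Y\to Z\times\Omega\). Both arguments hinge on the same naturality, and both are essentially the same length; the paper's version needs no auxiliary action space and no appeal to the universal action, while yours packages the reduction into a single equivariant map rather than a family of fibre inclusions. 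One point in your bookkeeping step is not quite justified as stated: naturality applied to \(\pr_2\colon Z\times\Omega\to\Omega\) only shows that the second coordinate of \(\gamma\cdot(z,\omega)\) in the induced \(F\)\nobreakdash-action is \(\gamma\cdot\omega\); it does not show that the first coordinate is~\(z\), which is exactly what you need to conclude that \(F\)\nobreakdash-equivariance of \((f,q)\) amounts to \(f(\gamma\cdot y)=f(y)\). The fix is to apply naturality instead to the slice inclusions \(\Omega\cong\{z\}\times\Omega\hookrightarrow Z\times\Omega\) for \(z\in Z\), which are \(\Gr[U]\)\nobreakdash-equivariant and jointly surjective, and whose \(F\)\nobreakdash-equivariance forces \(\gamma\cdot(z,\omega)=(z,\gamma\cdot\omega)\); this pins down the induced \(F\)\nobreakdash-action as the product action. (Amusingly, this repair reintroduces a fibrewise argument in the spirit of the paper's proof, now applied to \(\pr_1\colon Z\times\Omega\to Z\) rather than to \(f\) itself.) With that adjustment your argument is complete.
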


\begin{proof}
  Let \(g\colon Y\to Z\) be a continuous map.  If~\(g\) is
  \(F\)\nb-invariant, then the \(F\)\nb-action restricts to an
  action on~\(g^{-1}(z)\) for each \(z\in Z\), and the inclusion map
  \(g^{-1}(z) \hookrightarrow Y\) is \(F\)\nb-equivariant.
  Conversely, if these inclusion maps for \(z\in Z\) are
  \(F\)\nb-equivariant for some \(F\)\nb-actions on \(g^{-1}(z)\),
  then~\(g\) is \(F\)\nb-invariant.  Since the bijection between
  actions of \(F\) and~\(\Gr[U]\) is natural, the action of~\(F\)
  on~\(g^{-1}(z)\) induces an action of~\(\Gr[U]\), and the
  inclusion map \(g^{-1}(z) \hookrightarrow Y\) is
  \(\Gr[U]\)\nb-equivariant.  Then its image is
  \(\Gr[U]\)\nb-invariant, and then~\(g\) is
  \(\Gr[U]\)\nb-invariant.  This argument may be reversed.
\end{proof}

\begin{proposition}
  \label{pro:groupoid_model}
  Let \(\Gr[U]\) and~\(\Gr[U]'\) be two groupoid models for
  \(F\)\nb-actions.  There is a unique groupoid isomorphism
  \(\Gr[U]\cong\Gr[U]'\) that is compatible with the equivalence
  between actions of \(\Gr[U]\), \(\Gr[U]'\) and~\(F\).
\end{proposition}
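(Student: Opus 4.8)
The plan is to reconstruct the étale groupoid \(\Gr[U]\) from its category of actions, equipped with the forgetful functor to topological spaces, in a way that refers only to this categorical data. The two natural bijections supplied by the groupoid model property for \(\Gr[U]\) and for \(\Gr[U]'\) assemble into an isomorphism of categories \(\Phi\) from the category of \(\Gr[U]\)-actions to the category of \(\Gr[U]'\)-actions that commutes with the forgetful functors to \(\mathsf{Space}\) and, by \longref{Lemma}{lem:groupoid_model_invariant_map}, with the notions of equivariant and of invariant map. It then suffices to recover \(\Gr[U]\) functorially from this pair of data and to check that \(\Phi\) transports it to \(\Gr[U]'\); the resulting homeomorphism of arrow spaces is the desired isomorphism, and its construction makes compatibility and uniqueness transparent.

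First I would recover the object space. By \longref{Proposition}{pro:universal_F-action}, applied to both groupoid models, \(\Gr[U]^0\) and \((\Gr[U]')^0\) carry the universal \(F\)-action, which is a terminal object in the category of \(F\)-actions. Hence \(\Phi\) sends the terminal object \(\Gr[U]^0\) to the terminal object \((\Gr[U]')^0\), and, since \(\Phi\) commutes with the forgetful functors, this gives a canonical homeomorphism \(\Gr[U]^0\congto (\Gr[U]')^0\). Moreover, as in the proof of \longref{Proposition}{pro:universal_F-action}, the anchor map of any action \(Y\) is the unique morphism from \(Y\) to the terminal object; so \(\Phi\) respects anchor maps and restricts to an isomorphism between \(\mathsf{Space}/\Gr[U]^0\) and \(\mathsf{Space}/(\Gr[U]')^0\) under which the forgetful functors to these slice categories correspond.

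Next I would recover the arrow space together with the range and source maps, the unit embedding and the multiplication, by a Tannaka-type argument internal to the category of actions. The forgetful functor from \(\Gr[U]\)-actions to \(\mathsf{Space}/\Gr[U]^0\) has a left adjoint \(L\), which sends \(p\colon Z\to\Gr[U]^0\) to the induced left action on \(\Gr[U]\times_{\s,\Gr[U]^0,p} Z\); this adjoint depends only on \(\Gr[U]^0\). The identity correspondence \(\Gr[U]\), viewed as a left \(\Gr[U]\)-space with anchor \(\rg\), is exactly \(L\) applied to the terminal object \(\id\colon\Gr[U]^0\to\Gr[U]^0\) of \(\mathsf{Space}/\Gr[U]^0\). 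Consequently \(\Phi\) carries this object to the analogous \(\Gr[U]'\)-space \(\Gr[U]'\), producing a homeomorphism \(\phi\colon\Gr[U]\congto\Gr[U]'\) lying over the homeomorphism of object spaces. Within this picture, \(\rg\) is the structure map of \(L(\id_{\Gr[U]^0})\) in \(\mathsf{Space}/\Gr[U]^0\); the unit embedding \(\Gr[U]^0\hookrightarrow\Gr[U]\) is the unit of the adjunction; the multiplication \(\Gr[U]\times_{\s,\Gr[U]^0,\rg}\Gr[U]\to\Gr[U]\) is the action map of the \(\Gr[U]\)-space \(L(\id_{\Gr[U]^0})\), that is, the forgetful image of the counit of the adjunction at that object; and \(\s\colon\Gr[U]\to\Gr[U]^0\) is the orbit space projection of \(L(\id_{\Gr[U]^0})\), after the canonical identification of its orbit space with \(\Gr[U]^0\) via the unit embedding. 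Since \(\Phi\) commutes with all of these ingredients --~terminal objects, the slice adjunction \(L\) together with its unit and counit, and, crucially, orbit space projections, which are the universal invariant maps and are preserved by \longref{Lemma}{lem:groupoid_model_invariant_map}~-- the homeomorphism \(\phi\) intertwines \(\rg\), \(\s\), the unit embeddings and the multiplications, hence is an isomorphism of étale groupoids (inversion, being determined by the rest of the structure, is intertwined automatically). Compatibility of \(\phi\) with the identifications of action categories holds by construction. For uniqueness, if \(\phi\) and \(\psi\) are two compatible groupoid isomorphisms, then \(\psi\circ\phi^{-1}\) is a groupoid automorphism of \(\Gr[U]'\) that induces the identity functor on the category of \(\Gr[U]'\)-actions; evaluating this on the terminal object forces it to be the identity on objects, and then evaluating on the identity correspondence forces it to be the identity.

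I expect the main obstacle to be this Tannaka-type reconstruction, and within it the recovery of the source map \(\s\): unlike \(\rg\), the unit embedding and the multiplication, \(\s\) is not a morphism in the category of actions, and becomes accessible only as the orbit space projection of \(L(\id_{\Gr[U]^0})\) --~which is precisely why \longref{Lemma}{lem:groupoid_model_invariant_map} is proved beforehand. A secondary point requiring care is verifying that the left adjoint \(L\) exists and that it, its unit and counit, and the associated orbit projections are all characterised by universal properties inside the pair consisting of the category of \(\Gr[U]\)-actions and its forgetful functor, so that an abstract isomorphism of such pairs must carry all of them along.
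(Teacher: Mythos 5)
Your proposal is correct and follows essentially the same route as the paper's proof: identify the object spaces via the terminal object of the category of actions, then recover the arrow space, unit, range map and multiplication from the left adjoint \(L\) of the forgetful functor to \(\mathsf{Space}/\Gr[U]^0\) together with its unit and counit, applied to \(\id_{\Gr[U]^0}\). The only (harmless) divergence is in recovering the source map: the paper uses naturality of \(L\) with respect to the point inclusions \(\{x\}\hookrightarrow\Gr[U]^0\), whereas you characterise \(\s\) as the orbit space projection of \(L(\id_{\Gr[U]^0})\), which is indeed preserved because \longref{Lemma}{lem:groupoid_model_invariant_map} guarantees that invariant maps, and hence the universal invariant map, correspond under the equivalence.
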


\begin{proof}
  This follows from
  \longref{Lemma}{lem:groupoid_model_invariant_map} and
  \cite{Meyer-Zhu:Groupoids}*{Propositions 4.23 and~4.24}.  For the
  convenience of the reader, we prove this directly, without
  mentioning the category of groupoid actors.  The canonical actions
  of \(\Gr[U]\) and~\(\Gr[U]'\) on their object spaces correspond to
  actions of~\(F\), which are both universal.  Since any two
  terminal objects are isomorphic with a unique isomorphism, there
  is a unique \(F\)\nb-equivariant homeomorphism
  \(\Gr[U]^0 \cong (\Gr[U]')^0\).  To simplify notation, we identify
  \(\Gr[U]^0\) and \((\Gr[U]')^0\) through this isomorphism.

  A \(\Gr[U]\)\nb-action on a space~\(Y\) contains an anchor map
  \(Y\to \Gr[U]^0\).  Forgetting the rest of the action gives a
  forgetful functor from the category of \(\Gr[U]\)\nb-actions to
  the category of spaces over~\(\Gr[U]^0\).  This functor has a left
  adjoint, namely, the map that sends a space~\(Z\) with a
  continuous map \(\varrho\colon Z\to\Gr[U]^0\) to
  \(\Gr[U]\times_{\s,\Gr[U]^0,\varrho} Z\) with the obvious left
  \(\Gr[U]\)\nb-action,
  \(\gamma_1\cdot (\gamma_2,z) \defeq (\gamma_1\cdot \gamma_2,z)\).
  Being left adjoint to the forgetful functor means that
  \(\Gr[U]\)\nb-equivariant continuous maps
  \(\psi\colon \Gr[U]\times_{\s,\Gr[U]^0,\varrho} Z \to Y\) for a
  \(\Gr[U]\)\nb-space~\(Y\) are in natural bijection with maps
  \(\varphi\colon Z \to Y\) that satisfy
  \(r\circ \varphi = \varrho\).  Under this natural bijection,
  \(\psi\) corresponds to the map \(\psi^\flat\colon Z\to Y\),
  \(z\mapsto \psi(\varrho(z),y)\), whereas \(\varphi\colon Z\to Y\)
  corresponds to the map
  \(\varphi^\#\colon \Gr[U]\times_{\s,\Gr[U]^0,\varrho} Z \to Y\)
  defined by \(\varphi^\#(\gamma,z) \defeq \gamma\cdot \varphi(z)\).
  We give~\(\Gr[U]\) the left multiplication action of~\(\Gr[U]\),
  whose anchor map is~\(\rg\).  The adjunct of the unit map
  \(u\rg\colon \Gr[U] \to \Gr[U]\) is a \(\Gr[U]\)\nb-equivariant
  isomorphism
  \(u^\#\colon \Gr[U]\times_{\s,\Gr[U]^0,\varrho} \Gr[U]^0 \congto
  \Gr[U]\).  The identity map on~\(\Gr[U]\) and the map
  \(\gamma\mapsto \rg(\gamma)\), are maps over~\(\Gr[U]^0\).  Their
  adjuncts are \(\id^\#(\gamma,\eta) = \gamma\cdot \eta\), the
  multiplication map, and
  \((u\rg)^\#(\gamma,\eta) = \gamma\cdot \rg(\eta) = \gamma\), the
  first coordinate projection.

  Since \(\Gr[U]\) and~\(\Gr[U]'\) are both groupoid models
  for~\(F\), the categories of \(\Gr[U]\)\nb-actions and
  \(\Gr[U]'\)\nb-actions are isomorphic to the category of
  \(F\)\nb-actions.  It follows that there is a unique isomorphism
  between the left adjoint functors
  \(\Gr[U]\times_{\s,\Gr[U]^0,\varrho} \blank\) and
  \(\Gr[U]'\times_{\s,\Gr[U]^0,\varrho} \blank\) that preserves the
  adjunction \(\varphi\mapsto \varphi^\#\).  Plugging
  in~\(\Gr[U]^0\), this isomorphism gives an \(F\)\nb-equivariant
  homeomorphism \(h\colon \Gr[U] \congto \Gr[U]'\).  Since~\(h\) is
  \(F\)\nb-equivariant, it intertwines the range maps on \(\Gr[U]\)
  and~\(\Gr[U]'\).  If \(x\in\Gr[U]^0\), then the inclusion map
  makes~\(\{x\}\) a space over~\(\Gr[U]^0\), and the inclusion
  \(\{x\} \hookrightarrow \Gr[U]^0\) is a map of spaces
  over~\(\Gr[U]^0\).  By naturality, the homeomorphism~\(h\)
  intertwines the inclusion of
  \(\Gr[U]\times_{\s,\Gr[U]^0,x} \{x\} =
  \setgiven{\gamma\in\Gr[U]}{\s(\gamma)=x}\) into~\(\Gr[U]\) and the
  corresponding map for~\(\Gr[U]'\).  That is, \(h\)
  intertwines the source maps to~\(\Gr[U]^0\) as well.

  Next, the natural isomorphism between
  \(\Gr[U]\times_{\s,\Gr[U]^0,\varrho} \blank\) and
  \(\Gr[U]'\times_{\s,\Gr[U]^0,\varrho} \blank\) specialises to an
  \(F\)\nb-equivariant homeomorphism
  \(h_2\colon \Gr[U]\times_{\s,\Gr[U]^0,\rg} \Gr[U] \cong
  \Gr[U]'\times_{\s,\Gr[U]^0,\rg} \Gr[U]'\).  The compatibility with
  the adjunction \(\varphi\mapsto \varphi^\#\) for the maps \(\id\)
  and~\(u\rg\) discussed above shows that~\(h_2\) intertwines the
  multiplication maps and the first coordinate projections.  Since
  the inclusion map \(\{\eta\} \hookrightarrow \Gr[U]\) is a map of
  spaces over~\(\Gr[U]^0\), naturality implies that
  \(h_2(\gamma,\eta)\) has the form \((\gamma',h(\eta))\).  Since
  the first coordinate projection is intertwined, we get
  \(h_2(\gamma,\eta)=(h(\gamma),h(\eta))\).  Thus~\(h\) is a
  groupoid isomorphism.
\end{proof}

\subsection{Disjoint unions and graded groupoids as groupoid models}

\begin{proposition}
  \label{pro:union_groupoid}
  Let~\(\Cat\)
  be a set viewed as a category with only identity arrows.  A morphism
  \(F\colon \Cat\to\Grcat\)
  is equivalent to a collection of groupoids \((\Gr_x)_{x\in\Cat^0}\).
  The disjoint union groupoid \(\bigsqcup_{x\in \Cat^0} \Gr_x\)
  is a groupoid model for \(F\)\nb-actions.
\end{proposition}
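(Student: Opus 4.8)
The plan is to unwind the definitions on both sides and observe that they produce literally the same data, so that all the ``natural bijections'' required by \longref{Definition}{def:universal_action} become identities after bookkeeping. First I would record that, because~\(\Cat\) has only identity arrows, a diagram \(F\colon \Cat\to\Grcat\) in the sense of \longref{Proposition}{pro:diagrams_in_Grcat} carries no information beyond the family \((\Gr_x)_{x\in\Cat^0}\): the only correspondences \(\Bisp_g\) are the identity correspondences by~\ref{en:diagrams_in_Grcat_1}, the only isomorphisms \(\mu_{g,h}\) are the canonical unitors by~\ref{en:diagrams_in_Grcat_2}, and the coherence condition~\ref{en:diagrams_in_Grcat_3} holds automatically since every arrow is an identity. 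Next I would check the routine fact that \(\Gr[U]\defeq\bigsqcup_{x\in\Cat^0}\Gr_x\) is an étale groupoid with \(\Gr[U]^0=\bigsqcup_x\Gr_x^0\): source and range are local homeomorphisms because they are so summandwise and each \(\Gr_x^0\) is open in \(\Gr[U]^0\), while multiplication, inversion and units are defined summandwise since arrows in different summands are never composable.

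The core step is to exhibit, for each space~\(Y\), a bijection between \(\Gr[U]\)\nb-actions on~\(Y\) and \(F\)\nb-actions on~\(Y\). Given a \(\Gr[U]\)\nb-action, its anchor map \(Y\to\Gr[U]^0=\bigsqcup_x\Gr_x^0\) determines a clopen partition \(Y=\bigsqcup_x Y_x\) with \(Y_x\) the preimage of~\(\Gr_x^0\); since only arrows of~\(\Gr_x\) can act on points of~\(Y_x\) and send them back into~\(Y_x\), the action breaks up into a family of left \(\Gr_x\)\nb-actions on the~\(Y_x\), and this assembly process is reversible. It then remains to match this data with \longref{Definition}{def:diagram_dynamical_system}, in which the only maps \(\alpha_g\) present are the \(\alpha_x\) for identity arrows: by \longref{Lemma}{lem:action_pieces} the axioms~\ref{en:diagram_dynamical_system1} say exactly that each~\(\alpha_x\) is a left \(\Gr_x\)\nb-action, so the only points to verify are that~\ref{en:diagram_dynamical_system2} and the ``open, continuous, surjective'' requirement on~\(\alpha_x\) are automatic for any left action of an étale groupoid. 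Both are short: surjectivity follows from \(\rg(y)\cdot y=y\); openness follows by covering~\(\Gr_x\) by slices~\(U\) and noting that \((u,y)\mapsto u\cdot y\) restricted to \(U\times_{\s,\Gr_x^0,\rg}Y_x\) is a homeomorphism onto the open set \(\rg^{-1}(\rg(U))\); and~\ref{en:diagram_dynamical_system2} holds with \(\eta\defeq\braket{\gamma}{\gamma'}=\gamma^{-1}\gamma'\).

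Finally I would check naturality. A continuous \(\varphi\colon Y\to Y'\) is \(\Gr[U]\)\nb-equivariant if and only if it intertwines the anchor maps to~\(\Gr[U]^0\) --~equivalently, \(\varphi(Y_x)\subseteq Y'_x\) and \(\rg\circ\varphi=\rg\) on each~\(Y_x\)~-- and satisfies \(\varphi(\gamma\cdot y)=\gamma\cdot\varphi(y)\) for composable \(\gamma\in\Gr[U]\), \(y\in Y\); since the only arrows of~\(\Cat\) are identities, this is precisely \(F\)\nb-equivariance in the sense of \longref{Definition}{def:F-invariant}. Hence the bijection of action sets is natural in~\(Y\), which by \longref{Definition}{def:universal_action} is exactly the assertion that~\(\Gr[U]\) is a groupoid model for \(F\)\nb-actions. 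I do not expect a genuine obstacle: the whole argument is a translation of definitions, and the only place demanding care is confirming that the two ``automatic'' conditions --~openness/surjectivity of the action map and condition~\ref{en:diagram_dynamical_system2}~-- really hold for every action of an étale groupoid, so that no \(F\)\nb-actions are gained or lost in passing to~\(\Gr[U]\).
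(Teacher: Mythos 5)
Your proposal is correct and follows essentially the same route as the paper, which simply observes that an \(F\)\nb-action is a clopen partition \(Y=\bigsqcup_x Y_x\) with a \(\Gr_x\)\nb-action on each piece, and that this is literally the same data as an action of the disjoint union groupoid. Your extra verifications (that openness/surjectivity of the action map and condition~\ref{en:diagram_dynamical_system2} are automatic for any action of an étale groupoid, with \(\eta=\gamma^{-1}\gamma'\)) are correct and merely make explicit what the paper leaves implicit.
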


\begin{proof}
  An \(F\)\nb-action on a space~\(Y\) is a decomposition into clopen
  subsets, \(Y = \bigsqcup_{x\in\Cat^0} Y_x\), with a
  \(\Gr_x\)\nb-action on~\(Y_x\) for each \(x\in\Cat^0\).  This is
  exactly the same as an action of the disjoint union groupoid
  \(\bigsqcup_{x\in \Cat^0} \Gr_x\).  That is, \(\bigsqcup_{x\in \Cat^0}
  \Gr_x\) is a groupoid model for \(F\)\nb-actions.
\end{proof}

\begin{proposition}
  \label{pro:group_action_limit}
  Let~\(\Cat\) be a group.  Turn a diagram \(F\colon \Cat\to\Grcat\)
  into a \(\Cat\)\nb-graded groupoid~\(\Gr[L]\) as
  in~\longref{Section}{sec:group_action}.  Then~\(\Gr[L]\) is a
  groupoid model for \(F\)\nb-actions.
\end{proposition}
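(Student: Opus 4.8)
The plan is to unwind both notions of action and see that they consist of exactly the same data. Recall that $\Cat$ has a single object, so the partition in an $F$\nb-action is trivial, $Y = Y_*$; that $\Gr[L]^0 = \Gr^0$ and $\Gr[L]_1 = \Gr$; and that, by the construction of~$\Gr[L]$ from~$F$ recalled in \longref{Section}{sec:group_action}, we have $\Gr[L]_g = \Bisp_g$ and the multiplication map $\Gr[L]_g \times_{\s,\Gr^0,\rg} \Gr[L]_h \to \Gr[L]_{g h}$, $(\gamma,\eta)\mapsto \gamma\eta$, is the map in~\eqref{eq:diagram_as_mult} realising~$\mu_{g,h}$. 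A $\Gr[L]$\nb-action on a space~$Y$ consists of an anchor map $\rg_Y\colon Y\to \Gr^0$ together with a continuous multiplication map $\Gr[L]\times_{\s,\Gr^0,\rg_Y} Y\to Y$; since $\Gr[L] = \bigsqcup_{g\in\Cat}\Gr[L]_g$ is a disjoint union of clopen subsets, this is the same as a family of continuous maps $\alpha_g\colon \Gr[L]_g\times_{\s,\Gr^0,\rg_Y} Y\to Y$. I will show that a pair $(\rg_Y,(\alpha_g)_g)$ is a $\Gr[L]$\nb-action if and only if it is an $F$\nb-action with~$\rg_Y$ in the role of~$\rg$, and that this bijection is natural.

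First I would check that a $\Gr[L]$\nb-action satisfies the conditions of \longref{Definition}{def:diagram_dynamical_system}. Condition~\ref{en:diagram_dynamical_system1} is exactly the compatibility of~$\rg_Y$ with the range map and the associativity of the $\Gr[L]$\nb-action, using that multiplication in~$\Gr[L]$ restricted to $\Gr[L]_g\times\Gr[L]_h$ is the map~\eqref{eq:diagram_as_mult}. Condition~\ref{en:diagram_dynamical_system2} holds because~$\Gr[L]$ is a groupoid: if $\gamma\cdot y = \gamma'\cdot y'$ with $\gamma,\gamma'\in\Gr[L]_g$, then (noting $\rg(\gamma)=\rg(\gamma')$) the element $\eta\defeq \gamma^{-1}\gamma'\in\Gr[L]_{g^{-1}g} = \Gr$ satisfies $\gamma' = \gamma\eta$ and $\eta\cdot y' = \gamma^{-1}\cdot(\gamma'\cdot y') = \gamma^{-1}\cdot(\gamma\cdot y) = y$. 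It remains to see that each~$\alpha_g$ is open and surjective. The assignment $(\gamma,y)\mapsto (\gamma,\gamma\cdot y)$ is a homeomorphism from $\Gr[L]_g\times_{\s,\Gr^0,\rg_Y} Y$ onto $\Gr[L]_g\times_{\rg,\Gr^0,\rg_Y} Y$, with inverse $(\gamma,z)\mapsto(\gamma,\gamma^{-1}z)$, and it intertwines~$\alpha_g$ with the coordinate projection $\Gr[L]_g\times_{\rg,\Gr^0,\rg_Y} Y\to Y$. This projection is the base change along~$\rg_Y$ of the range map $\rg\colon\Gr[L]_g\to\Gr^0$, which is open because~$\Gr[L]$ is étale and~$\Gr[L]_g$ is open in it, and surjective because~$\Gr[L]_g$ is a groupoid equivalence; hence~$\alpha_g$ is open and surjective. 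This last point is the only step with real content; everything else is bookkeeping, powered by the fact that the grading of~$\Gr[L]$ is set up precisely so that its multiplication encodes the structure isomorphisms~$\mu_{g,h}$.

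Conversely, given an $F$\nb-action $(\rg,(\alpha_g)_g)$ on~$Y$, I would assemble the~$\alpha_g$ into a single map $\Gr[L]\times_{\s,\Gr^0,\rg} Y\to Y$ restricting to~$\alpha_g$ on the clopen piece $\Gr[L]_g\times_{\Gr^0} Y$; it is continuous, and the left groupoid action axioms are precisely~\ref{en:diagram_dynamical_system1} together with \longref{Lemma}{lem:action_pieces}, which gives that~$\alpha_1$ is the attached left $\Gr$\nb-action and that $\rg(y)\in\Gr^0=\Gr[L]^0$ acts trivially on~$y$. So this is a $\Gr[L]$\nb-action, and the two constructions are visibly mutually inverse, since each merely repackages the family $(\rg,(\alpha_g)_g)$. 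Finally, naturality is immediate from the same description: a continuous map $Y\to Y'$ is $\Gr[L]$\nb-equivariant if and only if it intertwines the anchor maps and all of the~$\alpha_g$, which, the partition condition in \longref{Definition}{def:F-invariant} being vacuous, is exactly $F$\nb-equivariance. Hence the categories of $\Gr[L]$\nb-actions and $F$\nb-actions are isomorphic over the category of spaces, so~$\Gr[L]$ is a groupoid model for $F$\nb-actions.
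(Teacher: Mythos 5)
Your proof is correct and takes essentially the same route as the paper, whose entire argument is the one-line observation that an $F$\nobreakdash-action is equivalent to a $\Gr[L]$\nobreakdash-action by restricting to the graded pieces; you have simply carried out that restriction in detail. The one substantive point you rightly isolate and verify --- that openness and surjectivity of the maps $\alpha_g$ come for free from the groupoid structure of $\Gr[L]$, via the homeomorphism $(\gamma,y)\mapsto(\gamma,\gamma\cdot y)$ and the fact that $\rg|_{\Gr[L]_g}$ is an open surjection --- is left implicit in the paper.
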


\begin{proof}
  An \(F\)\nb-action on a space~\(Y\) is equivalent to an
  \(\Gr[L]\)\nb-action by restricting the latter to the pieces of
  different degree.
\end{proof}

\subsection{The universal \texorpdfstring{\((m,n)\)}{(m,n)}-dynamical system}
\label{sec:universal_nm}

Let~\(\Cat\) be the path category of the directed graph
\(1 \rightrightarrows 2\) (compare
\longref{Section}{sec:path_categories}).  That is, \(\Cat\) has two
objects~\(1,2\), two parallel nonidentity arrows
\(h,v\colon 1\rightrightarrows 2\), and the identity arrows on \(1\)
and~\(2\).  Consider a diagram in~\(\Grcat\) where \(1\) and~\(2\)
both go to the one-point groupoid~\(\mathds{1}\).  The arrows
\(h,v\) go to correspondences \(\mathds{1}\leftarrow \mathds{1}\).
Such correspondences are discrete sets \(\Bisp_h,\Bisp_v\) because
the source map is a local homeomorphism to the one-point set.  These
discrete sets are finite if and only if the two correspondences are
proper.  As a result, for each \(m,n\in\N\) there is, up to
isomorphism, a unique diagram~\(E_{m,n}\) in~\(\Grcat_\prop\) where
\(\Gr_1 = \Gr_2 = \mathds{1}\) and \(\abs{\Bisp_h}=n\),
\(\abs{\Bisp_v} = m\).

By definition, an \(E_{m,n}\)\nb-action on a space~\(Y\) is a
partition \(Y = Y_1\sqcup Y_2\) into two clopen
subsets \(Y_1,Y_2\subseteq Y\) with homeomorphisms
\[
\alpha_h\colon \{1,\dotsc,n\} \times Y_1 \congto Y_2,
\qquad
\alpha_v\colon \{1,\dotsc,m\} \times Y_1 \congto Y_2;
\]
here we identify \(\Bisp_h \Grcomp_{\mathds{1}} Y_1\) with the
product \(\Bisp_h \times Y_1\).  The map~\(\alpha_h\) is equivalent
to homeomorphisms \(h_1,\dotsc,h_n\) from~\(Y_1\) onto clopen
subsets that partition~\(Y_2\), that is,
\(Y_2 = \bigsqcup_{i=1}^n h_i(Y_1)\).  Similarly, \(\alpha_v\) is
equivalent to homeomorphisms \(v_1,\dotsc,v_m\) from~\(Y_1\) onto
clopen subsets of~\(Y_2\) that partition~\(Y_2\), that is,
\(Y_2 = \bigsqcup_{i=1}^m v_i(Y_1)\).  If \(n=1\) or \(m=1\), we may
use one of the maps \(v\) or~\(h\) to identify \(Y_1\) and~\(Y_2\),
and then we are left with a homeomorphism \(A\times Y \congto Y\)
(as in \longref{Lemma}{lem:action_self-similar_group} in the case
where~\(\Gr\) is the trivial group).  This case is much simpler than
the case \(n,m\ge2\).

\begin{definition}
  Let~\(Y\) be a topological space.  A \emph{partial homeomorphism}
  of~\(Y\) is a homeomorphism between two open subsets of~\(Y\).
  These are composed by the obvious formula: if \(f,g\) are partial
  homeomorphisms of~\(Y\), then \(f g\) is the partial homeomorphism
  of~\(Y\) that is defined on \(y\in Y\) if and only if \(g(y)\) and
  \(f(g(y))\) are defined, and then \((f g)(y) \defeq f(g(y))\).
  If~\(f\) is a partial homeomorphism of~\(Y\), we let~\(f^*\) be
  its ``partial inverse'', defined on the image of~\(f\) by
  \(f^*(f(y)) = y\) for all \(y\) in the domain of~\(f\).
\end{definition}

The maps \(h_i\) and~\(v_i\) above are examples of partial
homeomorphisms of the space~\(Y\).  It will be crucial later to
describe actions of general diagrams through certain partial
homeomorphisms.

A compact Hausdorff space~\(Y\) with maps \(h_i,v_j\) as above is
called an \emph{\((m,n)\)-\alb{}dynamical system} by Ara, Exel and
Katsura~\cite{Ara-Exel-Katsura:Dynamical_systems}.  We allow~\(Y\)
to be any topological space.  It will be shown
in~\cite{Ko-Meyer:Groupoid_models} that the universal
\((m,n)\)-\alb{}dynamical system takes place on a compact Hausdorff
space.  Thus it makes no difference whether we restrict to actions
on compact Hausdorff spaces or allow actions on arbitrary
topological spaces.

We follow~\cite{Ara-Exel-Katsura:Dynamical_systems}
in studying \(E_{m,n}\)\nb-actions through partial group actions,
which are a special class of actions of inverse semigroups by
partial homeomorphisms.  Let~\(Y\) be an \(E_{m,n}\)\nb-action.
There is a unique partial action~\(\vartheta\) of the free
group~\(\mathbb{F}_{n+m}\) on~\(Y\) where the generators act by
\(h_1,\dotsc,h_n,v_1,\dotsc,v_m\).  To construct it, write an
element \(g\in\mathbb{F}_{n+m}\) as a reduced word in the generators
and their inverses.  Let the generators and their inverses act by
the partial homeomorphisms
\(h_1^{\pm1},\dotsc,h_n^{\pm1}, v_1^{\pm1},\dotsc,v_m^{\pm1}\), and
compose the partial homeomorphisms for the letters of the reduced
word expressing~\(g\) to get~\(\vartheta_g\).  For the empty word,
\(\vartheta_\emptyset\) is the identity on~\(Y\).  The partial
homeomorphism \(\vartheta_g \vartheta_{g^{-1}}\) is contained in the
identity for each \(g\in\mathbb{F}_{n+m}\), and
\(\vartheta_g\vartheta_h = \vartheta_{g h}\) if \(g\), \(h\)
and~\(g h\) are reduced words.  This implies that~\(\vartheta\) is a
partial action of~\(\mathbb{F}_{n+m}\) on~\(Y\).  The construction
shows that the domain of~\(\vartheta_g\) is clopen for each
\(g\in\mathbb{F}_{n+m}\).

By construction, a map between \(E_{m,n}\)\nb-actions is
equivariant if and only if it is equivariant for the partial
homeomorphisms \(h_i,v_j\) on~\(Y\), if and only if it is
equivariant for the partial actions of~\(\mathbb{F}_{n+m}\).
There are,
however, more partial actions of~\(\mathbb{F}_{n+m}\)
than \(E_{m,n}\)\nb-actions.
A partial action~\(\varrho\)
of~\(\mathbb{F}_{n+m}\)
that comes from an \(E_{m,n}\)\nb-action
has the following extra properties:
\begin{enumerate}
\item \label{en:partial_action_from_nm_1}%
  \(\varrho_g\varrho_h = \varrho_{g h}\) if \(g\), \(h\)
  and~\(g h\) are reduced words;
\item \label{en:partial_action_from_nm_2}%
  \(\varrho_{a_1}^{-1} \varrho_{a_1} = \varrho_{a_i}^{-1}
  \varrho_{a_i}\) for \(i=2,\dotsc,n+m\), that is, the partial
  homeomorphisms~\(\varrho_{a_i}\) for the generators
  \(a_1,\dotsc,a_{n+m}\) all have the same domain, which we
  call~\(Y_1\);
\item \label{en:partial_action_from_nm_3}%
  \(\varrho_{a_i}\varrho_{a_j}=\emptyset\) for all
  \(1\le i,j\le n+m\), that is, \(Y_1\) is disjoint from the
  codomain of~\(\varrho_{a_j}\);
\item \label{en:partial_action_from_nm_4}%
  \(\varrho_{a_i}^{-1}\varrho_{a_j} = \emptyset\) if \(i\neq j\) and
  either \(1\le i,j\le n\) or \(n+1\le i,j\le n+m\), that is, the
  codomains of~\(\varrho_{a_i}\) for \(i=1,\dotsc,n\) are disjoint
  and the codomains of~\(\varrho_{a_i}\) for \(i=n+1,\dotsc,n+m\)
  are disjoint;
\item \label{en:partial_action_from_nm_5}%
  the (disjoint) union of~\(Y_1\) and the codomains of
  \(\varrho_{a_i}\) for \(i=1,\dotsc,n\) is~\(Y\), and so is the
  (disjoint) union of~\(Y_1\) and the codomains of \(\varrho_{a_i}\)
  for \(i=n+1,\dotsc,n+m\).
\end{enumerate}
Conversely, any partial action of~\(\mathbb{F}_{n+m}\) with these
extra properties comes from an \(E_{m,n}\)\nb-action.  The first
property says that the action is determined by what it does on the
generators.  The remaining conditions say that these generators act
by homeomorphisms from~\(Y_1\) to clopen subsets of \(Y_2
\defeq Y\setminus Y_1\) that partition~\(Y_2\) in two
different ways, exactly as for an \(E_{m,n}\)\nb-action.

Partial actions of a group~\(\Gamma\) are equivalent to actions
preserving \(0\) and~\(1\) of a certain inverse
semigroup~\(\IS(\Gamma)\) with \(0\) and~\(1\)
(see~\cite{Exel:Partial_actions}).  The conditions
\ref{en:partial_action_from_nm_1}--\ref{en:partial_action_from_nm_4}
above can be encoded through a congruence relation in the inverse
semigroup~\(\IS(\mathbb{F}_{n+m})\).  So partial actions
of~\(\mathbb{F}_{n+m}\) satisfying
\ref{en:partial_action_from_nm_1}--\ref{en:partial_action_from_nm_4}
are equivalent to actions of a certain quotient inverse
semigroup~\(\IS(F_{n+m})\) of~\(\IS(\mathbb{F}_{n+m})\).  The last
condition~\ref{en:partial_action_from_nm_5}, however, is not of this
form.  Thus we cannot model \(E_{m,n}\)\nb-actions purely as actions
of some inverse semigroup.

A ``standard \((m,n)\)\nb-dynamical
system''~\(\Omega\)
is constructed in~\cite{Ara-Exel-Katsura:Dynamical_systems} by
analysing a certain quotient of a separated graph \(\Cst\)\nb-algebra
(see \cite{Ara-Exel-Katsura:Dynamical_systems}*{Definition~3.2}).  This
is a universal \((m,n)\)\nb-dynamical
system by~\cite{Ara-Exel-Katsura:Dynamical_systems}*{Theorem~3.8}.
The space~\(\Omega\)
is a certain closed subset of~\(\{0,1\}^{\mathbb{F}_{n+m}}\),
equipped with the subspace topology from the product topology.  That
is, \(\Omega\)
is a set of subsets of~\(\mathbb{F}_{n+m}\).
The unique \(E_{m,n}\)\nb-equivariant
map \(Y\to \Omega\)
sends a point~\(x\)
in~\(Y\)
to the subset of all \(g\in \mathbb{F}_{n+m}\)
for which~\(\vartheta_g\)
is defined at~\(x\).
This defines a continuous map \(Y\to \Omega\)
because the domains of the~\(\vartheta_g\) are clopen.

Another way to describe~\(\Omega\)
uses the Paterson groupoid of the quotient inverse semigroup
\(S\defeq \IS(\mathbb{F}_{n+m})/{\sim}\)
mentioned above (see~\cite{Paterson:Groupoids}).  The object space of
this groupoid is the space~\(\hat{E}\)
of all characters \(E\to \{0,1\}\),
where~\(E\)
is the idempotent semilattice of~\(S\);
the topology is the product topology from~\(\{0,1\}^E\).
The Paterson groupoid has the universal property that actions of~\(S\)
on topological spaces with clopen domains are equivalent to actions of
the transformation groupoid \(\hat{E}\rtimes S\).
Here the anchor map sends \(\omega\in Y\)
to the character that maps \(e\in E\)
to~\(1\)
if and only if \(\omega\)
belongs to the domain of~\(e\)
in the partial action.  The last condition~(5) singles out a closed
subspace~\(\Omega\)
of~\(\hat{E}\),
so that an action of~\(S\)
satisfies~(5) if and only if the corresponding map
\(Y\to \hat{E}\) takes values in~\(\Omega\).

Summing up, an \(E_{m,n}\)\nb-action on~\(Y\) is equivalent to a
partial action of~\(\mathbb{F}_{n+m}\) satisfying the conditions
\ref{en:partial_action_from_nm_1}--\ref{en:partial_action_from_nm_5}
above.  This, in turn, is equivalent to a partial action of the
inverse semigroup~\(\IS(F_{n+m})\) together with an equivariant
continuous map \(Y\to \Omega\).  The latter is equivalent to an
action of the transformation groupoid
\(\Omega \rtimes \IS(F_{n+m}) \cong \Omega\rtimes\mathbb{F}_{n+m}\).
As a result, \(\Omega\rtimes\mathbb{F}_{n+m}\) is a groupoid model
for \(E_{m,n}\)\nb-actions.

\section{Encoding a diagram action through an inverse semigroup
  action}
\label{sec:encoding}

Eventually, we will describe the groupoid model of a diagram~\(F\)
as a transformation groupoid for an action of a certain inverse
semigroup \(\IS(F)\) on the space that underlies the universal
action of the diagram.  To prepare for this, we are going to encode
an action of a diagram on a space~\(Y\) through partial
homeomorphisms of~\(Y\).  We find the relations among these partial
homeomorphisms, and these relations lead to the definition of the
inverse semigroup~\(\IS(F)\).

\subsection{Slices of correspondences}
\label{sec:slices}

We introduced slices of groupoids and groupoid correspondences in
\longref{Definition}{def:correspondence_slices} and
\longref{Remark}{rem:slice_tight}.  Now we recall some results about
them from~\cite{Antunes-Ko-Meyer:Groupoid_correspondences}.

Let \(\Bisp\colon \Gr[H]\leftarrow \Gr\) be a groupoid
correspondence and let \(\U,\V\subseteq \Bisp\) be slices.  Recall
that \(\braket{x}{y}\) for \(x,y\in\Bisp\) with \(\Qu(x) = \Qu(y)\)
is the unique arrow in~\(\Gr\) with \(x\cdot \braket{x}{y} = y\).
The subset
\[
  \braket{\U}{\V} \defeq
  \setgiven{\braket{x}{y}\in\Gr}{x\in\U,\ y\in\V,\ \Qu(x)=\Qu(y)}
\]
is a slice in the groupoid~\(\Gr\) by
\cite{Antunes-Ko-Meyer:Groupoid_correspondences}*{Lemma~7.7}.  In
addition, if \(z\in \braket{\U}{\V}\), then the elements \(x\in\U\),
\(y\in\V\) with \(\Qu(x) = \Qu(y)\) and \(z=\braket{x}{y}\) are
unique.

Next, let \(\Bisp\colon \Gr[H]\leftarrow \Gr\) and
\(\Bisp[Y]\colon \Gr\leftarrow \Gr[K]\) be groupoid correspondences
and let \(\U\subseteq \Bisp\) and \(\V\subseteq \Bisp[Y]\) be
slices.  Then
\[
  \U \cdot \V\defeq
  \setgiven{[x,y] \in \Bisp\Grcomp_{\Gr} \Bisp[Y]}{x\in\U,\
    y\in\V,\ \s(x) = \rg(y)}
\]
is a slice in the composite groupoid correspondence
\(\Bisp\Grcomp_{\Gr} \Bisp[Y]\) by
\cite{Antunes-Ko-Meyer:Groupoid_correspondences}*{Lemma~7.14}.  In
addition, if \(z\in \U\cdot \V\), then the elements
\(x\in\U\), \(y\in\V\) with \(\s(x) = \rg(y)\) and
\(z=[x,y]\) are unique.  In particular, we may multiply a slice
in~\(\Bisp\) by slices in~\(\Gr[H]\) on the left and~\(\Gr\) on the
right, and this produces slices in~\(\Bisp\) again.

Let~\(F\) be a diagram of groupoid correspondences.  Let \(\Bis(F)\)
be the set of all slices of the correspondences~\(\Bisp_g\) for all
arrows \(g\in\Cat\), modulo the relation that we identify the empty
slices of \(\Bis(\Bisp_g)\) for all \(g\in \Cat\).  Given composable
arrows \(g,h\in\Cat\) and slices \(\U\subseteq \Bisp_g\),
\(\V\subseteq \Bisp_h\), then
\(\U\V \defeq \mu_{g,h}(\U\cdot \V)\) is a slice
in~\(\Bisp_{g h}\) by
\cite{Antunes-Ko-Meyer:Groupoid_correspondences}*{Lemma~7.14}.  When
we write~\(\mu_{g,h}\) multiplicatively, \(\U\V\) is the
subset of all products \(\gamma\cdot\eta\) for \(\gamma\in\U\),
\(\eta\in\V\) with \(\s(\gamma)=\rg(\eta)\).  If \(g,h\) are not
composable, then we let \(\U\V\) be the empty
slice~\(\emptyset\).  This turns~\(\Bis(F)\) into a semigroup with
zero element~\(\emptyset\).

\subsection{Encoding actions of diagrams through partial
  homeomorphisms}
\label{encode}

We are going to describe an action of a single groupoid
correspondence or of a diagram of groupoid correspondences on a
topological space~\(Y\) through certain partial homeomorphisms.

\begin{lemma}
  \label{lem:slice_acts}
  Let \(\Bisp\colon \Gr[H]\leftarrow \Gr\) be a groupoid
  correspondence and let~\(Y\) be a \(\Gr\)\nb-space.  Let
  \(\U\subseteq\Bisp\) be a slice.  Write \([\gamma,y]\) instead
  of \(\Qu(\gamma,y)\) for the image of
  \((\gamma,y) \in \Bisp\times_{\s,\Gr^0,\rg} Y\)
  in~\(\Bisp\Grcomp_{\Gr} Y\).  There is a
  homeomorphism~\(\U_*\) between the open subsets
  \(\rg^{-1}(\s(\U))\subseteq Y\) and
  \(\Qu(\U\times_{\s,\Gr^0,\rg} Y)\subseteq \Bisp\Grcomp_{\Gr}
  Y\), which maps \(y\in Y\) to \([\gamma,y]\) for the unique
  \(\gamma\in\U\) with \(\s(\gamma) = \rg(y)\).
\end{lemma}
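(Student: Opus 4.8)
The plan is to realise~\(\U_*\) as a composite of two homeomorphisms. First I would note that \(\s\colon\Bisp\to\Gr^0\) is a local homeomorphism and that \(\s|_\U\) is injective by the definition of a slice, so \(\s|_\U\) is a homeomorphism from~\(\U\) onto the open set \(\s(\U)\subseteq\Gr^0\); write \(\sigma\colon\s(\U)\congto\U\) for its inverse. Then \(\rg^{-1}(\s(\U))\) is open in~\(Y\), and the map \(\iota\colon\rg^{-1}(\s(\U))\to\Bisp\times_{\s,\Gr^0,\rg}Y\), \(y\mapsto(\sigma(\rg(y)),y)\), is continuous with image exactly \(\U\times_{\s,\Gr^0,\rg}Y\): any \((\gamma,y)\) in this fibre product has \(\gamma\in\U\) and \(\s(\gamma)=\rg(y)\), hence \(\gamma=\sigma(\rg(y))\). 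Since the second coordinate projection is a continuous inverse for~\(\iota\), the map~\(\iota\) is a homeomorphism onto the open subset \(\U\times_{\s,\Gr^0,\rg}Y\) of \(\Bisp\times_{\s,\Gr^0,\rg}Y\). I would then set \(\U_*\defeq\Qu\circ\iota\), so that \(\U_*(y)=[\sigma(\rg(y)),y]=[\gamma,y]\) for the unique \(\gamma\in\U\) with \(\s(\gamma)=\rg(y)\); this is continuous, and its image is \(\Qu(\U\times_{\s,\Gr^0,\rg}Y)\).

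Next I would check injectivity. If \([\gamma_1,y_1]=[\gamma_2,y_2]\) with \(\gamma_1,\gamma_2\in\U\), then there is \(g\in\Gr\) with \(\gamma_2=\gamma_1\cdot g^{-1}\) and \(y_2=g\cdot y_1\). As \(\gamma_1\) and~\(\gamma_2\) lie in the same \(\Gr\)\nb-orbit of~\(\Bisp\), they have the same image under the orbit space projection \(\Bisp\to\Bisp/\Gr\); since this projection is injective on the slice~\(\U\), we get \(\gamma_1=\gamma_2\), and therefore \(\gamma_1\cdot g^{-1}=\gamma_1=\gamma_1\cdot\s(\gamma_1)\). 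The right \(\Gr\)\nb-action on~\(\Bisp\) is basic, so the action map is injective, forcing \(g^{-1}=\s(\gamma_1)=\rg(y_1)\); then \(g\) is a unit and \(y_2=g\cdot y_1=y_1\). Hence \(\U_*\) is a continuous bijection onto \(\Qu(\U\times_{\s,\Gr^0,\rg}Y)\).

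It remains to show that \(\U_*\) is open, for which it suffices that the orbit space projection \(\Qu\colon\Bisp\times_{\s,\Gr^0,\rg}Y\to\Bisp\Grcomp_{\Gr}Y\) restricts to a homeomorphism on the open set \(\U\times_{\s,\Gr^0,\rg}Y\). I would deduce this from \longref{Lemma}{lem:basic_orbit_lh} once the diagonal \(\Gr\)\nb-action \(g\cdot(\gamma,y)=(\gamma g^{-1},g y)\) on \(\Bisp\times_{\s,\Gr^0,\rg}Y\) is shown to be basic: if \(\gamma_2=\gamma_1 g^{-1}\), then \(g^{-1}=\braket{\gamma_1}{\gamma_2}\) is the unique arrow with \(\gamma_1\cdot g^{-1}=\gamma_2\) and depends continuously on \((\gamma_1,\gamma_2)\) by \longref{Definition and Lemma}{def:innerprodofcorr} and \longref{Proposition}{pro:innprod}, and \(y_1=g^{-1}\cdot y_2\) is then recovered continuously as well, so the inverse of the action map for the diagonal action is continuous. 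Thus \(\Qu\) is a local homeomorphism; its restriction to the open subset \(\U\times_{\s,\Gr^0,\rg}Y\) is an injective local homeomorphism, hence a homeomorphism onto its image, which is open. Composing with~\(\iota\) shows that \(\U_*\colon\rg^{-1}(\s(\U))\to\Qu(\U\times_{\s,\Gr^0,\rg}Y)\) is a homeomorphism onto an open subset of \(\Bisp\Grcomp_{\Gr}Y\), as asserted. The one point that needs a little care is the basicness of the diagonal action, but this is exactly the computation behind the well-definedness of composition of groupoid correspondences, and it uses only the basicness of the right \(\Gr\)\nb-action on~\(\Bisp\).
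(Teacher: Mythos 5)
Your proof is correct and follows essentially the same route as the paper's: realise \(\U_*\) as the composite of the section \(y\mapsto(\gamma,y)\) with the orbit space projection~\(\Qu\), prove injectivity from the injectivity of \(\Qu\) on the slice together with freeness of the basic right action, and get openness from \longref{Lemma}{lem:basic_orbit_lh}. The only difference is that you explicitly verify that the diagonal \(\Gr\)\nb-action on \(\Bisp\times_{\s,\Gr^0,\rg}Y\) is basic before invoking that lemma, a step the paper leaves implicit; this is a correct and welcome addition.
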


\begin{proof}
  Since~\(\s|_\U\) is a homeomorphism
  \(\U \to \s(\U)\subseteq \Gr^0\), the map~\(\U_*\) is
  well defined and continuous.  Its image is
  \(\Qu(\U\times_{\s,\Gr^0,\rg} Y) \subseteq
  \Bisp\Grcomp_{\Gr}Y\).  Suppose \(\U_*(y_1)=\U_*(y_2)\)
  for \(y_1,y_2\in\rg^{-1}(\s(\U))\).  That is, there are
  \(\gamma_i\in\U\) with \(\s(\gamma_i)=\rg(y_i)\) for \(i=1,2\)
  and \([\gamma_1,y_1]=[\gamma_2,y_2]\) in \(\Bisp\Grcomp_{\Gr} Y\).
  Then there is \(\eta\in\Gr\) with
  \(\gamma_2 = \gamma_1 \eta\) and \(y_1 = \eta y_2\).  Then
  \(\Qu(\gamma_1)=\Qu(\gamma_2)\).  This implies
  \(\gamma_1 = \gamma_2\) because~\(\Qu\) is injective
  on~\(\U\).  Since the right \(\Gr\)\nb-action on~\(\Bisp\) is
  free, \(\eta\) is a unit and hence \(y_1=y_2\).  Thus~\(\U_*\)
  is injective.

  The quotient map
  \(\Qu\colon \Bisp\times_{\s,\Gr^0,\rg} Y \prto \Bisp\Grcomp_{\Gr}
  Y\) is a local homeomorphism by
  \longref{Lemma}{lem:basic_orbit_lh}.  So is the coordinate
  projection \(\pr_2\colon \Bisp\times_{\s,\Gr^0,\rg} Y \prto Y\) by
  \cite{Antunes-Ko-Meyer:Groupoid_correspondences}*{Lemma~2.9}
  because \(\s\colon \Bisp\to\Gr^0\) is a local homeomorphism.  The
  map~\(\U_*\) is the composite of the local section
  \(y\mapsto (\gamma,y)\) of \(\pr_2\) and the map~\(\Qu\).  Hence
  it is a local homeomorphism.  Being injective, it is a
  homeomorphism onto an open subset of \(\Bisp\Grcomp_{\Gr} Y\).
\end{proof}

An action of an étale groupoid induces an action of its slices by
partial homeomorphisms (see~\cite{Exel:Inverse_combinatorial}).  The
following construction generalises this to an action of a diagram of
groupoid correspondences~\(F\).  We describe~\(F\) as in
\longref{Proposition}{pro:diagrams_in_Grcat} by
\((\Gr_x,\Bisp_g,\mu_{g,h})\).  Let~\(Y\) with the partition
\(Y = \bigsqcup_{x\in\Cat^0} Y_x\) be an \(F\)\nb-action.  To
simplify notation, we write~\(\circ\) instead of~\(\circ_{\Gr_x}\).  The
multiplication maps of the \(F\)\nb-action induce
\(\Gr_x\)\nb-equivariant homeomorphisms
\(\dot\alpha_g\colon \Bisp_g \Grcomp Y_{x'} \xrightarrow {\cong}
Y_x\) for all arrows \(g\colon x\leftarrow x'\) in~\(\Cat\) by
\longref{Lemma}{homeo}.  A slice \(\U\subseteq\Bisp_g\)
induces a partial homeomorphism~\(\U_*\) from~\(Y_{x'}\) to
\(\Bisp_g \Grcomp Y_{x'}\) by \longref{Lemma}{lem:slice_acts}.
Combining these maps gives a partial homeomorphism
\[
\vartheta(\U)\defeq \dot\alpha_g\circ\U_* \colon
Y_{x'} \supseteq \rg^{-1}(\s(\U)) \to Y_x;
\]
it maps~\(y\) to \(\gamma\cdot y\) for the unique \(\gamma\in \U\)
with \(\s(\gamma)=\rg(y)\).  We also view~\(\vartheta(\U)\) as a
partial homeomorphism of~\(Y\).

\begin{lemma}
  \label{lem:theta_multiplicative}
  Let~\(Y\) be a topological space with an action of a diagram of
  groupoid correspondences~\(F\).  Define \(\vartheta(\U)\) for
  a slice \(\U\in \Bis(F)\) as above, and
  let~\(\vartheta(\U)^*\) be its partial inverse.  Then
  \(\vartheta(\U \V) = \vartheta(\U)\vartheta(\V)\)
  for all \(\U,\V\in \Bis(F)\), and
  \(\vartheta(\U_1)^*\vartheta(\U_2) =
  \vartheta(\braket{\U_1}{\U_2})\) if \(\U_1,\U_2\)
  are slices in~\(\Bisp_g\) for the same arrow \(g\in\Cat\).
\end{lemma}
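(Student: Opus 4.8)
The plan is to verify both equalities directly at the level of points, treating each side as a partial homeomorphism of~$Y$, so that ``equality'' means coincidence of domains together with coincidence of values on the common domain. I would first dispose of degenerate cases: writing $\U\subseteq\Bisp_g$, $\V\subseteq\Bisp_h$, if $g$ and $h$ are not composable then $\U\V=\emptyset$ by definition of the product in $\Bis(F)$, so $\vartheta(\U\V)$ is nowhere defined, while the image of $\vartheta(\V)$ lies in $Y_{\rg(h)}$ and the domain of $\vartheta(\U)$ in the disjoint clopen piece $Y_{\s(g)}$, so $\vartheta(\U)\vartheta(\V)$ is nowhere defined as well; the same holds if $\U$ or $\V$ is empty. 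Hence I may assume $g\colon x\leftarrow y$, $h\colon y\leftarrow z$ composable and $\U,\V$ nonempty, and similarly for the second identity.

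For $\vartheta(\U\V)=\vartheta(\U)\vartheta(\V)$, take $w\in Y_z$. Then $\vartheta(\V)$ is defined at $w$ exactly when $\rg(w)\in\s(\V)$, with $\vartheta(\V)(w)=\eta\cdot w$ for the unique $\eta\in\V$ such that $\s(\eta)=\rg(w)$, and $\rg(\eta\cdot w)=\rg(\eta)$ by property~\ref{en:diagram_dynamical_system1}. So $\vartheta(\U)\vartheta(\V)$ is defined at $w$ exactly when moreover $\rg(\eta)\in\s(\U)$, and then it sends $w$ to $\gamma\cdot(\eta\cdot w)$ for the unique $\gamma\in\U$ with $\s(\gamma)=\rg(\eta)$. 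Property~\ref{en:diagram_dynamical_system1} again gives $\gamma\cdot(\eta\cdot w)=(\gamma\cdot\eta)\cdot w$ with $\gamma\cdot\eta=\mu_{g,h}[\gamma,\eta]\in\U\V$; since $\s(\gamma\cdot\eta)=\s(\eta)=\rg(w)$ and $\s$ is injective on the slice $\U\V$, this $\gamma\cdot\eta$ is the element of $\U\V$ with source $\rg(w)$, so $(\gamma\cdot\eta)\cdot w=\vartheta(\U\V)(w)$. Matching the domains then reduces to the identity $\s(\U\V)=\s\bigl(\V\cap\rg^{-1}(\s(\U))\bigr)$, which holds because $\mu_{g,h}$ is an isomorphism of correspondences and so carries $\s$ on $\U\V$ to the right anchor map of the second factor; its right-hand side is exactly the set of $\rg(w)$ for which $\vartheta(\U)\vartheta(\V)$ is defined. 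The case where $\U$ or $\V$ comes from an identity arrow is automatically included here and recovers the usual action of a groupoid slice by a partial homeomorphism.

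For $\vartheta(\U_1)^*\vartheta(\U_2)=\vartheta(\braket{\U_1}{\U_2})$ with $\U_1,\U_2\subseteq\Bisp_g$, $g\colon x\leftarrow y$, take $w\in Y_y$ with $\rg(w)\in\s(\U_2)$, so $\vartheta(\U_2)(w)=\gamma_2\cdot w$ for the unique $\gamma_2\in\U_2$ with $\s(\gamma_2)=\rg(w)$. Then $\vartheta(\U_1)^*$ is defined at $\gamma_2\cdot w$ iff $\gamma_2\cdot w=\gamma_1\cdot w'$ for some $\gamma_1\in\U_1$, $w'\in Y_y$, with value $w'$. By property~\ref{en:diagram_dynamical_system2} this forces $\Qu(\gamma_1)=\Qu(\gamma_2)$ and $w'=\braket{\gamma_1}{\gamma_2}\cdot w$; conversely, if $\Qu(\gamma_2)=\Qu(\gamma_1)$ for some $\gamma_1\in\U_1$ (unique because $\Qu$ is injective on the slice $\U_1$), then \longref{Proposition}{pro:innprod} gives $\s(\braket{\gamma_1}{\gamma_2})=\s(\gamma_2)=\rg(w)$ and $\gamma_1\cdot\braket{\gamma_1}{\gamma_2}=\gamma_2$, so property~\ref{en:diagram_dynamical_system1} yields $\gamma_2\cdot w=\gamma_1\cdot(\braket{\gamma_1}{\gamma_2}\cdot w)$, whence $\vartheta(\U_1)^*$ is defined at $\gamma_2\cdot w$ with value $\braket{\gamma_1}{\gamma_2}\cdot w$. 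Thus $\vartheta(\U_1)^*\vartheta(\U_2)$ is defined precisely on those $w\in Y_y$ with $\rg(w)\in\s\bigl(\U_2\cap\Qu^{-1}(\Qu(\U_1))\bigr)$, where it sends $w$ to $\braket{\gamma_1}{\gamma_2}\cdot w$. Since $\braket{\gamma_1}{\gamma_2}\in\braket{\U_1}{\U_2}$, a slice of $\Gr_y$ on which $\s$ is injective, and $\s(\braket{\gamma_1}{\gamma_2})=\rg(w)$, this $\braket{\gamma_1}{\gamma_2}$ is the element of $\braket{\U_1}{\U_2}$ with source $\rg(w)$; combined with $\s(\braket{\U_1}{\U_2})=\s\bigl(\U_2\cap\Qu^{-1}(\Qu(\U_1))\bigr)$ (again from \longref{Proposition}{pro:innprod}), this identifies the two partial homeomorphisms.

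I expect the only real work to be the bookkeeping of the domains of the various partial homeomorphisms together with the repeated appeals to uniqueness of factorisations inside slices of composites and inside inner products, that is, to \cite{Antunes-Ko-Meyer:Groupoid_correspondences}*{Lemmas~7.7 and~7.14}; the computations of the actual values are immediate from the associativity and freeness axioms \ref{en:diagram_dynamical_system1}--\ref{en:diagram_dynamical_system2} of a diagram action and the standard properties of the inner product in \longref{Proposition}{pro:innprod}.
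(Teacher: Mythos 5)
Your proof is correct and follows essentially the same route as the paper's: pointwise verification of both identities using the associativity and freeness axioms \ref{en:diagram_dynamical_system1}--\ref{en:diagram_dynamical_system2} together with the uniqueness of elements of a slice with prescribed source (resp.\ orbit class), with the paper merely being terser about the non-composable case and the matching of domains. The extra care you take with $\s(\U\V)=\s\bigl(\V\cap\rg^{-1}(\s(\U))\bigr)$ and with the converse direction of \ref{en:diagram_dynamical_system2} via \ref{en:diagram_dynamical_system1} is exactly the bookkeeping the paper leaves implicit.
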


\begin{proof}
  Let \(\U\subseteq\Bis(\Bisp_g)\), \(\V\subseteq\Bis(\Bisp_h)\) for
  arrows \(g\colon y\leftarrow x\) and \(h\colon x'\leftarrow z\)
  in~\(\Cat\).  If \(x\neq x'\), then \(Y_x\cap Y_{x'}=\emptyset\).
  Then \(\vartheta(\U)\vartheta(\V) = \emptyset\) because the
  codomain of \(\vartheta(\V)\) is contained in~\(Y_{x'}\) whereas the
  domain of \(\vartheta(\U)\) is contained in~\(Y_x\).  If \(x=x'\),
  then \(\vartheta(\U \V) = \vartheta(\U)\vartheta(\V)\) follows
  immediately from the
  \longref{condition}{en:diagram_dynamical_system1} for an
  \(F\)\nb-action.  Recall that \(\vartheta(\U_i)\) for \(i=1,2\) is
  the partial homeomorphism that maps \(y_i\in Y_x\) with
  \(\rg(y_i)\in\s(\U_i)\) to~\(\gamma_i\cdot y_i\) for the unique
  \(\gamma_i\in\U_i\) with \(\s(\gamma_i) = \rg(y_i)\).  Thus the
  composite partial homeomorphism
  \(\vartheta(\U_1)^*\vartheta(\U_2)\) maps~\(y_2\) to~\(y_1\) if
  and only if \(\gamma_1 \cdot y_1 = \gamma_2 \cdot y_2\).  By the
  \longref{condition}{en:diagram_dynamical_system2} for an
  \(F\)\nb-action, this happens if and only if there is
  \(\eta\in\Gr_x\) with \(\gamma_1 \eta = \gamma_2\) and
  \(\eta y_2 = y_1\).  If such an~\(\eta\) exists, then
  \(\eta=\braket{\gamma_1}{\gamma_2}\) and
  \(y_1 = \braket{\gamma_1}{\gamma_2}\cdot y_2 =
  \vartheta(\braket{\U_1}{\U_2}) y_2\).  Thus
  \(\vartheta(\U_1)^*\vartheta(\U_2) =
  \vartheta(\braket{\U_1}{\U_2})\).
\end{proof}

The following lemma generalises the description of \((m,n)\)-dynamical
systems in \longref{Section}{sec:universal_nm} to actions of
arbitrary diagrams of groupoid correspondences.  The point is to
recover an action of the diagram~\(F\) from the partial
homeomorphisms~\(\vartheta(\U)\) for \(\U\in \Bis(F)\).  As it turns
out, this is possible if the objects spaces~\(\Gr_x^0\) for
\(x\in\Cat^0\) are locally compact or, more generally, sober --~but
not always.  In general, we also need the map
\[
\rg\colon Y = \bigsqcup_{x\in\Cat^0} Y_x
\to \bigsqcup_{x\in\Cat^0} \Gr_x^0
\]
that we get by combining the maps \(\rg\colon Y_x \to \Gr_x^0\) for
\(x\in\Cat^0\).  This map must be ``compatible'' with the partial
homeomorphisms~\(\vartheta(\U)\) for \(\U\in\Bis(F)\).  To express
this, we need another construction.  For a groupoid correspondence
\(\Bisp \colon \Gr[H] \leftarrow \Gr\) and a slice \(\U\subseteq\Bisp\),
we define a partial map~\(\U_\dagger\) on~\(\Gr^0\) by mapping
\(x\in \s(\U) \subseteq\Gr^0\) to \(\rg(\gamma)\) for the unique
\(\gamma\in\U\) with \(\s(\U)=x\); here we use that
\(s|_\U \colon \U \to \Gr^0\) is injective.  The partial
map~\(\U_\dagger\) need not be a partial homeomorphism,
unless~\(\Bisp\) is tight.

\begin{lemma}
  \label{lem:F-action_from_theta}
  Let~\(Y\) be a space and let
  \(\rg\colon Y\to \bigsqcup_{x\in\Cat^0} \Gr_x^0\) and
  \(\vartheta\colon \Bis(F)\to I(Y)\) be maps.  These come from an
  \(F\)\nb-action on~\(Y\) if and only if
  \begin{enumerate}[label=\textup{(\ref*{lem:F-action_from_theta}.\arabic*)},
    leftmargin=*,labelindent=0em]
  \item \label{en:F-action_from_theta1}%
    \(\vartheta(\U \V) = \vartheta(\U)\vartheta(\V)\)
    for all \(\U,\V\in \Bis(F)\);
  \item \label{en:F-action_from_theta2}%
    \(\vartheta(\U_1)^*\vartheta(\U_2) =
    \vartheta(\braket{\U_1}{\U_2})\)
    for all \(g\in\Cat\), \(\U_1,\U_2\in\Bis(\Bisp_g)\);
  \item \label{en:F-action_from_theta3}%
    the images of~\(\vartheta(\U)\) for \(\U\in\Bisp_g\) cover
    \(Y_{\rg(g)} \defeq \rg^{-1}(\Gr_{\rg(g)}^0)\) for each
    \(g\in\Cat\);
  \item \label{en:F-action_from_theta5}%
    \(\rg\circ \vartheta(\U) = \U_\dagger\circ\rg\) as partial maps
    \(Y \to \Gr^0\) for any \(\U\in\Bis(F)\).
  \end{enumerate}
  The corresponding \(F\)\nb-action on~\(Y\) is unique if it exists,
  and it satisfies
  \begin{enumerate}[label=\textup{(\ref*{lem:F-action_from_theta}.\arabic*)},
    leftmargin=*,labelindent=0em,resume]
  \item \label{en:F-action_from_theta4}%
    for \(U\subseteq \Gr_x^0\) open, \(\vartheta(U)\) is the
    identity map on~\(\rg^{-1}(U)\);
  \item \label{en:F-action_from_theta6}%
    for any \(\U\in\Bis(F)\), the domain of \(\vartheta(\U)\) is
    \(\rg^{-1}(\s(\U))\).
  \end{enumerate}
\end{lemma}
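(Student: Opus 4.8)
The plan is to prove the ``if'' direction, since the ``only if'' direction follows from Lemmas~\ref{homeo}, \ref{lem:slice_acts} and~\ref{lem:theta_multiplicative} together with the observation that an \(F\)\nb-action has range maps \(\rg\colon Y_x\to\Gr_x^0\) and that \(\vartheta(\U)\) is always defined exactly on \(\rg^{-1}(\s(\U))\) by construction. So assume we are given \(\rg\colon Y\to\bigsqcup_x\Gr_x^0\) and \(\vartheta\colon\Bis(F)\to I(Y)\) satisfying \ref{en:F-action_from_theta1}--\ref{en:F-action_from_theta5}. First I would use the range map to set \(Y_x\defeq\rg^{-1}(\Gr_x^0)\); since \(\bigsqcup_x\Gr_x^0\) is a disjoint union of spaces with \(\Gr_x^0\) clopen in it, each \(Y_x\) is clopen and \(Y=\bigsqcup_x Y_x\). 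Next I would recover the \(\Gr_x\)\nb-action on \(Y_x\): applying \ref{en:F-action_from_theta1} and~\ref{en:F-action_from_theta2} to slices of the identity correspondence \(\Bisp_x=\Gr_x\) shows that \(\U\mapsto\vartheta(\U)\) restricted to slices of \(\Gr_x\) is an action of the inverse semigroup of slices of \(\Gr_x\) on \(Y_x\); by the standard correspondence between such actions (with the domain condition \ref{en:F-action_from_theta5} pinning down the anchor map as \(\rg\)) and genuine \(\Gr_x\)\nb-actions, this yields a continuous left \(\Gr_x\)\nb-action on \(Y_x\) with anchor map \(\rg\). In particular this gives \ref{en:F-action_from_theta4}, that \(\vartheta(U)\) for \(U\subseteq\Gr_x^0\) open is the identity on \(\rg^{-1}(U)\), and \ref{en:F-action_from_theta6} for slices of \(\Gr_x\).

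Then I would define the multiplication maps \(\alpha_g\colon\Bisp_g\times_{\s,\Gr^0,\rg}Y_{\s(g)}\to Y_{\rg(g)}\). Given \((\gamma,y)\) with \(\s(\gamma)=\rg(y)\), choose any slice \(\U\ni\gamma\) of \(\Bisp_g\) and set \(\gamma\cdot y\defeq\vartheta(\U)(y)\); this is defined because \(\rg(y)=\s(\gamma)\in\s(\U)\). I must check this is independent of the chosen slice: if \(\U_1,\U_2\) both contain \(\gamma\) then \(\U_1\cap\U_2\) is a slice containing \(\gamma\), and since \(\vartheta\) sends a subslice to a restriction of the partial homeomorphism — which follows from \ref{en:F-action_from_theta1} and~\ref{en:F-action_from_theta4} (write \(\U_i = \U_i(\s\U_i)\) and intersect), or more directly from \ref{en:F-action_from_theta2} applied with \(\U_1=\U_2\) a subslice — the three values agree at \(y\). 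Continuity of \(\alpha_g\) follows because the slices of \(\Bisp_g\) form a basis for its topology, so \(\alpha_g\) agrees locally with the continuous maps \(\vartheta(\U)\circ\pr_2\) (precomposed with the local section \(y\mapsto(\gamma,y)\) of \(\pr_2\), which exists since \(\s|_\U\) is a homeomorphism). Openness and surjectivity onto \(Y_{\rg(g)}\): surjectivity is exactly \ref{en:F-action_from_theta3}, and openness follows because each \(\vartheta(\U)\) is a homeomorphism of open sets and the sets \(\Qu(\U\times Y)\) cover the domain. This also gives the domain statement \ref{en:F-action_from_theta6} in general.

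It remains to verify the algebraic axioms \ref{en:gpaction1b}, \ref{en:diagram_dynamical_system1} and~\ref{en:diagram_dynamical_system2} of Definition~\ref{def:diagram_dynamical_system}. Axiom \ref{en:diagram_dynamical_system1} — that \(\rg(\gamma_2\cdot y)=\rg(\gamma_2)\) and \(\gamma_1\cdot(\gamma_2\cdot y)=(\gamma_1\cdot\gamma_2)\cdot y\) — comes from \ref{en:F-action_from_theta5} for the first part (unravelling \(\U_\dagger\)) and from \ref{en:F-action_from_theta1} for the second: choosing slices \(\U\ni\gamma_1\), \(\V\ni\gamma_2\) with \(\s(\U)=\rg(\V)\), the slice \(\U\V\) contains \(\gamma_1\cdot\gamma_2\), and \(\vartheta(\U\V)=\vartheta(\U)\vartheta(\V)\) evaluated at \(y\) is the required identity. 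The compatibility \ref{en:gpaction1b} with the left \(\Gr_{\rg(g)}\)\nb-action and right \(\Gr_{\s(g)}\)\nb-action is the special case where one of the slices lies in an identity correspondence, again via \ref{en:F-action_from_theta1}. Axiom \ref{en:diagram_dynamical_system2} — if \(\gamma\cdot y=\gamma'\cdot y'\) for \(\gamma,\gamma'\in\Bisp_g\) then there is \(\eta\in\Gr_{\s(g)}\) with \(\gamma'=\gamma\eta\), \(y=\eta y'\) — is where \ref{en:F-action_from_theta2} enters decisively: pick slices \(\U_1\ni\gamma\), \(\U_2\ni\gamma'\); then \(\vartheta(\U_1)^{*}\vartheta(\U_2)=\vartheta(\braket{\U_1}{\U_2})\) maps \(y'\) to \(y\), so \(y'\in\rg^{-1}(\s\braket{\U_1}{\U_2})\) and hence \(\braket{\U_1}{\U_2}\) contains an arrow \(\eta\) with \(\s(\eta)=\rg(y')\), and by the uniqueness in \cite{Antunes-Ko-Meyer:Groupoid_correspondences}*{Lemma~7.7} this \(\eta\) satisfies \(\gamma'=\gamma\eta\); applying \ref{en:F-action_from_theta2} once more (or the identity-correspondence case of \ref{en:F-action_from_theta1}) gives \(\eta y'=y\). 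Finally, uniqueness of the \(F\)\nb-action: any \(F\)\nb-action inducing the given \(\rg\) and \(\vartheta\) must have \(\gamma\cdot y=\vartheta(\U)(y)\) for \(\gamma\in\U\), so it is determined.

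\textbf{Main obstacle.} The delicate point is the reconstruction of the honest \(\Gr_x\)\nb-action from \(\vartheta\) restricted to slices of the identity correspondence, and — intertwined with it — checking that axiom~\ref{en:diagram_dynamical_system2} holds with an element \(\eta\) of the \emph{groupoid} rather than merely a germ or a slice relation; this is exactly the place where one expects local compactness or sobriety of \(\Gr_x^0\) to be needed (as the statement's preamble warns), since one must pass from ``the composite partial homeomorphism is \(\vartheta\) of a nonempty slice'' to ``there is an actual arrow''. Handling this cleanly — probably by phrasing the inverse-semigroup-to-groupoid passage through a known theorem and carefully tracking anchor maps via \ref{en:F-action_from_theta5} — will be the heart of the proof; everything else is bookkeeping with slices being a basis and MacLane-style coherence already absorbed into the \(\mu_{g,h}\).
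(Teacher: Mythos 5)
Your proposal is correct and follows essentially the same route as the paper's proof: set \(Y_x \defeq \rg^{-1}(\Gr_x^0)\), define \(\gamma\cdot y \defeq \vartheta(\U)(y)\) for any slice \(\U\ni\gamma\), check independence of the slice by intersecting and using \ref{en:F-action_from_theta1}, and read off the axioms of \longref{Definition}{def:diagram_dynamical_system} from \ref{en:F-action_from_theta1}, \ref{en:F-action_from_theta2} and~\ref{en:F-action_from_theta5}.  (The paper does not treat the \(\Gr_x\)\nb-actions separately via an inverse-semigroup theorem; identity arrows are handled uniformly with all other arrows, which is slightly cleaner.)

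The ``main obstacle'' you flag is not an obstacle, and neither sobriety nor local compactness enters.  The paper first derives \ref{en:F-action_from_theta6} from \ref{en:F-action_from_theta2} and \ref{en:F-action_from_theta5}: the domain of \(\vartheta(\U)\) equals the domain of \(\vartheta(\U)^*\vartheta(\U) = \vartheta(\braket{\U}{\U})\), and since \(\braket{\U}{\U} = \s(\U)\) is an idempotent slice, \ref{en:F-action_from_theta5} identifies this domain with \(\rg^{-1}(\s(\U))\).  With \ref{en:F-action_from_theta6} in hand, the passage from ``the composite partial homeomorphism equals \(\vartheta\) of a slice'' to ``there is an actual arrow~\(\eta\)'' is elementary: if \(\gamma\cdot y = \gamma'\cdot y'\), then \(y'\) lies in \(\rg^{-1}\bigl(\s(\braket{\U_1}{\U_2})\bigr)\), so injectivity of~\(\s\) on the slice \(\braket{\U_1}{\U_2}\) produces a unique \(\eta\in\braket{\U_1}{\U_2}\) with \(\s(\eta)=\rg(y')\), and injectivity of~\(\s\) on \(\U_1\) and~\(\U_2\) together with \ref{en:F-action_from_theta5} forces \(\eta = \braket{\gamma}{\gamma'}\).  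This is exactly the role of the datum~\(\rg\) and of condition~\ref{en:F-action_from_theta5}: they substitute for sobriety, which would only be needed to reconstruct~\(\rg\) from~\(\vartheta\) alone (see the remark after \longref{Lemma}{lem:theta_gives_equivariant_invariant}).
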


\begin{proof}
  Conditions \ref{en:F-action_from_theta1}
  and~\ref{en:F-action_from_theta2} are necessary by
  \longref{Lemma}{lem:theta_multiplicative}.
  \longref{Condition}{en:F-action_from_theta5} is necessary by the
  construction of~\(\vartheta\).
  \longref{Condition}{en:F-action_from_theta3} means that any
  \(y\in Y_x\) is of the form \(\gamma\cdot y'\) for some
  \(\gamma\in \Bisp_g\), \(y'\in Y_{x'}\) with
  \(\s(\gamma)=\rg(y')\).  Thus it expresses the surjectivity of the
  multiplication map \(\Bisp_g\times_{\s,\rg} Y_{x'} \to Y_x\).  So
  these conditions are necessary for~\(\vartheta\) to come from an
  \(F\)\nb-action.

  Conversely, assume the conditions
  \ref{en:F-action_from_theta1}--\ref{en:F-action_from_theta5}.  Let
  \(U\subseteq \Gr_x^0\) be an open subset and view it as an element
  of~\(\Bis(F)\).  The domains of the partial maps
  \(\rg\circ \vartheta(U)\) and \(U_\dagger\circ\rg\) are equal to
  the domain of \(\vartheta(\U)\) and \(\rg^{-1}(U)\), respectively.
  \longref{Condition}{en:F-action_from_theta5} implies that these
  domains are equal.  \longref{Condition}{en:F-action_from_theta1}
  implies that \(\vartheta(U)\) is idempotent, so that it is the
  identity map on its domain.  This gives
  \longref{condition}{en:F-action_from_theta4}.  Let
  \(\U\in\Bis(\Bisp_g)\).  The domain of~\(\vartheta(\U)\) is the
  same as the domain of \(\vartheta(\U)^* \vartheta(\U)\), which is
  equal to \(\vartheta(\braket{\U}{\U})\) by
  \ref{en:F-action_from_theta2}.  Since
  \(\braket{\U}{\U} \subseteq \Gr_{\s(g)}\) is the idempotent slice
  \(\s(\U)\subseteq \Gr_{\s(g)}^0\), \ref{en:F-action_from_theta5}
  implies that \(\vartheta(\braket{\U}{\U})\) is the identity on
  \(\rg^{-1}(\s(\U))\).  This finishes the proof of
  \ref{en:F-action_from_theta6}.

  Since the domains of partial homeomorphisms are open,
  \ref{en:F-action_from_theta4} implies that the subsets
  \(Y_x \defeq \rg^{-1}(\Gr_x^0)\) are open and that the maps
  \(\rg|_{Y_x}\colon Y_x\to \Gr_x^0\) are continuous.  Since
  \(Y= \bigsqcup_{x\in \Cat^0} Y_x\) by construction, the
  subsets~\(Y_x\) are also closed.

  Let \(g\colon x\leftarrow x'\) be an arrow in~\(\Cat\) and let
  \(\gamma\in\Bisp_g\), \(y\in Y_{x'}\) satisfy
  \(\s(\gamma) = \rg(y)\).  There is a slice \(\U\in\Bis(F)\) with
  \(\gamma\in \U\).  \longref{Condition}{en:F-action_from_theta6}
  implies that \(\vartheta(\U)\) is defined at~\(y\).  We want to
  put \(\gamma\cdot y \defeq \vartheta(\U)(y)\).  This is the only
  possibility for an \(F\)\nb-action that induces the given maps
  \(\vartheta(\U)\) and~\(\rg\).  We must show that this formula
  gives a well defined \(F\)\nb-action.  If \(\U'\in\Bis(\Bisp_g)\)
  is another slice with \(\gamma\in\U'\), then
  \(\gamma\in \U\cap \U'\).  Let
  \(V = \s(\U\cap \U')\subseteq \Gr_x^0\), viewed as a slice of
  \(\Gr_x = \Bisp_x\).  Then
  \(\U \cdot V = \U\cap \U' = \U' \cdot V\) and
  \[
  \vartheta(\U)y
  = \vartheta(\U)\vartheta(V) y
  = \vartheta(\U\cap \U')y
  = \vartheta(\U')\vartheta(V) y
  = \vartheta(\U')y.
  \]
  Thus \(\gamma\cdot y\)
  is well defined.  The resulting map
  \(\Bisp_g \times_{\s,\rg} Y_x \to Y_{x'}\)
  is continuous and open because the maps~\(\vartheta(\U)\)
  are partial homeomorphisms, and it is surjective
  by~\ref{en:F-action_from_theta3}.

  The \longref{condition}{en:F-action_from_theta5} says that
  \(\rg(\gamma\cdot y) = \rg(\gamma)\) for all \(\gamma\in\U\),
  \(y\in Y\) with \(\s(\gamma) = \rg(y)\).
  Then~\ref{en:F-action_from_theta1} implies
  \(\gamma_1\cdot (\gamma_2\cdot y) = (\gamma_1\cdot \gamma_2)\cdot
  y\) for all composable \(\gamma_1,\gamma_2,y\).  The
  condition~\ref{en:F-action_from_theta2} implies that
  \(\gamma_1 \cdot y_1 = \gamma_2 \cdot y_2\) for
  \(\gamma_1,\gamma_2\in\Bisp_g\), \(y_1,y_2\in Y_x\) holds only if
  \(\Qu(\gamma_1) = \Qu(\gamma_2)\) and
  \(y_1 = \braket{\gamma_1}{\gamma_2} y_2\).  It follows that all
  conditions for an \(F\)\nb-action in
  \longref{Definition}{def:diagram_dynamical_system} are satisfied.
\end{proof}

The following lemma describes \(F\)\nb-equivariant maps between two
\(F\)\nb-actions through the partial homeomorphisms \(\vartheta(\U)\)
for \(\U\in\Bis(F)\) and \(r\colon Y\to \Gr^0\).  This is important to
characterise the groupoid model.

\begin{lemma}
  \label{lem:theta_gives_equivariant_invariant}
  Let \(Y\) and~\(Y'\) be \(F\)\nb-actions and let~\(Z\) be any
  space.  A continuous map \(\varphi\colon Y\to Y'\) is
  \(F\)\nb-equivariant if and only if \(\rg'\circ \varphi = \rg\)
  and \(\vartheta'(\U)\circ\varphi = \varphi\circ\vartheta(\U)\) for
  all \(\U\in \Bis(F)\).  A continuous map \(\psi\colon Y\to Z\) is
  \(F\)\nb-invariant if and only if
  \(\psi(y) = \psi\bigl(\vartheta(\U)(y)\bigr)\) for all
  \(\U\in\Bis(F)\) and~\(y\) in the domain of \(\vartheta(\U)\).
\end{lemma}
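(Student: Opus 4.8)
The plan is to unwind both equivalences by reducing everything to the level of the individual multiplication maps $\alpha_g$ of the $F$\nb-actions, using that the slices $\U\in\Bis(F)$ form a basis for the topology of each correspondence $\Bisp_g$ (as noted after \longref{Remark}{rem:slice_tight}) and that $\vartheta(\U)$ by definition sends $y$ to $\gamma\cdot y$ for the unique $\gamma\in\U$ with $\s(\gamma)=\rg(y)$. For the equivariance statement, I would argue both implications directly. For the forward direction, assume $\varphi$ is $F$\nb-equivariant. Then $\rg'\circ\varphi=\rg$ is part of \longref{Definition}{def:F-invariant}. Given $\U\in\Bis(\Bisp_g)$ and $y$ in the domain of $\vartheta(\U)$, i.e.\ $\rg(y)\in\s(\U)$, let $\gamma\in\U$ be the unique element with $\s(\gamma)=\rg(y)$; then $\rg(\varphi(y))=\rg(y)=\s(\gamma)$ as well, so $\gamma$ is also the unique element of $\U$ with $\s(\gamma)=\rg(\varphi(y))$, and hence $\vartheta'(\U)(\varphi(y))=\gamma\cdot\varphi(y)=\varphi(\gamma\cdot y)=\varphi(\vartheta(\U)(y))$ using $F$\nb-equivariance. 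This shows the domains also match (they are $\rg^{-1}(\s(\U))$ and $\varphi^{-1}((\rg')^{-1}(\s(\U)))$, which coincide since $\rg'\circ\varphi=\rg$), so $\vartheta'(\U)\circ\varphi=\varphi\circ\vartheta(\U)$ as partial maps.

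For the converse, assume $\rg'\circ\varphi=\rg$ and $\vartheta'(\U)\circ\varphi=\varphi\circ\vartheta(\U)$ for all $\U$. First, $\rg'\circ\varphi=\rg$ forces $\varphi(Y_x)\subseteq Y'_x$ for each $x\in\Cat^0$, since $Y_x=\rg^{-1}(\Gr_x^0)$ and similarly for $Y'$. Now fix an arrow $g\colon x\leftarrow x'$, an element $\gamma\in\Bisp_g$, and $y\in Y_{x'}$ with $\s(\gamma)=\rg(y)$; choose a slice $\U\in\Bis(\Bisp_g)$ with $\gamma\in\U$, which exists because slices form a basis. Then $\rg(y)=\s(\gamma)\in\s(\U)$, so $y$ lies in the domain of $\vartheta(\U)$ and $\vartheta(\U)(y)=\gamma\cdot y$. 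Applying the hypothesis, $\varphi(\gamma\cdot y)=\varphi(\vartheta(\U)(y))=\vartheta'(\U)(\varphi(y))$. On the other side, $\rg'(\varphi(y))=\rg(y)=\s(\gamma)$, so $\vartheta'(\U)(\varphi(y))=\gamma\cdot\varphi(y)$ by the definition of $\vartheta'(\U)$, the unique element of $\U$ with source $\rg'(\varphi(y))$ being again $\gamma$. Hence $\varphi(\gamma\cdot y)=\gamma\cdot\varphi(y)$, which together with $\rg'\circ\varphi=\rg$ is exactly $F$\nb-equivariance.

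The invariance statement is handled similarly but is shorter. If $\psi\colon Y\to Z$ is $F$\nb-invariant, then for $\U\in\Bis(\Bisp_g)$ and $y$ in the domain of $\vartheta(\U)$, writing $\gamma$ for the unique element of $\U$ with $\s(\gamma)=\rg(y)$, we have $\vartheta(\U)(y)=\gamma\cdot y$ and so $\psi(\vartheta(\U)(y))=\psi(\gamma\cdot y)=\psi(y)$ by \longref{Definition}{def:F-invariant}. Conversely, if $\psi(\vartheta(\U)(y))=\psi(y)$ whenever $\vartheta(\U)$ is defined at $y$, then given any $g\in\Cat$, $\gamma\in\Bisp_g$ and $y\in Y_{\s(g)}$ with $\s(\gamma)=\rg(y)$, pick a slice $\U\ni\gamma$; then $y$ is in the domain of $\vartheta(\U)$ and $\vartheta(\U)(y)=\gamma\cdot y$, so $\psi(\gamma\cdot y)=\psi(\vartheta(\U)(y))=\psi(y)$, i.e.\ $\psi$ is $F$\nb-invariant. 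I expect no serious obstacle here; the only point requiring a little care is the bookkeeping of domains of the partial maps in the equivariance statement, to make sure that the equation $\vartheta'(\U)\circ\varphi=\varphi\circ\vartheta(\U)$ really is an equality of partial maps (equal domains and equal values) and not merely an equality where both sides happen to be defined — this is exactly where $\rg'\circ\varphi=\rg$ and \longref{condition}{en:F-action_from_theta6} of \longref{Lemma}{lem:F-action_from_theta}, identifying the domain of $\vartheta(\U)$ with $\rg^{-1}(\s(\U))$, are used.
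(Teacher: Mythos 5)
Your proposal is correct and takes essentially the same route as the paper, whose proof is a one-line remark that the claim follows from the explicit formulas for \(\vartheta(\U)\) in terms of the multiplication maps and conversely; you have simply written out that unwinding in full, including the bookkeeping of domains via \(\rg'\circ\varphi=\rg\) and the identification of \(\operatorname{dom}\vartheta(\U)\) with \(\rg^{-1}(\s(\U))\).
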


\begin{proof}
  This follows easily from the explicit formulas
  for~\(\vartheta(\U)\) given an action and for the maps
  \(\Bisp_g \times_{\s,\rg} Y_{\s(g)} \to Y_{\rg(g)}\) given the
  maps \(\vartheta(\U)\) and \(\rg\colon Y\to \Gr^0\).
\end{proof}

\begin{remark}
  In the situation of
  \longref{Lemma}{lem:theta_gives_equivariant_invariant}, the
  map~\(\vartheta\) determines \(\rg^{-1}(U)\) for all open subsets
  \(U\subseteq \Gr^0\).  If the space~\(\Gr^0\) is ``sober'', then
  this determines the map \(\rg\colon Y\to \Gr^0\) by Stone duality.
  In particular, this works if~\(\Gr^0\) is locally compact.  Even
  more, Stone duality says that the map~\(\rg\) required in
  \longref{Lemma}{lem:theta_gives_equivariant_invariant} exists
  provided the map \(U\mapsto \operatorname{dom} \vartheta(U)\)
  commutes with arbitrary unions of open subsets and finite
  intersections.  Actually, it always commutes with intersections
  by~\ref{en:F-action_from_theta1}.  We do not give more details
  because we will not use this simplification of
  \longref{Lemma}{lem:theta_gives_equivariant_invariant}.
\end{remark}

\subsection{Induced inverse semigroup action}

We are going to turn a collection of partial homeomorphisms as in
\longref{Lemma}{lem:F-action_from_theta} into an action of a
suitable inverse semigroup.  There is an established way to turn an
inverse semigroup action into an étale groupoid, and when we apply
all this to a universal action of a diagram~\(F\), we get a groupoid
model of~\(F\).  To begin with, we recall some basics about inverse
semigroups (see~\cite{Higgins:Techniques_semigroup_theory}).

An \emph{inverse semigroup} is a semigroup~\(S\) with the extra
property that for each \(a\in S\), there is a unique element~\(a^*\)
with \(a a^* a = a\) and \(a^* a a^* = a^*\).

An element \(e \in S\) is \emph{idempotent} if \(e = e^2\).  The set of
idempotents of~\(S\) is denoted by \(E(S)\).  The idempotent elements
form a commutative subsemigroup, and \(s e s^* \in E(S)\) for all
\(s\in S\), \(e\in E(S)\).

\begin{example}
  The semigroup of partial homeomorphisms of a topological
  space~\(Y\) is an inverse semigroup \(I(Y)\).
  If \(f\in I(Y)\), then~\(f^*\) is its partial inverse.  The
  idempotent elements are the identity maps of open subsets
  of~\(Y\).  The product on idempotent elements corresponds to the
  intersection of open subsets.  It is well known that any inverse
  semigroup embeds into \(I(Y)\) for some topological space~\(Y\).
\end{example}

The most important inverse semigroup for us is the following:

\begin{definition}
  Let~\(F\) be a diagram of groupoid correspondences.
  Let~\(\IS(F)\) be the inverse semigroup that is generated
  by the set~\(\Bis(F)\) and the relations in the first two
  conditions in \longref{Lemma}{lem:F-action_from_theta}, that is,
  \(\Theta(\U \V) = \Theta(\U)\Theta(\V)\) for all
  \(\U,\V\in \Bis(F)\) and
  \(\Theta(\U_1)^*\Theta(\U_2) = \Theta(\braket{\U_1}{\U_2})\) for
  all \(g\in\Cat\), \(\U_1,\U_2\in\Bis(\Bisp_g)\); here we
  write~\(\Theta(\U)\) for~\(\U\) viewed as a generator
  of~\(\IS(F)\).
\end{definition}

To construct \(\IS(F)\), we first let~\(\mathrm{Free}(\Bis(F))\) be
the free inverse semigroup on the set \(\Bis(F)\) \textup{(}see
\textup{\cite{Higgins:Techniques_semigroup_theory}*{Theorem~1.1.10})}.
Then we let~\(\IS(F)\) be the quotient of~\(\mathrm{Free}(\Bis(F))\)
by the smallest congruence relation that contains the relations in
\ref{en:F-action_from_theta1} and \ref{en:F-action_from_theta2}.
This is an inverse semigroup by
\cite{Higgins:Techniques_semigroup_theory}*{Corollary~1.1.8}.  By
construction, there is a natural bijection between inverse semigroup
homomorphisms \(\IS(F)\to \Gamma\) for another inverse
semigroup~\(\Gamma\) and maps \(\vartheta\colon \Bis(F)\to \Gamma\)
that satisfy \ref{en:F-action_from_theta1}
and~\ref{en:F-action_from_theta2}.

\begin{definition}
  \label{invsemigpact}
  Let~\(S\) be an inverse semigroup and let~\(Y\) be a topological
  space.  An \emph{action of~\(S\) on~\(Y\)} is a semigroup homomorphism
  \(\vartheta \colon S \to I(Y)\).

  Let \(Y\) and~\(Y'\) be two spaces with \(S\)\nb-actions and
  let~\(Z\) be a space.  A map \(\varphi\colon Y\to Y'\) is
  \emph{\(S\)\nb-equivariant} if, for all \(t\in S\), \(y\in Y\),
  \(t\cdot y\) is defined if and only if \(t\cdot \varphi(y)\) is
  defined and then \(\varphi(t\cdot y) = t\cdot \varphi(y)\).  Thus
  the domain of the partial homeomorphism of~\(Y\) associated to
  \(t\in S\) is the whole \(\varphi\)\nb-preimage of the
  corresponding domain in~\(Y'\).  A map \(\psi\colon Y\to Z\) is
  \emph{\(S\)\nb-invariant} if \(\psi(t\cdot y) = \psi(y)\) for all
  \(t\in S\), \(y\in \mathrm{dom} \vartheta_t\).
\end{definition}

By definition, an \(\IS(F)\)\nb-action on a space~\(Y\)
is equivalent to a map \(\vartheta\colon \Bis(F)\to I(Y)\)
that satisfies \ref{en:F-action_from_theta1}
and~\ref{en:F-action_from_theta2}.

\begin{lemma}
  \label{lem:SF_gives_equivariant_invariant}
  Let \(Y\) and~\(Y'\) be \(F\)\nb-actions and let~\(Z\) be any
  space.  Equip \(Y\) and~\(Y'\) with the induced
  \(\IS(F)\)\nb-actions.  A continuous map \(\varphi\colon Y\to Y'\)
  is \(F\)\nb-equivariant if and only if it is
  \(\IS(F)\)\nb-equivariant and \(\rg'\circ \varphi = \rg\).  A
  continuous map \(\psi\colon Y\to Z\) is \(F\)\nb-invariant if and
  only if it is \(\IS(F)\)\nb-invariant.
\end{lemma}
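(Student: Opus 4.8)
The plan is to transport everything through the description of \(F\)\nb-equivariant and \(F\)\nb-invariant maps in terms of the partial homeomorphisms \(\vartheta(\U)\), \(\U\in\Bis(F)\), given in \longref{Lemma}{lem:theta_gives_equivariant_invariant}, and then to bootstrap from the generators \(\Bis(F)\) of \(\IS(F)\) to all of \(\IS(F)\).

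Two of the four implications are immediate. If \(\varphi\) is \(\IS(F)\)\nb-equivariant, restricting the equivariance condition to the generators \(t=\Theta(\U)\) yields \(\vartheta'(\U)\circ\varphi=\varphi\circ\vartheta(\U)\) as partial maps for all \(\U\in\Bis(F)\); together with \(\rg'\circ\varphi=\rg\) this is exactly the hypothesis of \longref{Lemma}{lem:theta_gives_equivariant_invariant}, so \(\varphi\) is \(F\)\nb-equivariant. The same restriction shows that an \(\IS(F)\)\nb-invariant \(\psi\) is \(F\)\nb-invariant. For the converse directions, \(F\)\nb-equivariance of \(\varphi\) already includes \(\rg'\circ\varphi=\rg\) by \longref{Definition}{def:F-invariant}, so it remains to prove that \(F\)\nb-equivariance implies \(\IS(F)\)\nb-equivariance and that \(F\)\nb-invariance implies \(\IS(F)\)\nb-invariance.

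For these I would use that every \(t\in\IS(F)\) is a finite product \(t=x_1\dotsm x_n\) with each \(x_i\) of the form \(\Theta(\U)\) or \(\Theta(\U)^*\), \(\U\in\Bis(F)\), so that \(\vartheta_t=\vartheta_{x_1}\dotsm\vartheta_{x_n}\) in \(I(Y)\) and likewise in \(I(Y')\) (using that an inverse semigroup homomorphism automatically preserves~\(*\), so \(\vartheta_{\Theta(\U)^*}=\vartheta(\U)^*\)). The conditions to be proven, written as equalities of \emph{partial} maps \(\varphi\circ\vartheta_t=\vartheta'_t\circ\varphi\) and \(\psi\circ\vartheta_t=\psi|_{\operatorname{dom}\vartheta_t}\), are stable under composition: substituting equal partial maps into a composite preserves both domains and values, by associativity of partial\nb-map composition. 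Hence it suffices to verify both conditions for \(t=\Theta(\U)\) and for \(t=\Theta(\U)^*\). The case \(t=\Theta(\U)\) is precisely the content of \longref{Lemma}{lem:theta_gives_equivariant_invariant}. The invariant case \(t=\Theta(\U)^*\) is routine: if \(z=\vartheta(\U)(y)\) then \(\psi\bigl(\vartheta(\U)^*(z)\bigr)=\psi(y)=\psi\bigl(\vartheta(\U)(y)\bigr)=\psi(z)\).

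The only genuinely non\nb-formal point, which I expect to be the main obstacle, is the domain matching for \(t=\Theta(\U)^*\) in the equivariant case: one must show \(\varphi^{-1}\bigl(\operatorname{im}\vartheta'(\U)\bigr)=\operatorname{im}\vartheta(\U)\), not merely that \(\varphi\) intertwines \(\vartheta(\U)^*\) and \(\vartheta'(\U)^*\) where both are defined. The inclusion \(\operatorname{im}\vartheta(\U)\subseteq\varphi^{-1}\bigl(\operatorname{im}\vartheta'(\U)\bigr)\) follows from \(F\)\nb-equivariance. For the reverse inclusion, take \(y\in Y\) with \(\varphi(y)=\gamma\cdot y''\) for some \(\gamma\in\U\subseteq\Bisp_g\) and \(y''\in Y'_{\s(g)}\). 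Since \(\varphi\) maps each \(Y_z\) into \(Y'_z\) and the \(Y'_z\) are disjoint, \(\varphi(y)\in Y'_{\rg(g)}\) forces \(y\in Y_{\rg(g)}\), so by~\longref{condition}{en:F-action_from_theta3} there are a slice \(\V\subseteq\Bisp_g\), a point \(w\in Y_{\s(g)}\) and \(\delta\in\V\) with \(y=\delta\cdot w\) and \(\s(\delta)=\rg(w)\). Then \(F\)\nb-equivariance gives \(\gamma\cdot y''=\varphi(y)=\delta\cdot\varphi(w)\), and the basicness condition~\longref{condition}{en:diagram_dynamical_system2} for the \(F\)\nb-action on~\(Y'\) produces \(\eta\in\Gr_{\s(g)}\) with \(\delta=\gamma\cdot\eta\). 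Hence, by~\longref{condition}{en:diagram_dynamical_system1}, \(y=(\gamma\cdot\eta)\cdot w=\gamma\cdot(\eta\cdot w)\) with \(\gamma\in\U\) and \(\eta\cdot w\in Y_{\s(g)}\) satisfying \(\s(\gamma)=\rg(\eta)=\rg(\eta\cdot w)\); since \(\U\) is a slice and \(\gamma\) is then the unique element of~\(\U\) with that source, this says \(y=\vartheta(\U)(\eta\cdot w)\in\operatorname{im}\vartheta(\U)\). With this, \(\Theta(\U)^*\) satisfies the full \(\IS(F)\)\nb-equivariance condition, and the composition argument of the previous paragraph completes the proof.
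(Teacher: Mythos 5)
Your proposal is correct and follows essentially the same route as the paper: use \longref{Lemma}{lem:theta_gives_equivariant_invariant} to handle the generators, check that domains and codomains of the \(\vartheta(\U)\) are full \(\varphi\)\nb-preimages, and then induct on the length of a word in \(\Theta(\U)\), \(\Theta(\U)^*\). The one place where you go beyond the paper is the explicit verification of the reverse inclusion \(\varphi^{-1}\bigl(\operatorname{im}\vartheta'(\U)\bigr)\subseteq\operatorname{im}\vartheta(\U)\) via surjectivity of the multiplication map and \longref{condition}{en:diagram_dynamical_system2}; the paper asserts this codomain matching tersely, and your argument for it is a correct and welcome addition.
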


\begin{proof}
  First let~\(\varphi\) be \(F\)\nb-equivariant.  Then the
  conditions in
  \longref{Lemma}{lem:theta_gives_equivariant_invariant} hold.  The
  domain of~\(\vartheta(\U)\) for \(\U\in\Bis(F)\) is
  \(\rg^{-1}(\s(\U))\).  Since
  \(\rg\circ\varphi = \varphi\circ\rg\), the domain
  of~\(\vartheta(\U)\) in~\(Y'\) is the \(\varphi\)\nb-preimage
  of the corresponding domain in~\(Y\).  Since
  \(\vartheta(\U)\varphi(y) =
  \varphi\bigl(\vartheta(\U)(y)\bigr)\) for all \(y\in Y\) on
  which \(\vartheta(\U)\) is defined, the codomain
  of~\(\vartheta(\U)\) in~\(Y'\) is the whole
  \(\varphi\)\nb-preimage of the corresponding codomain in~\(Y\) as
  well.  Elements of~\(\IS(F)\) act by composites of the partial
  homeomorphisms \(\vartheta(\U)\) and~\(\vartheta(\U)^*\).
  An induction argument on the length of such a composite shows that
  \(t\cdot \varphi(y)\) is defined if and only if
  \(\varphi(t\cdot y)\) is defined, and
  \(\varphi(t\cdot y) = t\cdot \varphi(y)\) for all \(t\in \IS(F)\),
  \(y\in Y\).  Thus~\(\varphi\) is \(\IS(F)\)\nb-equivariant if it is
  \(F\)\nb-equivariant.  The converse implication and the statement
  for invariant maps follow immediately from
  \longref{Lemma}{lem:theta_gives_equivariant_invariant}.
\end{proof}

\begin{definition}[\cite{Exel:Inverse_combinatorial}]
  \label{trangp}
  Let~\(S\) be an inverse semigroup and let~\(Y\) be a topological space
  with an action \(\vartheta\colon S\to I(Y)\).  The
  \emph{transformation groupoid} \(S \ltimes Y\) is defined as
  follows.  Its object space is~\(Y\), and its set of arrows is the
  set of equivalence classes of pairs \((t,x)\) for \(t\in S\) and
  \(x\in \mathrm{dom}(\vartheta_t)\), where \((t,x) \sim (u,x')\) if
  \(x=x'\) and there is an idempotent element \(e\in E(S)\) such
  that \(\vartheta_e\) is defined at \(x=x'\) and \(t e = u e\).  We
  define the range and source maps by \(\s(t,x) \defeq x\),
  \(\rg(t,x) = \vartheta_t(x)\) and
  \((u,\vartheta_t(x))\cdot (t,x) = (u t,x)\) whenever this is
  defined.  There is a unique topology on the arrow space that makes
  \(S\ltimes Y\) an étale groupoid.
\end{definition}

The following proposition describes the transformation groupoid
through a universal property:

\begin{proposition}
  \label{pro:isg_trafo_universal}
  Let~\(S\) be an inverse semigroup and let~\(X\) be a space with an
  \(S\)\nb-action~\(\vartheta_X\) by partial homeomorphisms.
  Let~\(Y\) be a space.  There is a natural bijection between
  actions of the transformation groupoid \(S\ltimes X\) on~\(Y\) and
  pairs \((\vartheta_Y,f)\) consisting of an action~\(\vartheta_Y\)
  of~\(S\) on~\(Y\) and an \(S\)\nb-equivariant map
  \(f\colon Y\to X\).  A map \(Y\to Z\) is \(S\ltimes X\)-invariant
  if and only if it is \(S\)\nb-invariant.
\end{proposition}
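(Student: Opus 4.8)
The plan is to make the bijection explicit in both directions — using that every arrow of $S\ltimes X$ has the form $(t,x)$ with $t\in S$, $x\in\operatorname{dom}\vartheta_t^X$ — and then to check that the two constructions are mutually inverse, natural, and compatible with invariant maps.

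Given an action of $S\ltimes X$ on $Y$, I would take $f\colon Y\to X$ to be its (continuous) anchor map and, for $t\in S$, let $\vartheta_t^Y$ have domain the open set $f^{-1}(\operatorname{dom}\vartheta_t^X)$ and be given by $\vartheta_t^Y(y)\defeq(t,f(y))\cdot y$; this makes sense since $(t,f(y))$ is then an arrow with source $f(y)$. To see that each $\vartheta_t^Y$ is a partial homeomorphism one uses $\operatorname{dom}\vartheta_t^X=\operatorname{dom}\vartheta_{t^*t}^X$ and that $(t^*t,x)$ is the unit arrow at $x$ whenever $x\in\operatorname{dom}\vartheta_{t^*t}^X$ (the unit arrows of $S\ltimes X$ being exactly those represented by $(e,x)$ with $e\in E(S)$), which gives $\vartheta_{t^*}^Y\circ\vartheta_t^Y=\operatorname{id}$ on $\operatorname{dom}\vartheta_t^Y$; continuity is clear from continuity of $f$, of $x\mapsto(t,x)$ on $\operatorname{dom}\vartheta_t^X$, and of the groupoid action. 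That $\vartheta_Y$ is an $S$-action then follows from the relation $(s,\vartheta_t^X(x))(t,x)=(st,x)$ in $S\ltimes X$ and associativity of the action, and $S$-equivariance of $f$ follows from $f(\vartheta_t^Y(y))=\rg(t,f(y))=\vartheta_t^X(f(y))$ together with the explicit domain of $\vartheta_t^Y$.

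Conversely, given $(\vartheta_Y,f)$, I would define an action of $S\ltimes X$ on $Y$ with anchor $f$ by $[(t,x)]\cdot y\defeq\vartheta_t^Y(y)$ for $x=f(y)\in\operatorname{dom}\vartheta_t^X$, noting that $S$-equivariance of $f$ forces $y\in\operatorname{dom}\vartheta_t^Y$. The one genuinely delicate point is well-definedness: if $(t,x)\sim(u,x)$, witnessed by some $e\in E(S)$ with $x\in\operatorname{dom}\vartheta_e^X$ and $te=ue$, then $y\in\operatorname{dom}\vartheta_e^Y$ and $\vartheta_e^Y(y)=y$ by equivariance and idempotency, so $\vartheta_t^Y(y)=\vartheta_{te}^Y(y)=\vartheta_{ue}^Y(y)=\vartheta_u^Y(y)$. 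The groupoid-action axioms then reduce to the $S$-action axioms and equivariance of $f$: compatibility of anchors is $f(\vartheta_t^Y(y))=\vartheta_t^X(f(y))=\rg(t,f(y))$, associativity uses $(t_1,x_1)(t_2,x_2)=(t_1t_2,x_2)$, the unit at $f(y)$ is represented by some $(e,f(y))$ with $e$ idempotent and hence acts by $\vartheta_e^Y(y)=y$, and continuity of the action map follows from the $\vartheta_t^Y$ being partial homeomorphisms and the description of the topology on $S\ltimes X$ in \longref{Definition}{trangp}. Comparing formulas shows the two constructions are mutually inverse; the round trip through a pair also uses $\operatorname{dom}\vartheta_t^Y=f^{-1}(\operatorname{dom}\vartheta_t^X)$, which is exactly $S$-equivariance of $f$.

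Naturality and the invariance statement are then formal. Since every arrow is of the form $(t,x)$ and $(t,f(y))\cdot y=\vartheta_t^Y(y)$, a continuous map $\psi\colon Y\to Z$ satisfies $\psi(\gamma\cdot y)=\psi(y)$ for all composable $\gamma,y$ if and only if $\psi(\vartheta_t^Y(y))=\psi(y)$ for all $t$ and all $y\in\operatorname{dom}\vartheta_t^Y$, which is the last assertion. For naturality, if $\varphi\colon Y\to Y'$ is $(S\ltimes X)$-equivariant it intertwines the anchor maps, so $f'\circ\varphi=f$, whence $\varphi^{-1}(\operatorname{dom}\vartheta_t^{Y'})=f^{-1}(\operatorname{dom}\vartheta_t^X)=\operatorname{dom}\vartheta_t^Y$ and $\varphi(\vartheta_t^Y(y))=\varphi((t,f(y))\cdot y)=(t,f'(\varphi(y)))\cdot\varphi(y)=\vartheta_t^{Y'}(\varphi(y))$, so $\varphi$ is $S$-equivariant; the converse runs backwards along the same identities. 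I expect the only real obstacle to be the bookkeeping in the second construction around the equivalence relation defining $S\ltimes X$ together with the domains of the partial homeomorphisms — it is precisely this that forces $S$-equivariance of $f$ into the statement and makes the two round trips close up.
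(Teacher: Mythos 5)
Your proposal is correct and follows essentially the same route as the paper's proof: in both directions you use the anchor map, define $\vartheta^Y_t(y)$ as the action of the unique arrow of the slice $\Theta_t$ with source $f(y)$, and settle well-definedness in the converse direction via the idempotent witnessing $(t,x)\sim(u,x)$, exactly as the paper does. The remaining checks (associativity, units, continuity on each $\Theta_t\times_{\s,X,f}Y$, naturality, and the invariance statement) are handled in the same way, so nothing further is needed.
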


\begin{proof}
  Assume that~\(S\ltimes X\) acts on~\(Y\).  Let \(f\colon Y\to X\)
  be the anchor map of the action.  Any \(t\in S\) gives a
  slice~\(\Theta_t\) of~\(S\ltimes X\).  We define a partial
  map~\(\vartheta_{Y,t}\) of~\(Y\) with domain
  \(f^{-1}(\vartheta_X(t^* t))\) by
  \(\vartheta_{Y,t}(y) \defeq \gamma\cdot y\) for the unique
  \(\gamma\in\Theta_t\) with \(\s(\gamma) = f(y)\) in~\(X\).  This
  is a partial homeomorphism because~\(\vartheta_{Y,t^*}\) is a
  partial inverse for it.  The partial
  homeomorphisms~\(\vartheta_{Y,t}\) for \(t\in S\) define an
  action~\(\vartheta_Y\) of~\(S\) on~\(Y\) such that~\(f\) is
  \(S\)\nb-equivariant.  Conversely, let \((\vartheta_Y,f)\) be
  given.  We are going to define an action of~\(S\ltimes X\) with
  anchor map~\(f\).  Let \(\gamma\in S\ltimes X\) and \(y\in Y\)
  satisfy \(\s(\gamma) = f(y)\).  There is \(t\in S\) with
  \(\gamma\in \Theta_t\).  Since \(\s(\Theta_t)\) is the open subset
  in~\(X\) corresponding to the idempotent element \(t^* t \in S\),
  \(f(y) = \s(\gamma) \in \vartheta_X(t^* t)\).  Then
  \(y\in \vartheta_Y(t^* t)\) because~\(f\) is \(S\)\nb-equivariant.
  So \(\gamma\cdot y\defeq \vartheta_{Y,t}(y)\) is defined.  If
  \(\gamma\in \Theta_t\cap \Theta_u\), then there is an idempotent
  element \(e\in S\) with \(t e = u e\) and
  \(\s(\gamma)\in \vartheta_X(e)\).  Then \(y\in \vartheta_Y(e)\) as
  well and hence
  \(\vartheta_{Y,t}(y) = \vartheta_{Y,t e}(y) = \vartheta_{Y,u e}(y)
  = \vartheta_{Y,u}(y)\).  Thus \(\gamma\cdot y\) does not depend on
  the choice of \(t\in S\) with \(y\in\Theta_t\).  The
  multiplication map \((S\ltimes X) \times_{\s,X,f} Y \to Y\) is
  continuous because this holds on each
  \(\Theta_t\times_{s,X,f} Y\).  Routine computations show that the
  multiplication satisfies
  \(\gamma_1\cdot (\gamma_2\cdot y) = (\gamma_1\cdot \gamma_2)\cdot
  y\) for
  \((\gamma_1,\gamma_2,y) \in (S\ltimes X)\times_{\s,X,\rg}
  (S\ltimes X) \times_{\s,X,f} Y\) and \(1_{f(y)}\cdot y = y\) for
  all \(y\in Y\).  Thus the pair \((\vartheta_Y,f)\) gives rise to
  an action of \(S\ltimes X\).  The two constructions above are
  inverse to each other, so that we get the desired bijection.  Both
  are natural; that is, a map \(\varphi\colon Y\to Y'\) is
  equivariant with respect to actions of \(S\ltimes X\) if and only
  if it is \(S\)\nb-equivariant and satisfies
  \(f'\circ \varphi = f\).  Similarly, a map \(\psi\colon Y\to Z\)
  is \(S\ltimes X\)\nb-invariant if and only if it is
  \(S\)\nb-invariant.
\end{proof}

We are ready to reduce the construction of a groupoid model to the
construction of a universal action:

\begin{proposition}
  \label{pro:groupoid_model_from_universal_F-action}
  Let~\(\Omega\) be a universal \(F\)\nb-action.  Equip~\(\Omega\)
  with the action of the inverse semigroup~\(\IS(F)\) associated to
  the \(F\)\nb-action.  The transformation groupoid
  \(\IS(F)\ltimes\Omega\) is a groupoid model for~\(F\).
\end{proposition}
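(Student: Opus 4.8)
The plan is to chain together the universal property of $\Omega$ as a terminal object in $F$-actions, the translation between $F$-actions and $\IS(F)$-actions established in \longref{Lemma}{lem:SF_gives_equivariant_invariant}, and the universal property of the transformation groupoid $\IS(F)\ltimes\Omega$ from \longref{Proposition}{pro:isg_trafo_universal}. We must produce, for every space $Y$, a natural bijection between $(\IS(F)\ltimes\Omega)$-actions on $Y$ and $F$-actions on $Y$, and check that this bijection respects equivariant maps.

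First I would fix an arbitrary space $Y$ and unwind the right-hand side. By \longref{Proposition}{pro:isg_trafo_universal}, a $(\IS(F)\ltimes\Omega)$-action on $Y$ is the same as a pair $(\vartheta_Y, f)$ where $\vartheta_Y$ is an $\IS(F)$-action on $Y$ and $f\colon Y\to\Omega$ is $\IS(F)$-equivariant. Now an $\IS(F)$-action on $Y$ is, by the definition of $\IS(F)$ via generators and relations, exactly a map $\vartheta\colon\Bis(F)\to I(Y)$ satisfying \ref{en:F-action_from_theta1} and \ref{en:F-action_from_theta2}. The data that \longref{Lemma}{lem:F-action_from_theta} needs to reconstruct an $F$-action is such a $\vartheta$ together with a map $\rg\colon Y\to\bigsqcup_x\Gr_x^0$ satisfying \ref{en:F-action_from_theta3} and \ref{en:F-action_from_theta5}. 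The key observation is that the extra map $f\colon Y\to\Omega$ supplies precisely this missing $\rg$: compose $f$ with the anchor map $\rg_\Omega\colon\Omega\to\bigsqcup_x\Gr_x^0$ of the canonical $F$-action on $\Omega$, and set $\rg_Y\defeq\rg_\Omega\circ f$. Because $f$ is $\IS(F)$-equivariant, the compatibility conditions \ref{en:F-action_from_theta3} and \ref{en:F-action_from_theta5} for $(\vartheta_Y,\rg_Y)$ follow from the corresponding conditions on $\Omega$ (which hold since $\Omega$ is an $F$-action); conversely, given an $F$-action on $Y$, \longref{Lemma}{lem:SF_gives_equivariant_invariant} turns it into an $\IS(F)$-action, and the unique $F$-equivariant map $f\colon Y\to\Omega$ guaranteed by universality of $\Omega$ is then $\IS(F)$-equivariant with $\rg_\Omega\circ f=\rg_Y$, again by \longref{Lemma}{lem:SF_gives_equivariant_invariant}.

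So the bijection is: an $F$-action on $Y$ $\longmapsto$ the induced $\IS(F)$-action together with the unique $F$-equivariant map $Y\to\Omega$, which by \longref{Proposition}{pro:isg_trafo_universal} is a $(\IS(F)\ltimes\Omega)$-action; and conversely a $(\IS(F)\ltimes\Omega)$-action, i.e.\ a pair $(\vartheta_Y,f)$, $\longmapsto$ the $F$-action reconstructed from $(\vartheta_Y,\rg_\Omega\circ f)$ via \longref{Lemma}{lem:F-action_from_theta}. That these two constructions are mutually inverse is then a matter of tracing definitions: \longref{Lemma}{lem:F-action_from_theta} says the $F$-action is uniquely determined by the pair $(\vartheta,\rg)$, and \longref{Lemma}{lem:SF_gives_equivariant_invariant} plus the uniqueness clause of the universal property of $\Omega$ pin down the map $f$ in the other direction. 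For naturality one chases a continuous map $\varphi\colon Y\to Y'$: it is $(\IS(F)\ltimes\Omega)$-equivariant iff it is $\IS(F)$-equivariant and commutes with the anchor maps to $\Omega$ (by \longref{Proposition}{pro:isg_trafo_universal}), iff it is $\IS(F)$-equivariant and $\rg_\Omega\circ f'\circ\varphi=\rg_\Omega\circ f$; since $f,f'$ are the \emph{unique} $F$-equivariant maps to $\Omega$, the latter forces $f'\circ\varphi=f$, and then \longref{Lemma}{lem:SF_gives_equivariant_invariant} gives that $\varphi$ is $F$-equivariant, and vice versa.

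The step I expect to be the main obstacle is the careful matching of the role of the anchor map $f\colon Y\to\Omega$ with the auxiliary map $\rg\colon Y\to\bigsqcup_x\Gr_x^0$ of \longref{Lemma}{lem:F-action_from_theta}: a priori $f$ and $\rg$ live in different places, and one has to verify that post-composing $f$ with $\rg_\Omega$ yields a map with exactly the properties \longref{Lemma}{lem:F-action_from_theta} requires, and that in the reverse direction the $\rg$ one starts with is recovered as $\rg_\Omega\circ f$ — this last point is where universality of $\Omega$ (that the comparison map $Y\to\Omega$ is $F$-equivariant, hence intertwines the anchor maps) is essential. A subtlety worth noting is that $f$ need not determine $\rg_Y$ on the nose from $\vartheta_Y$ alone unless $\Gr^0$ is sober (cf.\ the Remark after \longref{Lemma}{lem:theta_gives_equivariant_invariant}); it is precisely for this reason that we carry the map to $\Omega$ rather than trying to work with $\IS(F)$-actions in isolation. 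Everything else — the algebra of idempotents, the explicit formulas for $\vartheta(\U)$, the verification of \ref{en:F-action_from_theta4} and \ref{en:F-action_from_theta6} — is handled once and for all by the cited lemmas and needs only to be invoked.
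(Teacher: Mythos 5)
Your proposal is correct and follows essentially the same route as the paper: both directions rest on the universality of \(\Omega\), \longref{Lemma}{lem:SF_gives_equivariant_invariant}, \longref{Proposition}{pro:isg_trafo_universal}, and the reconstruction of the \(F\)\nb-action from \((\vartheta,\rg_\Omega\circ f)\) via \longref{Lemma}{lem:F-action_from_theta}, with conditions \ref{en:F-action_from_theta3} and \ref{en:F-action_from_theta5} inherited from \(\Omega\) through the equivariance of~\(f\), exactly as in the paper's proof. The naturality argument also matches.
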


\begin{proof}
  Let~\(Y\) be any space with an \(F\)\nb-action.  Since~\(\Omega\)
  is universal, there is a unique \(F\)\nb-equivariant, continuous
  map \(f\colon Y\to\Omega\).  This map is \(\IS(F)\)-equivariant by
  \longref{Lemma}{lem:SF_gives_equivariant_invariant}.  By
  \longref{Proposition}{pro:isg_trafo_universal}, \(f\) and the
  \(\IS(F)\)-action on~\(Y\) are equivalent to an action of the
  groupoid~\(\IS(F)\ltimes\Omega\).  Conversely, let~\(Y\) carry an
  action of \(\IS(F)\ltimes\Omega\).  We turn this into an action
  of~\(\IS(F)\) and an \(\IS(F)\)\nb-equivariant map
  \(f\colon Y\to\Omega\) by
  \longref{Proposition}{pro:isg_trafo_universal}.  We compose~\(f\)
  with the map
  \(\rg_\Omega \colon \Omega\to\bigsqcup_{x\in\Cat^0} \Gr_x^0\) to
  get \(\rg\colon Y\to\bigsqcup_{x\in\Cat^0} \Gr_x^0\).  We restrict
  the action of~\(\IS(F)\) to generators to get
  \(\vartheta\colon \Bis(F)\to I(Y)\).  We claim that \(\rg\)
  and~\(\vartheta\) satisfy the conditions in
  \longref{Lemma}{lem:F-action_from_theta}, so that they come from
  an \(F\)\nb-action on~\(Y\).

  Since~\(\vartheta\) comes from an action of~\(\IS(F)\), the
  conditions \ref{en:F-action_from_theta1}
  and~\ref{en:F-action_from_theta2} in
  \longref{Lemma}{lem:F-action_from_theta} hold.  The codomains
  of~\(\vartheta(\U)\) in~\(Y\) are the preimages of the
  corresponding codomains in~\(\Omega\) because~\(f\) is
  \(\IS(F)\)\nb-equivariant.  Hence~\(Y\)
  inherits~\ref{en:F-action_from_theta3} from~\(\Omega\).
  \longref{Condition}{en:F-action_from_theta5} holds because it
  holds for~\(r_\Omega\) and~\(f\) is equivariant.  Thus \(\rg\)
  and~\(\vartheta\) come from an \(F\)\nb-action.  The maps above
  between \(\IS(F)\ltimes\Omega\)-actions and \(F\)\nb-actions are
  inverse to each other.  The equivariant maps with respect to \(F\)
  and \(\IS(F)\ltimes \Omega\) are the same by
  \longref{Lemma}{lem:SF_gives_equivariant_invariant} and
  \longref{Proposition}{pro:isg_trafo_universal}.  Thus
  \(\IS(F)\ltimes \Omega\) is a groupoid model for~\(F\).
\end{proof}

\section{Groupoid models in the tight case}
\label{sec:limit_tight}

The construction of groupoid models for general diagrams contains
two complicated pieces that we do not understand well in general:
the inverse semigroup~\(\IS(F)\) and the space~\(\Omega\) that carries
the universal \(F\)\nb-action.  We are now going to show that the
\(F\)\nb-action~\(\Omega\) is very simple for diagrams of tight
correspondences.

Let~\(\Cat\) be a small category and let \(F=(\Gr_x,\Bisp_g,\mu_{g,h})\)
describe a \(\Cat\)\nb-shaped diagram of correspondences as in
\longref{Proposition}{pro:diagrams_in_Grcat}.  We assume that all
correspondences~\(\Bisp_g\) are tight.  Let \(\Omega_x \defeq \Gr_x^0\)
with the identity map \(\rg\colon \Omega_x\to \Gr_x^0\), and let
\(\Omega \defeq \bigsqcup_{x\in \Cat^0} \Omega_x = \bigsqcup_{x\in \Cat^0}
\Gr_x^0\).  For an arrow \(g\colon x'\leftarrow x\) in~\(\Cat\),
define
\[
\alpha_g\colon \Bisp_g \times_{\s,\Gr_x^0,\id} \Gr_x^0
\cong \Bisp_g \to \Gr_{x'}^0,\qquad
(\gamma, \s(\gamma)) \mapsto \rg(\gamma).
\]
That is, \(\gamma\cdot y\) is defined if and only if
\(y=\s(\gamma)\), and \(\gamma\cdot \s(\gamma) = \rg(\gamma)\) for
all \(\gamma\in\Bisp_g\).  This satisfies
\(\rg(\gamma\cdot y) = \rg(\gamma)\) and
\(\gamma_1\cdot (\gamma_2\cdot y) = (\gamma_1\cdot \gamma_2)\cdot
y\) because \(\rg(\gamma_1) = \rg(\gamma_1\cdot \gamma_2)\).  The
assumption that~\(\Bisp_g\) is tight means that~\(\alpha_g\)
descends to a homeomorphism from
\(\Bisp_g/\Gr_{\s(g)}^0 \cong \Bisp_g \Grcomp_{\Gr_{\s(g)}}
\Gr_{\s(g)}^0\) to~\(\Gr_{\rg(g)}^0\).  Hence the above data is an
\(F\)\nb-action as in
\longref{Definition}{def:diagram_dynamical_system}.  This breaks
down for diagrams that are not tight.

\begin{theorem}
  \label{the:universal_action_tight}
  For a tight diagram~\(F\),
  the \(F\)\nb-action
  on \(\Omega\defeq \bigsqcup_{x\in \Cat^0} \Gr_x^0\)
  defined above is universal.  The transformation groupoid
  \(\IS(F)\ltimes \Omega\)
  for the induced action of the inverse semigroup~\(\IS(F)\)
  on~\(\Omega\) is a groupoid model for the diagram~\(F\).
\end{theorem}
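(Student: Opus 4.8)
The plan is to prove the first assertion, that the \(F\)\nb-action on \(\Omega \defeq \bigsqcup_{x\in\Cat^0}\Gr_x^0\) is universal, and then to obtain the second assertion for free from \longref{Proposition}{pro:groupoid_model_from_universal_F-action}.

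For universality I would argue as follows. Let \(Y\), with clopen partition \(Y = \bigsqcup_{x\in\Cat^0} Y_x\) and anchor maps \(\rg\colon Y_x\to\Gr_x^0\), be an arbitrary \(F\)\nb-action. First I would settle uniqueness: the anchor map of \(\Omega\) is the identity on each \(\Gr_x^0\), so for any \(F\)\nb-equivariant map \(\varphi\colon Y\to\Omega\) the condition \(\rg_\Omega\bigl(\varphi(y)\bigr) = \rg_Y(y)\) of \longref{Definition}{def:F-invariant} forces \(\varphi(y) = \rg(y)\) for all \(y\in Y\). Hence there is at most one candidate, namely \(\varphi \defeq \rg\), viewed as a map into the disjoint union \(\Omega = \bigsqcup_{x\in\Cat^0}\Gr_x^0\); this \(\varphi\) is continuous because its restriction to the clopen piece \(Y_x\) is the continuous map \(\rg\colon Y_x\to\Gr_x^0 = \Omega_x\), and it sends \(Y_x\) into \(\Omega_x\) by construction.

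It then remains to check existence, that is, that this \(\varphi = \rg\) is indeed \(F\)\nb-equivariant in the sense of \longref{Definition}{def:F-invariant}. The partition and the anchor conditions hold as just noted. For the multiplicativity condition, fix an arrow \(g\colon x'\leftarrow x\) in~\(\Cat\), an element \(\gamma\in\Bisp_g\), and a point \(y\in Y_x\) with \(\s(\gamma) = \rg(y)\). On the one hand, \(\varphi(\gamma\cdot y) = \rg(\gamma\cdot y) = \rg(\gamma)\) by~\ref{en:diagram_dynamical_system1}. On the other hand, \(\varphi(y) = \rg(y) = \s(\gamma)\) lies in \(\Omega_x = \Gr_x^0\), and the action \(\alpha_g\) on \(\Omega\) was defined precisely so that \(\gamma\cdot\s(\gamma) = \rg(\gamma)\); thus \(\gamma\cdot\varphi(y) = \rg(\gamma)\) as well. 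So \(\varphi(\gamma\cdot y) = \gamma\cdot\varphi(y)\), which proves that \(\varphi\) is \(F\)\nb-equivariant, and therefore that \(\Omega\) is a universal \(F\)\nb-action. Finally, \longref{Proposition}{pro:groupoid_model_from_universal_F-action}, applied to this universal action and the inverse semigroup action of \(\IS(F)\) on \(\Omega\) that it induces, shows that \(\IS(F)\ltimes\Omega\) is a groupoid model for~\(F\).

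I do not expect any serious obstacle. The tightness hypothesis is used entirely before the universal property enters, to guarantee that \(\Omega\) carries an \(F\)\nb-action at all---as noted before the statement, \(\alpha_g\) descends to a homeomorphism \(\Bisp_g/\Gr_{\s(g)}^0\cong\Gr_{\rg(g)}^0\) exactly because \(\Bisp_g\) is tight---and once \(\Omega\) is in place the verification of universality is essentially one line, with only the minor bookkeeping about continuity into the disjoint union and the clopen partition to keep track of.
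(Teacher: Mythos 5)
Your proof is correct and follows essentially the same route as the paper: the anchor‑map condition in the definition of \(F\)\nb-equivariance forces the unique candidate \(\varphi=\rg\), equivariance follows from \(\rg(\gamma\cdot y)=\rg(\gamma)\) together with \(\gamma\cdot\s(\gamma)=\rg(\gamma)\) on~\(\Omega\), and the second assertion is then \longref{Proposition}{pro:groupoid_model_from_universal_F-action}. Your write-up only adds a little extra bookkeeping (continuity into the disjoint union, the explicit two-sided computation) beyond what the paper records.
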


\begin{proof}
  Any \(F\)\nb-action~\(Y\) comes with a clopen partition \(Y =
  \bigsqcup_{x\in\Cat^0} Y_x\) with maps \(\rg\colon Y_x\to\Gr_x^0\),
  and an \(F\)\nb-equivariant map must respect these maps.  Thus the
  only map \(Y\to \bigsqcup_{x\in\Cat^0} \Gr_x^0\) that has a chance
  to be \(F\)\nb-equivariant is the map that restricts to~\(\rg\) on
  each~\(Y_x\).  This map is indeed \(F\)\nb-equivariant because
  \(\rg(\gamma\cdot y) = \rg(\gamma)\) for each \(\gamma\in\Bisp_g\),
  \(y\in Y_x\) with \(\s(\gamma)= \rg(y)\).  Thus~\(\Omega\) is the
  universal \(F\)\nb-action.  And \(\IS(F)\ltimes \Omega\) is a groupoid
  model by
  \longref{Proposition}{pro:groupoid_model_from_universal_F-action}.
\end{proof}

In \longref{Section}{sec:tight_Ore_model}, we will describe the
groupoid model differently for tight diagrams where~\(\Cat\)
satisfies the ``right Ore conditions''.  This alternative
description gives the arrow space of a groupoid directly, without
mentioning inverse semigroups.


The monoid~\(\Bis(F)\) acts on~\(\Omega\) by partial homeomorphisms
by the constructions above.  These partial homeomorphisms generate a
pseudogroup by adding their partial inverses and closing again under
composition.  This pseudogroup gives rise to a germ groupoid~\(H\),
whose objects are points in~\(\Omega\) and whose arrows are the
germs of the partial homeomorphisms in this pseudogroup.  This germ
groupoid is pretty close to the original data and should therefore
be quite computable.  How is it related to our groupoid model
\(\IS(F)\ltimes \Omega\)?

The action of~\(\Bis(F)\) on~\(\Omega\) extends to a homomorphism
\(\IS(F) \to I(\Omega)\).  This induces a surjective functor
\(\IS(F)\ltimes\Omega \prto H\) that is the identity on objects.

\begin{proposition}
  \label{pro:tight_effective_quotient}
  The kernel of the surjective functor
  \(\IS(F)\ltimes\Omega \prto H\) is the open isotropy group bundle,
  that is, the interior of the set of arrows~\(\gamma\) with
  \(\s(\gamma)=\rg(\gamma)\) in \(\IS(F)\ltimes\Omega\).  Hence
  \(\IS(F)\ltimes\Omega \cong H\) if and only if \(\IS(F)\ltimes\Omega\)
  is effective.
\end{proposition}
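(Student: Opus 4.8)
The plan is to describe the functor $\pi\colon \IS(F)\ltimes\Omega \prto H$ explicitly and then compare $\ker\pi$ with the interior of the isotropy bundle by a germ-wise computation. Concretely, $\pi$ sends the arrow class $(t,x)$, with $t\in\IS(F)$ and $x\in\operatorname{dom}(\vartheta_t)$, to the germ $[\vartheta_t]_x$ of the partial homeomorphism $\vartheta_t$ at~$x$; this germ lies in the germ groupoid~$H$ because $\vartheta_t$ is a composite of the generating partial homeomorphisms $\vartheta(\U)$, $\U\in\Bis(F)$, and their adjoints. Both the basic slice $\Theta_t=\setgiven{(t,y)}{y\in\operatorname{dom}(\vartheta_t)}$ of $\IS(F)\ltimes\Omega$ and the set of germs $\setgiven{[\vartheta_t]_y}{y\in\operatorname{dom}(\vartheta_t)}$ in~$H$ are carried homeomorphically onto $\operatorname{dom}(\vartheta_t)$ by their source maps, and $\pi$ intertwines these; since the slices $\Theta_t$ cover the arrow space, it follows that $\pi$ is a local homeomorphism, in particular open and continuous. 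As $\pi$ is moreover the identity on objects (noted in the excerpt), it preserves source and range, so every arrow in the open subgroupoid $\ker\pi\defeq\pi^{-1}(\Omega)$ has equal source and range; being open, $\ker\pi$ is contained in the interior of the isotropy bundle $\operatorname{Iso}\defeq\setgiven{\gamma}{\s(\gamma)=\rg(\gamma)}$ of $\IS(F)\ltimes\Omega$.

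For the reverse inclusion I would unravel both conditions pointwise. The arrow $(t,x)$ lies in $\ker\pi$ iff the germ $[\vartheta_t]_x$ is a unit of~$H$, that is, iff $\vartheta_t$ coincides with the identity on some open neighbourhood of~$x$. On the other hand, since $\IS(F)\ltimes\Omega$ is étale, $(t,x)$ lies in the interior of $\operatorname{Iso}$ iff some open bisection neighbourhood of it is contained in $\operatorname{Iso}$, and such a neighbourhood may be taken of the form $\setgiven{(t,y)}{y\in V}$ for an open $V\ni x$ inside $\operatorname{dom}(\vartheta_t)$; containment in $\operatorname{Iso}$ then says exactly that $\vartheta_t(y)=y$ for every $y\in V$, i.e.\ $\vartheta_t|_V=\operatorname{id}_V$. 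The two conditions coincide, so $\ker\pi$ equals the interior of $\operatorname{Iso}$, which is the first assertion.

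The second assertion follows formally from the first. By definition $\IS(F)\ltimes\Omega$ is \emph{effective} precisely when the interior of $\operatorname{Iso}$ is the unit space~$\Omega$, i.e.\ when $\ker\pi$ is trivial. If this holds, then $\pi$ is injective: any two arrows with the same image under~$\pi$ are parallel (as $\pi$ is the identity on objects and preserves source and range), so their product with one inverse lies in $\ker\pi=\Omega$, forcing the two arrows to be equal. Together with the surjectivity of~$\pi$ and the fact that $\pi$ is a local homeomorphism which is the identity on objects, this shows $\pi$ is an isomorphism of topological groupoids. Conversely, if $\pi$ is an isomorphism then $\ker\pi=\Omega$, so by the first assertion $\IS(F)\ltimes\Omega$ is effective.

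The only step requiring real care is the description of the interior of $\operatorname{Iso}$ through étale bisections of the form $\setgiven{(t,y)}{y\in V}$, which is what lets the germ-wise comparison succeed; the verifications that $\pi$ is a local homeomorphism and is the identity on objects are routine (the latter is already in the excerpt). I do not anticipate a genuine obstacle: the statement is the topological-groupoid counterpart of the fact that the germ groupoid of an inverse semigroup action is the maximal effective quotient of its transformation groupoid.
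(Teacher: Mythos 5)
Your proof is correct and follows essentially the same route as the paper: both identify the kernel of \(\IS(F)\ltimes\Omega \prto H\) with the arrows \((t,x)\) for which \(\vartheta_t\) restricts to the identity on a neighbourhood of~\(x\), and match this, via the slices \(\Theta_t\), with the interior of the isotropy bundle. The paper's version is just much terser (it invokes the effectiveness of the germ groupoid and the germ relation directly), while you spell out the bisection-wise comparison and the formal deduction of the second assertion.
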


\begin{proof}
  The germ groupoid~\(H\) is effective by the definition of the germ
  relation.  Hence the map \(\IS(F)\ltimes\Omega \prto H\) annihilates
  the open isotropy group bundle.  Conversely, if an arrow is not in
  the interior of the isotropy group bundle, then the germ
  of its action on~\(\Omega\) is non-trivial, and then its image
  in~\(H\) cannot be trivial.
\end{proof}

Thus~\(H\) is the \emph{effective quotient} of the étale topological
groupoid~\(\IS(F)\ltimes \Omega\), that is, the largest quotient
that is effective.  Several important results about groupoid
\(\Cst\)\nb-algebras hold only for effective groupoids.  In
particular, this is so for criteria when \(\Cst(\Gr)\) is simple or
purely infinite.  Hence examples where \(\IS(F)\ltimes \Omega\) is
different from the germ groupoid~\(H\) may be hard to analyse.

Ruy Exel suggested to consider the example of an action of a free
monoid on~\(n\) generators on a space~\(X\) by local homeomorphisms,
that is, tight groupoid correspondences \(X\leftarrow X\).  Such an
action is determined by \(n\)~arbitrary local homeomorphisms (see
\longref{Section}{sec:free_monoid_action}).  We do not know general criteria
for the resulting groupoid \(\IS(F)\ltimes \Omega\) to be
effective.

\section{Groupoid models and fundamental groups for complexes of groups}
\label{sec:group_complex}

We have seen in \longref{Section}{sec:tight_group_diagram} that a complex of
groups is equivalent to a tight diagram of groupoid correspondences
with some extra properties.  We are going to relate the groupoid
model of such a diagram to the fundamental group of the
corresponding complex of groups.  This fundamental group is defined
in \cite{Bridson-Haefliger}*{Definition 3.1 in Chapter
  III.\(\mathcal{C}\)} by generators and relations.  The following
definition adapts the one in~\cite{Bridson-Haefliger} to
our conventions:

\begin{definition}
  \label{def:fundamental_group_of_complex}
  Let~\(\Cat\) be a small category and let
  \((\Gr_x,\varphi_g,u_{g,h})\) describe a \(\Cat\)\nb-shaped
  diagram of tight correspondences between groups as in
  \longref{Section}{sec:tight_group_diagram}.  The \emph{fundamental group}
  of this diagram is the group~\(\Pi_1\) generated by the set
  \(\Cat\sqcup \bigsqcup_{x\in\Cat^0} \Gr_x\) with the following
  relations:
  \begin{enumerate}
  \item \label{def:fundamental_group_of_complex_1}%
    \(\gamma_1\cdot \gamma_2 = \gamma_1\gamma_2\) for all
    \(\gamma_1,\gamma_2\in\Gr_x\) for the same~\(x\); that is, the
    canonical maps \(\Gr_x \to \Pi_1\) are group homomorphisms;
  \item \label{def:fundamental_group_of_complex_2}%
    \(g\cdot \varphi_g(\gamma) = \gamma\cdot g\) for all
    \(g\in\Cat\), \(\gamma\in\Gr_{\rg(g)}\);
  \item \label{def:fundamental_group_of_complex_3}%
    \(g\cdot h = (g h) \cdot u_{g,h}\) for all
    \((g,h)\in\Cat^2\);
  \item \label{def:fundamental_group_of_complex_4}%
    \(x=1\) for each unit arrow \(x\in\Cat^0\subseteq \Cat\).
  \end{enumerate}
\end{definition}


The generators \(x\in\Cat^0\) are redundant by~\ref{def:fundamental_group_of_complex_4}.  Relation~\ref{def:fundamental_group_of_complex_2}
for \(g\in\Cat^0\) follows from~\ref{def:fundamental_group_of_complex_4}, and so does~\ref{def:fundamental_group_of_complex_3} if
\(g\in\Cat^0\) or \(h\in\Cat^0\) because \(u_{g,h}=1\) in both
cases.  If~\(\Cat\) has no loops, as assumed
in~\cite{Bridson-Haefliger}, then \(g h\in\Cat^0\) only if
\(g\in\Cat^0\) and \(h\in\Cat^0\); hence we may just leave out the
units in~\(\Cat^0\) among the generators and leave out~\ref{def:fundamental_group_of_complex_4} and
require \ref{def:fundamental_group_of_complex_2} and~\ref{def:fundamental_group_of_complex_3} only for \(g,h\in\Cat\setminus\Cat^0\).
If~\(\Cat\) has loops, we may still leave out the units among the
generators and leave out~\ref{def:fundamental_group_of_complex_4}
and require \ref{def:fundamental_group_of_complex_2}
and~\ref{def:fundamental_group_of_complex_3} only for
\(g,h\in\Cat\setminus\Cat^0\).  But then we must clarify that~\ref{def:fundamental_group_of_complex_3}
says \(g\cdot h = u_{g,h}\) if~\(g h\) is a unit.  When we compare
our definition to that in~\cite{Bridson-Haefliger}, we must take
into account that we work with the reverse composition of arrows
in~\(\Cat\) and that our~\(u_{g,h}\) is the inverse of the twisting
element~\(g_{b,a}^{-1}\) in~\cite{Bridson-Haefliger}.  The
definitions become equivalent if we put \(g^- = g\) and \(g^+ =
g^{-1}\).

How is the fundamental group related to the groupoid model?
The description of a groupoid model for the diagram
\((\Gr_x,\varphi_g,u_{g,h})\) is like the presentation above, but
taking into account the ranges and sources of the generators.  First
construct a directed graph with vertex set~\(\Cat^0\) and set of
edges \(\Cat\sqcup \bigsqcup_{x\in\Cat^0} \Gr_x\), where \(\s(g)\)
and \(\rg(g)\) have the usual meaning for \(g\in\Cat\) and
\(\s(\gamma)= \rg(\gamma) = x\) if \(\gamma\in\Gr_x\).  Let~\(P\) be
the path groupoid of this directed graph; that is, an arrow in~\(P\)
is a reduced composable word, where each letter is an edge or its
formal inverse, and where ``reduced'' means that the combinations
\(a\cdot a^{-1}\) and \(a^{-1}\cdot a\) for generators~\(a\) are
forbidden.

\begin{definition}
  \label{def:complex_groups_to_groupoid}
  Let~\(\Gr[U]\) be the quotient of~\(P\) by the family of normal
  subgroups in~\(P\) that is generated by the following elements:
  \begin{enumerate}
  \item \(\gamma_1\cdot \gamma_2 \cdot (\gamma_1\gamma_2)^{-1}\) for
    all \(\gamma_1,\gamma_2\in\Gr_x\) for the same~\(x\);
  \item \(g\cdot \varphi_g(\gamma) \cdot g^{-1} \cdot \gamma^{-1}\)
    for all \(g\in\Cat\), \(\gamma\in\Gr_{\rg(g)}\);
  \item \(u_{g,h}^{-1}\cdot (g h)^{-1} \cdot g\cdot h\) for all
    \((g,h)\in\Cat^2\);
  \item \(x=1\) for each unit arrow \(x\in\Cat^0\).
  \end{enumerate}
\end{definition}

\begin{proposition}
  \label{pro:groupoid_model_tight_group_diagram}
  The groupoid~\(\Gr[U]\) described above is a groupoid model for
  the diagram of tight group correspondences described by
  \((\Gr_x,\varphi_g,u_{g,h})\).
\end{proposition}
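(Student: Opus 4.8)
The plan is to check directly that $\Gr[U]$ has the property defining a groupoid model in \longref{Definition}{def:universal_action}: for every space~$Y$, the actions of~$\Gr[U]$ on~$Y$ are in natural bijection with the $F$-actions on~$Y$. (Combined with \longref{Theorem}{the:universal_action_tight} and the uniqueness statement \longref{Proposition}{pro:groupoid_model}, this will also identify~$\Gr[U]$ with the transformation groupoid $\IS(F)\ltimes\Omega$, but the verification stands on its own.) Throughout, equip~$\Gr[U]$ with the discrete topology, which makes it an étale groupoid with object set~$\Cat^0$.

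First I would make the $F$-actions explicit. Since each~$\Gr_x^0$ is a single point, in an $F$-action the maps $\rg\colon Y_x\to\Gr_x^0$ carry no information and all fibre products over the~$\Gr_x^0$ trivialise. Using the identification $\Bisp_g\cong{}_{\varphi_g}\Gr_{\s(g)}$ of \longref{Section}{sec:tight_group_diagram} and writing $\bar g(y)\defeq 1\cdot y$ for the action of the unit element of~$\Bisp_g$, condition~\ref{en:diagram_dynamical_system1} applied to the composable pair $(g,\s(g))$ gives $\alpha_g(\gamma,y)=\bar g(\gamma\cdot y)$ for $\gamma\in\Bisp_g=\Gr_{\s(g)}$, where on the right~$\gamma$ acts through the $\Gr_{\s(g)}$-action on~$Y_{\s(g)}$ from \longref{Lemma}{lem:action_pieces}. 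Surjectivity of~$\alpha_g$ then forces~$\bar g$ to be onto~$Y_{\rg(g)}$, condition~\ref{en:diagram_dynamical_system2} forces it to be injective, and openness and continuity of~$\alpha_g$ make $\bar g\colon Y_{\s(g)}\congto Y_{\rg(g)}$ a homeomorphism, which is the identity when~$g$ is a unit arrow. Applying condition~\ref{en:diagram_dynamical_system1} to the pairs $(\rg(g),g)$ and $(g,h)$, with~$\mu_{g,h}$ written in the explicit form $(\gamma_1,\gamma_2)\mapsto u_{g,h}\,\varphi_h(\gamma_1)\,\gamma_2$ dictated by the conventions of \longref{Section}{sec:tight_group_diagram}, then yields the covariance relation $\gamma\cdot\bar g(y)=\bar g(\varphi_g(\gamma)\cdot y)$ for $\gamma\in\Gr_{\rg(g)}$ and the twisted multiplicativity $\bar g\circ\bar h=\bar{gh}\circ(u_{g,h})_*$ for composable~$g,h$. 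Hence an $F$-action on~$Y$ is precisely a clopen partition $Y=\bigsqcup_{x\in\Cat^0}Y_x$ together with actions of the groups~$\Gr_x$ on the~$Y_x$ and homeomorphisms $\bar g\colon Y_{\s(g)}\congto Y_{\rg(g)}$ for $g\in\Cat$, equal to the identity on unit arrows and satisfying the covariance relation and the twisted multiplicativity. Conversely, given such data one recovers an $F$-action by setting $\alpha_g(\gamma,y)\defeq\bar g(\gamma\cdot y)$; a short computation — moving~$\bar h$ past~$\gamma_1$ by covariance and merging $\bar g\circ\bar h$ into $\bar{gh}$ by twisted multiplicativity — verifies conditions~\ref{en:diagram_dynamical_system1} and~\ref{en:diagram_dynamical_system2} for arbitrary $\gamma_i\in\Bisp_{g_i}$.

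Next I would describe the actions of~$\Gr[U]$. The graph underlying~$\Gr[U]$ has vertex set~$\Cat^0$ and edge set $\Cat\sqcup\bigsqcup_{x\in\Cat^0}\Gr_x$, and~$\Gr[U]$ is the quotient of its path groupoid by the relations of \longref{Definition}{def:complex_groups_to_groupoid}. Hence an action of~$\Gr[U]$ on a space~$Y$ is the same thing as a clopen partition $Y=\bigsqcup_{x}Y_x$ together with homeomorphisms $\gamma_*\colon Y_x\congto Y_x$ for $\gamma\in\Gr_x$ and $g_*\colon Y_{\s(g)}\congto Y_{\rg(g)}$ for $g\in\Cat$ for which the defining relations hold: relation~(1) says the~$\gamma_*$ assemble into group actions of the~$\Gr_x$, relation~(2) is exactly $\gamma_*\circ g_*=g_*\circ\varphi_g(\gamma)_*$, relation~(3) is exactly $g_*\circ h_*=(g h)_*\circ(u_{g,h})_*$, and relation~(4) says unit arrows act identically. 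This is verbatim the data of an $F$-action found above, so taking this common data as the correspondence gives the desired bijection. It is natural: a continuous map $Y\to Y'$ is $F$-equivariant iff it respects the partitions and intertwines all the~$\alpha_g$, iff it respects the partitions and intertwines the generators~$\gamma_*$ and~$g_*$, iff it is $\Gr[U]$-equivariant. Therefore~$\Gr[U]$ is a groupoid model for the diagram described by $(\Gr_x,\varphi_g,u_{g,h})$.

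The main obstacle is the bookkeeping in the second step: carrying the data of~$\Bisp_g$ and~$\mu_{g,h}$ through the identifications of \longref{Section}{sec:tight_group_diagram} to pin down the formula $\mu_{g,h}(\gamma_1,\gamma_2)=u_{g,h}\,\varphi_h(\gamma_1)\,\gamma_2$ with the correct placement of~$u_{g,h}$, and then checking that conditions~\ref{en:diagram_dynamical_system1} and~\ref{en:diagram_dynamical_system2} hold for \emph{all} $\gamma_i\in\Bisp_{g_i}$ and not only for the unit elements — this last point is exactly where the covariance and twisted-multiplicativity relations enter. Once this dictionary between $F$-action data and $\Gr[U]$-action data is set up, matching the two notions and checking naturality is routine.
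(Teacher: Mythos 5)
Your argument is correct and coincides with the paper's own (first) proof: both translate an $F$\nb-action into a clopen partition with $\Gr_x$\nb-actions and homeomorphisms $\psi_g\colon Y_{\s(g)}\congto Y_{\rg(g)}$ satisfying $\psi_x=\id$, the covariance relation $\gamma\cdot\psi_g(y)=\psi_g(\varphi_g(\gamma)\cdot y)$ and the twisted multiplicativity $\psi_g\circ\psi_h(y)=\psi_{gh}(u_{g,h}\cdot y)$, and then observe that this is exactly the generators-and-relations description of a $\Gr[U]$\nb-action. The paper additionally sketches a second derivation via the universal action on $\bigsqcup_x\Gr_x^0$ and the inverse semigroup $\IS(F)$, but your direct route is the one the paper actually carries out in detail.
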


We prove this in two ways, either directly or using the description
of the groupoid model in
\longref{Theorem}{the:universal_action_tight}.

\begin{proof}
  Let~\(Y\) be a space with an action of our diagram~\(F\).  The
  correspondences in our diagram are of the form \(\Bisp_g \defeq
  {}_{\varphi_g}\Gr_{\s(g)}\).  The multiplication map
  \[
    {}_{\varphi_g} \Gr_{\s(g)} \Grcomp Y_{\s(g)} \to Y_{\rg(g)},\qquad
    (\gamma,y)\mapsto \gamma\cdot y,
  \]
  is a homeomorphism that intertwines the canonical left actions
  of~\(\Gr_{\rg(g)}\) on
  \({}_{\varphi_g} \Gr_{\s(g)} \Grcomp Y_{\s(g)} \cong Y_{\s(g)}\)
  and~\(Y_{\rg(g)}\), which are given by
  \(\gamma\cdot y \defeq \varphi_g(\gamma) y\) and
  \(\gamma\cdot y' \defeq \gamma y'\) for \(\gamma\in\Gr_{\rg(g)}\),
  \(y\in Y_{\s(g)}\) and \(y'\in Y_{\rg(g)}\).  So the homeomorphism
  \({}_{\varphi_g} \Gr_{\s(g)} \Grcomp Y_{\s(g)} \congto
  Y_{\rg(g)}\) that is part of an \(F\)\nb-action on~\(Y\) is
  equivalent to a homeomorphism
  \(\psi_g\colon Y_{\s(g)} \congto Y_{\rg(g)}\) with
  \(\gamma\cdot \psi_g(y) = \psi_g(\varphi_g(\gamma)\cdot y)\) for
  all \(\gamma\in \Gr_{\rg(g)}\), \(y\in Y_{\s(g)}\).  The
  condition~\ref{en:diagram_dynamical_system1} in
  \longref{Definition}{def:diagram_dynamical_system} is equivalent
  to
  \(\psi_{g_1}\circ \psi_{g_2}(y) = \psi_{g_1 g_2}(u_{g_1,g_2}\cdot
  y)\) for all composable \(g_1,g_2\in \Cat\) and
  \(y\in Y_{\s(h)}\).

  Thus an \(F\)\nb-action on~\(Y\) is equivalent to the following data:
  \begin{enumerate}
  \item a disjoint union decomposition \(Y = \bigsqcup_{x\in \Cat^0}
    Y_x\);
  \item group actions of~\(\Gr_x\) on~\(Y_x\) for each \(x\in\Cat^0\);
  \item homeomorphisms \(\psi_g\colon Y_{\s(g)} \congto Y_{\rg(g)}\)
    for all \(g\in\Cat\),
  \end{enumerate}
  subject to the conditions \(\psi_x=\id\) for all \(x\in\Cat^0\),
  \(\gamma\cdot \psi_g(y) = \psi_g(\varphi_g(\gamma)\cdot y)\) for
  all \(\gamma\in \Gr_{\rg(g)}\), \(y\in Y_{\s(g)}\), and
  \(\psi_g\circ \psi_h(y) = \psi_{g h}(u_{g,h}\cdot y)\) for all
  composable \(g,h\in \Cat\) and \(y\in Y_{\s(h)}\).  We claim that
  this is equivalent to an action of the groupoid~\(\Gr[U]\)
  on~\(Y\).  First, the anchor map \(\rg\colon Y\to
  \Gr[U]^0=\Cat^0\) is equivalent to the disjoint union
  decomposition \(Y = \bigsqcup_{x\in\Cat^0} Y_x\) with \(Y_x =
  \rg^{-1}(\{x\})\).  Secondly, an action of~\(\Gr[U]\) is
  equivalent to an action of its generators, such that the relations
  hold.  We let \(g\in\Cat\) act by the map~\(\psi_g\) and the
  elements of~\(\Gr_x\) for \(x\in\Cat^0\) by the given group
  actions.  The relations that define~\(\Gr[U]\) are exactly those
  that are needed for this to give an \(F\)\nb-action.
\end{proof}

The general theory also quickly leads to the groupoid
model~\(\Gr[U]\) above.  First of all, the universal action
described in \longref{Theorem}{the:universal_action_tight} takes
place on the discrete set
\(\bigsqcup_{x\in\Cat^0} \Gr_x^0 \cong \Cat^0\).  Since
each~\(\Gr_x^0\) is a singleton, slices of the
correspondences~\(\Bisp_g\) are either singletons or empty.  When we
identify \(\Bisp_g \cong {}_{\varphi_g}\Gr_{\s(g)}\), then we may
decompose the slice corresponding to
\(\gamma\in\Bisp_g \cong \Gr_{\s(g)}\) as \(g\cdot \gamma\),
where~\(g\) denotes the slice of~\(\Bisp_g\) that is represented by
the neutral element \(1\in \Gr_{\s(g)}\).  Then one shows that the
semigroup of slices is the quotient of the free semigroup on the set
\(\Cat \sqcup \bigsqcup_{x\in\Cat^0} \Gr_x\sqcup \{\emptyset\}\) by
the congruence relation generated by the relations \(x=1\) for
\(x\in\Cat^0 \subseteq \Cat\),
\(\gamma_1\cdot \gamma_2 = \gamma_1\gamma_2\) for all
\(\gamma_1,\gamma_2\in\Gr_x\) for the same \(x\in\Cat^0\),
\(\gamma\cdot g = g\cdot \varphi(\gamma)\) for \(g\in\Cat\),
\(\gamma\in\Gr_{\rg(g)}\), \(g\cdot h = (gh) u_{g,h}\) for
composable \(g,h\in \Cat\), and
\(\gamma_1\cdot \gamma_2=\emptyset\), \(g\cdot \gamma=\emptyset\),
\(\gamma\cdot g=\emptyset\), \(g\cdot h=\emptyset\) in the cases not
listed above.  When we generate an inverse semigroup from this, we
adjoin the extra generators \(g^* = g^{-1}\) for \(g\in\Cat\).  We
need no extra generators~\(\gamma^*\) for \(\gamma\in\Gr_x\) because
\(\Gr_x\) is already a group and so
\(\gamma^* = \gamma^{-1} \in\Gr_x\).  The extra relations
\(\alpha^*\beta = \braket{\alpha}{\beta}\) for slices
\(\alpha,\beta\in\Bis(\Bisp_g)\) for the same \(g\in\Cat\) are
redundant in this case because we may write
\(\alpha=g\cdot \gamma_1\), \(\beta=g\cdot \gamma_2\), and then
\(\alpha^* = \gamma_1^{-1}\cdot g^{-1}\) with
\(\gamma_1^{-1} \in \Gr_{\s(g)}\) and
\(\alpha^* \beta = \gamma_1^{-1}\cdot g^{-1} \cdot g\cdot \gamma_2 =
\gamma_1^{-1} \gamma_2 = \braket{g\cdot \gamma_1}{g\cdot\gamma_2}\).
This explicit description of \(\Omega\) and
\(\IS(F)\) implies that \(\Omega \rtimes \IS(F)\) is isomorphic
to the groupoid~\(\Gr[U]\) described above.

There is usually no close relationship between the groupoid
model~\(\Gr[U]\) and the fundamental group~\(\Pi_1\).  For instance,
if the underlying category~\(\Cat\) has only identity arrows, then
the resulting complex of groups is simply a set of groups
\((\Gr_x)_{x\in\Cat^0}\) without further structure.  Then the
fundamental group is the free product of these groups, whereas the
groupoid model is their disjoint union by
\longref{Proposition}{pro:union_groupoid}.  We may, however, enlarge the diagram
so that the groupoid model of the enlarged diagram is equivalent
to~\(\Pi_1\).  Namely, let \(\Cat^\triangleright\) be the category
with object set \(\Cat^0\sqcup \{\infty\}\) and with arrows set
\(\Cat\times\{0,\infty\} \sqcup\{\id_\infty\}\), source and range
maps
\[
\s(g,0)=\s(g),\qquad
\rg(g,0)=\rg(g),\qquad
\s(g,\infty)=\s(g),\qquad
\rg(g,\infty)=\infty,
\]
and the composition \((g,0)\cdot (h,0) =(g h,0)\) and
\((g,\infty)\cdot (h,0) =(g h,\infty)\).  In other words, we take
the category generated freely by~\(\Cat\) and new arrows
\((x,\infty)\colon x\to\infty\) for all \(x\in\Cat^0\).  We impose
no relations besides those in~\(\Cat\), so the arrows
\((x,\infty)\circ (g,0) = (g,\infty)\) for \(g\in\Cat\) with
\(\rg(g)=x\) are all distinct.  By construction, the only arrow with
\(\s(g)=\infty\) is \(g=\id_\infty\).  We extend a given diagram
to~\(\Cat^\triangleright\) as follows.  Let
\(\Gr_\infty\defeq\{1\}\) and let \(\varphi_{(g,\infty)}\) for
\(g\in\Cat\) be the unique group homomorphism \(\{1\}\to \Gr_x\).
Let
\[
u_{(g,0),(h,0)} = u_{(g,\infty),(h,0)} = u_{g,h}
\]
for all composable \(g,h\in\Cat\) and let \(u_{\id_\infty,g} = 1\)
and \(u_{g,\id_\infty} = 1\) if \(\rg(g)=\infty\) or
\(\s(g)=\infty\), respectively.  This defines \(u_{g,h}\) for all
\((g,h) \in (\Cat^\triangleright)^2\), and the conditions needed for
a diagram of tight group correspondences hold because they hold for
the given diagram over~\(\Cat\).

\begin{theorem}
  \label{the:action_complex_of_groups}
  Let \((\Gr_x,\varphi_g,u_{g,h})\)
  be a diagram of tight group correspondences.
  Let~\(\Gr[U]^\triangleright\)
  be the groupoid model for the extension of this diagram
  to~\(\Cat^\triangleright\).
  The groupoid~\(\Gr[U]^\triangleright\)
  is isomorphic to the product of the fundamental group~\(\Pi_1\)
  of \((\Gr_x,\varphi_g,u_{g,h})\)
  with the pair groupoid of \(\Cat^0\sqcup\{\infty\}\).
  So it is equivalent to~\(\Pi_1\).
\end{theorem}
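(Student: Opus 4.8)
The plan is to prove the isomorphism abstractly, via the uniqueness of groupoid models (Proposition~\ref{pro:groupoid_model}), rather than by manipulating the generators-and-relations presentation of Definition~\ref{def:complex_groups_to_groupoid}. Proposition~\ref{pro:groupoid_model_tight_group_diagram}, applied to the extended diagram~$F^\triangleright$ over~$\Cat^\triangleright$ (which is again a tight diagram of group correspondences, since $\Bisp_{(g,\infty)}\cong {}_{\varphi_{(g,\infty)}}\Gr_{\s(g)}$ is tight and $\Bisp_{\id_\infty}$ is an identity correspondence), already shows that $\Gr[U]^\triangleright$ is a groupoid model for~$F^\triangleright$. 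Hence it suffices to show that $\Pi_1\times\mathrm{P}$ is \emph{also} a groupoid model for~$F^\triangleright$, where $\mathrm{P}$ denotes the pair groupoid of $\Cat^0\sqcup\{\infty\}$; the asserted isomorphism then follows from Proposition~\ref{pro:groupoid_model}, and the final clause, that $\Gr[U]^\triangleright$ is equivalent to~$\Pi_1$, follows because $\Pi_1\times\mathrm{P}$ is Morita equivalent to~$\Pi_1$ (the inclusion of~$\Pi_1$ as the isotropy group at~$\infty$ is a groupoid equivalence), which by Theorem~\ref{the:groupoid_equivalence} is an equivalence in~$\Grcat$.

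I would then unpack both kinds of actions and match them. An action of $\Pi_1\times\mathrm{P}$ on a space~$Y$ is, after splitting off the anchor map, the same as a clopen partition $Y=\bigsqcup_{x\in\Cat^0\sqcup\{\infty\}} Y_x$, a family of homeomorphisms $\rho_x\colon Y_x\congto Y_\infty$ for $x\in\Cat^0$ (together with $\rho_\infty=\id$) coming from the pair-groupoid factor, and an action of the group~$\Pi_1$ on~$Y_\infty$; these three ingredients are chosen independently, and a continuous map $Y\to Y'$ is equivariant if and only if it respects the partitions, intertwines the~$\rho_x$, and is $\Pi_1$-equivariant on the $\infty$-pieces. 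On the other hand, the proof of Proposition~\ref{pro:groupoid_model_tight_group_diagram}, applied to~$F^\triangleright$, shows that an $F^\triangleright$-action on~$Y$ consists of such a clopen partition, group actions of~$\Gr_x$ on~$Y_x$ for $x\in\Cat^0$ (with no condition at~$\infty$, since $\Gr_\infty$ is trivial), and homeomorphisms $\psi_g\colon Y_{\s(g)}\congto Y_{\rg(g)}$ for the arrows~$g$ of~$\Cat^\triangleright$, subject to $\psi_{\id}=\id$, the intertwining identity $\gamma\cdot\psi_g(y)=\psi_g(\varphi_g(\gamma)\cdot y)$, and the $u_{g,h}$-cocycle identity.

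The bridge between the two pictures is the distinguished arrow $(x,\infty)\colon x\to\infty$ of~$\Cat^\triangleright$: set $\rho_x\defeq\psi_{(x,\infty)}$, which is an unconstrained homeomorphism $Y_x\congto Y_\infty$ precisely because $\Gr_\infty=\{1\}$. The cocycle identity together with the normalisation $u_{\rg(g),g}=1$ forces $\psi_{(g,\infty)}=\rho_{\rg(g)}\circ\psi_{(g,0)}$, so the homeomorphisms indexed by the new arrows $(g,\infty)$, as well as $\psi_{\id_\infty}$, are redundant, and the remaining data are the $\Gr_x$-actions and the $\psi_{(g,0)}$ for $g\in\Cat$. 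Transporting these to~$Y_\infty$ along the~$\rho_x$ --~replacing the $\Gr_x$-action by $\gamma\cdot_\infty z\defeq\rho_x(\gamma\cdot\rho_x^{-1}z)$ and $\psi_{(g,0)}$ by $\rho_{\rg(g)}\circ\psi_{(g,0)}\circ\rho_{\s(g)}^{-1}$~-- turns the $F^\triangleright$-action axioms into exactly the defining relations of~$\Pi_1$ in Definition~\ref{def:fundamental_group_of_complex}: the group-action axioms give relation~(1), the intertwining identity gives relation~(2), the cocycle identity gives relation~(3), and $\psi_{\id}=\id$ gives relation~(4). Conversely, any $\Pi_1$-action on~$Y_\infty$ together with a partition and homeomorphisms~$\rho_x$ rebuilds an $F^\triangleright$-action, and this correspondence is natural in~$Y$, so $\Pi_1\times\mathrm{P}$ is a groupoid model for~$F^\triangleright$ and the theorem follows.

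The main obstacle will be the bookkeeping in this last step: verifying that the transported axioms match the relations of~$\Pi_1$ \emph{on the nose}, keeping careful track of which $F^\triangleright$-axioms over~$\Cat^\triangleright$ are vacuous (those involving the trivial group~$\Gr_\infty$ or the unit~$\id_\infty$) and which merely re-express the redundant arrows~$(g,\infty)$ through the~$(g,0)$ and the~$\rho_x$, and that the two assignments (action $\leftrightarrow$ decomposed data) are mutually inverse and functorial. A more computational alternative avoids the universal property altogether: present $\Gr[U]^\triangleright$ by the generators and relations of Definition~\ref{def:complex_groups_to_groupoid}, observe that the edges $(x,\infty)$, $x\in\Cat^0$, form a spanning tree of the underlying graph, and collapse it; the isotropy group at~$\infty$ then acquires precisely the presentation of~$\Pi_1$, and the groupoid splits as the product of this isotropy group with the pair groupoid of its object set. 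Either way, the substance is the same: trivialising the component structure by means of the spanning tree $\{(x,\infty)\}_{x\in\Cat^0}$.
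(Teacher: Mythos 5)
Your proposal is correct, and your primary route is genuinely different from the proof the paper gives. The paper's proof is essentially your ``computational alternative'': it works directly with the presentation of \(\Gr[U]^\triangleright\) from Definition~\ref{def:complex_groups_to_groupoid}, observes that the arrows \(h_x\colon x\to\infty\) make the groupoid transitive (so it splits as the pair groupoid times the isotropy group at~\(\infty\)), and then inserts \(h_x^{-1}h_x\) into reduced words to rewrite the generators as \(h_{\rg(g)}(g,0)h_{\s(g)}^{-1}\) and \(h_x\gamma h_x^{-1}\), after which the relations become exactly those of Definition~\ref{def:fundamental_group_of_complex} — your spanning-tree collapse. Your main argument instead shows that \(\Pi_1\times\mathrm{P}\) is itself a groupoid model for \(F^\triangleright\) by matching actions, and then invokes uniqueness of groupoid models (Proposition~\ref{pro:groupoid_model}); the paper only sketches this action-theoretic picture informally in the paragraph following the theorem, without carrying it out. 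Both arguments hinge on the same structural fact, and your redundancy checks do go through: the conditions at~\(\infty\) are vacuous because \(\Gr_\infty\) is trivial, and \(\psi_{(g,\infty)}=\rho_{\rg(g)}\circ\psi_{(g,0)}\) follows from the cocycle identity for the composable pair \(\bigl((\rg(g),\infty),(g,0)\bigr)\) together with \(u_{\rg(g),g}=1\), so the relations indexed by the arrows \((g,\infty)\) reduce to those for the \((g,0)\). What your route buys is that it never touches the path-groupoid presentation and stays entirely at the level of the universal property (it only uses the description of \(F^\triangleright\)-actions from the proof of Proposition~\ref{pro:groupoid_model_tight_group_diagram}); what the paper's route buys is brevity, since Proposition~\ref{pro:groupoid_model_tight_group_diagram} has already identified \(\Gr[U]^\triangleright\) with an explicit generators-and-relations groupoid and only a change of generators remains.
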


\begin{proof}
  For any object~\(x\)
  in~\(\Cat^\triangleright\),
  there is a canonical arrow \(h_x\colon x\to\infty\)
  in~\(\Cat^\triangleright\),
  namely, the arrow \((x,\infty)\colon x\to\infty\)
  for \(x\in\Cat^0\)
  or \(\id_\infty\)
  for \(x=\infty\).
  Hence the groupoid~\(\Gr[U]^\triangleright\)
  is transitive.  Thus it is isomorphic to the product of the pair
  groupoid on the set \(\Cat^0\sqcup \{\infty\}\)
  and the isotropy group \(\Gr[U]^\triangleright_\infty\)
  at the object~\(\infty\).
  Let~\(P\)
  be the path groupoid in the definition of~\(\Gr[U]^\triangleright\).
  Given a reduced path in~\(P\),
  put \(h_x^{-1} h_x\)
  for the appropriate object~\(x\)
  between any two factors.  This gives a product of
  factors~\(t^{\pm1}\),
  where~\(t\)
  is either \(h_{\rg(g)} (g,0) h_{\s(g)}^{-1}\)
  for some \(g\in\Cat\),
  or \(h_x \gamma h_x^{-1}\)
  for some \(x\in\Cat^0\),
  \(\gamma\in\Gr_x\),
  or \((g,\infty) h_{\s(g)}^{-1}\)
  for some \(g\in\Cat\).
  We may rewrite the last term as \(h_{\rg(g)} (g,0) h_{\s(g)}^{-1}\)
  because \(u(\rg(g),g)=1\)
  for all \(g\in\Cat\)
  and so \((g,\infty) = h_{\rg(g)} \cdot (g,0)\)
  holds in \(\Gr[U]^\triangleright_\infty\).
  So we only need the first two types of generators, which are in
  bijection with \( \Cat \sqcup \bigsqcup_{x\in\Gr^0} \Gr_x\).
  Decorating the relations defining the groupoid~
  \(\Gr[U]^\triangleright\)
  with \(h_x^{-1} h_x\)
  appropriately, we see that~\(\Gr[U]^\triangleright_\infty\)
  is the group generated by the set
  \(\bigsqcup_{x\in\Gr^0} \Gr_x \sqcup \Cat\) and the relations in
  \longref{Definition}{def:fundamental_group_of_complex}.
\end{proof}

We may also understand \longref{Theorem}{the:action_complex_of_groups} by
looking at actions of the extended diagram.  Such an action occurs
on a space \(Y=Y_\infty \sqcup \bigsqcup_{x\in\Cat^0} Y_x\).  The
arrows \((x,\infty)\colon x\to\infty\) in~\(\Cat^\triangleright\)
act by homeomorphisms \(Y_x\congto Y_\infty\).  We use these
homeomorphisms to identify all the~\(Y_x\) with~\(Y_\infty\), and to
transfer the actions of~\(\Gr_x\) on~\(Y_x\) and the homeomorphisms
\(Y_{\s(g)} \congto Y_{\rg(g)}\) for arrows \((g,0)\in
\Cat^\triangleright\) to actions of~\(\Gr_x\) on~\(Y_\infty\) and
homeomorphisms of~\(Y_\infty\).  The defining relations of~\(\Pi_1\)
are exactly the relations that these homeomorphisms of~\(Y_\infty\)
must satisfy to give an action of the extended diagram.

\section{Diagrams of Ore shape}
\label{sec:Ore_shape}

Let \(F=(\Gr_x,\Bisp_g,\mu_{g,h})\) be a \(\Cat\)\nb-shaped diagram
that is not tight.  Then \(\bigsqcup_{x\in\Cat^0} \Gr_x^0\) does not
carry an \(F\)\nb-action.  So the underlying space of the universal
\(F\)\nb-action must be more complicated.  We are going to construct
a better approximation to this universal \(F\)\nb-action.
If~\(\Cat\) satisfies certain conditions that generalise the
definition of an Ore monoid in~\cite{Albandik-Meyer:Product}, then
this better approximation is the universal \(F\)\nb-action.

Let \(Y= \bigsqcup_{x\in \Cat^0} Y_x\)
be a topological space with an \(F\)\nb-action.
This \(F\)\nb-action
is equivalent to a family of homeomorphisms
\(V_g\colon \Bisp_g \Grcomp Y_{\s(g)} \congto Y_{\rg(g)}\)
for all \(g\in\Cat\)
such that the multiplication defined by
\(\gamma\cdot y\defeq V_g[\gamma,y]\)
for \(\gamma\in\Bisp_g\),
\(y\in Y_{\s(g)}\)
with \(\s(\gamma) = \rg(y)\)
is associative as in \longref{Definition}{def:diagram_dynamical_system}.  The
spaces \(\Bisp_g \Grcomp Y_{\s(g)}\)
and~\(\Bisp_g/\Gr_{\s(g)}\)
are, respectively, defined as the quotients of
\(\Bisp_g \times_{\s,\Gr_{\s(g)}^0,\rg} Y_{\s(g)}\)
and~\(\Bisp_g\)
by the equivalence relations
\((\gamma,y) \sim (\gamma\cdot\eta,\eta^{-1}\cdot y)\)
and \(\gamma = \gamma\cdot \eta\)
for all \(\gamma\in \Bisp_g\),
\(y\in Y_{\s(g)}\),
\(\eta\in\Gr_{\s(g)}\)
with \(\s(\gamma)=\rg(y) = \rg(\eta)\).
Hence the coordinate projection
\(\Bisp_g \times_{\s,\Gr_{\s(g)}^0,\rg} Y_{\s(g)}\to \Bisp_g\)
descends to a well defined continuous map
\(\Bisp_g \Grcomp Y_{\s(g)} \to \Bisp_g/\Gr_{\s(g)}\).
Together with the homeomorphism~\(V_g\),
this produces a continuous map
\[
\varrho^g\colon Y_{\rg(g)} \xrightarrow[\cong]{V_g^{-1}}
\Bisp_g \Grcomp Y_{\s(g)} \to \Bisp_g/\Gr_{\s(g)}
\]
that maps \(\gamma\cdot y = V_g[\gamma,y]\)
to~\([\gamma]\)
for \(\gamma\in \Bisp_g\),
\(y\in Y_{\s(g)}\) with \(\s(\gamma)=\rg(y)\).

Similarly, the homeomorphism \(\mu_{g,h}\colon \Bisp_g\Grcomp
\Bisp_h \congto \Bisp_{g h}\), \([\gamma_1,\gamma_2]\mapsto
\gamma_1\cdot \gamma_2\), for composable \(g,h\in\Cat\) induces a
well defined continuous map
\begin{equation}
 \label{eq:projection_maps_Ore_diagrams}
\pi_{g,h}\colon \Bisp_{g h}/\Gr_{\s(g h)}
\xrightarrow[\cong]{\mu_{g,h}^{-1}}
\Bisp_g \Grcomp \Bisp_h/\Gr_{\s(h)} \to \Bisp_g/\Gr_{\s(g)},
\qquad
[\gamma_1\cdot \gamma_2]\mapsto [\gamma_1].
\end{equation}

\begin{lemma}
  \label{lem:varrho_g_compatible}
  These maps are related by
  \begin{equation}
    \label{eq:varrho_g_compatible}
    \pi_{g,h}\circ \varrho^{g h} = \varrho^g
  \end{equation}
  for all composable \(g,h\in\Cat\).
\end{lemma}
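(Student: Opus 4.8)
The plan is to check the identity $\pi_{g,h}\circ\varrho^{gh}=\varrho^g$ pointwise on the space $Y_{\rg(gh)}$; since $\rg(gh)=\rg(g)$ and $\s(gh)=\s(h)$ for composable arrows $g,h$ in~\(\Cat\), this is the space $Y_{\rg(g)}$, and both composites are continuous maps into $\Bisp_g/\Gr_{\s(g)}$, so a pointwise verification suffices. The key inputs are that the action maps $\alpha_g$ and $\alpha_h$ of the given \(F\)\nb-action are surjective, so that every point of $Y_{\rg(g)}$ admits a convenient decomposition, together with the associativity axiom~\ref{en:diagram_dynamical_system1} and the characterising formulas for $\varrho^g$, $\varrho^{gh}$ and $\pi_{g,h}$ recalled just before the lemma.

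First I would take an arbitrary $z\in Y_{\rg(g)}$. Using surjectivity of $\alpha_g$, write $z=\gamma_1\cdot w$ with $\gamma_1\in\Bisp_g$ and $w\in Y_{\s(g)}$, $\s(\gamma_1)=\rg(w)$. By the defining property of~\(\varrho^g\) this gives $\varrho^g(z)=[\gamma_1]$ in $\Bisp_g/\Gr_{\s(g)}$. Since $w\in Y_{\s(g)}=Y_{\rg(h)}$ and $\alpha_h$ is surjective, write further $w=\gamma_2\cdot y$ with $\gamma_2\in\Bisp_h$, $y\in Y_{\s(h)}$, $\s(\gamma_2)=\rg(y)$. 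Then axiom~\ref{en:diagram_dynamical_system1} yields $z=\gamma_1\cdot(\gamma_2\cdot y)=(\gamma_1\cdot\gamma_2)\cdot y$, where $\gamma_1\cdot\gamma_2=\mu_{g,h}[\gamma_1,\gamma_2]\in\Bisp_{gh}$ and $\s(\gamma_1\cdot\gamma_2)=\s(\gamma_2)=\rg(y)$. Hence the defining property of~\(\varrho^{gh}\) gives $\varrho^{gh}(z)=[\gamma_1\cdot\gamma_2]$ in $\Bisp_{gh}/\Gr_{\s(gh)}$.

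Finally I would apply $\pi_{g,h}$: by its definition in~\eqref{eq:projection_maps_Ore_diagrams}, $\pi_{g,h}[\gamma_1\cdot\gamma_2]=[\gamma_1]$, so $\pi_{g,h}\bigl(\varrho^{gh}(z)\bigr)=[\gamma_1]=\varrho^g(z)$. As $z\in Y_{\rg(g)}$ was arbitrary, this establishes~\eqref{eq:varrho_g_compatible}.

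I do not expect a genuine obstacle here: the argument is essentially a bookkeeping exercise in unwinding the definitions of the three maps and invoking the appropriate instance of the action axioms. The only point requiring a small amount of care is that the class $[\gamma_1]$ should not depend on the chosen decomposition $z=\gamma_1\cdot w$ (which follows from axiom~\ref{en:diagram_dynamical_system2}); but this is already subsumed in the fact, stated before the lemma, that $\varrho^g$ and $\varrho^{gh}$ are well-defined continuous maps, so I may simply use their characterising formulas without rechecking well-definedness.
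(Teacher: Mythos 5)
Your proposal is correct and follows essentially the same argument as the paper: decompose an arbitrary point of \(Y_{\rg(g)}\) as \((\gamma_1\cdot\gamma_2)\cdot y\) using surjectivity of the action maps and associativity, then read off \(\varrho^g\), \(\varrho^{gh}\) and \(\pi_{g,h}\) on this decomposition. The paper's proof is just a more compressed version of the same computation.
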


\begin{proof}
  Any element of~\(Y_{\rg(g)}\) is of the form
  \((\gamma_1\cdot \gamma_2) \cdot y = \gamma_1 \cdot (\gamma_2\cdot
  y)\) for some \(\gamma_1\in \Bisp_g\), \(\gamma_2\in\Bisp_h\),
  \(y\in Y_{\s(h)}\) with \(\s(\gamma_1) = \rg(\gamma_2)\) and
  \(\s(\gamma_2) = \rg(y)\).  The maps \(\varrho^{g h}\)
  and~\(\varrho^g\) on \(Y_{\rg(g)} = Y_{\rg(g h)}\) map
  \(\gamma_1 \cdot \gamma_2\cdot y\) to \([\gamma_1\cdot \gamma_2]\)
  and \([\gamma_1]\), respectively, and
  \(\pi_{g,h}[\gamma_1 \cdot \gamma_2] = [\gamma_1]\).  This
  proves~\eqref{eq:varrho_g_compatible}.
\end{proof}

We now use~\eqref{eq:varrho_g_compatible} to combine all
maps~\(\varrho^g\) for \(g\in\Cat\) with \(\rg(g) = x\) into a
single map from~\(Y_x\) to a certain limit space.  First we describe
the category~\(\Cat[D]_x\) over which we take the limit.  Its object
set is \(\Cat^x \defeq \setgiven{g\in\Cat}{\rg(g)=x}\), and an arrow
\(g_1\to g_2\) in~\(\Cat[D]_x\) is \(h\in\Cat\) with \(g_1 = g_2
h\).  The composition in~\(\Cat[D]_x\) is the multiplication
in~\(\Cat\).  The map that sends an object~\(g\) of~\(\Cat[D]_x\) to
the space~\(\Bisp_g/\Gr_{\s(g)}\) and an arrow \(h\colon g h\to g\)
to the continuous map~\(\pi_{g,h}\) is a diagram of topological
spaces indexed by~\(\Cat[D]_x\), that is,
\[
\pi_{g,h_1} \circ \pi_{g h_1,h_2} = \pi_{g, h_1 h_2}\colon
\Bisp_{g h_1 h_2}/\Gr_{\s(h_2)} \to \Bisp_{g}/\Gr_{\s(g)}
\]
if \(g,h_1,h_2\in\Cat\) are composable; this follows as in the proof
of \longref{Lemma}{lem:varrho_g_compatible}.  Let
\[
\Omega_x \defeq \varprojlim {}(\Bisp_g/\Gr_{\s(g)},\pi_{g,h})
\]
be the limit of this diagram.  The maps \(\varrho^g\colon Y_x \to
\Bisp_g/\Gr_{\s(g)}\) form a cone over the diagram
\((\Bisp_g/\Gr_{\s(g)},\pi_{g,h})\)
by~\eqref{eq:varrho_g_compatible}.  Hence they induce a map
\[
\varrho_x\defeq (\varrho^g)_{g\in\Cat^x} \colon Y_x \to
\varprojlim {}(\Bisp_g/\Gr_{\s(g)},\pi_{g,h}) = \Omega_x.
\]

The construction of \(\varrho^g\) and~\(\varrho_x\) is
natural with respect to \(F\)\nb-equivariant maps; that is, if
\(\varphi\colon Y_1 \to Y_2\) is \(F\)\nb-equivariant and
\(\varrho_i^g\colon Y_{i,x} \to \Bisp_g/\Gr_{\s(g)}\) and
\(\varrho_{i,x}\colon Y_{i,x} \to \Omega_x\) are these canonical
maps for \(i=1,2\), then \(\varrho_2^g \circ \varphi = \varrho_1^g\)
and \(\varrho_{2,x} \circ \varphi = \varrho_{1,x}\).

Let \(\Omega \defeq \bigsqcup_{x\in\Cat^0} \Omega_x\).  Is there
some kind of \(F\)\nb-action on~\(\Omega\)?  Each space
\(\Bisp_g/\Gr_{\s(g)}\) carries a left \(\Gr_{\rg(g)}\)\nb-action,
and the maps~\(\pi_{g,h}\) are equivariant for these actions.  Hence
the limit space~\(\Omega_x\) inherits a canonical action of the
groupoid~\(\Gr_x\).  Its anchor map is the projection to the factor
\(\Bisp_x/\Gr_x \cong \Gr_x^0\) at \(x\in\Cat^x\) in our diagram.

We also need multiplication maps \(\Bisp_g
\times_{\s,\Gr_{\s(g)}^0,\rg} \Omega_{\s(g)} \to \Omega_{\rg(g)}\)
for non-identity arrows \(g\in \Cat\).  Let \(\gamma\in\Bisp_g\),
\(x\defeq \s(g)\), and let \(\omega = (\omega_h)_{h\in\Cat^x} \in
\Omega_x\) with \(\s(\gamma) = \rg(\omega) = \rg(\omega_h)\) for all
\(h\in\Cat^x\).  Then \(\omega_h\in\Bisp_h/\Gr_{\s(h)}\) and so we
get \(\gamma\cdot\omega_h\in\Bisp_{g\cdot h}/\Gr_{\s(g\cdot h)}\) by
choosing a representative of~\(\omega_h\) in~\(\Bisp_h\) and
multiplying.  But to define a point in~\(\Omega_{\rg(g)}\), we need
elements \((\gamma\cdot\omega)_k\) for all \(k\in\Cat^x\).  The
formula above only produces \((\gamma\cdot\omega)_{g\cdot h}\) for
fixed~\(g\) and \(h\in\Cat^{\s(g)}\).  In general, this is not
enough and so our Ansatz does not lead to a natural action of~\(F\)
on~\(\Omega\).  To get such an action, we now assume that all the
categories~\(\Cat[D]_x^\op\) are filtered, compare
also~\cite{Albandik-Meyer:Product}.  The following lemma makes
explicit the usual definition of a filtered category:

\begin{lemma}
  \label{lem:Ore_conditions}
  The categories~\(\Cat[D]_x^\op\) are filtered for all \(x\in\Cat^0\)
  if and only if the following two conditions hold:
  \begin{enumerate}[label=\textup{(\ref*{lem:Ore_conditions}.\arabic*)},
    leftmargin=*,labelindent=0em]
  \item if \(g_1,g_2\in\Cat\) satisfy \(\rg(g_1) = \rg(g_2)\), then
    there are \(h_1,h_2\in\Cat\) with \(\s(g_i)=\rg(h_i)\) for
    \(i=1,2\) and \(g_1\cdot h_1 = g_2\cdot h_2\);
  \item if \(g,h_1,h_2\in\Cat\) satisfy \(\s(g) = \rg(h_1) =
    \rg(h_2)\) and \(g\cdot h_1 = g\cdot h_2\), then there is \(k\in
    \Cat\) with \(\rg(k) = \s(h_1) = \s(h_2)\) and \(h_1\cdot k =
    h_2\cdot k\).\qed
  \end{enumerate}
\end{lemma}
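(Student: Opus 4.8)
The statement is a translation exercise: I need to unwind the standard definition of ``filtered category'' (equivalently, that $\Cat[D]_x^{\op}$ admits cocones over finite diagrams) and show it amounts to the two displayed conditions, uniformly in $x \in \Cat^0$. Recall first that $\Cat[D]_x$ has object set $\Cat^x = \setgiven{g \in \Cat}{\rg(g) = x}$, and a morphism $g_1 \to g_2$ in $\Cat[D]_x$ is an arrow $h \in \Cat$ with $g_1 = g_2 h$; so a morphism $g_1 \to g_2$ in $\Cat[D]_x^{\op}$ is an arrow $h \in \Cat$ with $g_2 = g_1 h$, i.e.\ $g_1$ ``divides on the left'' $g_2$. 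A category is filtered iff it is nonempty, any two objects admit a cocone (there is an object receiving maps from both), and any two parallel morphisms are coequalized by some further morphism. Since $x = \id_x \in \Cat^x$, each $\Cat[D]_x$ is nonempty, so nonemptiness is automatic and need not appear in the list.

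The plan is to check the two remaining filteredness axioms one at a time. For the ``cocone over two objects'' axiom in $\Cat[D]_x^{\op}$: given objects $g_1, g_2 \in \Cat^x$ (so $\rg(g_1) = \rg(g_2) = x$), a common object receiving $\Cat[D]_x^{\op}$-morphisms from both is an arrow $k \in \Cat$ with $\rg(k) = x$ together with $h_1, h_2 \in \Cat$ with $k = g_1 h_1 = g_2 h_2$; composability forces $\s(g_i) = \rg(h_i)$. Conversely, any such $h_1, h_2$ with $g_1 h_1 = g_2 h_2$ produces the object $k := g_1 h_1$ (note $\rg(k) = \rg(g_1) = x$ is automatic, so it need not be stated), and these are exactly the morphisms to $k$. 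Thus this axiom, ranged over all $x$, is precisely condition~(\ref{lem:Ore_conditions}.1) — the hypothesis $\rg(g_1) = \rg(g_2)$ in~(\ref{lem:Ore_conditions}.1) encodes ``$g_1, g_2$ lie in the same $\Cat^x$'', and the existence of $h_1, h_2$ with $g_1 h_1 = g_2 h_2$ encodes the cocone. For the ``coequalizer of parallel pairs'' axiom: a parallel pair of $\Cat[D]_x^{\op}$-morphisms from $g$ to some $g'$ is a pair $h_1, h_2 \in \Cat$ with $\s(g) = \rg(h_1) = \rg(h_2)$ and $g h_1 = g' = g h_2$; coequalizing it means finding a $\Cat[D]_x^{\op}$-morphism $k$ out of $g'$, i.e.\ $k \in \Cat$ with $\rg(k) = \s(g') = \s(h_1) = \s(h_2)$, such that $h_1 k = h_2 k$ (postcomposition with $k$ sends both to the same morphism $g \to g' k$). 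This is exactly condition~(\ref{lem:Ore_conditions}.2). I would present both directions together, since in each case the identification of data is an iff on the nose.

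I do not anticipate a genuine obstacle here — the content is bookkeeping — but the one point requiring care is matching the quantifier structure: ``$\Cat[D]_x^{\op}$ is filtered for all $x$'' is a statement quantified over $x$, whereas conditions~(\ref{lem:Ore_conditions}.1)--(\ref{lem:Ore_conditions}.2) are stated as single conditions on $\Cat$ with the hypotheses $\rg(g_1) = \rg(g_2)$ resp.\ $\s(g) = \rg(h_1) = \rg(h_2)$ doing the work of fixing $x$. I would make explicit that ranging over $x$ and then over pairs of objects of $\Cat^x$ is the same as ranging over pairs $g_1, g_2 \in \Cat$ with $\rg(g_1) = \rg(g_2)$, and similarly for parallel pairs; once that identification is flagged, the proof is a couple of lines in each direction, and the \texttt{\textbackslash qed} symbol already placed at the end of the lemma statement closes it.
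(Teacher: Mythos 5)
Your proof is correct and is exactly the intended argument: the paper gives no proof at all (the lemma is stated with a \qed, being regarded as the immediate unwinding of the definition of a filtered category that you carry out). Your identification of the three filteredness axioms with nonemptiness via $\id_x$, condition (1), and condition (2) is accurate, including the point that every parallel pair in $\Cat[D]_x^\op$ out of $g$ is by definition a pair $h_1,h_2$ with $g h_1 = g h_2$.
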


We briefly say that~\(\Cat\) \emph{satisfies the right Ore
  conditions}.

\begin{lemma}
  \label{lem:Ore_gives_cofinal}
  Let~\(\Cat\) satisfy the right Ore conditions and let
  \(g\in\Cat\).  Then the functor
  \(g_*\colon \Cat[D]_{\s(g)} \to \Cat[D]_{\rg(g)}\),
  \(h\mapsto g\cdot h\), is final, so that
  \[
  \Omega_{\rg(g)} \cong
  \varprojlim_{\Cat[D]_{\s(g)}} {}(\Bisp_{g h}/\Gr_{\s(h)},\pi_{g h,k}).
  \]
  The canonical map
  \[
  \Bisp_g \times_{\s,\Gr_{\s(g)}^0,\rg} \Omega_{\s(g)}
  \to \varprojlim_{\Cat[D]_{\s(g)}} {}
  (\Bisp_{g h}/\Gr_{\s(h)},\pi_{g h,k}) \cong \Omega_{\rg(g)}
  \]
  descends to a homeomorphism \(\Bisp_g \Grcomp \Omega_{\s(g)}
  \congto \Omega_{\rg(g)}\).  These maps define an \(F\)\nb-action
  on~\(\Omega\).
\end{lemma}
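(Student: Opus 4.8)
The statement bundles several claims: (i) the functor $g_*\colon \Cat[D]_{\s(g)} \to \Cat[D]_{\rg(g)}$ is final under the right Ore conditions; (ii) consequently the limit over $\Cat[D]_{\rg(g)}$ may be recomputed along the subdiagram indexed by $\Cat[D]_{\s(g)}$ via $g_*$; (iii) the resulting canonical map descends to a homeomorphism $\Bisp_g \Grcomp \Omega_{\s(g)} \congto \Omega_{\rg(g)}$; and (iv) these homeomorphisms assemble into an $F$-action on $\Omega$. I would treat these in exactly this order.

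\textbf{Finality of $g_*$.} Recall that a functor into a category $\Cat[D]$ is final if for each object $d \in \Cat[D]$ the comma category $d \downarrow g_*$ is nonempty and connected. An object of $\Cat[D]_{\rg(g)}$ is an arrow $g_1 \in \Cat$ with $\rg(g_1) = \rg(g)$; an object of $d \downarrow g_*$ with $d = g_1$ is a pair $(h, k)$ with $h \in \Cat^{\s(g)}$ and $k \colon g_1 \to g\cdot h$ in $\Cat[D]_{\rg(g)}$, i.e.\ $g_1 = g\cdot h \cdot k$. Nonemptiness is precisely the first right Ore condition \ref{lem:Ore_conditions} applied to $g_1$ and $g$ (with a reshuffling: write $g_1 h_1 = g h_2$, then $(h_2, h_1)$ works after checking sources and ranges match). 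For connectedness, given two such pairs $(h,k)$ and $(h',k')$ with $g\cdot h \cdot k = g_1 = g\cdot h'\cdot k'$, I would first apply the first Ore condition to $k$ and $k'$ (which share range $\s(g h)$ vs.\ $\s(g h')$ — one needs a small argument that one can arrange a common cone), obtaining a candidate morphism in the comma category, and then use the second Ore condition \ref{lem:Ore_conditions} to kill the ambiguity, exactly as in the cofinality argument for Ore monoids in \cite{Albandik-Meyer:Product}. This is the step I expect to be the main obstacle: the bookkeeping of ranges and sources is fussy, and one must be careful that ``connected'' is witnessed by a \emph{zig-zag}, not a single arrow, so the argument is really an iterated application of the two Ore conditions. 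Once $g_*$ is final, the standard fact that limits are invariant under precomposition with final functors gives the displayed isomorphism $\Omega_{\rg(g)} \cong \varprojlim_{\Cat[D]_{\s(g)}} (\Bisp_{g h}/\Gr_{\s(h)}, \pi_{g h, k})$.

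\textbf{The homeomorphism.} For each $h \in \Cat^{\s(g)}$ there is a canonical continuous map $\Bisp_g \times_{\s,\Gr_{\s(g)}^0,\rg} (\Bisp_h/\Gr_{\s(h)}) \to \Bisp_{g h}/\Gr_{\s(h)}$ sending $(\gamma, [\eta])$ to $[\gamma \cdot \eta]$; this is well defined by \longref{Lemma}{lem:slice_acts} (or directly from basicness of the right actions), and it descends to a homeomorphism $\Bisp_g \Grcomp (\Bisp_h/\Gr_{\s(h)}) \congto \Bisp_{g h}/\Gr_{\s(h)}$ by \cite{Antunes-Ko-Meyer:Groupoid_correspondences}*{Lemma~5.4} applied to the basic $\Gr_{\s(g)}$-action on $\Bisp_g$. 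Composing with the left $\Gr_{\s(g)}$-action entering the fibre product, and passing to the limit over $\Cat[D]_{\s(g)}$, these maps are compatible with the $\pi_{gh,k}$ (again by the proof of \longref{Lemma}{lem:varrho_g_compatible}), so they induce $\Bisp_g \times_{\s,\Gr_{\s(g)}^0,\rg} \Omega_{\s(g)} \to \varprojlim_{\Cat[D]_{\s(g)}}(\Bisp_{g h}/\Gr_{\s(h)}, \pi_{g h,k})$. Since each level map is a homeomorphism onto its image after dividing by the diagonal $\Gr_{\s(g)}$-action, and limits preserve this, the induced map descends to a homeomorphism $\Bisp_g \Grcomp \Omega_{\s(g)} \congto \Omega_{\rg(g)}$; I would check injectivity, surjectivity and openness levelwise and then invoke that a map of cofiltered limits which is a homeomorphism on each level (and on the transition maps) is a homeomorphism.

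\textbf{The $F$-action.} Define $\alpha_g$ as the composite of the projection $\Bisp_g \times_{\s,\Gr_{\s(g)}^0,\rg} \Omega_{\s(g)} \to \Bisp_g \Grcomp \Omega_{\s(g)}$ with the homeomorphism just constructed, and let $\rg \colon \Omega_x \to \Gr_x^0$ be the projection to the factor at the identity object $x \in \Cat^x$. Then \ref{en:diagram_dynamical_system2} is immediate because $\alpha_g$ factors through $\Bisp_g \Grcomp \Omega_{\s(g)}$ and that quotient is exactly identification along the right $\Gr_{\s(g)}$-action, while $\braket{\gamma}{\gamma'}$ is by \longref{Proposition}{pro:innprod} the unique groupoid element implementing that identification. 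For the associativity \ref{en:diagram_dynamical_system1} I would compare, for composable $g,h$, the two ways of producing an element of $\Omega_{\rg(g)}$ from $\gamma_1 \in \Bisp_g$, $\gamma_2 \in \Bisp_h$, $\omega \in \Omega_{\s(h)}$: both feed into the coherence isomorphisms $\mu_{g,h}$, and the required equality $\gamma_1 \cdot (\gamma_2 \cdot \omega) = (\gamma_1 \cdot \gamma_2) \cdot \omega$ at each level $k \in \Cat^{\s(h)}$ reduces to $\mu_{g, hk} \circ (\id \Grcomp \mu_{h,k}) = \mu_{gh,k} \circ (\mu_{g,h} \Grcomp \id)$, i.e.\ the pentagon/associativity condition \ref{en:diagrams_in_Grcat_3} of the diagram, together with finality of $(gh)_* = g_* \circ h_*$ to know that these levels suffice. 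Openness, continuity and surjectivity of $\alpha_g$ follow since it is a homeomorphism followed by an orbit projection, which is a local homeomorphism by \longref{Lemma}{lem:basic_orbit_lh}. Finally \longref{Lemma}{lem:action_pieces} is automatic here because at $g = x$ the homeomorphism is the identity and $\alpha_x$ is literally the given $\Gr_x$-action on $\Omega_x$. Hence $\Omega$ with these data is an $F$-action in the sense of \longref{Definition}{def:diagram_dynamical_system}.
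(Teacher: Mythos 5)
Your decomposition into finality, the homeomorphism, and the $F$\nb-action is the right one, and the last part is handled essentially as the paper (implicitly) does.  Two points need attention, one minor and one substantive.

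On finality: the paper deliberately does \emph{not} verify the comma-category criterion; it proves only the consequence, by writing down an explicit inverse $(\omega_{g h})_{h}\mapsto(\omega_k)_{k}$ with $\omega_k\defeq\pi_{k,h_1}(\omega_{g h_2})$ for a chosen relation $k h_1=g h_2$, and then checking well-definedness and compatibility using both Ore conditions.  Your abstract route is viable, but for a \emph{limit} the relevant comma categories are $g_*\downarrow g_1$, whose objects are pairs $(h,k)$ with $g\cdot h=g_1\cdot k$ --- exactly what the first Ore condition produces --- and not $g_1\downarrow g_*$ with $g_1=g\cdot h\cdot k$ as you wrote; the latter would require $g_1$ to factor through $g$, which is not available.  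Your own witness $(h_2,h_1)$ with $g_1h_1=g h_2$ lives in the former category, so the mathematics you invoke is correct but the variance in your setup is reversed, and the connectedness check (where both Ore conditions enter, essentially as in the paper's verification that $\pi_{k,h}(\omega_{k h})=\omega_k$) is only gestured at.

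The substantive gap is the phrase ``each level map is a homeomorphism \dots{} after dividing by the diagonal action, and limits preserve this.''  The domain of the comparison map is the quotient $\Bisp_g\Grcomp\Omega_{\s(g)}$ of a limit, while the codomain is a limit of quotients $\varprojlim{}(\Bisp_g\Grcomp\Bisp_h/\Gr_{\s(h)})$; orbit-space quotients do not commute with cofiltered limits in general, and this interchange is precisely the content of the lemma, not a formality.  Concretely, a point of the codomain is a compatible family of classes $[\omega_{1,h},\omega_{2,h}]$ whose first components $\omega_{1,h}\in\Bisp_g$ are determined only modulo $\Gr_{\s(g)}$ and may a priori vary with~$h$; one must show that they can all be rewritten with a single representative $\omega_{1,\s(g)}$, which uses basicness of the right $\Gr_{\s(g)}$\nb-action on~$\Bisp_g$ to produce unique correction elements $\eta_h$, and one must then verify that the corrected family $(\eta_h\cdot\omega_{2,h})$ is again compatible, so that it defines a point of $\Omega_{\s(g)}$.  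Continuity of the resulting inverse is likewise not levelwise-formal: it uses that the orbit projection $\Bisp_g\to\Bisp_g/\Gr_{\s(g)}$ is a local homeomorphism, so the lift can be chosen continuously near any given point.  None of this is covered by ``a map of cofiltered limits which is a homeomorphism on each level is a homeomorphism,'' because the map is not presented as a map of cofiltered limits on the domain side.  This lifting argument is the heart of the paper's proof and must be supplied.
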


\begin{proof}
  We prove finality implicitly by proving that the limits
  \[
    \Omega_{\rg(g)} \defeq \varprojlim_{\Cat[D]_{\rg(g)}} {}
    (\Bisp_h/\Gr_{\s(h)},\pi_{h,k}),\qquad
    \Omega' \defeq \varprojlim_{\Cat[D]_{\s(g)}} {}
    (\Bisp_{g h} / \Gr_{\s(h)},\pi_{g h,k})
  \]
  are isomorphic.  This consequence is all that we are going to use
  anyway.  The functor
  \(g_*\colon \Cat[D]_{\s(g)} \to \Cat[D]_{\rg(g)}\) induces a
  canonical continuous map
  \[
  \Omega_{\rg(g)} \to \Omega',\qquad
  (\omega_k)_{k\in\Cat^{\rg(g)}}\mapsto
  (\omega_{g h})_{h\in\Cat^{\s(g)}}.
  \]
  We claim that it is a homeomorphism.

  Let \(\omega_{g h}\in \Bisp_{g h}/\Gr_{\s(g h)}\) be given for all
  \(h\in\Cat^{\s(g)}\).  Assume \(\pi_{g h,k}(\omega_{g h k}) =
  \omega_{g h}\) for all composable \(h,k\).  If \(\rg(k) =
  \rg(g)\), then there are \(h_1,h_2\in\Cat\) with \(k h_1 = g h_2\)
  by the first Ore condition.  Pick \(h_1,h_2\) as above for
  each~\(k\) and define \(\omega_k \defeq \pi_{k,h_1}(\omega_{g
    h_2}) \in \Bisp_k/\Gr_{\s(k)}\).  If \((\omega_k) \in
  \Omega_{\rg(g)}\), then \(\omega_k = \pi_{k,h_1}(\omega_{k
    h_1})\), so this is the only possible preimage of \((\omega_{g
    h})\in\Omega'\).  We will prove below that \((\omega_k)_{k\in
    \Cat^{\rg(g)}}\) belongs to~\(\Omega_{\rg(g)}\), that is,
  \(\pi_{k, h}(\omega_{k h}) = \omega_k\) for all composable \(k,h\)
  with \(\rg(k) = \rg(g)\).  The maps sending~\((\omega_{g h})\)
  to~\(\omega_k\) are continuous for each \(k\in\Cat^{\rg(g)}\).
  Hence we have found a continuous two-sided inverse for the
  canonical continuous map \(\Omega_{\rg(g)} \to \Omega'\).  Thus
  \(\Omega_{\rg(g)} \cong\Omega'\).

  We still have to prove \(\pi_{k, h}(\omega_{k h}) = \omega_k\) for
  composable \(k,h\) with \(\rg(k) = \rg(g)\).  By definition,
  \(\omega_k = \pi_{k,h_1}(\omega_{g h_2})\) and \(\omega_{k h} =
  \pi_{k h,h_3}(\omega_{g h_4})\) for certain \(h_i\in\Cat\) with
  \(k h_1 = g h_2\) and \(k h h_3 = g h_4\).  Since \(\rg(g h_2) =
  \rg(g h_4)\), the first Ore condition allows us to choose
  \(h_5,h_6\in\Cat\) with \(g h_2 h_5 = g h_4 h_6\).  Then \(k h h_3
  h_6 = g h_4 h_6 = g h_2 h_5 = k h_1 h_5\).  The second Ore
  condition allows us to choose \(h_7\in\Cat\) with \(h h_3 h_6 h_7
  = h_1 h_5 h_7\).  Then
  \begin{multline*}
    \omega_k
    = \pi_{k, h_1}(\omega_{g h_2})
    = \pi_{k, h_1}\bigl(\pi_{g h_2, h_5 h_7}(\omega_{g h_2 h_5 h_7})\bigr)
    = \pi_{k, h_1 h_5 h_7}(\omega_{g h_2 h_5 h_7})
    \\= \pi_{k, h h_3 h_6 h_7}(\omega_{g h_4 h_6 h_7})
    = \pi_{k, h}\circ \pi_{k h, h_3}(\omega_{g h_4})
    = \pi_{k, h}(\omega_{k h}).
  \end{multline*}

  Since limits commute with fibre products, there is a canonical
  homeomorphism
  \[
  \Bisp_g \times_{\s,\Gr_{\s(g)}^0,\rg} \Omega_{\s(g)} \cong
  \varprojlim_{\Cat[D]_{\s(g)}} {}
  (\Bisp_g\times_{\s,\Gr_{\s(g)}^0,\rg} \Bisp_h/\Gr_{\s(h)},
  \id\times\pi_{h,k}).
  \]
  Taking the quotient by the inner conjugation action
  of~\(\Gr_{\s(g)}\) on each factor in the limit on the right gives
  a continuous map
  \begin{equation}
    \label{eq:limit_comparison_map}
    \Bisp_g \times_{\s,\Gr_{\s(g)}^0,\rg} \Omega_{\s(g)} \to
    \varprojlim_{\Cat[D]_{\s(g)}} {}
    (\Bisp_g\Grcomp \Bisp_h/\Gr_{\s(h)},
    \id\Grcomp\pi_{h,k}).
  \end{equation}
  Its codomain is homeomorphic to the space~\(\Omega'\), which we have
  already
  identified with \(\Omega_{\rg(g)}\).  We claim that the map above
  is the orbit space projection for the canonical
  \(\Gr_{\s(g)}\)\nb-action on \(\Bisp_g
  \times_{\s,\Gr_{\s(g)}^0,\rg} \Omega_{\s(g)}\)
  by \(\eta\cdot (\gamma,\omega) = (\gamma \eta^{-1},\eta\omega)\)
  for \(\gamma\in\Bisp_g\), \(\eta\in\Gr_{\s(g)}\),
  \(\omega\in\Omega_{\s(g)}\) with \(\s(\gamma) = \rg(\omega) =
  \s(\eta)\).

  An element of \(\varprojlim_{\Cat[D]^{\s(g)}} {}
  (\Bisp_g\Grcomp\Bisp_h/\Gr_{\s(h)})\) is a family of \(\omega_{g
    h}\in (\Bisp_g \Grcomp \Bisp_h)/\Gr_{\s(h)} \cong \Bisp_g
  \Grcomp (\Bisp_h/\Gr_{\s(h)})\) with \(\pi_{g h,k} (\omega_{g h
    k}) = \omega_{g h}\) for all \(h,k\) with composable~\(g,h,k\).  In
  particular, \(\pi_{g, h} (\omega_{g h}) = \omega_{g \cdot \s(g)}\)
  for all \(h\in\Cat^{\s(g)}\).  We may write \(\omega_{g h} =
  \omega_{1,h}\cdot \omega_{2,h}\) with \(\omega_{1,h}\in \Bisp_g\),
  \(\omega_{2,h}\in \Bisp_h/\Gr_{\s(h)}\).  Then \([\omega_{1,h}] =
  [\omega_{1,\s(g)}]\) in \(\Bisp_g/\Gr_{\s(g)}\) for all \(h\in
  \Cat^{\s(g)}\).  Since the right \(\Gr_{\s(g)}\)\nb-action
  on~\(\Bisp_g\) is basic, there are unique \(\eta_h\in\Gr_{\s(g)}\)
  with \(\omega_{1,h} = \omega_{1,\s(g)} \cdot \eta_h\).  Thus we
  may rewrite \(\omega_{g h} = \omega_{1,\s(g)}\cdot \omega'_{2,h}\)
  with \(\omega_{2,h}' = \eta_h\cdot \omega_{2,h}\in
  \Bisp_h/\Gr_{\s(h)}\).

  The equality of \(\omega_{1,\s(g)}\cdot \omega_{2,h}'\) and
  \(\pi_{g h,k}(\omega_{1,\s(g)}\cdot \omega_{2,h k}')\) implies
  \(\pi_{h,k}(\omega_{2,h k}') = \omega_{2,h}'\) for all composable
  \(g,h,k\).  Thus \((\omega_{2,h}')\in
  \Omega_{\s(g)}\) and we have lifted \((\omega_{g h}) \in \Omega'\)
  to an element of \(\Bisp_g \times_{\s,\rg} \Omega_{\s(g)}\).  Here
  the image of~\(\omega_{1,\s(g)}\) in~\(\Bisp_g/\Gr_{\s(g)}\) is
  unique and so the lifting of \((\omega_{g h}) \in \Omega'\) to
  \(\Bisp_g \times_{\s,\rg} \Omega_{\s(g)}\) is unique up to the
  \(\Gr_{\s(g)}\)\nb-action by inner conjugation.  That is, the
  map~\eqref{eq:limit_comparison_map} induces a continuous bijection
  from the orbit space \(\Bisp_g \Grcomp \Omega_{\s(g)}\)
  onto~\(\Omega'\).

  The projection \(\Bisp_g\prto \Bisp_g/\Gr_{\s(g)}\) is
  a local homeomorphism by \longref{Lemma}{lem:basic_orbit_lh}.  Hence the
  choice of the lifting~\(\omega_{1,\s(g)}\) above can be made
  continuously in a sufficiently small neighbourhood of the class
  of~\(\omega_{1,\s(g)}\) in~\(\Bisp_g/\Gr_{\s(g)}\).  The preimage
  of this neighbourhood in~\(\Omega'\) is open, and the construction
  above gives a continuous map from this open subset to \(\Bisp_g
  \times_{\s,\rg} \Omega_{\s(g)}\).  Hence the bijection \(\Bisp_g
  \Grcomp \Omega_{\s(g)}\cong\Omega'\) is a homeomorphism.
\end{proof}

\begin{theorem}
  \label{the:Ore_universal_action}
  Let~\(\Cat\) satisfy the right Ore conditions.  Then the
  \(F\)\nb-action on~\(\Omega\) constructed above is universal.
\end{theorem}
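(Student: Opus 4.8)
The plan is to prove that $\Omega$ is a terminal object in the category of $F$\nb-actions, i.e.\ that every $F$\nb-action $Y = \bigsqcup_{x\in\Cat^0} Y_x$ admits exactly one $F$\nb-equivariant continuous map to~$\Omega$. The candidate is the map $\varrho_Y$ already constructed above: on~$Y_x$ it is $\varrho_{Y,x} = (\varrho_Y^g)_{g\in\Cat^x}\colon Y_x\to\Omega_x$, assembled from the maps $\varrho_Y^g\colon Y_x\to\Bisp_g/\Gr_{\s(g)}$, which form a cone over~$\Cat[D]_x$ by \longref{Lemma}{lem:varrho_g_compatible}; glueing over $x\in\Cat^0$ gives $\varrho_Y\colon Y\to\Omega$. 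Each~$\varrho_Y^g$ is continuous, being the composite of the homeomorphism~$V_g^{-1}$ with the orbit space projection $\Bisp_g\Grcomp Y_{\s(g)}\to\Bisp_g/\Gr_{\s(g)}$, so~$\varrho_{Y,x}$ is continuous by the universal property of the limit and hence so is~$\varrho_Y$.

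First I would check that~$\varrho_Y$ is $F$\nb-equivariant in the sense of \longref{Definition}{def:F-invariant}. It sends~$Y_x$ into~$\Omega_x$ by construction. The anchor map of~$\Omega_x$ is the projection to the factor at the identity arrow~$x\in\Cat^x$, namely $\Bisp_x/\Gr_x\cong\Gr_x^0$; since $\Bisp_x = \Gr_x$ and $\rg(y)\cdot y = y$ for $y\in Y_x$ by \longref{Lemma}{lem:action_pieces}, the $x$\nb-component $\varrho_Y^x(y)$ equals the class of~$\rg(y)$, which corresponds to~$\rg(y)$ under $\Gr_x/\Gr_x\cong\Gr_x^0$, so~$\varrho_Y$ intertwines the anchor maps. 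For the correspondence multiplications, let $\gamma\in\Bisp_g$ and $y\in Y_{\s(g)}$ with $\s(\gamma)=\rg(y)$; then $\rg(\varrho_Y(y)) = \rg(y) = \s(\gamma)$, so $\gamma\cdot\varrho_Y(y)$ is defined for the $F$\nb-action on~$\Omega$ from \longref{Lemma}{lem:Ore_gives_cofinal}. By the finality of $g_*\colon\Cat[D]_{\s(g)}\to\Cat[D]_{\rg(g)}$ it suffices to compare the components at the objects~$g h$ for $h\in\Cat^{\s(g)}$. Using that the multiplication map $\Bisp_h\times_{\s,\rg} Y_{\s(h)}\to Y_{\s(g)}$ is surjective, write $y = \eta\cdot z$ with $\eta\in\Bisp_h$, $z\in Y_{\s(h)}$, $\s(\eta)=\rg(z)$; then $\gamma\cdot y = (\gamma\cdot\eta)\cdot z$ by \longref{condition}{en:diagram_dynamical_system1}, so $\varrho_Y^{g h}(\gamma\cdot y)$ is the class of $\gamma\cdot\eta$ in $\Bisp_{g h}/\Gr_{\s(g h)}$. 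On the other hand $\varrho_Y^h(y)$ is the class of~$\eta$, and the recipe for the $F$\nb-action on~$\Omega$ in \longref{Lemma}{lem:Ore_gives_cofinal} produces exactly the class of~$\gamma\cdot\eta$ as the $h$\nb-entry of $\gamma\cdot\varrho_Y(y)$ under the identification $\Omega_{\rg(g)}\cong\varprojlim_{\Cat[D]_{\s(g)}}(\Bisp_{g h}/\Gr_{\s(h)},\pi_{g h,k})$. Hence $\varrho_Y(\gamma\cdot y) = \gamma\cdot\varrho_Y(y)$ and~$\varrho_Y$ is $F$\nb-equivariant.

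For uniqueness I would combine the naturality of the construction $Y\mapsto\varrho_Y$ recorded above with the identity $\varrho_\Omega = \id_\Omega$. The latter is quick: $\Omega$ is itself an $F$\nb-action, and the homeomorphism $\Bisp_g\Grcomp\Omega_{\s(g)}\congto\Omega_{\rg(g)}$ of \longref{Lemma}{lem:Ore_gives_cofinal} is onto, so any $\omega\in\Omega_x$ may be written $\omega = \gamma\cdot\omega'$ with $\gamma\in\Bisp_g$, $\omega'\in\Omega_{\s(g)}$; then $\varrho_\Omega^g(\omega)$ is the class of~$\gamma$, while the $g$\nb-component of~$\gamma\cdot\omega'$ is~$\gamma$ applied to the unit $\omega'_{\s(g)} = \s(\gamma)$, which is again the class of~$\gamma$. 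Thus $\varrho_\Omega^g(\omega) = \omega_g$ for all $g\in\Cat^x$, so $\varrho_{\Omega,x} = \id_{\Omega_x}$ and $\varrho_\Omega = \id$. Now if $\varphi\colon Y\to\Omega$ is any $F$\nb-equivariant map, naturality gives $\varphi = \varrho_\Omega\circ\varphi = \varrho_Y$. Hence~$\varrho_Y$ is the unique $F$\nb-equivariant map $Y\to\Omega$, so~$\Omega$ is terminal in the category of $F$\nb-actions, i.e.\ a universal $F$\nb-action.

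I expect the main obstacle to be the equivariance computation for the correspondence multiplications, since that is where the right Ore conditions enter: it is the finality of~$g_*$ that lets the action of~$\gamma$ on a point of~$\Omega_{\s(g)}$ be pinned down by its values on the cofinal subsystem indexed by the~$g h$, and one has to unwind carefully both the limit description of~$\Omega$ and the ``multiply a chosen representative'' operation of \longref{Lemma}{lem:Ore_gives_cofinal} (including the check that it is independent of the representative, already implicit there). Everything else --~the cone and limit manipulations, continuity, and the naturality argument for uniqueness~-- is formal.
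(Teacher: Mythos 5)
Your proposal is correct and follows essentially the same route as the paper: check that the canonical map \(\varrho_Y\) is \(F\)\nb-equivariant, observe that applying the construction to the \(F\)\nb-action on~\(\Omega\) itself yields the coordinate projections (equivalently, \(\varrho_\Omega=\id_\Omega\)), and then deduce uniqueness from the naturality of \(Y\mapsto\varrho_Y\). The paper compresses the equivariance check into ``a direct computation''; your unwinding of it via the finality of \(g_*\) and the surjectivity of the multiplication maps is exactly the computation intended.
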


\begin{proof}
  A direct computation shows that the canonical map
  \(\varrho\colon Y\to\Omega\) for an \(F\)\nb-action on~\(Y\) is
  \(F\)\nb-equivariant when we equip~\(\Omega\) with the
  \(F\)\nb-action in \longref{Lemma}{lem:Ore_gives_cofinal}.  Let
  \(\varphi\colon Y\to\Omega\) be any \(F\)\nb-equivariant map.  We
  claim that \(\varphi=\varrho\).  The construction of the maps
  \(\varrho_g\colon Y_x\to \Bisp_g/\Gr_{\s(g)}\) applied to the
  \(F\)\nb-action on \(\Omega\) gives the coordinate projections
  \(\pi^\Omega_g\colon \Omega_x \to \Bisp_g/\Gr_{\s(g)}\).  Since
  this map is natural, \(\pi^\Omega_g\circ \varphi = \varrho_g\) for
  all \(g\in \Cat\).  This implies
  \(\varphi = (\varrho_g)_{g\in\Cat} = \varrho\) as asserted.
\end{proof}

\begin{corollary}
  The \(F\)\nb-action on~\(\Omega\) induces an action of the inverse
  semigroup~\(\IS(F)\), and its transformation groupoid
  \(\IS(F)\ltimes \Omega\) is a groupoid model for~\(F\)
\end{corollary}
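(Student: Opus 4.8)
The plan is to read this off directly from the two structural results already established. First I would invoke \longref{Theorem}{the:Ore_universal_action}: since~\(\Cat\) satisfies the right Ore conditions, the \(F\)\nb-action on \(\Omega = \bigsqcup_{x\in\Cat^0}\Omega_x\) constructed as the projective limit in \longref{Lemma}{lem:Ore_gives_cofinal} is a universal \(F\)\nb-action, i.e.\ a terminal object in the category of \(F\)\nb-actions in the sense of \longref{Definition}{def:universal_F-action}.

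Next I would recall how a universal \(F\)\nb-action determines an \(\IS(F)\)\nb-action. Applying the \(F\)\nb-action on~\(\Omega\), each slice \(\U\in\Bis(F)\) yields a partial homeomorphism \(\vartheta(\U)\) of~\(\Omega\), and by \longref{Lemma}{lem:theta_multiplicative} the assignment \(\U\mapsto\vartheta(\U)\) satisfies the relations \ref{en:F-action_from_theta1} and \ref{en:F-action_from_theta2}. By the defining universal property of~\(\IS(F)\), this map extends uniquely to a semigroup homomorphism \(\IS(F)\to I(\Omega)\), that is, to an action of~\(\IS(F)\) on~\(\Omega\); this is precisely the \(\IS(F)\)\nb-action referred to in the statement.

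Finally I would apply \longref{Proposition}{pro:groupoid_model_from_universal_F-action}, whose hypothesis is exactly ``a universal \(F\)\nb-action equipped with the associated action of~\(\IS(F)\)''. It gives that the transformation groupoid \(\IS(F)\ltimes\Omega\) is a groupoid model for~\(F\), which is the assertion.

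Since every ingredient has already been proved, there is essentially no obstacle here; the corollary is purely a matter of chaining \longref{Theorem}{the:Ore_universal_action} into \longref{Proposition}{pro:groupoid_model_from_universal_F-action}. The only point worth checking explicitly is that the \(\IS(F)\)\nb-action named in the corollary is the same one fed into \longref{Proposition}{pro:groupoid_model_from_universal_F-action}, and it is: both are the canonical extension of \(\U\mapsto\vartheta(\U)\) coming from the \(F\)\nb-action on~\(\Omega\).
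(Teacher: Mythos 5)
Your proposal is correct and follows exactly the paper's own argument: the paper's proof is the one-line citation of Theorem~\ref{the:Ore_universal_action} together with Proposition~\ref{pro:groupoid_model_from_universal_F-action}. The extra detail you supply about how the \(\IS(F)\)\nb-action on~\(\Omega\) arises from \(\U\mapsto\vartheta(\U)\) via Lemma~\ref{lem:theta_multiplicative} and the universal property of~\(\IS(F)\) is consistent with the paper's setup and harmless.
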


\begin{proof}
  This follows from \longref{Theorem}{the:Ore_universal_action} and
  \longref{Proposition}{pro:groupoid_model_from_universal_F-action}.
\end{proof}

This description of the groupoid model is useful, but it is not the
only one.  Our next goal is another description of it that does not
use the inverse semigroup~\(\IS(F)\).  This construction
follows~\cite{Albandik:Thesis} and proceeds in two steps.  The first
step reduces to the case of tight diagrams, and the second step
describes an explicit groupoid model for a tight diagram over a
category with right Ore conditions.

\subsection{Reduction to the tight case}
\label{sec:Ore_tight_reduction}

The \(F\)\nb-action on~\(\Omega_x\) contains a canonical
\(\Gr_x\)\nb-action.  Let
\[
\Gr\Omega_x \defeq \Gr_x\ltimes \Omega_x,\qquad
\Bisp\Omega_g \defeq \Bisp_g \times_{\s,\Gr_x^0,\rg} \Omega_{\s(g)}
\]
for \(x\in\Cat^0\), \(g\in\Cat\).  By definition, \(\Gr\Omega_x\) is
an étale topological groupoid.  We are going to turn~\(\Bisp\Omega_g\)
into a groupoid correspondence
\(\Gr\Omega_{\rg(g)} \leftarrow \Gr\Omega_{\s(g)}\).  The right
anchor map is the coordinate projection \(\s\colon \Bisp\Omega_g
\to \Omega_{\s(g)}\), \((\gamma,\omega)\mapsto \omega\), and the
left anchor map is the multiplication map \(\rg\colon \Bisp\Omega_g
\to \Omega_{\rg(g)}\), \((\gamma,\omega)\mapsto \gamma\cdot
\omega\), constructed in \longref{Lemma}{lem:Ore_gives_cofinal}.  The left
and right actions of \(\Gr_{\rg(g)}\) and~\(\Gr_{\s(g)}\) are
\[
\gamma_1 \cdot (\gamma_2,\omega) \defeq
(\gamma_1\cdot \gamma_2,\omega),\qquad
(\gamma_2,\omega)\cdot \gamma_3 \defeq
(\gamma_2\cdot \gamma_3,\gamma_3^{-1}\cdot \omega)
\]
for \(\gamma_1\in\Gr_{\rg(g)}\), \(\gamma_2\in\Bisp_g\),
\(\gamma_3\in\Gr_{\s(g)}\), \(\omega\in\Omega_{\s(g)}\) with
\(\s(\gamma_1) = \rg(\gamma_2)\) and \(\s(\gamma_2) = \rg(\gamma_3)
= \rg(\omega)\).  These anchor maps and actions satisfy
\begin{alignat*}{2}
  \s(\gamma_1\cdot (\gamma_2,\omega))
  &= \s(\gamma_2,\omega),&\qquad
  \s((\gamma_2,\omega)\cdot \gamma_3)
  &= \gamma_3^{-1}\cdot \s(\gamma_2,\omega),\\
  \rg(\gamma_1\cdot (\gamma_2,\omega))
  &= \gamma_1\cdot \rg(\gamma_2,\omega),&\qquad
  \rg((\gamma_2,\omega)\cdot \gamma_3) &=
  \rg(\gamma_2,\omega),\\
  \gamma_1\cdot ((\gamma_2,\omega)\cdot\gamma_3) &=
  (\gamma_1\cdot (\gamma_2,\omega))\cdot\gamma_3.
\end{alignat*}
Hence we may combine them to commuting actions of
\(\Gr\Omega_{\rg(g)}\) and~\(\Gr\Omega_{\s(g)}\)
on~\(\Bisp\Omega_g\).  The right action of~\(\Gr_{\s(g)}\)
on~\(\Bisp_g\) is basic.  This is inherited by the right action
of~\(\Gr_{\s(g)}\) on~\(\Bisp\Omega_g\) because of the
\(\Gr_{\s(g)}\)\nb-equivariant map \(\Bisp\Omega_g \to \Bisp_g\),
\((\gamma,\omega)\mapsto \gamma\) (see
\cite{Meyer-Zhu:Groupoids}*{Proposition~7.4}).  Hence the right
action of the transformation groupoid~\(\Gr\Omega_{\s(g)}\)
on~\(\Bisp\Omega_g\) is basic by \cite{Meyer-Zhu:Groupoids}*{Lemma
  5.17}.  Thus \(\Bisp\Omega_g\) is a groupoid correspondence
\(\Gr\Omega_{\rg(g)}\leftarrow\Gr\Omega_{\s(g)}\).

\begin{lemma}
  \label{lem:compose_with_BispOmega}
  Let~\(Y\) carry a left \(\Gr\Omega_{\s(g)}\)-action.  Then
  \(\Bisp\Omega_g \Grcomp_{\Gr\Omega_{\s(g)}} Y \cong \Bisp_g
  \Grcomp_{\Gr_{\s(g)}} Y\).  In particular,
  \[
  \Bisp\Omega_g \Grcomp_{\Gr\Omega_{\s(g)}} \Bisp\Omega_h
  \cong \Bisp_g \Grcomp_{\Gr_{\s(g)}} \Bisp_h \Grcomp_{\Gr_{\s(h)}} \Omega_h
  \xrightarrow[\cong]{\mu_{g,h}} \Bisp_{g h} \Grcomp_{\Gr_{\s(h)}} \Omega_h
  = \Bisp\Omega_{g h}.
  \]
  These isomorphisms \(\Bisp\Omega_g \Grcomp_{\Gr\Omega_{\s(g)}}
  \Bisp\Omega_h \congto \Bisp\Omega_{g h}\) form a diagram of tight
  groupoid correspondences.
\end{lemma}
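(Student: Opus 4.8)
The plan is to prove the first homeomorphism directly and to read off everything else from it. The key observation is that the coordinate projection \(q\colon \Gr\Omega_{\s(g)} = \Gr_{\s(g)}\ltimes\Omega_{\s(g)} \to \Gr_{\s(g)}\), which sends an arrow \((\eta,\omega)\) to~\(\eta\) and an object~\(\omega\) to~\(\rg(\omega)\), is a groupoid homomorphism; composing with it turns any \(\Gr\Omega_{\s(g)}\)-space into a \(\Gr_{\s(g)}\)-space, and this is the \(\Gr_{\s(g)}\)-action meant in \(\Bisp_g\Grcomp_{\Gr_{\s(g)}} Y\).  By definition, \(\Bisp\Omega_g \Grcomp_{\Gr\Omega_{\s(g)}} Y\) is the orbit space of the diagonal \(\Gr\Omega_{\s(g)}\)-action on the set of triples \((\gamma,\omega,y)\) with \((\gamma,\omega)\in\Bisp\Omega_g\) and \(\s(\gamma,\omega)=\omega\) equal to the anchor map \(\rg_Y(y)\in\Omega_{\s(g)}\) of the action on~\(Y\).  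The condition \(\omega=\rg_Y(y)\) makes the \(\omega\)-coordinate redundant and, via~\(q\), the remaining condition \(\s(\gamma)=\rg(\omega)\) becomes \(\s(\gamma)=\rg(\rg_Y(y))\); so \((\gamma,\omega,y)\mapsto(\gamma,y)\) is a homeomorphism onto \(\Bisp_g\times_{\s,\Gr_{\s(g)}^0,\rg} Y\).  A short computation with the formula \((\gamma,\omega)\cdot\gamma_3=(\gamma\gamma_3,\gamma_3^{-1}\omega)\) for the right \(\Gr_{\s(g)}\)-action on \(\Bisp\Omega_g\) shows that under this homeomorphism the \(\Gr\Omega_{\s(g)}\)-orbit relation goes over to the \(\Gr_{\s(g)}\)-orbit relation defining \(\Bisp_g\Grcomp_{\Gr_{\s(g)}} Y\).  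This gives a homeomorphism \(\Bisp\Omega_g \Grcomp_{\Gr\Omega_{\s(g)}} Y \cong \Bisp_g \Grcomp_{\Gr_{\s(g)}} Y\) that is natural in~\(Y\).  (Alternatively, this is an instance of the general behaviour of correspondences under transformation groupoids; compare~\cite{Meyer-Zhu:Groupoids}.)

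The displayed chain of isomorphisms follows by taking \(Y\defeq\Bisp\Omega_h\), which is a \(\Gr\Omega_{\s(g)}\)-space since \(\s(g)=\rg(h)\).  The homeomorphism just proved gives \(\Bisp\Omega_g \Grcomp_{\Gr\Omega_{\s(g)}} \Bisp\Omega_h \cong \Bisp_g \Grcomp_{\Gr_{\s(g)}} \Bisp\Omega_h\); since the left \(\Gr_{\rg(h)}\)-action on \(\Bisp\Omega_h = \Bisp_h\times_{\s,\Gr_{\s(h)}^0,\rg}\Omega_{\s(h)}\) only affects the \(\Bisp_h\)-coordinate, the composition over \(\Gr_{\s(g)}\) passes through the fibre product with \(\Omega_{\s(h)}\), so this is \((\Bisp_g\Grcomp_{\Gr_{\s(g)}}\Bisp_h)\times_{\s,\Gr_{\s(h)}^0,\rg}\Omega_{\s(h)}\); applying \(\mu_{g,h}\times\id\) yields \(\Bisp_{g h}\times_{\s,\Gr_{\s(h)}^0,\rg}\Omega_{\s(h)} = \Bisp\Omega_{g h}\), using \(\s(g h)=\s(h)\).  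For tightness of a single \(\Bisp\Omega_g\) — that it is a groupoid correspondence was shown before the lemma — take \(Y\defeq\Omega_{\s(g)}\) with its unit \(\Gr\Omega_{\s(g)}\)-action.  Then \(\Bisp\Omega_g/\Gr\Omega_{\s(g)} \cong \Bisp\Omega_g \Grcomp_{\Gr\Omega_{\s(g)}} \Omega_{\s(g)} \cong \Bisp_g\Grcomp_{\Gr_{\s(g)}}\Omega_{\s(g)} \cong \Omega_{\rg(g)} = \Gr\Omega_{\rg(g)}^0\), where the last homeomorphism is the one from \longref{Lemma}{lem:Ore_gives_cofinal}; chasing through the identifications shows this composite is the map \(\rg_*\) induced by the left anchor map \((\gamma,\omega)\mapsto\gamma\cdot\omega\), so \(\Bisp\Omega_g\) is tight in the sense of \longref{Definition}{def:proper_regular_tight}.

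It remains to verify that \((\Gr\Omega_x,\Bisp\Omega_g)\) together with the isomorphisms \(\Bisp\Omega_g\Grcomp\Bisp\Omega_h\congto\Bisp\Omega_{g h}\) satisfies the conditions \ref{en:diagrams_in_Grcat_1}--\ref{en:diagrams_in_Grcat_3} of a diagram: for an object~\(x\), \(\Bisp\Omega_x=\Gr_x\times_{\s,\Gr_x^0,\rg}\Omega_x\) is the arrow space of \(\Gr\Omega_x=\Gr_x\ltimes\Omega_x\), hence its identity correspondence, so \ref{en:diagrams_in_Grcat_1} holds, and the unitor conditions \ref{en:diagrams_in_Grcat_2} and the coherence pentagon \eqref{eq:coherence_category-diagram} follow by a diagram chase from the corresponding properties of the \(\mu_{g,h}\) in~\(F\), the naturality of the homeomorphism of the first paragraph and of the distributivity isomorphisms used above, and MacLane's coherence theorem for the associators.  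The substantive step is the first paragraph; everything after it is bookkeeping, and the coherence chase in the last paragraph, while unenlightening, is entirely routine.  The one point that needs a little care is keeping the \(\Gr_{\s(g)}\)-action and the \(\Gr\Omega_{\s(g)}\)-action on the various fibre products straight — which is exactly what the homomorphism~\(q\) is there to organise.
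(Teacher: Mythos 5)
Your proof is correct and follows essentially the same route as the paper: identify the fibre product \(\Bisp\Omega_g\times_{\s,\Omega_{\s(g)},\rg}Y\) with \(\Bisp_g\times_{\s,\Gr_{\s(g)}^0,\rg}Y\) by dropping the redundant \(\Omega_{\s(g)}\)-coordinate, match the orbit relations, and then read off the composition isomorphism and tightness (the latter via \longref{Lemma}{lem:Ore_gives_cofinal}). The only difference is that you verify the agreement of the \(\Gr\Omega_{\s(g)}\)- and \(\Gr_{\s(g)}\)-orbit spaces by a direct computation, where the paper cites \cite{Meyer-Zhu:Groupoids}*{Lemma 5.18}.
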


\begin{proof}
  The space \(\Bisp\Omega_g \Grcomp_{\Gr\Omega_{\s(g)}} Y\) is
  defined as the orbit space of the inner conjugation action
  of~\(\Gr\Omega_{\s(g)}\) on the fibre product
  \[
  \Bisp\Omega_g \times_{\s,\Omega_{\s(g)},\rg} Y
  \defeq \Bisp_g \times_{\s,\Gr_x^0,\rg} \Omega_{\s(g)}
  \times_{\id,\Omega_{\s(g)},\rg} Y
  \cong \Bisp_g \times_{\s,\Gr_x^0,\rg} Y.
  \]
  The orbit space for an action of~\(\Gr\Omega_{\s(g)}\) is the same
  as for the restriction to~\(\Gr_{\s(g)}\) by
  \cite{Meyer-Zhu:Groupoids}*{Lemma 5.18}.  Hence \(\Bisp\Omega_g
  \Grcomp_{\Gr\Omega_{\s(g)}} Y \cong \Bisp_g \Grcomp_{\Gr_{\s(g)}}
  Y\).  Similarly, the orbit space for the right action
  of~\(\Gr\Omega_{\s(g)}\) on~\(\Bisp\Omega_g\) is \(\Bisp_g \Grcomp
  \Omega_{\s(g)}\).  Since~\(\rg\) descends to a homeomorphism
  \(\Bisp_g \Grcomp \Omega_{\s(g)} \congto \Omega_{\rg(g)}\) by
  \longref{Lemma}{lem:Ore_gives_cofinal}, \(\Bisp\Omega_g\) is a tight
  groupoid correspondence.  The multiplication isomorphism
  \(\mu\Omega_{g,h}\colon \Bisp\Omega_g \Grcomp_{\Gr\Omega_{\s(g)}}
  \Bisp\Omega_h \congto \Bisp\Omega_{g h}\) is defined by
  \[
  (\gamma_1,\gamma_2\cdot \omega_2)\cdot (\gamma_2,\omega_2)
  = (\mu_{g,h}(\gamma_1,\gamma_2),\omega_2).
  \]
  These maps are associative because the maps~\(\mu_{g,h}\) are so.
  The conditions for unit arrows are trivial.  Thus
  \((\Gr\Omega_x,\Bisp\Omega_g,\mu\Omega_{g,h})\) is a diagram of
  tight groupoid correspondences.
\end{proof}

\begin{theorem}
  \label{the:Ore_tight_same_limit}
  Let~\(\Cat\) be a small category that satisfies the right Ore
  conditions and let \(F = (\Gr_x,\Bisp_g,\mu_{g,h})\) be a
  \(\Cat\)\nb-shaped diagram of groupoid correspondences.  A
  groupoid model for actions of this diagram is also a groupoid
  model for actions of the corresponding tight diagram \(F\Omega
  \defeq (\Gr\Omega_x,\Bisp\Omega_g,\mu\Omega_{g,h})\), and vice
  versa.
\end{theorem}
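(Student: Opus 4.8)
The statement concerns only the defining property of a groupoid model in \longref{Definition}{def:universal_action}: a groupoid~\(\Gr[U]\) is a groupoid model for a diagram exactly when its actions on spaces are naturally identified with the actions of that diagram. The whole theorem therefore reduces to the single assertion that \emph{the category of \(F\)\nb-actions and the category of \(F\Omega\)\nb-actions are isomorphic, compatibly with the forgetful functors to the category of spaces.} Granting this, one composes the comparison isomorphism with the bijections coming from a groupoid model of either diagram, and both implications of the theorem follow at once. (Both diagrams admit groupoid models here, namely \(\IS(F)\ltimes\Omega\) and \(\IS(F\Omega)\ltimes\Omega\), but the conditional phrasing makes this irrelevant.) Note that both categories only make sense because~\(\Cat\) satisfies the right Ore conditions: this is what equips~\(\Omega\) with an \(F\)\nb-action by \longref{Lemma}{lem:Ore_gives_cofinal}, and what makes \(F\Omega = (\Gr\Omega_x,\Bisp\Omega_g,\mu\Omega_{g,h})\) a tight diagram at all by \longref{Lemma}{lem:compose_with_BispOmega}.

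The plan is to write the two comparison functors explicitly. Starting from an \(F\)\nb-action on \(Y=\bigsqcup_{x\in\Cat^0} Y_x\), the pieces~\(Y_x\) are \(\Gr_x\)\nb-spaces by \longref{Lemma}{lem:action_pieces}; the canonical maps \(\varrho_x\colon Y_x\to\Omega_x\) are \(\Gr_x\)\nb-equivariant, since for \(\eta\in\Gr_x\) and \(y=\gamma\cdot y'\) with \(\gamma\in\Bisp_g\) one has \(\eta\cdot y=(\eta\cdot\gamma)\cdot y'\) and hence \(\varrho^g(\eta\cdot y)=[\eta\cdot\gamma]=\eta\cdot\varrho^g(y)\); so each~\(Y_x\) becomes a \(\Gr\Omega_x\)\nb-space. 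Transporting the structure homeomorphisms \(\Bisp_g\Grcomp_{\Gr_{\s(g)}} Y_{\s(g)}\congto Y_{\rg(g)}\) of the \(F\)\nb-action through the isomorphisms of \longref{Lemma}{lem:compose_with_BispOmega} gives homeomorphisms \(\Bisp\Omega_g\Grcomp_{\Gr\Omega_{\s(g)}} Y_{\s(g)}\congto Y_{\rg(g)}\); since~\(\mu\Omega_{g,h}\) is assembled from~\(\mu_{g,h}\), these inherit the associativity of \longref{Definition}{def:diagram_dynamical_system} and define an \(F\Omega\)\nb-action on~\(Y\). Conversely, an \(F\Omega\)\nb-action on \(Z=\bigsqcup_x Z_x\) has pieces~\(Z_x\) that are \(\Gr\Omega_x\)\nb-spaces, equivalently \(\Gr_x\)\nb-spaces equipped with a \(\Gr_x\)\nb-equivariant anchor map \(f_x\colon Z_x\to\Omega_x\) (the groupoid analogue of \longref{Proposition}{pro:isg_trafo_universal}); forgetting the~\(f_x\) and transporting the structure homeomorphisms back through \longref{Lemma}{lem:compose_with_BispOmega} yields an \(F\)\nb-action on~\(Z\).

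Both functors are manifestly natural in the space, so once they are shown to be mutually inverse it follows that a continuous map is \(F\)\nb-equivariant if and only if it is \(F\Omega\)\nb-equivariant, which completes the proof. The only non-formal point is that, for the \(F\)\nb-action recovered from an \(F\Omega\)\nb-action~\(Z\), the canonical map~\(\varrho_x\) agrees with the original anchor map~\(f_x\); the remaining verifications (mutual inverseness on structure homeomorphisms, the coherence conditions, naturality) then follow formally from \longref{Lemma}{lem:compose_with_BispOmega}. I would check \(\varrho_x=f_x\) coordinatewise in \(\Omega_x=\varprojlim(\Bisp_g/\Gr_{\s(g)},\pi_{g,h})\). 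Fix \(g\) with \(\rg(g)=x\); by surjectivity of the multiplication, any \(z\in Z_x\) can be written \(z=\gamma\cdot y\) with \(\gamma\in\Bisp_g\), \(y\in Z_{\s(g)}\), \(\s(\gamma)=\rg(y)\). By construction \(\varrho^g(z)=[\gamma]\in\Bisp_g/\Gr_{\s(g)}\), whereas \(f_x(z)=f_x(\gamma\cdot y)=\gamma\cdot f_{\s(g)}(y)\) in~\(\Omega_{\rg(g)}\) by the range identity in \longref{condition}{en:diagram_dynamical_system1} applied to~\(F\Omega\), where the product on the right is taken via the homeomorphism \(\Bisp_g\Grcomp_{\Gr_{\s(g)}}\Omega_{\s(g)}\congto\Omega_{\rg(g)}\) of \longref{Lemma}{lem:Ore_gives_cofinal}. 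Tracing that homeomorphism through its construction --~passing through the cofinal reindexing \(h\mapsto g\cdot h\) and evaluating at the identity object of~\(\Cat[D]_{\s(g)}\)~-- shows that the \(g\)\nb-coordinate of \(\gamma\cdot f_{\s(g)}(y)\) is again~\([\gamma]\); hence \(\pi^\Omega_g\circ f_x=\varrho^g\) for every~\(g\) with \(\rg(g)=x\), and since the coordinate projections are jointly injective, \(f_x=\varrho_x\). Pinning down this anchor datum is the crux of the argument, and the one place where the explicit descriptions in \longref{Lemma}{lem:Ore_gives_cofinal} and \longref{Lemma}{lem:compose_with_BispOmega} are genuinely needed.
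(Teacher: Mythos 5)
Your proposal is correct and follows essentially the same route as the paper: both reduce the theorem to a natural bijection between \(F\)\nb-actions and \(F\Omega\)\nb-actions on a space, obtained by pairing the \(\Gr_x\)\nb-actions with the canonical equivariant map to~\(\Omega\) (equivalently, an action of the transformation groupoid \(\Gr_x\ltimes\Omega_x\)) and transporting the structure homeomorphisms through \longref{Lemma}{lem:compose_with_BispOmega}. The only difference is that you spell out the ``direct computation'' the paper leaves implicit, in particular the coordinatewise check that the anchor map of an \(F\Omega\)\nb-action must coincide with the canonical map~\(\varrho_x\), which is indeed the one non-formal point.
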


\begin{proof}
  We are going to construct a bijection between actions of the
  diagrams \(F\)
  and~\(F\Omega\)
  on~\(Y\)
  that preserves equivariant maps.  Hence both diagrams have
  isomorphic groupoid models by the uniqueness of groupoid models.

  First let~\(Y\) carry an \(F\)\nb-action.  This consists of
  \(\Gr_x\)\nb-actions on the spaces~\(Y_x\) and
  \(\Gr_{\rg(g)}\)\nb-equivariant homeomorphisms \(V_g\colon \Bisp_g
  \Grcomp_{\Gr_{\s(g)}} Y_{\s(g)}\congto Y_{\rg(g)}\) for all
  \(g\in\Cat\) with some relations between \(V_g\), \(V_h\)
  and~\(V_{g h}\) for composable arrows \(g,h\in\Cat\) and a
  condition for~\(V_x\) for unit arrows~\(x\).  Since the
  \(F\)\nb-action on~\(\Omega\) is universal by
  \longref{Theorem}{the:Ore_universal_action}, there is a unique
  \(F\)\nb-equivariant map \(\varrho\colon Y \to \Omega\).  Its
  restriction to~\(Y_x\) is a \(\Gr_x\)\nb-equivariant map
  \(\varrho^x\colon Y_x \to \Omega_x\).  The action of~\(\Gr_x\) and
  the map~\(\varrho^x\) are equivalent to an action of the
  transformation groupoid \(\Gr\Omega_x = \Gr_x\ltimes \Omega_x\)
  on~\(Y_x\).  So we get canonical actions of~\(\Gr\Omega_x\)
  if~\(Y\) carries an \(F\)\nb-action and, conversely, an
  \(F\Omega\)\nb-action provides actions of~\(\Gr_x\) on~\(Y_x\).

  \longref{Lemma}{lem:compose_with_BispOmega} provides
  canonical homeomorphisms \(\Bisp\Omega_g
  \Grcomp_{\Gr\Omega_{\s(g)}} Y_{\s(g)} \congto \Bisp_g
  \Grcomp_{\Gr_{\s(g)}} Y_{\s(g)}\).  Composing with~\(V_g\) gives
  homeomorphisms \(\Bisp\Omega_g \Grcomp_{\Gr\Omega_{\s(g)}}
  Y_{\s(g)} \congto Y_{\rg(g)}\).  A direct computation shows that
  these homeomorphisms form an action of~\(F\Omega\) if and only if
  the original maps~\(V_g\) form an action of~\(F\).
\end{proof}

\subsection{A groupoid model for tight diagrams of Ore shape}
\label{sec:tight_Ore_model}

Let~\(\Cat\) be a small category that satisfies the right Ore
conditions.  If \(F=(\Gr_x,\Bisp_g,\mu_{g,h})\) is a
\(\Cat\)\nb-shaped diagram of groupoid correspondences, then
\((\Gr\Omega_x,\Bisp\Omega_g,\mu\Omega_{g,h})\) is a tight
\(\Cat\)\nb-shaped diagram with the same groupoid model by
\longref{Theorem}{the:Ore_tight_same_limit}.  It remains to find a
groupoid model when the diagram \((\Gr_x,\Bisp_g,\mu_{g,h})\) is
already tight.

Let \(\Gr \defeq \bigsqcup_{x\in\Cat^0} \Gr_x\).  This is a groupoid
with object space \(\Gr^0 \defeq \bigsqcup_{x\in\Cat^0} \Gr_x^0\).
Since the diagram~\(F\) is tight, \(\Gr^0\) carries a universal
action of the diagram.  Let \(\vartheta\colon \IS(F)\to I(\Gr^0)\) denote
the canonical \(\IS(F)\)\nb-action generated by the \(F\)\nb-action.
The transformation groupoid \(\IS(F)\ltimes_\vartheta \Gr^0\) is a
groupoid model for \(F\)\nb-actions by
\longref{Theorem}{the:universal_action_tight}.  We are going to describe this
groupoid in a different way, without using the inverse
semigroup~\(\IS(F)\).

\begin{lemma}
  \label{lem:Ore_tight_simplify_groupoid}
  Let \(x\in\Cat^0\) and \(g,h\in\Cat\) with \(x=\rg(g)=\rg(h)\).
  Let \(\alpha\in\Bis(\Bisp_g)\), \(\beta\in\Bis(\Bisp_h)\) and let
  \(y\in\Gr_{\s(h)}^0\) belong to the domain of \(\vartheta(\alpha)^*
  \vartheta(\beta)\).  There are \(\tau,\sigma\in\Bis(F)\) such that
  \(\vartheta(\sigma)^*\) is defined in a neighbourhood of~\(y\) and
  \(\alpha^*\cdot \beta\cdot \sigma = \tau\) in~\(\IS(F)\).  Thus
  \((\alpha^*\cdot \beta,y)\) and \((\tau\cdot\sigma^*,y)\)
  represent the same arrow in \(\IS(F)\ltimes_\vartheta \Gr^0\).
\end{lemma}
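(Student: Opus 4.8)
The plan is to use the right Ore conditions to \emph{complete the span} of $\alpha$ and $\beta$, and then to recognise $\alpha^*\cdot\beta\cdot\sigma$ as a single slice of a single correspondence, where it is manifestly a generator of $\IS(F)$. Write $z\defeq\vartheta(\beta)(y)\in\Gr_x^0$ and $w\defeq\vartheta(\alpha)^*(z)\in\Gr_{\s(g)}^0$; these are defined because $y$ lies in the domain of $\vartheta(\alpha)^*\vartheta(\beta)$, and $z\in\rg(\alpha)\cap\rg(\beta)$, $w\in\s(\alpha)$. Since $\rg(g)=\rg(h)=x$, the first right Ore condition in \longref{Lemma}{lem:Ore_conditions} provides $p,q\in\Cat$ with $\s(g)=\rg(p)$, $\s(h)=\rg(q)$ and $k\defeq gp=hq$; then $\rg(p)=\s(g)$, $\rg(q)=\s(h)$ and $\s(p)=\s(q)=\s(k)$.

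Next I would make two careful choices of slices. Using condition \ref{en:F-action_from_theta3} for $\Bisp_p$ (the images $\rg(\rho')$ of slices of $\Bisp_p$ cover $\Gr_{\s(g)}^0$) together with the fact that slices form a basis for the topology, choose $\rho\in\Bis(\Bisp_p)$ with $w\in\rg(\rho)\subseteq\s(\alpha)$; here $\rg|_\rho$ is a homeomorphism onto the open set $\rg(\rho)$ because $\Bisp_p$ is tight. The set $\vartheta(\alpha^*\beta)^{-1}(\rg(\rho))\cap\s(\beta)$ is then an open neighbourhood of $y$, and using condition \ref{en:F-action_from_theta3} for $\Bisp_q$ and shrinking a slice, choose $\sigma\in\Bis(\Bisp_q)$ with $y\in\rg(\sigma)\subseteq\vartheta(\alpha^*\beta)^{-1}(\rg(\rho))\cap\s(\beta)$. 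Since $\rg(\sigma)$ is an open neighbourhood of $y$, $\vartheta(\sigma)^*$ is defined near $y$, as the statement demands.

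Now $\alpha\rho$ and $\beta\sigma$ are slices of the \emph{same}, tight, correspondence $\Bisp_k=\Bisp_{gp}=\Bisp_{hq}$, so $\rg_*\colon\Bisp_k/\Gr_{\s(k)}\to\Gr_x^0$ is a homeomorphism. Tracing through the multiplication maps, one checks that under $\rg_*$ the subset $\Qu(\beta\sigma)$ of the orbit space corresponds to $\vartheta(\beta)(\rg(\sigma))$ and $\Qu(\alpha\rho)$ corresponds to $\vartheta(\alpha)(\rg(\rho))$; the choices above make $\Qu(\beta\sigma)\subseteq\Qu(\alpha\rho)$. Since $\alpha\rho$ is a slice and $\tau'\defeq\braket{\alpha\rho}{\beta\sigma}$ is a slice of $\Gr_{\s(k)}$, this containment upgrades to the \emph{equality of slices} $\beta\sigma=(\alpha\rho)\cdot\tau'$ in $\Bisp_k$. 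Feeding this into the defining relations of $\IS(F)$ I would then compute
\[
  \Theta(\beta)\Theta(\sigma)=\Theta(\beta\sigma)=\Theta\bigl((\alpha\rho)\tau'\bigr)=\Theta(\alpha)\Theta(\rho)\Theta(\tau'),
\]
and hence, using $\Theta(\alpha)^*\Theta(\alpha)=\Theta(\braket{\alpha}{\alpha})=\Theta(\s(\alpha))$ (see \longref{Proposition}{pro:innprod}) and $\Theta(\s(\alpha))\Theta(\rho)=\Theta(\s(\alpha)\cdot\rho)=\Theta(\rho)$ (because $\rg(\rho)\subseteq\s(\alpha)$),
\[
  \Theta(\alpha)^*\Theta(\beta)\Theta(\sigma)=\Theta(\rho)\Theta(\tau')=\Theta(\rho\tau').
\]
Thus $\tau\defeq\rho\tau'\in\Bis(\Bisp_p)\subseteq\Bis(F)$ satisfies $\alpha^*\cdot\beta\cdot\sigma=\tau$ in $\IS(F)$. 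Finally, as $\sigma\sigma^*$ is an idempotent of $\IS(F)$ whose partial homeomorphism is the identity on $\rg(\sigma)\ni y$, and $(\alpha^*\cdot\beta)(\sigma\sigma^*)=\tau\cdot\sigma^*=(\tau\cdot\sigma^*)(\sigma\sigma^*)$, the pairs $(\alpha^*\cdot\beta,y)$ and $(\tau\cdot\sigma^*,y)$ represent the same arrow of $\IS(F)\ltimes_\vartheta\Gr^0$ by \longref{Definition}{trangp}.

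The step I expect to be the crux is the slice-level identity $\beta\sigma=(\alpha\rho)\tau'$: it is what dictates the precise choices of $\rho$ (with $\rg(\rho)\subseteq\s(\alpha)$) and of $\sigma$ (small enough that $\vartheta(\alpha^*\beta)(\rg(\sigma))\subseteq\rg(\rho)$), and it is what makes the reduction take place in the abstract inverse semigroup $\IS(F)$ rather than merely inside $I(\Gr^0)$.
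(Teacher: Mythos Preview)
Your argument is correct and follows the same overall strategy as the paper: invoke the first right Ore condition to obtain \(gp=hq\), push \(\beta\) down into the common correspondence \(\Bisp_{gp}=\Bisp_{hq}\) by multiplying with a suitable slice~\(\sigma\), and then recognise \(\alpha^*\cdot\beta\cdot\sigma\) as a single element of \(\Bis(F)\) using the \(\braket{\,\cdot\,}{\,\cdot\,}\) relation.

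The bookkeeping differs slightly. The paper first chooses \(\sigma\in\Bis(\Bisp_l)\) with \(y\in\rg(\sigma)\), then \emph{factorises} the slice \(\beta\sigma\subseteq\Bisp_{gk}\) as \(\tau_1\tau_2\) with \(\tau_1\in\Bis(\Bisp_g)\), \(\tau_2\in\Bis(\Bisp_k)\) (shrinking \(\sigma\) so that \(\beta\sigma\) fits inside such a product, then restricting \(\tau_2\) by \(\s(\beta\sigma)\) to force equality), and concludes via \(\alpha^*\tau_1=\braket{\alpha}{\tau_1}\). You instead choose an auxiliary slice \(\rho\in\Bis(\Bisp_p)\) with \(\rg(\rho)\subseteq\s(\alpha)\), then pick \(\sigma\) small enough to force \(\Qu(\beta\sigma)\subseteq\Qu(\alpha\rho)\) (using tightness to translate this into a range condition), obtain \(\beta\sigma=(\alpha\rho)\braket{\alpha\rho}{\beta\sigma}\), and cancel via \(\alpha^*\alpha=\s(\alpha)\). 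Your route trades the factorisation step for the extra choice of \(\rho\) and the cancellation \(\s(\alpha)\cdot\rho=\rho\); both are short, and both land on a \(\tau\in\Bis(\Bisp_p)=\Bis(\Bisp_k)\). The final sentence about the transformation groupoid is identical to the paper's.
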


\begin{proof}
  By the first Ore condition, there are \(k,l\in\Cat\) with
  \(\rg(k) = \s(g)\), \(\rg(l)=\s(h)\) and \(g k = h l\).  Since
  slices cover~\(\Bisp_l\) and
  \(\rg\colon \Bisp_l \to \Gr^0_{\rg(l)}\) is surjective, there is a
  slice \(\sigma\in\Bis(\Bisp_l)\) with
  \(y\in \rg(\sigma) = \rg(\vartheta(\sigma))\);
  shrinking~\(\sigma\), we may arrange that \(U\defeq \rg(\sigma)\)
  is contained in the domain of
  \(\vartheta(\alpha)^*\vartheta(\beta)\).  The subset~\(U\) is open
  because~\(\Bisp_l\) is a tight correspondence.  The relations
  defining~\(\IS(F)\) imply
  \(\alpha^* \cdot \beta \cdot \sigma = \alpha^* \cdot
  (\beta\sigma)\) with \(\beta\sigma \in \Bis(\Bisp_{h l})\).  Since
  \(\Bisp_{h l} = \Bisp_{g k}\), slices of the form
  \(\tau_1 \tau_2\) with \(\tau_1\in\Bis(\Bisp_g)\),
  \(\tau_2\in\Bis(\Bisp_k)\) cover~\(\Bisp_{g k}\).
  Shrinking~\(\sigma\) further if necessary, we may arrange that
  \(\beta \sigma = \tau_1 \tau_2\) is itself of this form.  Then
  \(\alpha^* \cdot \beta \cdot \sigma = \alpha^* \cdot \tau_1\cdot
  \tau_2 = \braket{\alpha}{\tau_1}\cdot \tau_2 = \tau\) with
  \(\tau\defeq\braket{\alpha}{\tau_1}\tau_2\) belonging to
  \(\Bis(\Bisp_k) \subseteq \Bis(F)\).  Then
  \((\alpha^*\cdot \beta,y)\) and \((\tau\cdot\sigma^*,y)\)
  represent the same arrow in \(\IS(F)\ltimes_\vartheta \Gr^0\)
  because~\(\sigma \cdot \sigma^*\) is an idempotent element
  of~\(\IS(F)\) defined at~\(y\) and
  \(\alpha^*\cdot \beta\cdot \sigma \cdot \sigma^* = \tau\cdot
  \sigma^*\).
\end{proof}

Arrows in \(\IS(F)\ltimes \Gr^0\) are equivalence classes of
pairs~\((w,x)\) with \(w\in \IS(F)\), \(x\in \Dom \vartheta(w)\), where
\((w_1,x_1)\sim (w_2,x_2)\) if and only if there is an idempotent
element \(e\in \IS(F)\) with \(w_1\cdot e = w_2\cdot e\) and
\(x_1=x_2\in \Dom \vartheta(e)\).  Arrows with fixed~\(w\) form a
slice in~\(\IS(F)\ltimes\Gr^0\), which we denote by~\(\Theta(w)\).
We distinguish the action~\(\vartheta\) on~\(\Gr^0\) from~\(\Theta\)
because the groupoid \(\IS(F)\ltimes\Gr^0\) need not be effective.

\begin{lemma}
  \label{lem:Ore_tight_simplify_groupoid2}
  Slices of the form \(\Theta(\tau)\Theta(\sigma)^*\) for
  \(\sigma,\tau\in\Bis(F)\) cover \(\IS(F)\ltimes \Gr^0\).
\end{lemma}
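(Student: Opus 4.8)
The strategy is to refine the cover of $\IS(F)\ltimes\Gr^0$ by the slices $\Theta(w)$, $w\in\IS(F)$, which do cover by \longref{Definition}{trangp}. Since $w\mapsto\Theta(w)$ is an inverse semigroup homomorphism from $\IS(F)$ to the slices of $\IS(F)\ltimes\Gr^0$, a slice $\Theta(\tau)\Theta(\sigma)^*$ equals $\Theta(\tau\sigma^*)$, so it suffices to show: for every $w\in\IS(F)$ and every $x$ in the domain of $\vartheta(w)$ there are $\tau,\sigma\in\Bis(F)$ such that $(w,x)$ and $(\tau\sigma^*,x)$ represent the same arrow of $\IS(F)\ltimes\Gr^0$.

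Write $w$ as a product of the generators $\alpha\in\Bis(F)$ and their adjoints $\alpha^*$. Since $w\neq 0$ (its image $\vartheta(w)$ is defined at $x$), every product of adjacent letters in such a word is nonzero, so we may use the relation $\Theta(\U)\Theta(\V)=\Theta(\U\V)$ and its adjoint to merge neighbouring letters of equal sign. Thus, without loss of generality, $w$ is an \emph{alternating word} $\V_0\,\U_1^*\,\V_1\,\U_2^*\cdots$ in which consecutive letters have opposite sign, each $\U_i$ is a slice of a single correspondence $\Bisp_{g_i}$, and each $\V_i$ a slice of a single $\Bisp_{h_i}$ (the leading positive letter $\V_0$ and the last letter may be absent). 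Call a two-letter block $\U_i^*\V_i$ occurring in $w$ a \emph{turnaround}; I induct on the number of turnarounds in such an alternating word.

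If $w$ has no turnaround, then after merging it has one of the forms $\tau\sigma^*$, $\tau$, or $\sigma^*$ with $\tau,\sigma\in\Bis(F)$; and $\tau=\tau\,\s(\tau)^*$ and $\sigma^*=\s(\sigma)\,\sigma^*$, using $\sigma^*\sigma=\braket{\sigma}{\sigma}=\s(\sigma)$ (by $\braket{x}{x}=\s(x)$, \longref{Proposition}{pro:innprod}) and that $\s(\tau)$ and $\s(\sigma)$, being open subsets of $\Gr^0$, lie in $\Bis(F)$. For the inductive step, suppose $w=w_{\mathrm{left}}\,\U_i^*\V_i\,w_{\mathrm{right}}$ where $\U_i^*\V_i$ is a turnaround. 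Let $y$ be the image of $x$ under the partial homeomorphism $\vartheta(w_{\mathrm{right}})$; then $y$ lies in the domain of $\vartheta(\U_i)^*\vartheta(\V_i)$ because $x$ lies in the domain of $\vartheta(w)$, and $\rg(g_i)=\rg(h_i)$ since this composite is nonempty and $\Gr^0=\bigsqcup_{z\in\Cat^0}\Gr_z^0$. Hence \longref{Lemma}{lem:Ore_tight_simplify_groupoid} provides $\tau_0,\sigma_0\in\Bis(F)$ with $\vartheta(\sigma_0)^*$ defined near $y$ and $(\U_i^*\V_i,y)=(\tau_0\sigma_0^*,y)$ in $\IS(F)\ltimes\Gr^0$. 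Writing $(w,x)$ as the product of the groupoid arrow coming from $w_{\mathrm{left}}$, the arrow $(\U_i^*\V_i,y)$, and the arrow coming from $w_{\mathrm{right}}$, and replacing the middle factor by $(\tau_0\sigma_0^*,y)$, we get $(w,x)=(w',x)$ for $w'\defeq w_{\mathrm{left}}\,\tau_0\sigma_0^*\,w_{\mathrm{right}}$. After merging equal-sign neighbours, the pattern $\cdots\V_{i-1}\,\tau_0\,\sigma_0^*\,\U_{i+1}^*\cdots$ in $w'$ collapses to $\cdots(\V_{i-1}\tau_0)\,(\U_{i+1}\sigma_0)^*\cdots$, a positive letter followed by a negative one, so $w'$ has one fewer turnaround (this remains true, with the obvious modifications, when $\V_{i-1}$ or $\U_{i+1}^*$ is absent). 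The induction hypothesis applied to $(w',x)$ finishes the argument.

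The main obstacle is the bookkeeping of this reduction: one must check that the intermediate point $y$ used to invoke \longref{Lemma}{lem:Ore_tight_simplify_groupoid} is well defined and lies in the required domain, that the substitution is legitimate at the level of groupoid arrows rather than merely of elements of $\IS(F)$ (which matters because $\IS(F)\ltimes\Gr^0$ need not be effective), and that merging genuinely lowers the number of turnarounds without creating new ones at the ends of the word. None of these points is deep; the essential use of the right Ore conditions is already encapsulated in \longref{Lemma}{lem:Ore_tight_simplify_groupoid}, so the present lemma is a purely combinatorial consequence of it together with the semigroup relations defining $\IS(F)$.
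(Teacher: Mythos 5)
Your proposal is correct and follows essentially the same route as the paper: normalise $w$ to an alternating word in $\Bis(F)$ and its adjoints (padding with unit slices where needed), use \longref{Lemma}{lem:Ore_tight_simplify_groupoid} at the relevant point of $\Gr^0$ to replace a block $\U^*\V$ by $\tau_0\sigma_0^*$ as arrows of $\IS(F)\ltimes\Gr^0$, merge the adjacent equal-sign letters, and induct. The only cosmetic difference is that the paper always eliminates the last turnaround $\sigma_{\ell-1}^*\tau_\ell$ and inducts on word length, whereas you eliminate an arbitrary turnaround and induct on their number; the substance, including the careful passage to arrows rather than elements of $\IS(F)$, is identical.
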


\begin{proof}
  The inverse semigroup~\(\IS(F)\) is generated by~\(\Bis(F)\).
  Appending unit slices of~\(\Gr_x\) for suitable~\(x\) if
  necessary, we may write any element~\(w\) of~\(\IS(F)\) as a word
  \(\tau_1\cdot \sigma^*_1\cdot \tau_2\cdot \sigma_2^* \dotsm
  \tau_\ell \cdot \sigma_\ell^*\) for certain \(\tau_i,\sigma_i\in
  \Bis(F)\).  Let \(y\in\Gr^0\) be in the domain of~\(\vartheta(w)\).
  Then \longref{Lemma}{lem:Ore_tight_simplify_groupoid} gives
  \(\xi,\eta\in\Bis(F)\) such that~\(\vartheta(\eta)^*\) is defined
  at~\(\vartheta(\sigma_\ell)^*y\) and \([\sigma_{\ell-1}^*\cdot
  \tau_\ell, \vartheta(\sigma_\ell)^*y] = [\xi\cdot
  \eta^*,\vartheta(\sigma_\ell)^*y]\) in \(\IS(F)\ltimes \Gr^0\).  Hence
  \[
  [\tau_1\cdot \sigma^*_1\cdot \tau_2\cdot \sigma_2^* \dotsm
  \tau_\ell \cdot \sigma_\ell^*,y]
  =
  [\tau_1\cdot \sigma^*_1\cdot \tau_2\cdot \sigma_2^* \dotsm
  \sigma_{\ell-2}^* \cdot (\tau_{\ell-1}\xi) \cdot (\sigma_\ell \eta)^*,y].
  \]
  This reduces the length of the word~\(w\).  Induction on the
  length of~\(w\) shows that any arrow is contained in
  \(\Theta(\tau)\Theta(\sigma)^*\) for some
  \(\sigma,\tau\in\Bis(F)\).
\end{proof}

Now we are going to derive a nice form for arrows
in~\(\IS(F)\ltimes\Gr^0\).  Let \(g,h\in\Cat\) with \(\s(g)=\s(h)\)
and let \((\gamma_g,\gamma_h) \in \Bisp_g \times_{\s,\Gr_{\s(g)},\s}
\Bisp_h\) be given.  There are slices \(\tau\in\Bis(\Bisp_g)\)
and \(\sigma\in\Bis(\Bisp_h)\) with \(\gamma_g\in\tau\),
\(\gamma_h\in\sigma\).  Then the partial homeomorphisms
\(\vartheta(\sigma^*)\) and~\(\vartheta(\tau)\) map \(\rg(\gamma_h)\mapsto
\s(\gamma_h) = \s(\gamma_g) \mapsto \rg(\gamma_g)\).  Thus
\[
\psi(\gamma_g,\gamma_h) \defeq (\tau\sigma^*,\rg(\gamma_h))
\]
is an arrow in the groupoid \(\IS(F)\ltimes \Gr^0\).

\begin{lemma}
  \label{lem:Ore_tight_simplify_groupoid3}
  The construction above defines a surjective local homeomorphism
  \[
  \psi\colon \bigsqcup_{\substack{g,h\in\Cat\\ \s(g)=\s(h)}} \Bisp_g
  \times_{\s,\Gr_{\s(g)},\s} \Bisp_h \to \IS(F)\ltimes\Gr^0.
  \]
\end{lemma}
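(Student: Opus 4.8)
The plan is to establish, in turn, that $\psi$ is well defined, that it is a local homeomorphism, and that it is surjective; surjectivity will fall out of \longref{Lemma}{lem:Ore_tight_simplify_groupoid2}, while the first two points amount to identifying the source of $\psi$ with the obvious basis of slices of the transformation groupoid. For well-definedness, suppose $\gamma_g$ lies in two slices $\tau,\tau'\in\Bis(\Bisp_g)$ and $\gamma_h$ lies in two slices $\sigma,\sigma'\in\Bis(\Bisp_h)$. Since slices of a correspondence are closed under intersection, after replacing $\tau'$ and $\sigma'$ by $\tau\cap\tau'$ and $\sigma\cap\sigma'$ it suffices to treat the case $\tau'\subseteq\tau$, $\sigma'\subseteq\sigma$. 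Put $V\defeq\s(\tau')\cap\s(\sigma')\subseteq\Gr_{\s(g)}^0$; this is an open unit slice containing $x'\defeq\s(\gamma_g)=\s(\gamma_h)$, and, the source maps being injective on $\tau$ and $\sigma$, one has $\tau V=\tau'V$ and $\sigma V=\sigma'V$. Let $e\defeq\Theta(\sigma)\Theta(V)\Theta(\sigma)^*\in\IS(F)$; it is idempotent, and $\vartheta(e)$ is defined at $x\defeq\rg(\gamma_h)$, since $\vartheta(\sigma)^*$ carries $x$ to $x'\in V$. Using the defining relations of $\IS(F)$ — $\Theta(\U\V)=\Theta(\U)\Theta(\V)$, $\Theta(\U_1)^*\Theta(\U_2)=\Theta(\braket{\U_1}{\U_2})$, and that the product of two unit slices is their intersection — one computes $\Theta(\tau)\Theta(\sigma)^*\,e=\Theta(\tau V)\Theta(\sigma)^*=\Theta(\tau')\Theta(\sigma')^*\,e$. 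Hence $(\tau\sigma^*,x)$ and $(\tau'\sigma'^*,x)$ differ by the idempotent $e$, which is defined at $x$, so they represent the same arrow of $\IS(F)\ltimes\Gr^0$; thus $\psi$ is independent of the auxiliary slices.

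For the topological statement, fix $g,h\in\Cat$ with $\s(g)=\s(h)$ and slices $\tau\in\Bis(\Bisp_g)$, $\sigma\in\Bis(\Bisp_h)$, and set $U_{\tau,\sigma}\defeq\{(\gamma_g,\gamma_h)\in\tau\times\sigma:\s(\gamma_g)=\s(\gamma_h)\}$, an open subset of the fibre product; since slices cover $\Bisp_g$ and $\Bisp_h$, the sets $U_{\tau,\sigma}$ cover the domain of $\psi$. Because the correspondences are tight, $\s|_\tau$ and $\rg|_\sigma$ are homeomorphisms onto open subsets of $\Gr^0$ (for $\rg|_\sigma$ use $\rg=\rg_*\circ\Qu$ together with \longref{Remark}{rem:slice_tight}), so $(\gamma_g,\gamma_h)\mapsto\rg(\gamma_h)$ is a homeomorphism from $U_{\tau,\sigma}$ onto the open set $\Dom\vartheta(\tau\sigma^*)=\{x\in\rg(\sigma):\vartheta(\sigma)^*x\in\s(\tau)\}\subseteq\Gr^0$, with inverse $x\mapsto\bigl((\s|_\tau)^{-1}(\vartheta(\sigma)^*x),(\rg|_\sigma)^{-1}(x)\bigr)$. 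Post-composing with the canonical homeomorphism $\Dom\vartheta(\tau\sigma^*)\congto\Theta(\tau)\Theta(\sigma)^*$, $x\mapsto(\tau\sigma^*,x)$, onto the slice $\Theta(\tau)\Theta(\sigma)^*$ of $\IS(F)\ltimes\Gr^0$ (see \longref{Definition}{trangp}), we find that $\psi$ restricted to $U_{\tau,\sigma}$ is a homeomorphism onto an open subset of the groupoid. Since the $U_{\tau,\sigma}$ form an open cover of the domain, $\psi$ is continuous and is a local homeomorphism.

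Surjectivity is immediate from \longref{Lemma}{lem:Ore_tight_simplify_groupoid2}: every arrow of $\IS(F)\ltimes\Gr^0$ lies in some slice $\Theta(\tau)\Theta(\sigma)^*$ with $\tau,\sigma\in\Bis(F)$, say $\tau\in\Bis(\Bisp_g)$, $\sigma\in\Bis(\Bisp_h)$; this slice is nonempty only if $\s(\tau)\cap\s(\sigma)\neq\emptyset$, which forces $\s(g)=\s(h)$, and then every arrow in it has the form $(\tau\sigma^*,x)$ with $x\in\Dom\vartheta(\tau\sigma^*)$, hence equals $\psi\bigl((\s|_\tau)^{-1}(\vartheta(\sigma)^*x),(\rg|_\sigma)^{-1}(x)\bigr)$ by the previous paragraph. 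The main obstacle is the first step: one must keep the generators, the relations of $\IS(F)$, and the equivalence relation defining $\IS(F)\ltimes\Gr^0$ carefully apart, so that the single idempotent $e$ genuinely witnesses $(\tau\sigma^*,x)\sim(\tau'\sigma'^*,x)$ and so that the reduction to $\tau'\subseteq\tau$, $\sigma'\subseteq\sigma$ via intersections is legitimate; once $\psi$ is known to be well defined, the remaining two points are a routine translation of the standard description of slices in an inverse-semigroup transformation groupoid.
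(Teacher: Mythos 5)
Your proof is correct and follows essentially the same route as the paper's: well-definedness by exhibiting an idempotent of \(\IS(F)\) defined at \(\rg(\gamma_h)\) that absorbs the change of slices, surjectivity from \longref{Lemma}{lem:Ore_tight_simplify_groupoid2}, and the local-homeomorphism property by identifying each \(\tau\times_{\s,\s}\sigma\) with the slice \(\Theta(\tau\cdot\sigma^*)\) via the source map, using tightness to see that \(\rg|_\sigma\) is an open injection. The only cosmetic difference is in the first step, where the paper takes the witnessing idempotent to be a unit slice \(U\subseteq\Gr_{\rg(h)}^0\) around \(\rg(\gamma_h)\) multiplied on the right, rather than your conjugated idempotent \(\Theta(\sigma)\Theta(V)\Theta(\sigma)^*\) built from a unit slice \(V\subseteq\Gr_{\s(h)}^0\); both computations rely on the same relations of \(\IS(F)\).
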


\begin{proof}
  We first show that~\(\psi\) is well defined.
  Let \(\gamma_g\in \tau\cap \tau'\) and \(\gamma_h\in \sigma\cap
  \sigma'\).  Then there is an open neighbourhood~\(U\)
  of~\(\rg(\gamma_h)\) in~\(\Gr_{\rg(h)}^0\) such that
  \(\vartheta(\tau\cap \tau') \vartheta(\sigma\cap \sigma)^*\) is defined
  on~\(U\).  Then \(\tau \cdot \sigma^* \cdot U = (\tau\cap
  \tau')\cdot (\sigma\cap \sigma')^*\cdot U = \tau'\cdot
  (\sigma')^*\cdot U\) holds in~\(\IS(F)\), where we view \(U=U^*\) as
  an element of \(\Bis(F)\subseteq \IS(F)\).  Hence
  \([\tau\cdot\sigma^*,\rg(\gamma_h)] = [\tau'\cdot
  (\sigma')^*,\rg(\gamma_h)]\) as arrows of \(\IS(F)\ltimes\Gr^0\).
  That is, the map~\(\psi\) is well defined.

  \longref{Lemma}{lem:Ore_tight_simplify_groupoid2} shows that any arrow
  in~\(\IS(F)\ltimes\Gr^0\)
  is of the form \((\tau\cdot \sigma^*,x)\)
  for some \(\sigma,\tau\in\Bis(F)\)
  and~\(x\)
  in the domain of \(\vartheta(\tau) \vartheta(\sigma)^*\).
  There are \(g,h\in\Cat\)
  with \(\tau\in\Bis(\Bisp_g)\),
  \(\sigma\in\Bis(\Bisp_h)\).
  Then \(\Dom \vartheta(\tau)\)
  and \(\Dom \vartheta(\sigma)\)
  are contained in \(\Gr^0_{\s(g)}\)
  and~\(\Gr^0_{\s(h)}\),
  respectively.  We have \(\s(g)=\s(h)\)
  because \(\vartheta(\tau) \vartheta(\sigma)^* = \emptyset\)
  for \(\s(g)\neq\s(h)\).
  By the definition of the action~\(\vartheta\),
  there are unique \(\gamma_g\in\tau\subseteq \Bisp_g\)
  and \(\gamma_h\in\sigma\subseteq \Bisp_h\)
  with \(\rg(\gamma_h) = x\),
  \(\s(\gamma_g)=\s(\gamma_h)\),
  and \(\rg(\gamma_g) = \vartheta(\tau)\vartheta(\sigma)^* (x)\).
  Then \((\tau\cdot\sigma^*,x) = \psi(\gamma_g,\gamma_h)\).
  That is, \(\psi\) is surjective.

  Fix \(g,h\in\Cat\) and \(\tau\in\Bis(\Bisp_g)\),
  \(\sigma\in\Bis(\Bisp_h)\).  Then
  \(\tau\times_{\s,\s} \sigma\subseteq \Bisp_g\times_{\s,\s}
  \Bisp_h\) is an open subset.  Since we assume~\(\Bisp_h\) to be
  tight, the map \(\rg\colon \Bisp_h\to \Gr_{\rg(h)}^0\) is
  equivalent to the orbit space projection
  \(\Bisp_h \to \Bisp_h/\Gr_{\s(h)}\).  Hence~\(\rg\) restricts to
  an injective, open map on~\(\sigma\).  The source anchor map also
  restricts to an injective, open map on~\(\tau\).  Then the map
  \((\gamma_g,\gamma_h)\mapsto \rg(\gamma_h)\) restricts to an
  injective, continuous, open map on \(\tau\times_{\s,\s} \sigma\).
  The source map also restricts to an injective, continuous, open
  map on the slice \(\Theta(\tau\cdot\sigma^*)\) of
  \(\IS(F)\ltimes\Gr^0\).  The map~\(\psi\) maps
  \(\tau\times_{\s,\s}\sigma\) bijectively onto
  \(\Theta(\tau\cdot\sigma^*)\) and intertwines
  \((\gamma_g,\gamma_h)\mapsto \rg(\gamma_h)\) with the source map
  on \(\Theta(\tau\cdot\sigma^*)\).  Hence~\(\psi\) is a
  homeomorphism from \(\tau\times_{\s,\s}\sigma\) onto the open
  subset \(\Theta(\tau\cdot\sigma^*)\) of~\(\IS(F)\ltimes\Gr^0\).
  Thus~\(\psi\) is a local homeomorphism.
\end{proof}

It remains to analyse when \((\gamma_g,\gamma_h)\in \Bisp_g
\times_{\s,\s}\Bisp_h\) and \((\gamma_{g'},\gamma_{h'})\in
\Bisp_{g'} \times_{\s,\s}\Bisp_{h'}\) for \(g,h,g',h'\in\Cat\) with
\(\s(g)=\s(h)\) and \(\s(g')=\s(h')\) give the same arrow in the
groupoid~\(\IS(F)\ltimes\Gr^0\).  We first describe some obvious
sufficient conditions for this.  If \(k\in\Cat\) and
\(\eta\in\Bisp_k\), then
\((\eta,\eta)\in\Bisp_k\times_{\s,\s}\Bisp_k\) represents the unit
arrow on~\(\rg(\gamma)\): if \(\eta\in\alpha\) for some
\(\alpha\in\Bis(\Bisp_k)\), then \(\alpha\cdot \alpha^*\) is
idempotent in~\(\IS(F)\) and hence its elements represent unit arrows
in any transformation groupoid with~\(\IS(F)\).  Since the map
\(\Bis(F)\to \IS(F)\) is multiplicative, this implies
\[
\psi(\gamma_g,\gamma_h)
= \psi (\gamma_g \cdot \eta,\gamma_h\cdot \eta)
\]
for all \(g,h,k\in\Cat\)
with \(\s(g)=\s(h)=\rg(k)\)
and all \((\gamma_g,\gamma_h)\in \Bisp_g \times_{\s,\s}\Bisp_h\),
\(\eta\in\Bisp_k\)
with \(\s(\gamma_g) = \s(\gamma_h) = \rg(\eta)\).
We write \((\gamma_g,\gamma_h) \sim (\gamma_{g'},\gamma_{h'})\)
if
\((\gamma_g \cdot\eta,\gamma_h\cdot \eta) =
(\gamma_{g'}\cdot\eta',\gamma_{h'}\cdot\eta')\)
for some \(k,k'\in\Cat\),
\(\eta\in\Bisp_k\),
\(\eta\in\Bisp_{k'}\)
with \(\rg(k) = \s(g)\),
\(\rg(k') = \s(g')\),
\(\s(\gamma_g) = \rg(\eta)\),
and \(\s(\gamma_{g'}) = \rg(\eta')\).
The discussion above shows that
\((\gamma_g,\gamma_h) \sim (\gamma_{g'},\gamma_{h'})\)
implies \(\psi(\gamma_g,\gamma_h) = \psi(\gamma_{g'},\gamma_{h'})\).

\begin{lemma}
  \label{lem:Ore_tight_simplify_groupoid4}
  The relation~\(\sim\) is an equivalence relation.
\end{lemma}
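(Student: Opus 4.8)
The plan is to check the three properties of an equivalence relation, with almost all the work going into transitivity. Reflexivity is immediate: given $(\gamma_g,\gamma_h)$, take $k=k'$ to be the identity arrow on $\s(g)=\s(h)$ and $\eta=\eta'=\s(\gamma_g)=\s(\gamma_h)$, regarded as a unit element of the identity correspondence $\Bisp_{\s(g)}=\Gr_{\s(g)}$; then $\gamma_g\cdot\eta=\gamma_g$ and $\gamma_h\cdot\eta=\gamma_h$ by the unitor~\eqref{eq:unitor_right}, so $(\gamma_g,\gamma_h)\sim(\gamma_g,\gamma_h)$. Symmetry is built into the definition of~$\sim$.

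For transitivity, the plan is to establish first the following common-continuation property for tight correspondences: \emph{if $\beta\in\Bisp_p$ and $\beta'\in\Bisp_{p'}$ have the same range $\rg(\beta)=\rg(\beta')$ (so $\rg(p)=\rg(p')$), then there are $q,q'\in\Cat$ with $\rg(q)=\s(p)$, $\rg(q')=\s(p')$, $pq=p'q'$, and elements $\zeta\in\Bisp_q$, $\zeta'\in\Bisp_{q'}$ with $\rg(\zeta)=\s(\beta)$, $\rg(\zeta')=\s(\beta')$ and $\beta\cdot\zeta=\beta'\cdot\zeta'$ in $\Bisp_{pq}$.} To prove this, I would first apply the first right Ore condition (\longref{Lemma}{lem:Ore_conditions}) to $p,p'$ to obtain $q,q'$ with $\rg(q)=\s(p)$, $\rg(q')=\s(p')$ and $r\defeq pq=p'q'$. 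Since the diagram is tight, the left anchor maps $\rg\colon\Bisp_q\to\Gr_{\s(p)}^0$ and $\rg\colon\Bisp_{q'}\to\Gr_{\s(p')}^0$ are surjective, so one may pick $\zeta_0\in\Bisp_q$, $\zeta_0'\in\Bisp_{q'}$ with $\rg(\zeta_0)=\s(\beta)$, $\rg(\zeta_0')=\s(\beta')$; then $\beta\zeta_0,\beta'\zeta_0'\in\Bisp_r$ have the common range $\rg(\beta)=\rg(\beta')$. As $\Bisp_r$ is tight, $\rg_*\colon\Bisp_r/\Gr_{\s(r)}\congto\Gr^0_{\rg(r)}$ is a homeomorphism and in particular injective, whence $\Qu(\beta\zeta_0)=\Qu(\beta'\zeta_0')$, and $\eta\defeq\braket{\beta\zeta_0}{\beta'\zeta_0'}\in\Gr_{\s(r)}$ is defined with $(\beta\zeta_0)\cdot\eta=\beta'\zeta_0'$ and $\rg(\eta)=\s(\zeta_0)$ by \longref{Proposition}{pro:innprod}. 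Replacing $\zeta_0$ by $\zeta\defeq\zeta_0\cdot\eta\in\Bisp_q$ (which still has $\rg(\zeta)=\s(\beta)$) and keeping $\zeta'\defeq\zeta_0'$, associativity of the multiplication maps gives $\beta\cdot\zeta=(\beta\zeta_0)\cdot\eta=\beta'\cdot\zeta'$.

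Then, given $(\gamma_g,\gamma_h)\sim(\gamma_{g'},\gamma_{h'})$ and $(\gamma_{g'},\gamma_{h'})\sim(\gamma_{g''},\gamma_{h''})$, I would unfold the definition to get $\eta_1\in\Bisp_{k_1}$, $\eta_1'\in\Bisp_{k_1'}$ with $\rg(k_1)=\s(g)$, $\rg(k_1')=\s(g')$, $\gamma_g\eta_1=\gamma_{g'}\eta_1'$, $\gamma_h\eta_1=\gamma_{h'}\eta_1'$, and $\eta_2\in\Bisp_{k_2}$, $\eta_2'\in\Bisp_{k_2'}$ with $\rg(k_2)=\s(g')$, $\rg(k_2')=\s(g'')$, $\gamma_{g'}\eta_2=\gamma_{g''}\eta_2'$, $\gamma_{h'}\eta_2=\gamma_{h''}\eta_2'$. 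In particular $\eta_1'$ and $\eta_2$ both have range $\s(\gamma_{g'})$ and $\rg(k_1')=\rg(k_2)=\s(g')$, so the common-continuation property applied to $\beta=\eta_1'$, $\beta'=\eta_2$ produces $q,q'\in\Cat$ with $k_1'q=k_2q'$ and $\zeta\in\Bisp_q$, $\zeta'\in\Bisp_{q'}$ with $\rg(\zeta)=\s(\eta_1')$, $\rg(\zeta')=\s(\eta_2)$ and $\eta_1'\zeta=\eta_2\zeta'$. Setting $\eta\defeq\eta_1\zeta$ and $\eta''\defeq\eta_2'\zeta'$ (legal composites, since comparing source and range anchors in the relations above gives $\s(k_1)=\s(k_1')=\rg(q)$, $\s(k_2')=\s(k_2)=\rg(q')$, $\s(\eta_1)=\s(\eta_1')=\rg(\zeta)$ and $\s(\eta_2')=\s(\eta_2)=\rg(\zeta')$), one has $\rg(\eta)=\s(\gamma_g)$, $\rg(\eta'')=\s(\gamma_{g''})$, and by associativity of the multiplication maps
\[
\gamma_g\cdot\eta=\gamma_g\eta_1\zeta=\gamma_{g'}\eta_1'\zeta=\gamma_{g'}\eta_2\zeta'=\gamma_{g''}\eta_2'\zeta'=\gamma_{g''}\cdot\eta'',
\]
and likewise $\gamma_h\cdot\eta=\gamma_{h''}\cdot\eta''$. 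Hence $(\gamma_g,\gamma_h)\sim(\gamma_{g''},\gamma_{h''})$, so~$\sim$ is transitive, and therefore an equivalence relation.

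The step I expect to be the main obstacle is the common-continuation property, and within it the passage from two continuations $\beta\zeta_0$, $\beta'\zeta_0'$ with equal range to making them literally equal: this is exactly where tightness is indispensable, since it simultaneously supplies surjectivity of the left anchor maps (so that $\zeta_0,\zeta_0'$ exist at all) and injectivity of $\rg_*$ (so that the two continuations agree modulo a single element of $\Gr_{\s(r)}$, which is then absorbed into $\zeta$). Everything else is bookkeeping with source and range objects in~$\Cat$, which the right Ore condition keeps consistent.
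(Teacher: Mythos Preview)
Your proof is correct and follows essentially the same approach as the paper's: both use the first Ore condition together with tightness (surjectivity of the range maps and injectivity of~\(\rg_*\)) to produce a common continuation of \(\eta_1'\) and~\(\eta_2\), and then chain the resulting equalities. The only difference is presentational: you isolate the ``common-continuation property'' as a separate sublemma, whereas the paper inlines that argument directly in the transitivity step.
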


\begin{proof}
  The relation is clearly symmetric and reflexive.  We must prove
  that it is transitive.  So let
  \(g,g',g'',h,h',h'',k,k',l,l'\in\Cat\) be such that
  \(\s(g)=\s(h)=\rg(k)\), \(\s(g')=\s(h')=\rg(k') = \rg(l)\),
  \(\s(g'')=\s(h'')=\rg(l')\) and \(g k = g' k'\), \(h k = h' k'\),
  \(g' l = g'' l'\), \(h' l = h'' l'\); let \(\gamma_g\in \Bisp_g\),
  \(\gamma_{g'}\in\Bisp_{g'}\), \(\gamma_{g''}\in\Bisp_{g''}\),
  \(\gamma_h\in \Bisp_h\), \(\gamma_{h'}\in\Bisp_{h'}\),
  \(\gamma_{h''}\in\Bisp_{h''}\), and let \(\eta\in\Bisp_k\),
  \(\eta'\in\Bisp_{k'}\), \(\xi\in\Bisp_l\), \(\xi'\in\Bisp_{l'}\)
  be such that \(\s(\gamma_g)=\rg(\eta)\),
  \(\s(\gamma_{g'})=\rg(\eta') = \rg(\xi)\),
  \(\s(\gamma_{g''})=\rg(\xi')\) and
  \[
  (\gamma_g \cdot\eta,\gamma_h\cdot \eta) =
  (\gamma_{g'}\cdot\eta',\gamma_{h'}\cdot\eta'),\qquad
  (\gamma_{g'} \cdot\xi,\gamma_{h'}\cdot \xi) =
  (\gamma_{g''}\cdot\xi',\gamma_{h''}\cdot\xi').
  \]
  This says that \((\gamma_g,\gamma_h) \sim
  (\gamma_{g'},\gamma_{h'})\) and \((\gamma_{g'},\gamma_{h'}) \sim
  (\gamma_{g''},\gamma_{h''})\).  The first Ore condition for
  \(k',l\) gives \(m,m'\in\Cat\) with \(\s(l)=\rg(m)\),
  \(\s(k')=\rg(m')\) and \(l m = k' m'\).  Since \(\rg\colon \Bisp_m
  \to \Gr^0_{\rg(m)}\) and \(\rg\colon \Bisp_{m'} \to
  \Gr^0_{\rg(m')}\) are surjective, there are \(\kappa\in \Bisp_m\),
  \(\kappa'\in \Bisp_{m'}\) with \(\rg(\kappa) = \s(\eta')\) and
  \(\rg(\kappa') = \s(\xi)\).  Then \(\eta' \cdot \kappa,
  \xi\cdot\kappa'\) are well defined elements of \(\Bisp_{k' m'} =
  \Bisp_{l m}\) with the same range \(\s(\gamma_{g'}) =
  \s(\gamma_{h'})\).  Since \(\rg\colon \Bisp_{k' m'} \to
  \Gr^0_{\rg(k')}\) is equivalent to the orbit space projection for
  the right action, there is \(\kappa''\in\Gr_{\s(m)}\) with \(\eta'
  \cdot \kappa= \xi\cdot\kappa'\kappa''\).  Replacing \(\kappa'\)
  by~\(\kappa' \cdot \kappa''\), we may arrange that already \(\eta'
  \cdot \kappa= \xi\cdot\kappa'\).  Then
  \[
  (\gamma_g \cdot\eta \kappa,\gamma_h\cdot \eta \kappa)
  = (\gamma_{g'}\cdot\eta' \kappa,\gamma_{h'}\cdot\eta' \kappa)
  = (\gamma_{g'}\cdot\xi \kappa',\gamma_{h'}\cdot\xi \kappa')
  = (\gamma_{g''}\cdot\xi' \kappa',\gamma_{h''}\cdot\xi' \kappa').
  \]
  Thus the relation~\(\sim\) is transitive.
\end{proof}

Let~\(\Gr[H]\) be the set of \(\sim\)\nb-equivalence classes in
\(\bigsqcup_{g,h\in\Cat: \s(g)=\s(h)} \Bisp_g \times_{\s,\s}
\Bisp_h\) with the quotient topology.  The discussion above shows
that~\(\psi\) descends to a surjective continuous map
\(\Gr[H] \to \IS(F)\ltimes\Gr^0\).

\begin{lemma}
  \label{lem:Ore_tight_simplify_groupoid5}
  The quotient map from \(\bigsqcup_{g,h\in\Cat: \s(g)=\s(h)}
  \Bisp_g \times_{\s,\s} \Bisp_h\) to~\(\Gr[H]\) and the map
  \(\Psi\colon \Gr[H] \to \IS(F)\ltimes\Gr^0\) induced
  by~\(\psi\) are local homeomorphisms.
\end{lemma}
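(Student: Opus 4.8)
The plan is to work from the factorisation $\psi = \Psi \circ q$ established in \longref{Lemma}{lem:Ore_tight_simplify_groupoid3}, where $q\colon D \to \Gr[H]$ is the quotient map from $D \defeq \bigsqcup \Bisp_g\times_{\s,\s}\Bisp_h$. By that lemma $\psi$ is a surjective local homeomorphism, and for slices $\tau\subseteq\Bisp_g$, $\sigma\subseteq\Bisp_h$ with $\s(g)=\s(h)$ it restricts to a homeomorphism from the basic open set $W_{\tau,\sigma}\defeq\tau\times_{\s,\s}\sigma$ onto the slice $\Theta(\tau\cdot\sigma^*)$ of $\IS(F)\ltimes\Gr^0$. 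Two facts then come for free. Since $q$ is a quotient map and $\psi=\Psi\circ q$ is continuous and open, the induced map $\Psi$ is continuous, surjective and open (for open $N\subseteq\Gr[H]$ one has $\Psi(N)=\psi(q^{-1}(N))$, which is open). And since $\psi|_{W_{\tau,\sigma}}$ is injective, so is $q|_{W_{\tau,\sigma}}$. Hence both assertions reduce to the single claim that $q$ is an open map: given this, $q|_{W_{\tau,\sigma}}$ is a continuous open injection onto the open set $q(W_{\tau,\sigma})$, hence a homeomorphism onto it, so $q$ is a local homeomorphism (the sets $W_{\tau,\sigma}$ cover $D$); and then $\Psi$ is a local homeomorphism too, being equal on each open set $q(W_{\tau,\sigma})$ to the composite $\psi|_{W_{\tau,\sigma}}\circ(q|_{W_{\tau,\sigma}})^{-1}$ of homeomorphisms.

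So the whole matter is to show that the $\sim$-saturation of a basic open set $W_{\tau,\sigma}$ is open. Let $w''=(\gamma'',\delta'')\in\Bisp_{g''}\times_{\s,\s}\Bisp_{h''}$ lie in this saturation, witnessed by some $(\gamma,\delta)\in W_{\tau,\sigma}$ (so $\gamma\in\tau\subseteq\Bisp_g$, $\delta\in\sigma\subseteq\Bisp_h$) and elements $\eta\in\Bisp_k$, $\eta'\in\Bisp_{k'}$ with $g k=g'' k'$, $h k=h'' k'$, $\gamma\eta=\gamma''\eta'$ and $\delta\eta=\delta''\eta'$. Fix a slice $\alpha'\ni\eta'$ in $\Bisp_{k'}$ and a slice $\beta\ni\eta$ in $\Bisp_k$; since $\Bisp_{k'}$ is tight, $\rg$ restricts to a homeomorphism of $\alpha'$ onto an open subset of $\Gr^0$. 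Fix also slices $\tau''\ni\gamma''$, $\sigma''\ni\delta''$. For $\tilde w''=(\tilde\gamma'',\tilde\delta'')\in\tau''\times_{\s,\s}\sigma''$ close enough to $w''$, set $\tilde\eta'\defeq(\rg|_{\alpha'})^{-1}(\s(\tilde\gamma''))\in\alpha'$; it depends continuously on $\tilde w''$, equals $\eta'$ at $w''$, and satisfies $\rg(\tilde\eta')=\s(\tilde\gamma'')=\s(\tilde\delta'')$. Then $\tilde\gamma''\cdot\tilde\eta'\in\Bisp_{g'' k'}=\Bisp_{g k}$ and $\tilde\delta''\cdot\tilde\eta'\in\Bisp_{h'' k'}=\Bisp_{h k}$ are defined, depend continuously on $\tilde w''$, and at $w''$ equal $\gamma\eta\in\tau\beta$ and $\delta\eta\in\sigma\beta$; so after shrinking the neighbourhood of $w''$ they lie in the slices $\tau\beta$ and $\sigma\beta$. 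By the uniqueness of slice factorisations in \cite{Antunes-Ko-Meyer:Groupoid_correspondences}*{Lemma~7.14} we may write $\tilde\gamma''\tilde\eta'=\tilde\gamma\cdot\tilde\eta$ and $\tilde\delta''\tilde\eta'=\tilde\delta\cdot\tilde\eta_0$ with unique $\tilde\gamma\in\tau$, $\tilde\delta\in\sigma$, $\tilde\eta,\tilde\eta_0\in\beta$, again depending continuously on $\tilde w''$. The \emph{key observation} is that computing the source map of $\tilde\gamma''\tilde\eta'$ in $\Bisp_{g k}$ as the source of the second factor of a composite gives $\s(\tilde\eta)=\s(\tilde\eta')$, and likewise $\s(\tilde\eta_0)=\s(\tilde\eta')$; since $\tilde\eta$ and $\tilde\eta_0$ lie in the single slice $\beta$, this forces $\tilde\eta=\tilde\eta_0$. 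Therefore $\s(\tilde\gamma)=\rg(\tilde\eta)=\rg(\tilde\eta_0)=\s(\tilde\delta)$, so $(\tilde\gamma,\tilde\delta)\in W_{\tau,\sigma}$, and the pair $(\tilde\eta,\tilde\eta')$ witnesses $\tilde w''\sim(\tilde\gamma,\tilde\delta)$. Hence a whole open neighbourhood of $w''$ lies in the $\sim$-saturation of $W_{\tau,\sigma}$.

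This shows $q$ is open, and with the reductions of the first paragraph it proves the lemma. The step I expect to be the real content is the last observation in the second paragraph: a priori the two factorisations of $\tilde\gamma''\tilde\eta'$ and $\tilde\delta''\tilde\eta'$ through $\Bisp_k$ could have different second factors, and one must notice that the fibre-product equalities pin down the common ``terminal'' object $\s(\tilde\eta')$ and hence, inside a single slice of $\Bisp_k$, force the two second factors to coincide — without this, the perturbed witness does not assemble into an element of $W_{\tau,\sigma}$. Everything else is a routine assembly of \longref{Lemma}{lem:Ore_tight_simplify_groupoid3}, tightness of the correspondences, and the slice calculus of \cite{Antunes-Ko-Meyer:Groupoid_correspondences}; in particular no further use of the right Ore conditions is needed here.
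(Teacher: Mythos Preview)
Your proof is correct and follows essentially the same strategy as the paper: reduce to showing that the \(\sim\)-saturation of each basic set \(\tau\times_{\s,\s}\sigma\) is open, then conclude that both \(q\) and \(\Psi\) restrict to homeomorphisms on these sets. The paper dispatches the openness of the saturation in a single line (``because the multiplication maps \(\Bisp_g\times_{\s,\rg}\Bisp_k\to\Bisp_{gk}\) are open''), whereas you spell out a concrete perturbation argument using tightness to invert \(\rg|_{\alpha'}\), the slice factorisation lemma to split products uniquely, and the source comparison to force \(\tilde\eta=\tilde\eta_0\). Your version is more detailed and makes the role of tightness explicit; the paper's is terser but relies on the same underlying mechanism.
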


\begin{proof}
  Let \(\tau\in\Bis(\Bisp_g)\), \(\sigma\in\Bis(\Bisp_h)\).  We have
  already seen in the proof of
  \longref{Lemma}{lem:Ore_tight_simplify_groupoid3} that~\(\psi\)
  restricts to a homeomorphism from
  \(\tau\times_{\s,\s}\sigma \subseteq \Bisp_g \times_{\s,\s}
  \Bisp_h\) onto the slice \(\Theta(\tau\sigma^*)\)
  in~\(\IS(F)\ltimes \Gr^0\).  The set of all pairs
  \((\gamma_1,\gamma_2)\) that are \(\sim\)\nb-equivalent to an
  element of \(\tau\times_{\s,\s}\sigma\) is open because the
  multiplication maps
  \(\Bisp_g \times_{\s,\rg}\Bisp_k \to \Bisp_{g k}\) are open.
  Equivalently, the image of \(\tau\times_{\s,\s}\sigma\)
  in~\(\Gr[H]\) is open in the quotient topology.  The quotient map
  to~\(\Gr[H]\) and~\(\Psi\) are continuous, and both are inverse to
  each other when restricted to the embedded image of
  \(\tau\times_{\s,\s}\sigma\).  Hence the quotient map restricts to
  a homeomorphism from \(\tau\times_{\s,\s}\sigma\) to its open
  image in~\(\Gr[H]\) and~\(\Psi\) restricts to a homeomorphism from
  that image onto the slice \(\Theta(\tau\sigma^*)\) in
  \(\IS(F)\ltimes\Gr^0\).  This implies the assertions because open
  subsets of the form \(\tau\times_{\s,\s}\sigma\) cover
  \(\Bisp_g \times_{\s,\s} \Bisp_h\).
\end{proof}

Let \(\Gr[H]^0\defeq \Gr^0\) and define \(\rg,\s\colon
\Gr[H]\rightrightarrows \Gr[H]^0\) by \(\rg[\gamma_g,\gamma_h]
\defeq \rg(\gamma_g)\), \(\s[\gamma_g,\gamma_h] \defeq
\rg(\gamma_h)\); this indeed descends to \(\sim\)\nb-equivalence
classes.  Equip~\(\Gr[H]\) with the partially defined
multiplication
\begin{equation}
  \label{eq:Ore_tight_multiplication}
  [\gamma_g,\gamma_h]\cdot [\gamma_h,\gamma_k] \defeq
  [\gamma_g,\gamma_k]
\end{equation}
for all \(g,h,k\in\Cat\) with \(\s(g)=\s(h)=\s(k)\),
\(\gamma_g\in\Bisp_g\), \(\gamma_h\in\Bisp_h\),
\(\gamma_k\in\Bisp_k\) with
\(\s(\gamma_g)=\s(\gamma_h)=\s(\gamma_k)\).

\begin{lemma}
  \label{lem:Ore_tight_simplify_groupoid6}
  The multiplication above is defined on
  \(\Gr[H]\times_{s,\Gr[H]^0,r}\Gr[H]\)
  and turns \(\rg,\s\colon \Gr[H]\rightrightarrows \Gr[H]^0\)
  into an étale topological groupoid.  The identity map on objects
  and~\(\Psi\)
  on arrows give a continuous functor
  \(\Gr[H] \to \IS(F)\ltimes \Gr^0\).
\end{lemma}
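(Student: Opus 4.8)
The plan is to obtain the groupoid structure on \(\Gr[H]\) by ``reading off the two endpoints'', and to transport all topological facts from the étale groupoid \(\IS(F)\ltimes\Gr^0\) along the surjective local homeomorphism~\(\Psi\) of \longref{Lemma}{lem:Ore_tight_simplify_groupoid5}. The first and main task is to show that~\eqref{eq:Ore_tight_multiplication} is defined on all of \(\Gr[H]\times_{\s,\Gr[H]^0,\rg}\Gr[H]\), that is, that any two composable arrows admit representatives with a common middle element. Given \([\gamma_g,\gamma_h]\) and \([\delta_{g'},\delta_{h'}]\) with \(\rg(\gamma_h)=\rg(\delta_{g'})\), hence \(\rg(h)=\rg(g')\), I would apply the first Ore condition of \longref{Lemma}{lem:Ore_conditions} to \(h,g'\) to get \(k,l\in\Cat\) with \(h k=g' l\), then use surjectivity of the range maps of the tight correspondences \(\Bisp_k,\Bisp_l\) to pick \(\eta\in\Bisp_k\), \(\zeta\in\Bisp_l\) with \(\rg(\eta)=\s(\gamma_h)\), \(\rg(\zeta)=\s(\delta_{g'})\). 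Since \(\Bisp_{h k}=\Bisp_{g' l}\) is tight, its range map is the orbit-space projection, so \(\gamma_h\cdot\eta\) and \(\delta_{g'}\cdot\zeta\) (which have the same range) differ by an element of \(\Gr_{\s(k)}\); absorbing it into~\(\eta\) I may assume \(\gamma_h\cdot\eta=\delta_{g'}\cdot\zeta=:\epsilon\). Then \((\gamma_g\cdot\eta,\epsilon)\sim(\gamma_g,\gamma_h)\) and \((\epsilon,\delta_{h'}\cdot\zeta)\sim(\delta_{g'},\delta_{h'})\) by the definition of~\(\sim\), so the product is represented by \((\gamma_g\cdot\eta,\delta_{h'}\cdot\zeta)\).

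The hard part will be showing that the class \([\gamma_g\cdot\eta,\delta_{h'}\cdot\zeta]\) depends neither on the chosen representatives \(\gamma_g,\gamma_h,\delta_{g'},\delta_{h'}\) nor on \(k,l,\eta,\zeta\); this is precisely the statement that~\(\sim\) is a congruence for the endpoint multiplication. I would prove it by the same kind of diagram chase as in \longref{Lemma}{lem:Ore_tight_simplify_groupoid4}: two outcomes of the construction yield, for each of the two factors, two common-middle representatives; transitivity of~\(\sim\) (\longref{Lemma}{lem:Ore_tight_simplify_groupoid4}) reconciles the representatives of each factor separately, and a further application of both Ore conditions aligns the two extensions so that the two candidate products become~\(\sim\)-equivalent. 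As an independent check I would also verify, directly from the formula for~\(\psi\) in \longref{Lemma}{lem:Ore_tight_simplify_groupoid3} and the defining relations of~\(\IS(F)\), that \(\psi(\gamma_g\cdot\eta,\delta_{h'}\cdot\zeta)=\psi(\gamma_g,\gamma_h)\cdot\psi(\delta_{g'},\delta_{h'})\) in \(\IS(F)\ltimes\Gr^0\); this shows the product has at least a well-defined image under~\(\Psi\), and it will also give multiplicativity of~\(\Psi\) once the product is known to be well defined.

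With the multiplication in hand, the algebraic groupoid axioms are routine. Associativity is immediate after choosing, along a composable chain of three arrows, representatives with literally matching middle elements, which is possible by iterating the common-middle construction together with the Ore conditions; the composite then records only the two outer endpoints. The object inclusion is \(\gamma\mapsto[\gamma,\gamma]\) for \(\gamma\in\Gr^0\subseteq\bigsqcup_{x}\Gr_x=\bigsqcup_{x}\Bisp_x\), and one checks that \((\gamma_g,\gamma_g)\sim(\rg(\gamma_g),\rg(\gamma_g))\) for every \(\gamma_g\in\Bisp_g\) by extending the left-hand pair by \(\s(\gamma_g)\) and the right-hand pair by~\(\gamma_g\); this makes \([\gamma,\gamma]\) the unit at \(\rg(\gamma)\). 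Inversion is \([\gamma_g,\gamma_h]\mapsto[\gamma_h,\gamma_g]\), well defined because the coordinate swap on \(\bigsqcup_{\s(g)=\s(h)}\Bisp_g\times_{\s,\s}\Bisp_h\) carries~\(\sim\) to itself (using \(\s(g)=\s(h)\)), and indeed \([\gamma_g,\gamma_h]\cdot[\gamma_h,\gamma_g]=[\gamma_g,\gamma_g]\) is the unit at \(\rg(\gamma_g)\).

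For the topology, \(\Gr[H]\) carries the quotient topology from \(\bigsqcup_{\s(g)=\s(h)}\Bisp_g\times_{\s,\s}\Bisp_h\), and \(\rg,\s\colon\Gr[H]\rightrightarrows\Gr[H]^0\) are local homeomorphisms because \(\rg_{\Gr[H]}=\rg_{\IS(F)\ltimes\Gr^0}\circ\Psi\) and \(\s_{\Gr[H]}=\s_{\IS(F)\ltimes\Gr^0}\circ\Psi\) are composites of the local homeomorphism~\(\Psi\) of \longref{Lemma}{lem:Ore_tight_simplify_groupoid5} with the range and source maps of the étale groupoid \(\IS(F)\ltimes\Gr^0\); the two displayed identities follow by unwinding the definition of~\(\psi\) and the action of the slices on~\(\Gr^0\). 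Continuity of inversion is clear from the coordinate swap, and continuity of multiplication follows by performing the common-middle construction locally on open sets of the form \(\tau\times_{\s,\s}\sigma\): the choices of \(\eta,\zeta\) and of the absorbing group element can be made continuously because the orbit-space projections (\longref{Lemma}{lem:basic_orbit_lh}) and the composition maps \(\Bisp_g\times_{\s,\rg}\Bisp_k\to\Bisp_{g k}\) are continuous and open, and \longref{Lemma}{lem:Ore_tight_simplify_groupoid3} identifies the quotient map with a homeomorphism on each such set. Finally, \(\Psi\) is a continuous functor: it is the identity on objects, it intertwines \(\rg\) and~\(\s\) by the identities just mentioned, and it is multiplicative by the computation in the second paragraph, which reduces to the literal common-middle case, where it is a short manipulation of the relations \(\Theta(\U\V)=\Theta(\U)\Theta(\V)\) and \(\Theta(\U_1)^*\Theta(\U_2)=\Theta(\braket{\U_1}{\U_2})\) after shrinking the auxiliary slices.
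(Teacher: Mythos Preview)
Your proposal is correct and follows essentially the same route as the paper: find common-middle representatives via the first Ore condition and tightness, read off the groupoid axioms from the formula \([\gamma_g,\gamma_h]\cdot[\gamma_h,\gamma_k]=[\gamma_g,\gamma_k]\), and deduce étaleness by factoring \(\rg,\s\) through the local homeomorphism~\(\Psi\) to the étale groupoid \(\IS(F)\ltimes\Gr^0\). Your treatment is in one respect more careful than the paper's: you explicitly flag well-definedness of the product (independence of the common-middle representatives) as something to be checked, whereas the paper passes over this point; your proposed Ore-style diagram chase does work, the key hidden step being that left multiplication \(\gamma_{h'}\cdot(-)\) on \(\Bisp_l\) is injective because the right \(\Gr\)\nb-action on~\(\Bisp_{h'}\) is free. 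For continuity of multiplication the paper takes a slightly slicker route than your ``make the choices of \(\eta,\zeta\) continuous'' argument: it simply observes that on each open set of the form \((\tau\times_{\s,\s}\sigma)\times_{\s,\rg}(\sigma\times_{\s,\s}\varrho)\) with a \emph{shared} middle slice~\(\sigma\), the multiplication is the visibly continuous map \((x,y,y,z)\mapsto(x,z)\), and that such sets cover the fibred product---this last fact being exactly the common-middle step you already carried out. Your approach is correct too, but the paper's formulation avoids having to talk about continuous sections.
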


\begin{proof}
  In the situation of~\eqref{eq:Ore_tight_multiplication}, we have
  \(\s[\gamma_g,\gamma_h] = \rg(\gamma_h) =
  \rg[\gamma_h,\gamma_k]\).  Since \(\s,\rg\) on~\(\Gr[H]\) are
  well defined, the product \([\gamma_g,\gamma_h]\cdot
  [\gamma_{h'},\gamma_k]\) can only be defined if
  \(\s[\gamma_g,\gamma_h] = \rg(\gamma_h)\) and
  \(\s[\gamma_{h'},\gamma_k] = \rg(\gamma_{h'})\) are equal.
  Conversely, let \(\rg(\gamma_h) = \rg(\gamma_{h'})\) and hence
  \(\rg(h) = \rg(h')\).  Then the first Ore condition gives
  \(l,l'\in \Cat\) with \(\s(h)=\rg(l)\), \(\s(h') = \rg(l')\) and
  \(h\cdot l = h'\cdot l'\).  Since \(\rg\colon \Bisp_l \to
  \Gr_{\s(l)}^0\) is surjective, there are \(\eta\in \Bisp_l\) and
  \(\eta'\in \Bisp_{l'}\) with \(\rg(\eta) = \s(\gamma_h)\),
  \(\rg(\eta') = \s(\gamma_{h'})\).  Then \(\gamma_h\cdot \eta\) and
  \(\gamma_{h'} \cdot \eta'\) are well defined elements of
  \(\Bisp_{h l} = \Bisp_{h' l'}\) with the same range.  Therefore,
  there is \(\eta''\in\Gr_{\s(h l)}\) with \(\gamma_h \cdot \eta
  \eta'' = \gamma_{h'}\cdot\eta'\); replacing \(\eta\)
  by~\(\eta\eta''\), we arrange that already \(\gamma_h \cdot \eta =
  \gamma_{h'}\cdot\eta'\).  Then
  \[
  (\gamma_g,\gamma_h) \sim
  (\gamma_g\cdot\eta,\gamma_h\cdot\eta),\qquad
  (\gamma_{h'},\gamma_k)\sim
  (\gamma_{h'}\cdot\eta',\gamma_k\cdot\eta'),
  \]
  and so \([\gamma_g,\gamma_h]\cdot [\gamma_{h'},\gamma_k]\) is
  defined and equal to \([\gamma_g\cdot\eta,\gamma_k\cdot\eta']\).

  The multiplication on~\(\Gr[H]\) is associative because
  \[
  [\gamma_g,\gamma_h]\cdot ([\gamma_h,\gamma_k] \cdot [\gamma_k,\gamma_l])
  = [\gamma_g,\gamma_l] =
  ([\gamma_g,\gamma_h]\cdot [\gamma_h,\gamma_k]) \cdot [\gamma_k,\gamma_l].
  \]
  Arrows of the form \([\gamma,\gamma]\) are unit arrows, and
  \([\gamma_g,\gamma_h]^{-1} = [\gamma_h,\gamma_g]\).
  Thus~\(\Gr[H]\) is a groupoid.  Since
  \(\psi(\gamma_g,\gamma_h)\cdot \psi(\gamma_h,\gamma_k) =
  \psi(\gamma_g,\gamma_k)\), the map~\(\Psi\) on arrows and the
  identity on objects form a functor \(\Gr[H]\to \IS(F)\ltimes
  \Gr^0\).

  The map \(\Psi\colon \Gr[H] \to \IS(F)\ltimes\Gr^0\) is a
  local homeomorphism by \longref{Lemma}{lem:Ore_tight_simplify_groupoid5},
  and so are \(\s,\rg\colon \IS(F)\ltimes\Gr^0\rightrightarrows
  \Gr^0\) by construction.  Since \(\rg\circ\Psi = \rg\) and
  \(\s\circ\Psi = \s\), the maps \(\rg,\s\colon \Gr[H] \to
  \Gr[H]^0 = \Gr^0\) are local homeomorphisms as well.  The
  multiplication map on~\(\Gr[H]\) is continuous when restricted
  to \((\tau\times_{\s,\s}\sigma)\times_{\s_{\Gr[H]},\rg_{\Gr[H]}}
  (\sigma\times_{\s,\s}\varrho)\).  So the multiplication is
  continuous everywhere because such subsets cover \(\Gr[H]
  \times_{\s,\Gr[H]^0,\rg} \Gr[H]\).  The inversion map is continuous
  for a similar reason.  Thus~\(\Gr[H]\) is an étale topological groupoid.
\end{proof}

\begin{theorem}
  \label{the:Ore_tight_groupoid_model}
  Let~\(\Cat\)
  be a category that satisfies the right Ore conditions and let
  \(F=(\Gr_x,\Bisp_g,\mu_{g,h})\)
  be a \(\Cat\)\nb-shaped
  diagram of groupoid correspondences.  Let
  \(F\Omega = (\Gr\Omega_x,\Bisp\Omega_g,\mu\Omega_{g,h})\)
  be the corresponding tight diagram as in
  \longref{Theorem}{the:Ore_tight_same_limit}.  The
  groupoid~\(\Gr[H]\)
  constructed from~\(F\Omega\) is a groupoid model for~\(F\).
\end{theorem}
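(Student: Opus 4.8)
The plan is to identify $\Gr[H]$ with the transformation groupoid $\IS(F\Omega)\ltimes\Omega$ by means of the functor $\Psi$ already constructed, and then to invoke the earlier theorems. Here $\Omega=\bigsqcup_{x\in\Cat^0}\Omega_x$ is the object space of $\bigsqcup_{x\in\Cat^0}\Gr\Omega_x$, it carries the universal $F\Omega$-action because $F\Omega$ is tight, and $\IS(F\Omega)$, the functor $\Psi$, and the relation~$\sim$ are obtained by applying the constructions of \longref{Section}{sec:tight_Ore_model} to the tight diagram $F\Omega$. By \longref{Theorem}{the:universal_action_tight}, $\IS(F\Omega)\ltimes\Omega$ is a groupoid model for $F\Omega$, hence, by \longref{Theorem}{the:Ore_tight_same_limit}, also for $F$. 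Lemmas~\ref{lem:Ore_tight_simplify_groupoid3}, \ref{lem:Ore_tight_simplify_groupoid5} and~\ref{lem:Ore_tight_simplify_groupoid6}, applied to $F\Omega$, say that $\Psi\colon\Gr[H]\to\IS(F\Omega)\ltimes\Omega$ is a continuous functor, a local homeomorphism, surjective, and the identity on objects; a continuous bijective local homeomorphism is a homeomorphism, so once $\Psi$ is known to be injective it is an isomorphism of étale topological groupoids, and then $\Gr[H]$ is a groupoid model for $F$ by the uniqueness of groupoid models (\longref{Proposition}{pro:groupoid_model}). So the proof reduces to the injectivity of $\Psi$, i.e.\ to showing that $\psi(\gamma_g,\gamma_h)=\psi(\gamma_{g'},\gamma_{h'})$ implies $(\gamma_g,\gamma_h)\sim(\gamma_{g'},\gamma_{h'})$.

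For the injectivity I would proceed as follows. Fix slices $\tau\ni\gamma_g$, $\sigma\ni\gamma_h$, $\tau'\ni\gamma_{g'}$, $\sigma'\ni\gamma_{h'}$ of the respective $\Bisp\Omega$'s. Unwinding the germ relation in the transformation groupoid, the hypothesis says that $z\defeq\rg(\gamma_h)=\rg(\gamma_{h'})$ and that there is an idempotent $e\in\IS(F\Omega)$, defined at~$z$, with $\Theta(\tau)\Theta(\sigma)^*\,e=\Theta(\tau')\Theta(\sigma')^*\,e$; comparing ranges also gives $\rg(\gamma_g)=\rg(\gamma_{g'})$. The first step is to put $e$ into a normal form: using the right Ore conditions and the reduction in the proof of \longref{Lemma}{lem:Ore_tight_simplify_groupoid}, one shows that, after multiplying by a small idempotent $\Theta(U)$ with $z\in U$ (which changes neither side as an arrow of $\IS(F\Omega)\ltimes\Omega$), $e$ may be taken of the form $\Theta(\mu)\Theta(\mu)^*$ for a single slice $\mu\in\Bis(\Bisp\Omega_m)$ with $\rg(m)=\rg(h)$ and $z$ in the codomain of the partial homeomorphism of~$\mu$. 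With $e$ in this form I would repeatedly append slices on the right: the first Ore condition applied to $(h,m)$, $(h',m)$ and then to the arrows these produce, together with surjectivity of the left anchor maps and tightness of $F\Omega$ (so that two elements with a common range differ by a unique groupoid element), yields arrows $k,k'\in\Cat$ and $\eta\in\Bisp\Omega_k$, $\eta'\in\Bisp\Omega_{k'}$ with $\gamma_h\cdot\eta=\gamma_{h'}\cdot\eta'$ in $\Bisp\Omega_{hk}=\Bisp\Omega_{h'k'}$ and a slice of $\Bisp\Omega_{hk}$ through $\gamma_h\eta$ that $e$ absorbs on the right. Multiplying the displayed identity by that slice and then simplifying both sides by the defining relations of $\IS(F\Omega)$ and the slice arithmetic of \longref{Section}{sec:slices} --- after shrinking all slices to suitable neighbourhoods of the points in play --- turns it into an identity $\Theta(\tau\tilde\eta)=\Theta(\tau'\tilde\eta')$ between single slices $\tau\tilde\eta\in\Bis(\Bisp\Omega_{gk})$ and $\tau'\tilde\eta'\in\Bis(\Bisp\Omega_{g'k'})$ through $\gamma_g\eta$ and $\gamma_{g'}\eta'$. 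Finally I would argue that such an equality of single generators forces $gk=g'k'$ and $\gamma_g\cdot\eta=\gamma_{g'}\cdot\eta'$, whence $(\gamma_g\eta,\gamma_h\eta)=(\gamma_{g'}\eta',\gamma_{h'}\eta')$ and so $(\gamma_g,\gamma_h)\sim(\gamma_{g'},\gamma_{h'})$.

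The hard part will be the last two steps, where one needs to control $\IS(F\Omega)$ quite precisely: the idempotent normal form, and --- most delicately --- the claim that an equality $\Theta(\rho_1)=\Theta(\rho_2)$ of generators reflects an honest coincidence of slice data and not merely the congruence. The latter must be extracted from the presentation of $\IS(F\Omega)$ as a quotient of a free inverse semigroup, using that the only identification imposed among the generators $\Theta(\rho)$ themselves is that all empty slices are identified and that the two defining families of relations rewrite products of generators, not single generators. An alternative that sidesteps the sharpest form of this is to verify the universal property of $\Gr[H]$ directly: an action of $\Gr[H]$ on a space~$Y$ produces, via the images under $\Psi$ of slices of $\Gr[H]$ (which cover $\IS(F\Omega)\ltimes\Omega$ since $\Psi$ is a surjective local homeomorphism), a well defined $\IS(F\Omega)$-action on~$Y$ together with an $\IS(F\Omega)$-equivariant map $Y\to\Omega$, hence an action of $\IS(F\Omega)\ltimes\Omega$ by \longref{Proposition}{pro:isg_trafo_universal}; this is inverse to restriction along $\Psi$ and natural, so \longref{Proposition}{pro:groupoid_model} again identifies $\Gr[H]$ with $\IS(F\Omega)\ltimes\Omega$.
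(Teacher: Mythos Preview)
Your reduction to the tight case and the identification of the remaining task --- injectivity of~$\Psi$ --- are exactly right and match the paper. The gap is in how you propose to prove injectivity.

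Your main approach attempts to analyse equality \emph{inside} $\IS(F\Omega)$: you want to argue that an equality $\Theta(\rho_1)=\Theta(\rho_2)$ of generators forces the underlying slice data to agree. You correctly flag this as the delicate point, but the justification you offer (``the two defining families of relations rewrite products of generators, not single generators'') does not suffice. The inverse semigroup $\IS(F\Omega)$ is a quotient of a free inverse semigroup by the \emph{congruence} generated by those relations, and a congruence can identify generators that no single relation touches. The paper says explicitly that ``we only understand~$\IS(F)$ through its universal property'', and there is no independent normal-form result available that would let you read off when two words in the generators represent the same element. Your idempotent-normal-form step has the same difficulty.

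The paper avoids this entirely by using the universal property of~$\IS(F\Omega)$ to build a map \emph{out} of it: for each slice $\alpha\in\Bis(\Bisp\Omega_g)$ one writes down an explicit slice $t_\alpha\subseteq\Gr[H]$ (the arrows $[\gamma_g,1_{\s(\gamma_g)}]$ with $\gamma_g\in\alpha$), checks the two defining relations $t_\alpha t_\beta=t_{\alpha\beta}$ and $t_\alpha^* t_\beta=t_{\braket{\alpha}{\beta}}$ by a short direct computation in~$\Gr[H]$, and thereby obtains a homomorphism $t\colon\IS(F\Omega)\to\Bis(\Gr[H])$. The induced functor $t_*\colon\IS(F\Omega)\ltimes\Omega\to\Gr[H]$ then satisfies $t_*\circ\Psi=\id_{\Gr[H]}$, so~$\Psi$ is injective. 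This is the missing idea: rather than proving that~$\IS(F\Omega)$ has no unexpected identifications, one constructs a concrete target that witnesses the distinctions one needs. Your ``alternative'' at the end gestures toward a construction of this kind but does not name the map $\alpha\mapsto t_\alpha$ or verify the relations, which is where the actual work lies.
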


\begin{proof}
  We may assume that~\(F\) is already tight and \(F=F\Omega\):
  \longref{Theorem}{the:Ore_tight_same_limit} reduces the general case to
  this special case.  Then \(\IS(F)\ltimes \Gr^0\) is a groupoid model
  by \longref{Theorem}{the:Ore_universal_action}.  We have
  already constructed a functor \(\Psi\colon \Gr[H]\to \IS(F)\ltimes
  \Gr^0\), which is the identity on objects and a local
  homeomorphism on arrows by
  \longref{Lemma}{lem:Ore_tight_simplify_groupoid5}.  It remains to prove
  that~\(\Psi\) is injective.  Since we only understand~\(\IS(F)\)
  through its universal property, we prove this by constructing a
  homomorphism from~\(\IS(F)\) to the inverse semigroup of slices
  of~\(\Gr[H]\) and checking that the resulting functor
  \(\IS(F)\ltimes \Gr^0 \to \Gr[H]\) is a one-sided inverse
  to~\(\Psi\).

  Let \(g\in\Cat\) and \(\alpha\in\Bis(\Bisp_g)\).  If
  \(\gamma_g\in\alpha\), then \((\gamma_g,1_{\s(\gamma_g)}) \in \Bisp_g
  \times_{\s,\s} \Bisp_{\s(g)}\) because
  \(\Bisp_{\s(g)}=\Gr_{\s(g)}\).  These arrows form a slice
  in~\(\Gr[H]\), which we denote by~\(t_\alpha\).

  We claim that the map \(\Bis(F)\to \Bis(\Gr[H])\), \(\alpha\mapsto
  t_\alpha\), extends to a unital homomorphism \(\IS(F)\to
  \Bis(\Gr[H])\).  By the defining universal property of~\(\IS(F)\),
  it suffices to check \(t_\alpha t_\beta = t_{\alpha\beta}\) for
  all \(\alpha,\beta\in\Bis(F)\) and \(t_\alpha^* t_\beta =
  t_{\braket{\alpha}{\beta}}\) if \(\alpha,\beta\in\Bis(\Bisp_g)\)
  for the same \(g\in\Cat\).  For the first relation, let
  \(\alpha\in\Bis(\Bisp_g)\), \(\beta\in\Bis(\Bisp_h)\).  If
  \(\s(g)\neq\rg(h)\), then both \(t_\alpha t_\beta\)
  and~\(t_{\alpha\beta}\) are empty.  So assume \(\s(g)=\rg(h)\).
  If \(\gamma_g\in\alpha\), \(\gamma_h\in\beta\), then
  \([\gamma_g,1_{\s(\gamma_g)}]\cdot [\gamma_h,1_{\s(\gamma_h)}]\)
  is defined if and only if \(\rg(1_{\s(\gamma_g)}) = \s(\gamma_g)\)
  and \(\rg(\gamma_h)\) are equal, and then
  \[
  [\gamma_g,1_{\s(\gamma_g)}]\cdot [\gamma_h,1_{\s(\gamma_h)}]
  =
  [\gamma_g\cdot \gamma_h,1_{\s(\gamma_g)}\cdot \gamma_h]\cdot
  [\gamma_h,1_{\s(\gamma_h)}]
  =  [\gamma_g\cdot \gamma_h,1_{\s(\gamma_g \gamma_h)}].
  \]
  This implies \(t_\alpha t_\beta = t_{\alpha\beta}\).  Now let
  \(g\in\Cat\) and \(\alpha,\beta\in \Bis(\Bisp_g)\).  Then elements
  of \(t_\alpha^*\) are of the form \([1_{\s(\gamma_g)},\gamma_g]\)
  with \(\gamma_g\in\alpha\).  The product of
  \([1_{\s(\gamma_g)},\gamma_g]\) and
  \([\gamma_h,1_{\s(\gamma_h)}]\) for \(\gamma_g\in\alpha\),
  \(\gamma_h\in\beta\) is defined if and only if \(\rg(\gamma_g) =
  \rg(\gamma_h)\); then there is a unique \(\eta\in\Gr_{\s(g)}\)
  with \(\gamma_g \eta = \gamma_h\), namely, \(\eta=
  \braket{\gamma_g}{\gamma_h}\).  Then
  \begin{multline*}
    [1_{\s(\gamma_g)},\gamma_g]\cdot [\gamma_h,1_{\s(\gamma_h)}]
    = [1_{\s(\gamma_g)}\cdot \braket{\gamma_g}{\gamma_h},\gamma_g\cdot
    \braket{\gamma_g}{\gamma_h}]\cdot
    [\gamma_h,1_{\s(\gamma_h)}]
    \\= [\braket{\gamma_g}{\gamma_h},1_{\s(\gamma_h)}].
  \end{multline*}
  This implies \(t_\alpha^* t_\beta = t_{\braket{\alpha}{\beta}}\).

  The argument above gives a unique homomorphism \(t\colon
  \IS(F)\to\Bis(\Gr[H])\) mapping \(\alpha\in\Bis(F)\subseteq \IS(F)\)
  to~\(t_\alpha\).  We may write \(\Gr[H] \cong \Bis(\Gr[H])\ltimes
  \Gr^0\).  Since~\(t\) preserves the actions on~\(\Gr^0\), it
  induces a continuous functor \(t_*\colon \IS(F)\ltimes\Gr^0 \to
  \Bis(\Gr[H]) \ltimes \Gr^0 = \Gr[H]\).  If \(\alpha\in\Bis(F)\)
  and \(x\in\Dom \vartheta(\alpha) \subseteq \Gr^0\), then the arrow
  \([\alpha,x] \in \IS(F)\ltimes\Gr^0\) is mapped by~\(t_*\) to
  \([\gamma_g,1_{\s(\gamma_g)}]\) for the unique \(\gamma_g\in
  \alpha\) with \(\s(\gamma_g) = x\).  Hence
  \[
  t_*(\psi(\gamma_g,\gamma_h))
  = [\gamma_g,1_{\s(\gamma_g)}]\cdot [1_{\s(\gamma_h)},\gamma_h]
  = [\gamma_g,\gamma_h]
  \]
  for all \((\gamma_g,\gamma_h)\in \Bisp_g\times_{\s,\s}\Bisp_h\).
  Thus \(t_*\circ \Psi\) is the identity functor on~\(\Gr[H]\).
\end{proof}

\subsection{Grading of the groupoid model}
\label{sec:grade_groupoid_model}

We still assume the diagram \((\Gr_x,\Bisp_g,\mu_{g,h})\) to be
tight.  The groupoid~\(\Gr[H]\) constructed above has some extra
structure that is analogous to the structure of the
Cuntz--Pimsner algebra of a product system over an Ore monoid found
in \cite{Albandik-Meyer:Product}*{Theorem~3.16}.  Namely, it is
graded by the groupoid completion~\(\GroupoidCat\) of~\(\Cat\).  We
are going to describe this groupoid completion and then build a
canonical functor from~\(\Gr[H]\) to~\(\GroupoidCat\).  This is how
a groupoid is graded by a group.

First we simplify~\(\GroupoidCat\) using the Ore conditions,
generalising a well known construction for Ore monoids to categories
with right Ore conditions.  The groupoid completion has the same
objects: \(\GroupoidCat^0 = \Cat^0\).  An arrow \(x\to y\)
in~\(\GroupoidCat\) is an equivalence class of zigzags
\[
g h^{-1}\colon x\xleftarrow{h} z \xrightarrow{g} y,
\]
where \(g,h\in\Cat\) satisfy \(\s(h)=\s(g)\), \(\rg(g)=y\), and
\(\rg(h)=x\); two zigzags \(g_1 h_1^{-1}\) and~\(g_2 h_2^{-1}\) as
above are equivalent if there are \(k_1,k_2\in\Cat\) with \(\rg(k_i)
= \s(g_i) = \s(h_i)\) for \(i=1,2\), and \(g_1 k_1 = g_2 k_2\),
\(h_1 k_1 = h_2 k_2\).  This relation is motivated by the formal
computation
\[
g_1 h_1^{-1}
= g_1 (k_1 k_1^{-1}) h_1^{-1}
= g_1 k_1 (h_1 k_1)^{-1}
= g_2 k_2 (h_2 k_2)^{-1}
= g_2 (k_2 k_2^{-1}) h_2^{-1}
= g_2 h_2^{-1}.
\]
We want to compose two zigzags \(g_1 h_1^{-1}\colon y\to x\) and
\(g_2 h_2^{-1}\colon z\to y\).  The Ore conditions allow us to
choose \(k_1,k_2\in \Cat\) with \(\rg(k_1) = \s(h_1)\), \(\rg(k_2) =
\s(g_2)\), and \(h_1 k_1 = g_2 k_2\).  Then we let \(g_1 h_1^{-1}
\cdot g_2 h_2^{-1} \defeq g_1 k_1 (h_2 k_2)^{-1}\), as suggested by
the formal computation
\[
g_1 h_1^{-1} \cdot g_2 h_2^{-1}
= (g_1 k_1) (h_1 k_1)^{-1} \cdot (g_2 k_2) (h_2 k_2)^{-1}
= (g_1 k_1) (h_2 k_2)^{-1}.
\]

\begin{lemma}
  \label{lem:Ore_groupoid_completion}
  The relation on zigzags above is an equivalence relation, and the
  multiplication is well defined.  It defines a
  groupoid~\(\GroupoidCat\).  This is the groupoid completion
  of~\(\Cat\), that is, any functor from~\(\Cat\) to a groupoid
  factors uniquely through~\(\GroupoidCat\).
\end{lemma}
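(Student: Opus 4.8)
The statement has three parts: (1)~the zigzag relation is an equivalence relation and the multiplication is well defined; (2)~the resulting data is a groupoid; (3)~it is the groupoid completion. The main work is in~(1), which is the same kind of bookkeeping with the two Ore conditions as in \longref{Lemma}{lem:Ore_conditions} and \longref{Lemma}{lem:Ore_gives_cofinal}; I expect this to be the main obstacle, though it is routine rather than conceptually hard. Parts~(2) and~(3) are then quick.

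First I would verify that the relation $g_1 h_1^{-1}\sim g_2 h_2^{-1}$ is an equivalence relation. Reflexivity and symmetry are immediate. For transitivity, suppose $g_1 h_1^{-1}\sim g_2 h_2^{-1}$ via $k_1,k_2$ and $g_2 h_2^{-1}\sim g_3 h_3^{-1}$ via $l_2,l_3$, so $g_1k_1=g_2k_2$, $h_1k_1=h_2k_2$, $g_2l_2=g_3l_3$, $h_2l_2=h_3l_3$. Since $\rg(k_2)=\rg(l_2)=\s(g_2)$, the first Ore condition gives $m,m'\in\Cat$ with $k_2m=l_2m'$. Then $g_2k_2m=g_2l_2m'$, i.e.\ $g_1k_1m=g_3l_3m'$, and similarly $h_1k_1m=h_3l_3m'$; setting $p_1\defeq k_1m$ and $p_3\defeq l_3m'$ (which have $\rg(p_1)=\s(g_1)=\s(h_1)$, $\rg(p_3)=\s(g_3)=\s(h_3)$) gives $g_1p_1=g_3p_3$, $h_1p_1=h_3p_3$, so $g_1h_1^{-1}\sim g_3h_3^{-1}$. (Note: strictly this shows transitivity directly; one does not even need the second Ore condition here.)

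Next, well-definedness of the multiplication. One must check that the choice of $k_1,k_2$ with $h_1k_1=g_2k_2$ does not matter, and that the product respects the equivalence relation in each variable. For the choice of $k_1,k_2$: given two such pairs $(k_1,k_2)$ and $(k_1',k_2')$, the first Ore condition applied to $k_1,k_1'$ (both with range $\s(h_1)$) produces $n,n'$ with $k_1n=k_1'n'$; then $g_2k_2n=h_1k_1n=h_1k_1'n'=g_2k_2'n'$, so by the \emph{second} Ore condition (applied to $g_2$) there is $q$ with $k_2nq=k_2'n'q$; this $q$ witnesses $g_1k_1(h_2k_2)^{-1}\sim g_1k_1'(h_2k_2')^{-1}$ after multiplying through by $nq$ resp.\ $n'q$. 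This is the one place the second Ore condition is genuinely used. Invariance under replacing $g_1h_1^{-1}$ or $g_2h_2^{-1}$ by equivalent zigzags is a similar but shorter diagram chase. I would present these as ``a routine computation with the Ore conditions'' after working out the key step above, since the paper's style elsewhere (e.g.\ the proofs of \longref{Lemma}{lem:Ore_gives_cofinal}) is to do the hard case explicitly and wave at the rest.

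Finally, the groupoid axioms: units are $1_x 1_x^{-1}$ for $x\in\Cat^0$, the inverse of $gh^{-1}$ is $hg^{-1}$, and associativity is checked by choosing compatible arrows via the Ore conditions to bring three composable zigzags to a common ``denominator'' — again the same technique. Composing associativity with the other axioms shows $\GroupoidCat$ is a groupoid. For the universal property, there is an evident functor $\iota\colon\Cat\to\GroupoidCat$ sending $g$ to $g\,1_{\s(g)}^{-1}$, and $\iota(g)$ is invertible with inverse $1_{\s(g)}g^{-1}$. Given any functor $\Phi\colon\Cat\to\Gr[G]$ to a groupoid, $\Phi$ sends every arrow to an invertible one, so $gh^{-1}\mapsto \Phi(g)\Phi(h)^{-1}$ is the only possible extension; one checks it is well defined on $\sim$-classes (if $g_1k_1=g_2k_2$, $h_1k_1=h_2k_2$ then $\Phi(g_1)\Phi(k_1)=\Phi(g_2)\Phi(k_2)$ etc., and the $\Phi(k_i)$ cancel) and multiplicative (directly from the definition of the product of zigzags), giving the unique factorisation $\Phi = \bar\Phi\circ\iota$. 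Since $\iota$ is surjective on objects and every arrow of $\GroupoidCat$ is $\iota(g)\iota(h)^{-1}$, $\bar\Phi$ is unique.
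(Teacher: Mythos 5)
Your proof is correct and follows essentially the same route as the paper's: reduce zigzags to a common denominator via the Ore conditions for transitivity and for well-definedness of the product, check the groupoid axioms directly, and establish the universal property by the forced formula \(g h^{-1}\mapsto F(g)F(h)^{-1}\). Your transitivity step is in fact marginally cleaner than the paper's, since you apply the first Ore condition directly to the middle witnesses \(k_2,l_2\) and thereby avoid the extra cancellation step (the second Ore condition) that the paper uses there; otherwise the two arguments coincide, including in which routine verifications are left to the reader.
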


\begin{proof}
  The relation on zigzags above is clearly reflexive and symmetric.
  To prove that it is transitive, let
  \(g_1 h_1^{-1} \sim g_2 h_2^{-1}\) and
  \(g_2 h_2^{-1} \sim g_3 h_3^{-1}\).  Then there are
  \(k_1,k_2,l_1,l_2\in\Cat\) with \(g_1 k_1 = g_2 k_2\),
  \(h_1 k_1 = h_2 k_2\), \(g_2 l_1 = g_3 l_2\) and
  \(h_2 l_1 = h_3 l_2\).  The Ore conditions imply that there are
  \(m_1,m_2\in\Cat\) with \(h_2 k_2 m_1 = h_2 l_1 m_2\).  Then there
  is \(m_3\in\Cat\) with \(k_2 m_1 m_3 = l_1 m_2 m_3\).  Then
  \(g_1 (k_1 m_1 m_3) = g_2 k_2 m_1 m_3 = g_2 l_1 m_2 m_3 = g_3 (l_2
  m_2 m_3)\) and
  \(h_1 (k_1 m_1 m_3) = h_2 k_2 m_1 m_3 = h_2 l_1 m_2 m_3 = h_3 (l_2
  m_2 m_3)\).  Then \(g_1 h_1^{-1} \sim g_3 h_3^{-1}\).

  Next we check that \(g_1 h_1^{-1} \cdot g_2 h_2^{-1}\) does not
  depend on the choice of~\(k_1,k_2\) above.  Let
  \(k_1,k_2,k_1',k_2'\in\Cat\) satisfy \(h_1 k_1 = g_2 k_2\) and
  \(h_1 k_1' = g_2 k_2'\).  The Ore conditions give
  \(l_1,l_2\in\Cat\) with \(h_1 k_1 l_1 = h_1 k_1' l_2\); they
  also give \(l_3\in\Cat\) with \(k_1 l_1 l_3 = k_1' l_2 l_3\).
  Then \(g_2 k_2 l_1 l_3 = h_1 k_1 l_1 l_3 = h_1 k_1' l_2 l_3 = g_2
  k_2' l_2 l_3\), so the Ore conditions give \(l_4\in\Cat\) with
  \(k_2 l_1 l_3 l_4 = k_2' l_2 l_3 l_4\).  Hence
  \begin{multline*}
    (g_1 k_1) (h_2 k_2)^{-1}
    \sim (g_1 k_1 l_1 l_3 l_4) (h_2 k_2 l_1 l_3 l_4)^{-1}
    \\= (g_1 k_1' l_2 l_3 l_4) (h_2 k_2' l_2 l_3 l_4)^{-1}
    = (g_1 k_1') (h_2 k_2')^{-1}.
  \end{multline*}
  Thus the multiplication in~\(\GroupoidCat\)
  is well defined and associative.  The arrow~\(1_x 1_x^{-1}\)
  is the unit arrow on~\(x\),
  and~\(h g^{-1}\)
  is inverse to~\(g h^{-1}\).  Hence~\(\GroupoidCat\) is a groupoid.

  The map \(g\mapsto g 1_x^{-1}\) is a functor \(\natural\colon
  \Cat\to\GroupoidCat\).  Let~\(\Gr\) be a groupoid and \(F\colon
  \Cat\to\Gr\) a functor.  Define \(\hat{F}\colon \GroupoidCat\to\Gr\)
  by \(\hat{F}(g h^{-1}) \defeq F(g) F(h)^{-1}\).  This is a well
  defined functor with \(\hat{F}(g 1_x^{-1}) = F(g)\) for all
  \(g\in\Cat\), and it is the unique functor on~\(\GroupoidCat\) with
  \(\hat{F}\circ{\natural} = F\).
\end{proof}

Implicitly, the proof above shows that certain categories
constructed from~\(\Cat\) are filtered.  For an arrow \(\hat{g}
\in\GroupoidCat\), let
\[
R_{\hat{g}} \defeq
\setgiven{(g,h)\in \Cat\times \Cat}{\s(g) = \s(h),\ g h^{-1} = \hat{g} \text{ in }\GroupoidCat}
\]
be the set of zigzags that represent it.  For
\((g_1,h_1),(g_2,h_2)\in R_g\), let
\[
\Cat[D]_{\hat{g}}\bigl((g_1,h_1),(g_2,h_2)\bigr)
\defeq \setgiven{k\in \Cat}{\rg(k)= \s(g_1) = \s(h_1),\ g_1 k = g_2,\ h_1 k = h_2}.
\]
We multiply such arrows through the multiplication of~\(k\)
in~\(\Cat\).  This defines a category~\(\Cat[D]_{\hat{g}}\).  This
category is filtered.  This is implicit in the proof of
\longref{Lemma}{lem:Ore_groupoid_completion} and shown if~\(\Cat\) is a
monoid in \cite{Albandik-Meyer:Product}*{Lemma~3.14}.

\begin{proposition}
  \label{pro:Ore_tight_partial_groupoid_fibration}
  The map \(\Gr^0\to\GroupoidCat^0=\Cat^0\) that maps
  \(\Gr_x^0\subseteq \Gr^0\) to~\(x\) and the map
  \(\Gr[H]\to\GroupoidCat\), \([\gamma_g,\gamma_h] \to g h^{-1}\), for
  \(g,h\in\Cat\) with \(\s(g)=\s(h)\) and \((\gamma_g,\gamma_h)\in
  \Bisp_g\times_{\s,\s}\Bisp_h\) combine to a well defined,
  continuous functor \(\Gr[H]\to\GroupoidCat\).
\end{proposition}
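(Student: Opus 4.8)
The plan is to verify, in order, that the arrow assignment $[\gamma_g,\gamma_h]\mapsto g h^{-1}$ is well defined on $\sim$\nb-equivalence classes, that it is continuous when $\GroupoidCat$ carries the discrete topology, and that together with the object map $\Gr_x^0\mapsto x$ it intertwines range, source, units, and multiplication. The object map $\Gr^0=\bigsqcup_{x\in\Cat^0}\Gr_x^0\to\Cat^0=\GroupoidCat^0$ is continuous because each~$\Gr_x^0$ is clopen in~$\Gr^0$ and~$\GroupoidCat^0$ is discrete. Compatibility with range and source is immediate from the description of~$\GroupoidCat$ in \longref{Lemma}{lem:Ore_groupoid_completion}: $\rg[\gamma_g,\gamma_h]=\rg(\gamma_g)$ lies in~$\Gr_{\rg(g)}^0$, which maps to the object $\rg(g)=\rg(g h^{-1})$, and symmetrically $\s[\gamma_g,\gamma_h]=\rg(\gamma_h)$ maps to $\rg(h)=\s(g h^{-1})$.

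For well definedness I would invoke \longref{Lemma}{lem:Ore_tight_simplify_groupoid4}, so that~$\sim$ is itself an equivalence relation and a single elementary move suffices. Suppose $(\gamma_g,\gamma_h)\sim(\gamma_{g'},\gamma_{h'})$ via $\eta\in\Bisp_k$ and $\eta'\in\Bisp_{k'}$ with $\gamma_g\cdot\eta=\gamma_{g'}\cdot\eta'$ and $\gamma_h\cdot\eta=\gamma_{h'}\cdot\eta'$. Here $\gamma_g\cdot\eta\in\Bisp_{g k}$ and $\gamma_{g'}\cdot\eta'\in\Bisp_{g' k'}$, and these are compared inside the genuine disjoint union $\bigsqcup_{a,b}\Bisp_a\times_{\s,\s}\Bisp_b$; hence the two equalities force $g k=g' k'$ and $h k=h' k'$ in~$\Cat$. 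With $\s(g)=\s(h)$ and $\s(g')=\s(h')$, the arrow~$k$ witnesses the equivalence of the zigzags $g h^{-1}$ and $g k\,(h k)^{-1}$, and~$k'$ that of $g' h'^{-1}$ and $g' k'\,(h' k')^{-1}$; since $g k=g' k'$ and $h k=h' k'$, we get $g h^{-1}=g' h'^{-1}$ in~$\GroupoidCat$. The same computation shows that, for each arrow~$\hat g$ of~$\GroupoidCat$, the disjoint union of the pieces $\Bisp_g\times_{\s,\s}\Bisp_h$ over all $(g,h)$ with $g h^{-1}=\hat g$ is $\sim$\nb-saturated, and it is clopen in the ambient disjoint union. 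Since the quotient map onto~$\Gr[H]$ is an open local homeomorphism by \longref{Lemma}{lem:Ore_tight_simplify_groupoid5} and both this saturated set and its complement are clopen, the preimage in~$\Gr[H]$ of each point of~$\GroupoidCat$ is clopen, so the arrow map is continuous.

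It remains to check multiplicativity. A unit arrow $[\gamma_g,\gamma_g]$ maps to $g g^{-1}$, which is the unit $1_{\rg(g)}$ of~$\GroupoidCat$, as the connecting arrow $1_{\s(g)}$ witnesses the equivalence $g g^{-1}\sim 1_{\rg(g)} 1_{\rg(g)}^{-1}$. For a composable pair in~$\Gr[H]$, the adjustment of representatives in the proof of \longref{Lemma}{lem:Ore_tight_simplify_groupoid6} presents it as $[\gamma_g,\gamma_h]$, $[\gamma_{h'},\gamma_k]$ with $\rg(\gamma_h)=\rg(\gamma_{h'})$, together with $l,l'\in\Cat$ satisfying $h l=h' l'$ and $\eta\in\Bisp_l$, $\eta'\in\Bisp_{l'}$ with $\gamma_h\cdot\eta=\gamma_{h'}\cdot\eta'$, such that the product equals $[\gamma_g\cdot\eta,\gamma_k\cdot\eta']$; this maps to $(g l)(k l')^{-1}$. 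On the other hand, the zigzag composition rule recalled before \longref{Lemma}{lem:Ore_groupoid_completion}, with connecting arrows $l,l'$, gives $g h^{-1}\cdot h' k^{-1}=(g l)(k l')^{-1}$. (When $h=h'$ and $\gamma_h=\gamma_{h'}$, this specialises to the statement that \eqref{eq:Ore_tight_multiplication}, $[\gamma_g,\gamma_h]\cdot[\gamma_h,\gamma_k]=[\gamma_g,\gamma_k]$, maps to $g k^{-1}=g h^{-1}\cdot h k^{-1}$.) Hence the arrow map is multiplicative, and combined with the object map it is the asserted continuous functor $\Gr[H]\to\GroupoidCat$. The one point requiring genuine attention is the observation, used twice above, that a single $\sim$\nb-move already forces the equalities $g k=g' k'$ and $h k=h' k'$ of arrows of~$\Cat$: this is simultaneously what makes the assignment well defined and what makes the relevant index sets $\sim$\nb-saturated, so that the openness in \longref{Lemma}{lem:Ore_tight_simplify_groupoid5} upgrades to continuity into the discrete groupoid~$\GroupoidCat$.
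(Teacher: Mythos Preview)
Your proof is correct and follows essentially the same outline as the paper's: check well-definedness on~$\sim$, continuity, and functoriality. The paper is terser, writing ``it is a functor by direct checking'' where you actually carry out the check, and for well-definedness it uses the generating one-sided move $(\gamma_g,\gamma_h)\sim(\gamma_g\eta,\gamma_h\eta)$ rather than the symmetric form from \longref{Lemma}{lem:Ore_tight_simplify_groupoid4}; both are fine.

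The one place where you work harder than necessary is continuity. The paper simply observes that the map is constant on each piece $\Bisp_g\times_{\s,\s}\Bisp_h$, hence continuous on the disjoint union into the discrete groupoid~$\GroupoidCat$, and therefore continuous on the quotient~$\Gr[H]$ by the universal property of the quotient topology. Your detour through $\sim$\nb-saturated clopen sets and the local homeomorphism from \longref{Lemma}{lem:Ore_tight_simplify_groupoid5} reaches the same conclusion but is not needed.
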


\begin{proof}
  The equivalence relation defining~\(\Gr[H]\) is generated by
  \((\gamma_g,\gamma_h)\sim (\gamma_g \eta,\gamma_h \eta)\) for
  \(k\in\Cat\), \(\eta\in\Bisp_k\) with \(\rg(k)=\s(g)=\s(h)\),
  \(\rg(\eta)=\s(\gamma_g) = \s(\gamma_h)\).  Since \(g k (h k)^{-1}
  \sim g h^{-1}\) in~\(\GroupoidCat\), the map
  \(\Gr[H]\to\GroupoidCat\) is well defined.  It is a functor by
  direct checking, and it is continuous because it is constant on
  \(\Bisp_g\times_{\s,\s}\Bisp_h\) for all \(g,h\in\Cat\) with
  \(\s(g)=\s(h)\).
\end{proof}

For \(\hat{g}\in\GroupoidCat\),
let \(\Gr[H]_{\hat{g}}\)
be the preimage of~\(\hat{g}\);
this is the set of all \([\gamma_g,\gamma_h]\)
with \((\gamma_g,\gamma_h) \in \Bisp_g\times_{\s,\s}\Bisp_h\)
and \(g h^{-1} = \hat{g}\)
in~\(\GroupoidCat\).
These subspaces are clopen and satisfy
\(\Gr[H]_{\hat{g}} \cdot \Gr[H]_{\hat{h}} \subseteq
\Gr[H]_{\hat{g}\cdot \hat{h}}\)
and \(\Gr[H]_{\hat{g}}^{-1} = \Gr[H]_{\hat{g}^{-1}}\)
for all \(\hat{g},\hat{h}\in\GroupoidCat\).
In particular, \(\Gr[H]_{1_x}\)
for the unit arrow \(1_x\in\GroupoidCat\)
is an open subgroupoid with unit space~\(\Gr_x^0\).

The subspace~\(\Gr[H]_{\hat{g}}\) of~\(\Gr[H]\) is the union of the
images of \(\Bisp_g \times_{\s,\s}\Bisp_h\) for all \((g,h)\in
R_{\hat{g}}\).  The map from the image of \(\Bisp_g
\times_{\s,\s}\Bisp_h\) to~\(\Gr[H]\) factors through the orbit
space \(\Bisp_g \Grcomp \Bisp_h^* \defeq (\Bisp_g
\times_{\s,\s}\Bisp_h)/\Gr_{\s(g)}\) for the diagonal action of
\(\Gr_{\s(g)} = \Gr_{\s(h)}\).

Let \((g,h)\in R_{\hat{g}}\) and \(k\in\Cat\) satisfy \(\s(g)=\s(h)
= \rg(k)\).  Then for any \([\gamma_g,\gamma_h] \in \Bisp_g \Grcomp
\Bisp_h^*\), there is \(\eta\in\Bisp_k\) with \(\s(\gamma_g) =
\s(\gamma_h) = \rg(\eta)\) and hence \([\gamma_g,\gamma_h] =
[\gamma_g \eta,\gamma_h \eta]\) in~\(\Gr[H]\).  Thus the image of
\(\Bisp_{g k} \times_{\s,\s}\Bisp_{h k}\) in~\(\Gr[H]\) contains
the image of \(\Bisp_g \times_{\s,\s}\Bisp_h\).  The proofs above
show implicitly that the map
\begin{equation}
  \label{eq:alpha_ghk}
  \alpha_{g,h}^k\colon
  \Bisp_g \Grcomp \Bisp_h^* \to
  \Bisp_{g k} \Grcomp \Bisp_{h k}^*,\qquad
  [\gamma_g,\gamma_h] \mapsto [\gamma_g \eta,\gamma_h \eta],
\end{equation}
is a well defined local homeomorphism.  These maps form a diagram of
topological spaces of shape~\(\Cat[D]_{\hat{g}}\).  The colimit of
this diagram is the quotient of \(\bigsqcup_{(g,h)\in R_{\hat{g}}}
\Bisp_g \Grcomp \Bisp_h^*\) by the equivalence relation generated by
\([\gamma_g,\gamma_h] \sim \alpha_{g,h}^k[\gamma_g,\gamma_h]\) for
all \(g,h,k\) as above and all \([\gamma_g,\gamma_h] \in \Bisp_g
\Grcomp \Bisp_h^*\).  The canonical maps \(\Bisp_g \Grcomp \Bisp_h^*
\to \Gr[H]_{\hat{g}}\) induce a homeomorphism
\[
\varinjlim_{\Cat[D]_{\hat{g}}}
{}(\Bisp_g \Grcomp \Bisp_h^*,\alpha_{g,h}^k)
\to  \Gr[H]_{\hat{g}}
\]
by definition of the equivalence relation~\(\sim\)
defining~\(\Gr[H]\).  The category~\(\Cat[D]_{\hat{g}}\) is
filtered.  Thus the colimit operation over \(\Cat[D]_{\hat{g}}\)
above behaves well.

If~\(x\in\Cat^0\) is a unit arrow in~\(\GroupoidCat\), then the
objects of the form \((g,g)\) with \(g\in\Cat^x\) are cofinal
in~\(\Cat[D]_{x}\).  Hence the subspace~\(\Gr[H]_x\) is also the
colimit of the restricted diagram \((\Bisp_g\Grcomp \Bisp_g^*,
\alpha_{g,g}^k)\) of shape~\(\Cat[D]_x\).

\section{Examples of groupoid models for diagrams of Ore shape}
\label{sec:examples_Ore}

\subsection{Diagrams of topological correspondences of Ore shape}
\label{sec:Ore_topological_corr}

Let~\(\Cat\) be a category with right Ore conditions and let
\((\Gr_x,\Bisp_g,\mu_{g,h})\) be a \(\Cat\)\nb-shaped diagram of
groupoid correspondences where all the groupoids~\(\Gr_x\) are just
spaces, viewed as groupoids.  We are going to explain the groupoid
model constructed above in this special case.  If~\(\Cat\) is a
monoid, the following analysis shows that we get exactly the same
groupoid, and the same structural properties, as
in~\cite{Albandik-Meyer:Product}.

We first make the diagram above more explicit; this does not yet use
the Ore conditions.  The groupoid correspondences~\(\Bisp_g\) are
topological correspondences between the spaces \(\Gr_{\s(g)}\)
and~\(\Gr_{\rg(g)}\) by
\cite{Antunes-Ko-Meyer:Groupoid_correspondences}*{Example~4.1}.
That is, they are spaces with continuous maps
\(\Gr_{\rg(g)} \xleftarrow{\rg} \Bisp_g \xrightarrow{\s}
\Gr_{\s(g)}\), where~\(\s\) is a local homeomorphism.  The
maps~\(\mu_{g,h}\) are homeomorphisms
\(\Bisp_g \times_{\s,\rg} \Bisp_h \to \Bisp_{g h}\),
\((\gamma_g,\gamma_h)\mapsto \gamma_g\cdot \gamma_h\), for
composable \(g,h\in\Cat\), which satisfy
\(\rg(\gamma_g\cdot \gamma_h) = \rg(\gamma_g)\),
\(\s(\gamma_g\cdot \gamma_h) = \s(\gamma_h)\) for all
\(\gamma_g\in\Bisp_g\), \(\gamma_h\in\Bisp_h\) with
\(\s(\gamma_g) = \rg(\gamma_h)\) and
\(\gamma_g\cdot (\gamma_h\cdot \gamma_k) = (\gamma_g\cdot
\gamma_h)\cdot \gamma_k\) for composable \(g,h,k\in\Cat\) and
\(\gamma_g\in\Bisp_g\), \(\gamma_h\in\Bisp_h\),
\(\gamma_k\in\Bisp_k\) with \(\s(\gamma_g) = \rg(\gamma_h)\) and
\(\s(\gamma_h) = \rg(\gamma_k)\).  In addition, \(\Bisp_x = \Gr_x\)
for unit arrows~\(x\), and the multiplication maps
\(\mu_{\rg(g),g}\) and~\(\mu_{g,\s(g)}\) are dictated by the above
conditions to be \(\rg(\gamma_g)\cdot \gamma_g = \gamma_g\) and
\(\gamma_g\cdot \s(\gamma_g) = \gamma_g\).

The first step in our construction replaces the given diagram by a
tight diagram.  Since the groupoids involved are of the form
\(\Gr_x\ltimes \Omega_x\), they are still spaces viewed as
groupoids.  A tight groupoid correspondence
\(\Omega_{\rg(g)}\leftarrow\Omega_{\s(g)}\) is equivalent to a local
homeomorphism \(\Omega_{\rg(g)}\rightarrow\Omega_{\s(g)}\) by
\cite{Antunes-Ko-Meyer:Groupoid_correspondences}*{Example~4.1}, and
there are no non-trivial
\(2\)\nb-arrows any more in this case.  Thus the first step in our
construction replaces the given diagram of topological
correspondences by an action of the opposite category~\(\Cat^\op\)
on a certain family of spaces \((\Omega_x)_{x\in\Cat^0}\) by local
homeomorphisms.

We now examine the construction of these spaces~\(\Omega_x\) and the
local homeomorphisms between them.  Since each~\(\Gr_x\) is a space,
\(\Bisp_g/\Gr_{\s(g)} = \Bisp_g\).  Hence~\(\Omega_x\) is the
projective limit of the spaces~\(\Bisp_g\) for \(g\in\Cat\) with
\(\rg(g)=x\) with respect to the canonical maps \(\pi_{g,k}\colon
\Bisp_{g k} \congto \Bisp_g \times_{\s,\rg} \Bisp_k \to \Bisp_g\),
where the first map is the inverse of the multiplication
homeomorphism and the second map is the coordinate projection.  So
\(\pi_{g,k}(\gamma_g\cdot \gamma_k) = \gamma_g\) for
\(\gamma_g\in\Bisp_g\), \(\gamma_k\in\Bisp_k\) with \(\s(\gamma_g) =
\rg(\gamma_k)\).  That is, an element of~\(\Omega_x\) is a family
\((\gamma_g)_{g\in\Cat^x}\) with \(\pi_{g,k}(\gamma_{g k}) =
\gamma_g\) for all \(g,k\in\Cat\) with \(\rg(g)=x\) and
\(\s(g)=\rg(k)\).

Now we examine the local homeomorphism \(\Omega_{\rg(g)} \to
\Omega_{\s(g)}\) corresponding to the tight correspondence
\(\Bisp\Omega_g\colon \Omega_{\rg(g)} \leftarrow \Omega_{\s(g)}\).
By definition, an element of the space~\(\Bisp\Omega_g\) is a pair
\((\gamma_g,\omega)\) where \(\gamma_g\in\Bisp_g\) and \(\omega\in
\Omega_{\s(g)}\).  The tightness of the
correspondence~\(\Bisp\Omega_g\) says that the range map
\((\gamma_g,\omega)\mapsto \gamma_g\cdot \omega\) is a homeomorphism
\(\Bisp\Omega_g \congto \Omega_{\rg(g)}\).
An element \((\gamma_k)_{k\in\Cat^{\rg(g)}}\) of~\(\Omega_{\rg(g)}\)
is determined uniquely and continuously by the entries~\(\gamma_{g
  k}\) for all \(k\in\Cat\) with \(\s(g)=\rg(k)\) (see
\longref{Lemma}{lem:Ore_gives_cofinal}); the Ore conditions are crucial to
prove this.  The local homeomorphism \(\sigma_g\colon
\Omega_{\rg(g)} \to \Omega_{\s(g)}\) corresponding
to~\(\Bisp\Omega_g\) maps \(\gamma_g\cdot \omega \mapsto \omega\).
Explicitly, if \((\gamma_h)_{h\in\Cat^{\rg(g)}}\) is a point
in~\(\Omega_{\rg(g)}\), then the \(k\)\nb-entry of
\(\sigma_g\bigl((\gamma_h)\bigr)\) for \(k\in\Cat^{\s(g)}\) is the
unique element \(\gamma_k\in \Bisp_k\) for which \(\gamma_{g k} =
\gamma_g\cdot \gamma_k\) for some \(\gamma_g\in \Bisp_g\).

The second step of the construction of~\(\Gr[H]\) is based on these
local homeomorphisms \(\sigma_g\colon \Omega_{\rg(g)}\to
\Omega_{\s(g)}\) for \(g\in\Cat\), which form an action
of~\(\Cat^\op\) on the spaces~\(\Omega_x\) for \(x\in\Cat^0\) by
local homeomorphisms.  We have identified \(\Bisp\Omega_g =
\Omega_{\rg(g)}\) above.  Since each~\(\Gr\Omega_x\) is still just a
space viewed as a groupoid,
\[
\Bisp\Omega_g \Grcomp \Bisp\Omega_h^*
\cong \Omega_{\rg(g)} \times_{\sigma_g,\sigma_h} \Omega_{\rg(h)}
\]
for all \(g,h\in\Cat\) with \(\s(g)=\s(h)\).  After this
identification, the local homeomorphism
\[
\alpha_{g,h}^k\colon
\Omega_{\rg(g)} \times_{\sigma_g,\sigma_h} \Omega_{\rg(h)}
\to
\Omega_{\rg(g k)} \times_{\sigma_{g k},\sigma_{h k}} \Omega_{\rg(h k)}
\]
in~\eqref{eq:alpha_ghk} becomes simply the inclusion map, which
exists because \(\sigma_g(\omega_1) = \sigma_h(\omega_2)\) implies
\(\sigma_{g k}(\omega_1) = \sigma_k \circ \sigma_g(\omega_1) =
\sigma_k \circ \sigma_h(\omega_2) = \sigma_{h k}(\omega_2)\).  Thus
these maps are all injective, and the colimit of these spaces is
their union inside \(\Omega_{\rg(g)} \times \Omega_{\rg(h)}\).  Thus
\[
\Gr[H]_{\hat{g}} \cong
\setgiven{(\omega_1,\omega_2) \in \Omega_{\rg(g)} \times \Omega_{\rg(h)}}{\sigma_g(\omega_1) = \sigma_h(\omega_2) \text{ for some }(g,h)\in R_{\hat{g}}},
\]
with the topology where a subset is open if and only if its
intersection with \(\Omega_{\rg(g)} \times_{\sigma_g,\sigma_h}
\Omega_{\rg(h)}\) is open for all \((g,h)\in R_{\hat{g}}\).  The
pair \((\omega_1,\omega_2)\) is an arrow \(\omega_1\leftarrow
\omega_2\).  Thus an arrow in~\(\Gr[H]\) is determined uniquely by
an arrow \(\hat{g}\in\GroupoidCat\) and by its range and source.  The
multiplication in~\(\Gr[H]\) is dictated by this:
\[
(\omega_1,\hat{g},\omega_2)\cdot
(\omega_2,\hat{h},\omega_3) =
(\omega_1,\hat{g}\cdot\hat{h},\omega_3).
\]

For a unit arrow \(\hat{g}=x\) in~\(\Cat^0\), the pairs~\((g,g)\)
are cofinal in~\(\Cat[D]_{\hat{g}}\).  Thus~\(\Gr[H]_x\) is the
union of \(\Omega_{\rg(g)} \times_{\sigma_g,\sigma_g}
\Omega_{\rg(g)}\) over all \(g\in\Cat^x\).  Each \(\Omega_{\rg(g)}
\times_{\sigma_g,\sigma_g} \Omega_{\rg(g)}\) is a proper equivalence
relation.  Thus~\(\Gr[H]_x\) is an approximately finite equivalence
relation.

The same structure was obtained in~\cite{Albandik-Meyer:Product} in
the case where~\(\Cat\) is a right Ore monoid, that is, has only one
object.  This also gives many important examples of groupoid models
for diagrams of Ore shape.

\subsection{Groupoid models of self-similar groups}
\label{sec:groupoid_self-similar}

Now we examine our construction in the case where~\(\Cat\) is the
Ore monoid \((\N,+)\) and~\(\Gr\) is a group.  Under the
faithfulness assumption in the theory of self-similar groups, this
reproduces the groupoid associated to a self-similar group by
Nekrashevych~\cite{Nekrashevych:Cstar_selfsimilar}.

We have seen in \longref{Section}{sec:free_monoid_action} that a diagram of
shape~\((\N,+)\) is equivalent to a groupoid~\(\Gr\) with a groupoid
correspondence \(\Bisp\colon \Gr\leftarrow\Gr\).   Groupoid
correspondences from a group to itself were described in
\cite{Antunes-Ko-Meyer:Groupoid_correspondences}*{Example~4.2}
through generalised
self-similarities.  Namely, \(\Bisp = A\times\Gr\) for a discrete
\(\Gr\)\nb-set~\(A\), with left action by \(g\cdot (x,h) = (g\cdot
x, g|_x\cdot h)\) for a \(1\)\nb-cocycle \(\Gr\times A\to \Gr\),
\((g,x)\mapsto g|_x\).  We do not need any further assumptions
on~\(A\) to construct a groupoid model for the diagram
\((\Gr,\Bisp)\).  As we shall see, however, the object space of our
groupoid is not locally compact unless~\(A\) is finite.

First we have to extend the single correspondence \(A\times\Gr\) to
a diagram, describing the composite correspondences
\((A\times\Gr)^{\Grcomp n}\) for all \(n\in\N\).  This is the
identity correspondence~\(\Gr\) for \(n=0\), which we write as
\(A^0\times\Gr\), where~\(A^0\) has only one element~\(\emptyset\).  The
composition of two correspondences \(A\times\Gr\) and \(Y\times\Gr\)
as above is defined so that its underlying space is
\[
(A\times\Gr)\Grcomp (Y\times \Gr) \cong A\times Y\times\Gr,
\]
again with the discrete topology and the obvious right
\(\Gr\)\nb-action.
The isomorphism maps
\(\bigl((x,g),(y,h)\bigr)\mapsto (x,g\cdot y, g|_y\cdot h)\).
Hence the left \(\Gr\)\nb-action is
\[
g\cdot (x,y,h) = (g\cdot x,g|_x\cdot y,(g|_x)|_y\cdot h).
\]
This involves the cocycle \(\Gr\times A\times Y \to \Gr\),
\((g,x,y)\mapsto g|_{(x,y)} \defeq (g|_x)|_y\).  By induction, we
get
\[
(A\times\Gr)^{\Grcomp n} = A^n\times \Gr
\]
with the \(\Gr\)\nb-action and cocycle
\begin{align*}
  g\cdot (x_1,x_2,\dotsc,x_n) &\defeq
  (g\cdot x_1,g|_{x_1}\cdot x_2,\dotsc,
  g|_{x_1 \dotsc x_{n-1}}\cdot x_n),\\
  g|_{x_1 x_2 \dotsc x_n} &\defeq g|_{x_1}|_{x_2} \dotsc|_{x_n}.
\end{align*}
This iteration is a standard construction in the theory of
self-similar groups, which is also used
in~\cite{Nekrashevych:Cstar_selfsimilar} to construct a groupoid from
the self-similar group.  The multiplication maps
\(\mu_{n,m}\colon (A^n\times\Gr) \Grcomp (A^m\times\Gr) \to
A^{n+m}\times\Gr\) are
\begin{multline}
  \label{eq:multiply_self-similarity_iterate}
  (x_1,x_2,\dotsc,x_n,g) \cdot
  (x_{n+1},x_{n+2},\dotsc,x_{n+m},h) \defeq
  \\(x_1,x_2,\dotsc,x_n, g\cdot x_{n+1},
  g|_{x_{n+1}}\cdot x_{n+2},\dotsc,
  g|_{x_{n+1}\dotsc x_{n+m-1}}\cdot
  x_{n+m}, g|_{x_{n+1}\dotsc x_{n+m}}\cdot h).
\end{multline}

The first step of our construction replaces the original diagram
over~\((\N,+)\) by a tight diagram of the form \(\Gr\ltimes \Omega\).
The space~\(\Omega\) is the projective limit of the sequence of spaces
\((A^n\times\Gr)/\Gr \cong A^n\) for \(n\in\N\).  Here the structure
maps \(A^n \to
A^m\) for \(n\ge m\) simply project to the first~\(m\) letters of a
word.  Hence~\(\Omega\) is the space of all infinite words in the
alphabet~\(A\), which is denoted~\(A^\omega\)
in~\cite{Nekrashevych:Cstar_selfsimilar}.  This is the object space of
the groupoid constructed in~\cite{Nekrashevych:Cstar_selfsimilar}.  It
is a compact space if~\(A\) is finite, which is assumed
in~\cite{Nekrashevych:Cstar_selfsimilar}.  If~\(A\) is infinite,
then~\(A^\omega\) is not even locally compact.

The action of the diagram on~\(A^\omega\) is given by an action of
the group~\(\Gr\) and a homeomorphism \(A\times A^\omega \to
A^\omega\) because \((A\times \Gr)\Grcomp_{\Gr} Y \cong A\times
Y\).  Here the action of~\(\Gr\) on~\(A^\omega\) is
\[
g\cdot (x_1,x_2,x_3,\dotsc) =
(g\cdot x_1, g|_{x_1}\cdot x_2, g|_{x_1 x_2}\cdot x_3,\dotsc)
\]
and the homeomorphism \(A\times A^\omega \to A^\omega\) is the
obvious one, \((x,w)\mapsto x\,w\).

The inverse semigroup~\(\IS(F)\) introduced in
\longref{Section}{sec:encoding} may be described very
concretely in this case:

\begin{lemma}
  \label{lem:S(F)_self-similar_group}
  The inverse semigroup~\(\IS(F)\) is the inverse semigroup with zero
  and unit generated by~\(\Gr\sqcup A\) with the relations that
  \(\Gr\to \IS(F)\) is a homomorphism, \(g\cdot x = (g x)\cdot g|_x\)
  for all \(g\in\Gr\), \(x\in A\) and \(x^*\cdot y = \delta_{x,y}\)
  for all \(x,y\in A\); here \(\cdot\) means the multiplication
  in~\(\IS(F)\) and \(g x\) the action of~\(\Gr\) on~\(A\).
  And~\(\delta_{x,y}\) is the zero element if \(x\neq y\) and the
  unit element of~\(\Gr\) if \(x=y\).
\end{lemma}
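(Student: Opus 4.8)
The plan is to identify both $\IS(F)$ and the abstractly presented inverse semigroup --- call it $S'$ --- through their universal properties, by writing down mutually inverse homomorphisms $\Phi\colon S'\to\IS(F)$ and $\Psi\colon\IS(F)\to S'$ and checking that they compose to the identity on generators.

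First I would make $\Bis(F)$ explicit. Since $\Gr$ is a group, $\Gr^0$ is a single point, so every nonempty slice of $\Bisp_n$ is a singleton; and by the iteration $(A\times\Gr)^{\Grcomp n}=A^n\times\Gr$ recalled above, the nonzero elements of $\Bis(F)$ are exactly the singletons $\{(x,g)\}$ with $x=(x_1,\dots,x_n)\in A^n$, $n\ge 0$, $g\in\Gr$, the product being dictated by~\eqref{eq:multiply_self-similarity_iterate}. In $S'$ put $w(x,g)\defeq x_1\cdots x_n\cdot g$, and define $\vartheta\colon\Bis(F)\to S'$ by $\vartheta(\emptyset)=0$, $\vartheta(\{(x,g)\})=w(x,g)$. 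I must check that $\vartheta$ satisfies the two relations in the first two conditions of \longref{Lemma}{lem:F-action_from_theta}. For the multiplicativity relation this amounts to verifying, in $S'$, that $w(x,g)\,w(y,h)=w\bigl(x\cdot(g\cdot y),\,g|_y\cdot h\bigr)$: one repeatedly applies the self-similarity relation $g\cdot a=(g\cdot a)\cdot g|_a$ of $S'$ to push the group element $g$ to the right past the letters $y_1,y_2,\dots,y_m$, each step producing one of the iterated restrictions $g|_{y_1\cdots y_{k-1}}\cdot y_k$ occurring in~\eqref{eq:multiply_self-similarity_iterate}. For the relation $\vartheta(\U_1)^*\vartheta(\U_2)=\vartheta(\braket{\U_1}{\U_2})$ with $\U_1=\{(x,a)\}$, $\U_2=\{(y,b)\}$ in $\Bisp_n$, one computes $\vartheta(\U_1)^*\vartheta(\U_2)=a^{-1}x_n^*\cdots x_1^*\,y_1\cdots y_n\,b$ (using that $\Gr\to S'$ is a homomorphism, so $g^*=g^{-1}$), and then $x_i^*y_i=\delta_{x_i,y_i}$ collapses this to $a^{-1}b$ if $x=y$ and to $0$ otherwise; since $\braket{\{(x,a)\}}{\{(y,b)\}}$ equals $\{a^{-1}b\}$ if $x=y$ and $\emptyset$ otherwise, this matches. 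By the universal property of $\IS(F)$, $\vartheta$ extends to a homomorphism $\Psi\colon\IS(F)\to S'$.

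Conversely I would define $\Phi\colon S'\to\IS(F)$ by sending $g\in\Gr$ to $\Theta(\{(\emptyset,g)\})$ (a slice of $\Bisp_0=\Gr$), $x\in A$ to $\Theta(\{(x,1)\})$ (a slice of $\Bisp_1$), and $0,1$ to the zero $\Theta(\emptyset)$ and the unit $\Theta(\Gr^0)$ of $\IS(F)$. To invoke the universal property of $S'$ I check the three families of defining relations in $\IS(F)$: that $\Gr\to\IS(F)$ is a homomorphism (immediate from the group multiplication on $\Bisp_0$ and from $\Theta$ being multiplicative on $\Bis(F)$); that $\Theta(\{(\emptyset,g)\})\,\Theta(\{(x,1)\})=\Theta(\{(gx,1)\})\,\Theta(\{(\emptyset,g|_x)\})$, which is a one-line instance of~\eqref{eq:multiply_self-similarity_iterate}; and that $\Theta(\{(x,1)\})^*\,\Theta(\{(y,1)\})=\delta_{x,y}$, which is exactly the second defining relation of $\IS(F)$ specialised to length-one slices. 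This yields $\Phi$.

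Finally I would check $\Phi$ and $\Psi$ are mutually inverse. On the generators of $S'$ one has $\Psi\Phi(g)=\vartheta(\{(\emptyset,g)\})=g$ and $\Psi\Phi(x)=\vartheta(\{(x,1)\})=x$, and $\Psi\Phi$ fixes $0,1$, so $\Psi\Phi=\id_{S'}$. For $\Phi\Psi$, note $\IS(F)$ is generated by the $\Theta(\U)$; using $\{(x,g)\}=\{(x_1,1)\}\cdots\{(x_n,1)\}\{(\emptyset,g)\}$ in $\Bis(F)$ (again a consequence of~\eqref{eq:multiply_self-similarity_iterate}) together with multiplicativity of $\Theta$ gives $\Phi\Psi(\Theta(\{(x,g)\}))=\Theta(\{(x,g)\})$, and $\Phi\Psi(\Theta(\emptyset))=\Theta(\emptyset)$, so $\Phi\Psi=\id_{\IS(F)}$. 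I expect the only genuinely delicate point to be the multiplicativity check for $\vartheta$ (equivalently, lifting the self-similarity relation), i.e.\ the bookkeeping with the iterated cocycle $g|_{x_1\cdots x_k}$; the remaining verifications are direct substitutions into~\eqref{eq:multiply_self-similarity_iterate} or immediate uses of the two defining relations of $\IS(F)$.
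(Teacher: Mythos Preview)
Your proposal is correct and follows essentially the same strategy as the paper: both arguments identify \(\IS(F)\) and the presented inverse semigroup via their universal properties. The paper's proof is more compressed---it first shows that \(\Bis(F)\) itself is the free semigroup with zero on \(\Gr\sqcup A\) modulo the group relations and the self-similarity relation \(g\cdot x=(gx)\cdot g|_x\), then observes that passing to \(\IS(F)\) only adds the relations \((w_1,g_1)^*(w_2,g_2)=\delta_{w_1,w_2}g_1^{-1}g_2\), and finally reduces these by induction on word length to the single family \(x^*y=\delta_{x,y}\)---whereas you spell out the mutually inverse homomorphisms \(\Phi,\Psi\) explicitly and verify both compositions on generators. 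The content is the same; your version simply unpacks the ``comparison of presentations'' into an explicit back-and-forth.

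One small point to watch: the paper remarks just after the lemma that the formal unit of \(\IS(F)\) (coming from the free inverse semigroup construction) is \emph{a priori} distinct from the image \(\Theta(\{1_{\Gr}\})\) of the group identity, and that one may harmlessly identify them afterwards. Your map \(\Phi\) sends the unit of \(S'\) to \(\Theta(\Gr^0)=\Theta(\{1_{\Gr}\})\) rather than to the formal unit, so strictly speaking your isomorphism lands in (and comes from) the quotient where these are identified. This is exactly the version that matters for actions, so nothing is lost, but it is worth being aware of the distinction.
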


\begin{proof}
  Since a group has only one object, a slice of a groupoid
  correspondence \(\Bisp\colon \Gr[H] \leftarrow \Gr\) between two
  groups is either empty or a singleton subset of~\(\Bisp\).  Thus
  \(\Bis(F) = \emptyset \sqcup \bigsqcup_{n\ge0} A^n\times\Gr\),
  where~\(\emptyset\) acts as a zero element and the other elements
  are multiplied by~\eqref{eq:multiply_self-similarity_iterate}.
  Thus~\(\Bis(F)\) is the free semigroup with a zero element
  generated by \(\Gr\sqcup A\) with the relations of~\(\Gr\) and the
  commutation relation \(g\cdot x = (g\,x)\cdot g|_x\) for all
  \(g\in \Gr\), \(x\in A\); the element \((x_1,\dotsc,x_n,g)\) has
  the canonical presentation \(x_1\cdot x_2\dotsm x_n \cdot g\).
  This semigroup is a monoid with the same unit element as~\(\Gr\).

  The relations in~\(\Gr\) imply \(g g^{-1} g = g\) and
  \(g^{-1} g g^{-1} = g^{-1}\) for all \(g\in\Gr\), so that
  \(g^* = g^{-1}\) holds whenever we map~\(\Gr\) into an inverse
  semigroup.  Thus~\(\IS(F)\) is generated as a monoid by
  \(\Gr\sqcup A\sqcup A^*\sqcup \{\emptyset\}\).  Besides the
  relations in~\(\Gr\) and the relation
  \(g\cdot x = (g x)\cdot g|_x\) from~\(\Bis(F)\), in \(\IS(F)\), we
  also impose the relation
  \((w_1,g_1)^* \cdot (w_2,g_2) = \delta_{w_1,w_2} g_1^{-1}\cdot
  g_2\) for all \(g_1,g_2\in \Gr\), \(n\in\N\), \(w_1\in A^n\),
  \(w_2\in A^n\).  This relation holds for all \(g_1,g_2\) once it
  holds for \(g_1=g_2=1\), and an induction on~\(n\) shows that the
  case \(n=1\) suffices, that is, the relations above follow once
  \(x^* y = \delta_{x,y}\) for all \(x,y\in A\).
\end{proof}

Since we defined~\(\IS(F)\) as the free unital inverse semigroup
generated by some generators and relations, its unit element is
added formally and hence different from the unit element in~\(\Gr\),
which is a unit element also in~\(\Bis(F)\).  But we only consider
actions of~\(\IS(F)\) where \(1\in\Gr\) acts already by the identity
map on the whole space.  Thus it is possible to replace~\(\IS(F)\)
by the quotient by the relation that identifies the new formal unit
element with the unit element in~\(\Gr\).  In general, we add a
formal unit element when constructing~\(\IS(F)\) because~\(\Bis(F)\)
only has a unit when the shape category~\(\Cat\) has only one
object.

By \longref{Lemma}{lem:S(F)_self-similar_group}, an element of~\(\IS(F)\)
is either the zero element or the formal unit element, or it has the
form \(x_1 \dotsm x_m \cdot g \cdot y_n^*\dotsm y_1^*\)
with \(x_1,\dotsc,x_m,y_1,\dotsc,y_n\in A\),
\(g\in \Gr\),
\(n,m\in\N\).
When we identify the formal and the usual unit element, this gives the
inverse semigroup denoted~\(\langle \Gr,A\rangle\)
in \cite{Nekrashevych:Cstar_selfsimilar}*{Section 5.2}.  The action
of~\(\IS(F)\)
on~\(A^\omega\)
descends to an action of this quotient~\(\langle \Gr,A\rangle\),
and this action is the same one that is used
in~\cite{Nekrashevych:Cstar_selfsimilar}.  Hence
\(\langle \Gr,A\rangle\ltimes A^\omega\)
is a groupoid model for the diagram
\(A\times\Gr\colon \Gr\leftarrow\Gr\)
by \longref{Theorem}{the:Ore_universal_action} and
\longref{Proposition}{pro:groupoid_model_from_universal_F-action}.

Nekrashevych defines the groupoid associated to a self-similar
group as the ``groupoid of germs'' for the action of \(\langle
\Gr,A\rangle\) on~\(A^\omega\) in
\cite{Nekrashevych:Cstar_selfsimilar}*{Section 5.2}.  But he uses a
potentially coarser germ relation than in
\cite{Exel:Inverse_combinatorial}*{Proposition 5.4}: the groupoid
constructed in~\cite{Nekrashevych:Cstar_selfsimilar} is
\emph{effective} by construction, that is, if a slice of the
groupoid in~\cite{Nekrashevych:Cstar_selfsimilar} acts trivially
on~\(A^\omega\), then it consists of unit arrows only.  Thus the
construction in~\cite{Nekrashevych:Cstar_selfsimilar} gives
the effective quotient of our transformation groupoid \(\IS(F)\ltimes
A^\omega\), that is, the quotient by the interior of the isotropy
group bundle; this is equal to the transformation groupoid if and
only if the latter is effective, compare
\longref{Proposition}{pro:tight_effective_quotient}.

\begin{proposition}
  \label{pro:self-similar_groupoid_effective}
  Let~\(A\) be finite.  The groupoid \(\IS(F)\ltimes A^\omega =
  \langle \Gr,A\rangle\ltimes A^\omega\) is effective if and only if
  for each \(g\in \Gr\) that acts trivially on~\(A^\omega\), there
  is \(k\ge0\) so that \(g|_x=1\) for all \(x\in A^k\).
\end{proposition}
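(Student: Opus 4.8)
The plan is to unwind the statement through the characterisation of effectiveness as the absence of non-unit arrows in the interior of the isotropy group bundle, and then to identify precisely which arrows in $\IS(F)\ltimes A^\omega$ lie in that interior. First I would recall that an arrow of the transformation groupoid is represented by a pair $(t,w)$ with $t\in\IS(F)$ and $w\in A^\omega$ in the domain of $\vartheta(t)$, and that by \longref{Lemma}{lem:S(F)_self-similar_group} every nonzero, non-unit $t$ can be written as $x_1\dotsm x_m\cdot g\cdot y_n^*\dotsm y_1^*$ with $g\in\Gr$, $x_i,y_j\in A$. Such a $t$ is defined at $w$ precisely when $w$ begins with the word $y_1 y_2\dotsm y_n$, say $w=y_1\dotsm y_n w'$, and then $\vartheta(t)(w)=x_1\dotsm x_m\,(g\cdot w')$. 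For this arrow to lie in the isotropy bundle we need $\vartheta(t)(w)=w$; for it to lie in the \emph{interior} of the isotropy bundle we need this to persist on a neighbourhood of $w$, that is, on a cylinder set. Comparing prefixes forces $m=n$ and $x_1\dotsm x_n=y_1\dotsm y_n$, so after replacing $t$ by an equivalent element we may assume $t=u\cdot g\cdot u^*$ where $u=x_1\dotsm x_n$ and $g\in\Gr$ acts trivially on all of $A^\omega$ (the triviality being exactly the condition that $g\cdot w'=w'$ for every $w'$ in a cylinder, and cylinders being dense this means everywhere). Conversely, any $g$ acting trivially on $A^\omega$ gives, for each finite word $u$, such an arrow $u g u^*$ in the isotropy bundle.

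Next I would pin down when $u g u^*$ represents a \emph{unit} arrow of the groupoid. By the definition of $\IS(F)\ltimes A^\omega$, $(ugu^*,w)$ is a unit arrow at $w=uw'$ if and only if there is an idempotent $e\in\IS(F)$ defined at $w$ with $ugu^*\cdot e=e$. Using the structure of $\IS(F)$, the relevant idempotents defined at $uw'$ are (up to refinement) of the form $uv(uv)^*$ for a finite prefix $v$ of $w'$, corresponding to the cylinder of words starting with $uv$. Now $ugu^*\cdot uv(uv)^* = u\,g\,v\,(uv)^*$, and by the commutation relation $g\cdot v=(g\cdot v)\cdot g|_v$ with $g\cdot v=v$ (since $g$ acts trivially, in particular fixes the first letters) this equals $uv\,(g|_v)\,(uv)^*$. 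So $ugu^*\cdot e=e$ holds for $e=uv(uv)^*$ exactly when $g|_v=1$ in $\Gr$. Therefore the arrow $(ugu^*,uw')$ is trivial in the groupoid if and only if there is a finite prefix $v$ of $w'$ with $g|_v=1$; and the whole slice $\Theta(ugu^*)$ consists of unit arrows if and only if for \emph{every} infinite word there is such a finite prefix, i.e.\ if and only if the restrictions $g|_v$ eventually vanish along every branch.

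Now I would assemble the two directions. The groupoid fails to be effective exactly when some slice $\Theta(ugu^*)$ with $g$ acting trivially on $A^\omega$ contains a non-unit arrow, which by the above is exactly the situation where there exists $g\in\Gr$ acting trivially on $A^\omega$ and an infinite word $w'$ with $g|_v\neq1$ for every finite prefix $v$ of $w'$. So effectiveness is equivalent to: for every $g\in\Gr$ acting trivially on $A^\omega$ and every $w'\in A^\omega$, there is a finite prefix $v$ of $w'$ with $g|_v=1$. It remains to see that this is equivalent to the stated uniform condition, that there is a single $k\ge0$ with $g|_x=1$ for all $x\in A^k$. One implication is trivial. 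For the other, this is where finiteness of $A$ enters: the set of finite words $v$ with $g|_v=1$, once it meets every infinite branch, is a ``barrier'' in the tree $A^{<\omega}$, and by König's lemma (the tree is finitely branching because $A$ is finite) a barrier that blocks every infinite path is reached at uniformly bounded depth, so some $k$ works for all branches of length $k$; moreover the cocycle identity $g|_{v x}=(g|_v)|_x$ shows that once $g|_v=1$ we also have $g|_{vx}=1$, so passing to the maximal depth of the finitely many minimal elements of the barrier gives the desired $k$. I expect the main obstacle to be the bookkeeping in the second paragraph: carefully justifying that an arbitrary arrow of the isotropy bundle can be brought to the normal form $ugu^*$ with $g$ acting globally trivially, and that the idempotents $uv(uv)^*$ are cofinal among those needed to test triviality --- both of which rest on density of cylinder sets in $A^\omega$ and on the explicit presentation of $\IS(F)$ from \longref{Lemma}{lem:S(F)_self-similar_group}.
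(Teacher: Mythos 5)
Your proposal is correct and follows essentially the same route as the paper: reduce an arrow in the interior of the isotropy bundle to the normal form $u\,g\,u^*$ with $g$ acting trivially on all of $A^\omega$ (via restriction to a cylinder and the replacement $u\,g\,u^*\mapsto uv\,(g|_v)\,(uv)^*$), characterise unit arrows by the existence of a prefix $v$ with $g|_v=1$, and then use finiteness of $A$ --- your König's lemma argument is just the combinatorial form of the paper's compactness-of-$A^\omega$ argument. The only wobble is the parenthetical appeal to ``density of cylinders'' to upgrade triviality of $g$ from a cylinder to all of $A^\omega$ (density does not do this); but the replacement step you already perform makes the suffixes range over all of $A^\omega$, which is the correct justification and the one the paper uses.
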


The hypothesis trivially holds if the action of~\(\Gr\) on the space
of finite words is effective, which is assumed in the theory of
self-similar groups in~\cite{Nekrashevych:Cstar_selfsimilar}.  So
the two germ relations agree in all cases considered
in~\cite{Nekrashevych:Cstar_selfsimilar}.

\begin{proof}
  Let \(w_1\,g\,w_2^*\in \IS(F)\) have trivial germ at some
  \(z\in A^\omega\).  That is, there is some open neighbourhood
  of~\(z\) on which~\(\vartheta(w_1\,g\,w_2^*)\) acts trivially.
  This open neighbourhood may be chosen of the form
  \(w_2\,x\,A^\omega\) for some \(k\ge 0\) and \(x\in A^k\) with
  \(z=w_2\,x\,z'\).  The restriction of \(\vartheta(w_1\,g\,w_2^*)\)
  to this neighbourhood is equal to
  \(\vartheta(w_1\,(g\cdot x)\,g|_x\,(w_2\,x)^*)\).  We may assume
  without loss of generality that~\(x\) is the empty word because
  otherwise we may replace \(w_1\,g\,w_2^*\) by
  \(w_1\,(g\cdot x)\,g|_x\,(w_2\,x)^*\).  Then
  \(\vartheta(w_1\,g\,w_2^*)\) acts by the identity map
  on~\(w_2\,A^\omega\).

  This means that \(w_2\,z = w_1\,(g\cdot z)\)
  for all \(z\in A^\omega\).
  This implies \(w_1=w_2\).
  Hence we may write \(w=w_1=w_2\).
  Then \(w\,z=w\,(g\cdot z)\)
  for all \(z\in A^\omega\)
  if and only if \(g\cdot z=z\)
  for all \(z\in A^\omega\).
  The arrow \([w\,g\,w^*,w z]\)
  in \(\IS(F)\ltimes A^\omega\)
  is a unit arrow if and only if there are \(k\ge0\)
  and \(x\in A^k\)
  with \(z=x\,z'\)
  and \(g|_x=1\).
  Then \([w\,g\,w^*,w z] = [w\,(g\cdot x)\,g|_x\,(w\,x)^*,w z]\),
  and \(g\cdot x = x\)
  follows from \(g\cdot z=z\).
  Conversely, the equivalence relation on triples that gives
  \(\IS(F)\ltimes A^\omega\)
  is generated by
  \([w\,g\,w^*,w z] = [w\,(g\cdot x)\,g|_x\,(w\,x)^*,w z]\)
  for \(z= x\,z'\)
  as above.  This follows from the structure of idempotents
  in~\(\IS(F)\)
  or from the alternative description of the groupoid model in the
  discussion below, which follows \longref{Section}{sec:tight_Ore_model}.
  Thus~\(\langle \Gr,A\rangle\ltimes A^\omega\)
  is effective if and only if for all \(g\in \Gr\)
  with \(g\cdot z = z\)
  for all \(z\in A^\omega\),
  the open subsets of the form~\(x\,A^\omega\)
  for all \(x\in A^k\)
  with \(g|_x=1\)
  cover~\(A^\omega\).
  Since we assume~\(A\)
  to be finite, the space~\(A^\omega\)
  is compact.  Hence the covering of~\(A^\omega\)
  by subsets of the form~\(x\,A^\omega\)
  with \(g|_x=1\)
  must admit a finite subcovering if it exists at all.  Let~\(k\)
  be the maximum of the lengths of the words~\(x\)
  used in this finite subcovering.  Taking all extensions of
  length~\(k\)
  for words that are shorter than~\(k\),
  we may arrange that~\(A^\omega\)
  is covered by subsets of the form \(x\,A^\omega\)
  with \(x\in A^k\)
  and \(g|_x=1\).  This means that \(g|_x=1\) for all \(x\in A^k\).
\end{proof}

Now we specialise the description of the groupoid model in
\longref{Theorem}{the:Ore_tight_groupoid_model} to the case at hand.
We prove that it yields the same groupoid model, as expected.

First we describe the tight diagram associated to the original
diagram.  This involves the groupoid \(\Gr\ltimes A^\omega\).  The
tight correspondence \(\Bisp^{\circ n} A^\omega\) over this groupoid
is the product space \(A^n\times\Gr\times A^\omega\).  By
definition, the left and right actions of \(\Gr\ltimes A^\omega\)
on~\(\Bisp^{\circ n} A^\omega\) have the anchor maps
\(\s(w,g,z) = z\) and \(\rg(w,g,z) = w\,(g\cdot z)\) for each
\(w\in A^n\), \(g\in\Gr\), \(z\in A^\omega\), that is, we
concatenate the finite word~\(w\) and the infinite
word~\(g\cdot z\).  The left and right multiplication maps are
determined by the following actions of~\(\Gr\):
\(g\cdot (w,h,z) \defeq (g\cdot w,g|_w\cdot h,z)\),
\((w,h,z)\cdot g \defeq (w,h\cdot g, g^{-1}\cdot z)\).  A direct
computation shows that this is tight, that is, the map~\(\rg\)
induces a homeomorphism
\((A^n\times\Gr\times A^\omega)/\Gr \congto A^\omega\).

Arrows in the groupoid~\(\Gr[H]\) are represented by
\[
\bigl((w_1,g_1,z_1), (w_2,g_2,z_2)\bigr) \in
\Bisp^{\circ n} A^\omega \times_{\s,\s} \Bisp^{\circ m} A^\omega
= A^n\times\Gr\times A^\omega\times_{\s,\s} A^m\times\Gr\times A^\omega,
\]
where the fibre product imposes the condition
\[
z_1 = \s(w_1,g_1,z_1) = \s(w_2,g_2,z_2) = z_2.
\]
We write \(z\) for \(z_1=z_2\).  The data above
gives an arrow from \(\rg(w_2,g_2,z) = w_2\,(g_2\cdot z)\) to
\(\rg(w_1,g_1,z) = w_1\,(g_1\cdot z)\).  The group~\(\Gr\)
acts on the above fibre product on the right by
\[
\bigl((w_1,g_1,z), (w_2,g_2,z)\bigr) \cdot h
\defeq
\bigl((w_1,g_1 \cdot h,h^{-1} z),
(w_2,g_2\cdot h,h^{-1} z)\bigr),
\]
and the arrow in~\(\Gr[H]\) depends only on the orbit under this
action.  Thus we may normalise the quintuples above by the condition
\(g_2=1\), say, and write them as
\[
(w_1,g,z,w_2) \defeq \bigl((w_1,g,z),(w_2,1,z)\bigr).
\]
The equivalence relation that describes the arrow space
of~\(\Gr[H]\) also involves the canonical maps~\(\alpha_{n,m}^k\)
in~\eqref{eq:alpha_ghk}, which map the orbit space
\((\Bisp^{\circ n} A^\omega \times_{\s,\s} \Bisp^{\circ m}
A^\omega)/\Gr\) to
\((\Bisp^{\circ n+k} A^\omega \times_{\s,\s} \Bisp^{\circ m+k}
A^\omega)/\Gr\).  To describe this map, write \(z\in A^\omega\)
as~\(x\,z'\) with \(x\in A^k\) and \(z'\in A^\omega\).  When we use
the simplified description of
\((\Bisp^{\circ n} A^\omega \times_{\s,\s} \Bisp^{\circ m}
A^\omega)/\Gr\), the map~\(\alpha_{n,m}^k\) becomes
\[
\alpha_{n,m}^k(w_1,g,z,w_2)
= (w_1\, (g\cdot x),g|_x,z', w_2\,x);
\]
this satisfies
\begin{align*}
  \s\bigl((w_1\, (g\cdot x),g,z, w_2\,x)\bigr)
  &= w_2\,x\,z = \s\bigl((w_1,g,xz,w_2)\bigr),\\
  \rg\bigl((w_1\, (g\cdot x),g|_x,z, w_2\,x)\bigr)
  &= w_1\,(g\cdot x)\,(g|_x\cdot z)
  = w_1\,(g\cdot (x\,z))
  = \rg\bigl((w_1,g,xz,w_2)\bigr),
\end{align*}
as it should be.  By construction,
\([w_1,g,z,w_2] = [w_1',g',z',w_2']\)
if and only if there are \(x\in A^k\),
\(x'\in A^{k'}\)
and \(\tilde{z}\in A^\omega\)
with
\[
z= x \tilde{z},\quad
z'= x' \tilde{z},\quad
w_1\, (g\cdot x) = w'_1\, (g'\cdot x'),\quad
g|_x = g'|_{x'},\quad
w_2\,x = w'_2\,x'.
\]
The groupoid~\(\Gr[H]\)
has some rather obvious slices that form a basis for the topology
on~\(\Gr[H]\).
Let \(\alpha_{w_1,g,w_2}\subseteq \Gr[H]\)
be the subset of all \([w_1,g,z,w_2]\)
with \(z\in A^\omega\)
for fixed \(w_1\in A^n\),
\(w_2\in A^m\),
\(g\in\Gr\)
with certain \(n,m\in\N\).
The subset~\(\alpha_{w_1,g,w_2}\)
is a slice because \(\rg[w_1,g,z,w_2] = w_1\,(g\cdot z)\)
and \(\s[w_1,g,z,w_2] = w_2\,z\)
determine~\(z\)
uniquely for fixed \(w_1,g,w_2\).
And~\(\alpha_{w_1,g,w_2}\)
has the codomain and domain \(w_1\, A^\omega\)
and~\(w_2\, A^\omega\),
respectively, that is, the sets of words starting with \(w_1\)
and~\(w_2\);
it acts on~\(A^\omega\)
by the partial homeomorphism \(w_2\,z\mapsto w_1\,(g\cdot z)\).
If \(x\in A^k\)
is another finite word, then the equivalence relation above implies
\[
\alpha_{w_1,g,w_2}|_{w_1\,x\, A^\omega}
= \alpha_{w_1\,(g\cdot x), g|_x, w_2\,x}.
\]
Thus
\[
\alpha_{w_1,g,w_2} = \bigsqcup_{x\in A^k}
\alpha_{w_1\,(g\cdot x), g|_x, w_2\,x}
\]
for each \(k\in\N\).  The description of the equivalence relation on
\([w_1,g,z,w_2]\) above shows that this implies all intersection
relations among the slices~\(\alpha_{w_1,g,w_2}\).

The slices above satisfy
\(\alpha_{w_1,g,w_2}^* = \alpha_{w_2,g^{-1},w_1}\) and
\(\alpha_{w_1,g,w_2}\cdot \alpha_{w_2,h,w_3} = \alpha_{w_1,g\cdot
  h,w_3}\) and
\(\alpha_{w_1,g,w_2} \alpha_{w_2',h,w_3} = \emptyset\) if neither
\(w_2\subseteq w_2'\) nor \(w_2'\subseteq w_2\).  Thus together
with~\(\emptyset\) they form an inverse semigroup.  This is exactly
the inverse semigroup denoted~\(\langle \Gr,A\rangle\) in
\cite{Nekrashevych:Cstar_selfsimilar}*{Section 5.2}.  To
get~\(\IS(F)\), we merely have to adjoin an extra unit element.

\begin{proposition}
  \label{pro:self-similar_groupoid_wide_subsemigroup}
  The inverse semigroup of slices \(\langle \Gr,A\rangle\)
  of~\(\Gr[H]\) is \emph{wide}, that is,
  \(\bigcup_{\tau\in\langle \Gr,A\rangle} \alpha_\tau = \Gr[H]\) and
  \(\alpha_{\tau_1} \cap \alpha_{\tau_2} = \bigcup_{\sigma \le
    \tau_1,\tau_2} \alpha_\sigma\) for all
  \(\tau_1,\tau_2\in \langle \Gr,A\rangle\).  Hence
  \[
  \Gr[H] = \langle \Gr,A\rangle \ltimes A^\omega.
  \]
\end{proposition}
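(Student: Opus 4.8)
The plan is to verify that the inverse semigroup $\langle \Gr, A\rangle$ of slices exhibited above is \emph{wide} in the sense of Exel, and then invoke the general fact (see~\cite{Exel:Inverse_combinatorial}) that an étale groupoid equals the transformation groupoid of any wide inverse semigroup of slices acting on its unit space. Concretely, the two conditions to check are that the slices $\alpha_\tau$ for $\tau\in\langle\Gr,A\rangle$ cover $\Gr[H]$, and that for any two such slices the intersection $\alpha_{\tau_1}\cap\alpha_{\tau_2}$ is a union of slices $\alpha_\sigma$ with $\sigma\le\tau_1,\tau_2$ in $\langle\Gr,A\rangle$. Granting these, the universal property of the transformation groupoid of an inverse semigroup action by partial homeomorphisms on the unit space identifies $\Gr[H]$ with $\langle\Gr,A\rangle\ltimes A^\omega$; since $\IS(F)$ differs from $\langle\Gr,A\rangle$ only by a formally adjoined unit that always acts as the identity, this is the same as $\IS(F)\ltimes A^\omega$, consistent with \longref{Theorem}{the:Ore_tight_groupoid_model}.

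First I would prove the covering statement. Every arrow of $\Gr[H]$ is of the form $[w_1,g,z,w_2]$ for some finite words $w_1\in A^n$, $w_2\in A^m$, $g\in\Gr$, and $z\in A^\omega$, by the explicit description of $\Gr[H]$ derived above from \longref{Theorem}{the:Ore_tight_groupoid_model}. Such an arrow lies in the slice $\alpha_{w_1,g,w_2}$ by definition. Hence $\bigcup_{\tau} \alpha_\tau = \Gr[H]$. This step is essentially bookkeeping once the description of arrows as quintuples (normalised by the $\Gr$-action to the form $(w_1,g,z,w_2)$) is in hand.

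The substantive step is the intersection condition. Take $\tau_1 = \alpha_{w_1,g,w_2}$ and $\tau_2 = \alpha_{w_1',g',w_2'}$. An arrow in the intersection is some $[w_1,g,z,w_2] = [w_1',g',z',w_2']$; the explicit equivalence relation says there are $k,k'$, words $x\in A^k$, $x'\in A^{k'}$, and $\tilde z\in A^\omega$ with $z = x\tilde z$, $z' = x'\tilde z$, $w_1(g\cdot x) = w_1'(g'\cdot x')$, $g|_x = g'|_{x'}$, and $w_2 x = w_2' x'$. The identity $w_2 x = w_2' x'$ forces one of $w_2, w_2'$ to be a prefix of the other; say $w_2' = w_2 y$ for some word $y$ (the other case is symmetric), and then $x = y x''$ for a suitable tail. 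Using $\alpha_{w_1,g,w_2}|_{w_1\,y\,A^\omega} = \alpha_{w_1(g\cdot y),\,g|_y,\,w_2 y}$ — an instance of the restriction formula $\alpha_{w_1,g,w_2}|_{w_1 u A^\omega} = \alpha_{w_1(g\cdot u),g|_u,w_2 u}$ established above — I would show that the sub-slice $\alpha_{w_1(g\cdot y),\,g|_y,\,w_2 y}$ is simultaneously $\le \tau_1$ and, after matching up the remaining data via the displayed relations, $\le \tau_2$. Running over all compatible refinements $y$ (equivalently, over all ways of choosing the overlap prefix) exhibits $\alpha_{\tau_1}\cap\alpha_{\tau_2}$ as a union of such $\alpha_\sigma$. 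Since $\langle\Gr,A\rangle$ is closed under the relevant restrictions (the slices $\alpha_{w_1,g,w_2}$ form an inverse semigroup with the multiplication and involution recorded above, and restriction to $w_2 u A^\omega$ is realised by multiplying with an idempotent $\alpha_{w_2 u,1,w_2 u}$), each $\alpha_\sigma$ appearing is genuinely in $\langle\Gr,A\rangle$.

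The main obstacle I anticipate is the careful combinatorial unwinding of the equivalence relation on quintuples in the intersection step: one must handle the prefix comparison $w_2 x = w_2' x'$ cleanly, track how $g$, $g'$, and their restrictions $g|_x$, $g'|_{x'}$ are related, and confirm that the resulting common refinement is expressed by a slice in $\langle\Gr,A\rangle$ that is below both $\tau_1$ and $\tau_2$ in the natural partial order of the inverse semigroup. Once this is settled, the passage from ``wide inverse semigroup of slices'' to ``$\Gr[H] = \langle\Gr,A\rangle\ltimes A^\omega$'' is an application of the standard theory in~\cite{Exel:Inverse_combinatorial}, and the final sentence $\Gr[H] = \langle\Gr,A\rangle\ltimes A^\omega$ follows, matching the abstract identification $\Gr[H]\cong\IS(F)\ltimes A^\omega$ from the general Ore theory.
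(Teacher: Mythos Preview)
Your approach matches the paper's proof: both dispose of the covering condition by noting that every arrow $[w_1,g,z,w_2]$ lies in $\alpha_{w_1,g,w_2}$, and both handle the intersection by first using the prefix comparison of $w_2,w_2'$ to reduce (via the restriction formula) to the case of equal source words, then invoking the equivalence relation on quintuples, and finally appealing to \cite{Exel:Inverse_combinatorial}*{Proposition~5.4}. The paper is more explicit at the point you flag as the main obstacle: after reducing to $w_2=w_2'$ it observes that nonempty intersection forces $w_1=w_1'$ as well, and then $\alpha_{w_1,g_1,w_2}\cap\alpha_{w_1,g_2,w_2}$ is exactly the union of $\alpha_{w_1(g_1\cdot x),\,g_1|_x,\,w_2 x}$ over all finite words $x$ with $g_1\cdot x=g_2\cdot x$ and $g_1|_x=g_2|_x$---so the common refinement you seek is indexed by such $x$, not merely by the prefix $y$.
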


\begin{proof}
  Any arrow in~\(\Gr[H]\)
  is represented by some \([w_1,g,z,w_2]\in \alpha_{w_1,g,w_2}\).
  That is,
  \(\bigcup_{\tau\in\langle \Gr,A\rangle} \alpha_\tau = \Gr[H]\).
  Let \(\tau_1 = (w_1,g_1,w_2)\)
  and \(\tau_2 = (w_3,g_2,w_4)\)
  with \(w_i \in A^{n_i}\)
  for \(1\le i\le 4\)
  and \(g_1,g_2\in \Gr\).
  We may exchange \(\tau_1\)
  and~\(\tau_2\)
  if necessary and hence assume that \(n_2\le n_4\).
  If there is no finite word~\(x\)
  with \(w_4 = w_2 x\),
  then the domains \(w_2\,A^\omega\)
  and \(w_4\,A^\omega\)
  of \(\alpha_{\tau_1}\)
  and~\(\alpha_{\tau_2}\)
  are disjoint and we are done.  So assume \(w_4 = w_2\,x\).
  Then the restriction of~\(\alpha_{\tau_1}\)
  to the domain of~\(\alpha_{\tau_2}\)
  is \(\alpha_{\tau_1'}\)
  with \(\tau_1' = (w_1\,g\cdot x,g|_x,w_2\,x)\).
  Thus
  \(\alpha_{\tau_1} \cap \alpha_{\tau_2} = \alpha_{\tau_1'} \cap
  \alpha_{\tau_2}\).
  So we may assume without loss of generality that \(w_2 = w_4\).
  Then \(w_1=w_3\)
  is necessary for
  \(\alpha_{\tau_1} \cap\alpha_{\tau_2}\neq\emptyset\)
  by our description of the equivalence relation defining~\(\Gr[H]\).
  Assume this as well.  If a point \((w_1,g_1,z,w_2)\)
  also lies in~\(\alpha_{w_1,g_2,w_2}\),
  then there is \(x\in A^k\)
  for some \(k\ge0\)
  with \(z = x\,z'\),
  \(g_1\cdot x= g_2\cdot x\),
  and \(g_1|_x = g_2|_x\).
  Thus \(\alpha_{w_1,g_1,w_2} \cap \alpha_{w_1,g_2,w_2}\)
  is the union of \(\alpha_{w_1\,g_1\cdot x,g_1|_x,w_2\,x}\)
  over all \(k\ge0\)
  and \(x\in A^k\)
  with \(g_1\cdot x = g_2\cdot x\)
  and \(g_1|_x = g_2|_x\).
  Thus~\(\langle \Gr,A\rangle\)
  is a wide inverse semigroup in~\(\Gr[H]\).
  Now \cite{Exel:Inverse_combinatorial}*{Proposition 5.4} implies
  \(\Gr[H] = \langle \Gr,A\rangle \ltimes A^\omega\).
\end{proof}

\longref{Proposition}{pro:self-similar_groupoid_wide_subsemigroup}
reproves the isomorphism between the groupoid model
\(\IS(F)\ltimes \Omega\) and the groupoid~\(\Gr[H]\) constructed in
\longref{Section}{sec:Ore_topological_corr} in the case of a
self-similar group.

The defining universal property of our groupoid model is also of
some interest.  It characterises the groupoid~\(\langle
\Gr,A\rangle\ltimes A^\omega\) by describing its actions on spaces.
We already described such actions rather explicitly in
\longref{Lemma}{lem:action_self-similar_group}.

\subsection{Groupoid models for self-similar graphs}
\label{sec:self-corr_groupoid}

Now we describe the groupoid model for a self-correspondence of a
transformation groupoid \(\Gr \defeq \Gamma\ltimes V\)
for a group~\(\Gamma\)
and a \(\Gamma\)\nb-set~\(V\),
equipped with the discrete topology.  The groupoid correspondences
\(\Gr\leftarrow \Gr\)
are described in
\cite{Antunes-Ko-Meyer:Groupoid_correspondences}*{Proposition~4.5}.
Namely, they come
from a \(\Gamma\)\nb-set~\(E\)
with a \(1\)\nb-cocycle
\(\Gamma\times E\to \Gamma\),
\((g,e)\mapsto g|_e\),
and two maps \(\rg,\s\colon E\rightrightarrows V\)
that satisfy \(\s(g\cdot e) = (g|_e)\cdot \s(e)\)
and \(\rg(g\cdot e) = g\cdot \rg(e)\)
for all \(g\in \Gamma\),
\(e\in E\).  The resulting groupoid correspondence
\begin{equation}
  \label{eq:self-similar_graph_correspondence}
  E\times \Gamma\colon \Gamma\ltimes V \leftarrow \Gamma\ltimes V
\end{equation}
has the underlying space \(E\times \Gamma\)
with the discrete topology, anchor maps
\[
\rg,\s\colon E\times \Gamma\rightrightarrows V,\qquad
\rg(e,g) = \rg(e),\quad
\s(e,g) = g^{-1}\cdot \s(e),
\]
and the left and right \(\Gamma\)\nb-actions
\(g_1\cdot (e,g_2)\cdot g_3 \defeq (g_1\cdot e,g_1|_e \cdot g_2\cdot
g_3)\) for all \(e\in E\), \(g,g_1,g_2,g_3\in \Gamma\).

Let \(\mathcal{P}^0(E)\defeq V\)
with the identical range and source maps, the given
\(\Gamma\)\nb-action,
and the trivial cocycle \(g|_v = g\)
for all \(v\in V\).
The self-correspondence of \(\Gamma\ltimes V\)
associated to this data is the identity correspondence.  For
\(n\ge1\), let
\begin{equation}
  \label{eq:self-similar_graph_pathn}
  \mathcal{P}^n(E) \defeq \setgiven{(e_1,\dotsc,e_n)\in E^n}
  {\s(e_i) = \rg(e_{i+1}) \text{ for }i=1,\dotsc,n-1}.
\end{equation}
Define the \(1\)\nb-cocycle
\(\Gamma\times \mathcal{P}^n(E) \to \Gamma\),
the \(\Gamma\)\nb-action
on~\(\mathcal{P}^n(E)\),
and \(\rg,\s\colon \mathcal{P}^n(E) \rightrightarrows V\)
recursively by
\(g|_{(e_1,e_2,\dotsc,e_n)} \defeq g|_{(e_1,e_2,\dotsc,
  e_{n-1})}|_{e_n}\) and
\begin{equation}
  \label{eq:group_action_on_path}
  g\cdot (e_1,\dotsc,e_n) \defeq
  (g\cdot e_1,g|_{e_1}\cdot e_2,\dotsc, g|_{e_1\dotsc e_{n-1}}\cdot e_n),
\end{equation}
\(\rg(e_1,\dotsc,e_n) \defeq \rg(e_1)\) and
\(\s(e_1,\dotsc,e_n) \defeq \s(e_n)\).  Induction gives
\(g\cdot (e_1,\dotsc,e_n) \in \mathcal{P}^n(E)\) if
\((e_1,\dotsc,e_n) \in \mathcal{P}^n(E)\).  This uses the properties
\(\s(g\cdot e) = (g|_e)\cdot \s(e)\) and
\(\rg(g\cdot e) = g\cdot \rg(e)\) of
\(\rg,\s\colon E\rightrightarrows V\); we do not need the equation
\(\s(g\cdot e) = g\cdot \s(e)\) that is assumed
in~\cite{Exel-Pardo:Self-similar}.  As in
\longref{Section}{sec:groupoid_self-similar}, the groupoid
correspondence
\[
\mathcal{P}^n(E)\times \Gamma\colon \Gamma\ltimes V \leftarrow \Gamma\ltimes V
\]
is isomorphic to the \(n\)\nb-fold
composite of \(E\times \Gamma\).  The multiplication maps
\[
(\mathcal{P}^n(E)\times \Gamma) \times (\mathcal{P}^m(E)\times \Gamma)
\to (\mathcal{P}^{n+m}(E)\times \Gamma)
\]
are given by a variant of~\eqref{eq:multiply_self-similarity_iterate},
see also \eqref{eq:group_action_on_path}.  The discussion above
describes the \((\N,+)\)-shaped
diagram associated to the groupoid
correspondence~\eqref{eq:self-similar_graph_correspondence}.

Now we describe the universal action of this diagram.  It takes place
on the projective limit~\(\mathcal{P}^\omega(E)\)
of the discrete spaces
\(\mathcal{P}^n(E) \cong (\mathcal{P}^n(E)\times \Gamma) \mathbin/
(\Gamma\ltimes V)\)
for \(n\in\N\), where the limit is taken for the truncation maps
\[
(e_1,\dotsc,e_{n+1})\mapsto (e_1,\dotsc,e_n).
\]
Thus \(\mathcal{P}^\omega(E)\) is the space of infinite paths,
\[
\mathcal{P}^\omega(E) =
\setgiven*{(e_n)_{n\in\N} \in \prod_{n\in\N} E}
{\s(e_n) = \rg(e_{n+1}) \text{ for all }n\in\N},
\]
equipped with the restriction of the product topology.  The left
action of~\(\Gamma\ltimes V\)
is described by the anchor map \((e_n)\mapsto \rg(e_0)\)
to~\(V\)
and by the action of~\(\Gamma\)
on infinite paths that generalises~\eqref{eq:group_action_on_path} for
finite paths in the obvious way.

If \(\rg\colon E\to V\)
is finite-to-one, then the set of all \(w\in \mathcal{P}^\omega(E)\)
with \(\rg(w) = v\)
for a fixed \(v\in V\)
is compact by Tychonov's Theorem.  And then the
space~\(\mathcal{P}^\omega(E)\)
is locally compact because these subspaces
of~\(\mathcal{P}^\omega(E)\)
for \(v\in V\)
form an open cover.  Conversely, if~\(\rg\)
is surjective, but not finite-to-one, then the
space~\(\mathcal{P}^\omega(E)\)
is not locally compact; we leave the proof to the reader.

The groupoid model for our diagram is
\(\IS(F)\ltimes \mathcal{P}^\omega(E)\)
by our description of the universal action~\(\Omega\)
and \longref{Proposition}{pro:groupoid_model_from_universal_F-action}.  The inverse
semigroup~\(\IS(F)\)
is not as simple now as in \longref{Section}{sec:groupoid_self-similar} because
\(\Gamma\ltimes V\)
has several objects.  There is, however, a more concrete inverse
semigroup \(\langle \Gamma,E\rangle\)
with
\(\langle \Gamma,E\rangle \ltimes \mathcal{P}^\omega(E) = \IS(F)\ltimes
\mathcal{P}^\omega(E)\);
this is the analogue of \(\langle \Gamma,A\rangle\)
for a self-similar group.  Let \(B\subseteq \Bis(F)\)
consist of the empty slice and all singleton slices, that is,
the elements of \(\bigsqcup_{n\ge0} \mathcal{P}^n(E)\times \Gamma\).
This is a subsemigroup because the product of two singleton slices
is either empty or again a singleton slice.  Explicitly, if
\(w_i\in\mathcal{P}^*(E) \defeq \bigsqcup_{n=0}^\infty
\mathcal{P}^n(E)\) and \(g_i\in \Gamma\) for \(i=1,2\), then
\[
(w_1,g_1)\cdot (w_2,g_2) =
\begin{cases}
  (w_1\,(g_1\cdot w_2),g_1|_{w_2}\cdot g_2)
  &\text{if }\s(w_1) = g_1\cdot \rg(w_2),\\
  \emptyset&\text{otherwise.}
\end{cases}
\]
Any slice is a disjoint union of slices in~\(B\).
Therefore, the proof that \(\IS(F)\ltimes \Omega\)
for a universal \(F\)\nb-action~\(\Omega\)
is a groupoid model
(\longref{Proposition}{pro:groupoid_model_from_universal_F-action}) still works if
we replace~\(\Bis(F)\) by the subsemigroup~\(B\) throughout.

Thus we replace~\(\IS(F)\)
be the universal inverse semigroup~\(\langle \Gamma,E\rangle\)
generated by the semigroup~\(B\) above with the relations of~\(B\) and
\begin{equation}
  \label{eq:self-similar_graph_GE_adjoint}
  (w_1,g_1)^* \cdot (w_2,g_2)
  = \langle (w_1,g_1),(w_2,g_2)\rangle
  \defeq \delta_{w_1,w_2} (g_1^{-1}\s(w_1), g_1^{-1} g_2)
\end{equation}
if \(g_1,g_2\in \Gamma\)
and \(w_1,w_2\in\mathcal{P}^n(E)\)
with the same \(n\in\N\),
where~\(\delta_{w_1,w_2}\)
means that the result is~\(\emptyset\)
if \(w_1\neq w_2\)
and \((g_1^{-1}\s(w_1), g_1^{-1} g_2)\)
if \(w_1= w_2\).
The element~\(\emptyset\)
remains a zero element in~\(\langle \Gamma,E\rangle\).

\begin{lemma}
  \label{lem:self-similar_graph_isg}
  Any element in~\(\langle \Gamma,E\rangle\)
  that is not the zero element~\(\emptyset\)
  may be written uniquely as
  \(w_1\,g\,w_2^* \defeq (w_1,g)\cdot (w_2,1)^*\)
  for some \(w_1,w_2\in\mathcal{P}^*(E)\),
  \(g\in \Gamma\) with \(g\cdot \s(w_2) = \s(w_1)\).
\end{lemma}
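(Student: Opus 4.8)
The plan is to prove Lemma~\ref{lem:self-similar_graph_isg} by establishing first that every nonzero element of $\langle \Gamma,E\rangle$ can be brought into the asserted normal form $w_1\,g\,w_2^*$, and then that this form is unique. I will work directly with the presentation of $\langle \Gamma,E\rangle$ by generators $B$ and the relations of the semigroup $B$ together with~\eqref{eq:self-similar_graph_GE_adjoint}.

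For existence, I would argue that $\langle \Gamma,E\rangle$ is generated as an \emph{inverse} semigroup by the elements $(w,g)$ with $(w,g)\in B$. Since the relations of $B$ already force $(v,g_1)\cdot(v,g_2)=(v,g_1 g_2)$ for $v\in V\subseteq\mathcal P^0(E)$, every element $(v,g)$ with $v\in V$ is a unit in its own right, so $(v,g)^*=(v,g^{-1})$ is again in $B$. Thus a general word alternates products of elements of $B$ and their formal adjoints. The key reduction step is that any product $(w_2,1)^*\cdot(w_1',g')$ of an adjoint generator followed by a generator can be rewritten: writing $w_1' = (e_1,\dots,e_k)$ and $w_2=(f_1,\dots,f_m)$, one uses the semigroup relation in $B$ to peel off a common prefix, so that $(w_2,1)^*\cdot(w_1',g')$ is either $\emptyset$, or equals $u\,g''\,v^*$ for shorter paths — concretely, if $w_2$ is a prefix of $w_1'$ this collapses using~\eqref{eq:self-similar_graph_GE_adjoint} with the tail of $w_1'$ appended, and symmetrically if $w_1'$ is a prefix of $w_2$; if neither is a prefix of the other, the product vanishes because of the $\delta_{w_1,w_2}$ in~\eqref{eq:self-similar_graph_GE_adjoint} applied after equalizing lengths by multiplying with further generators. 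Iterating this reduction on any alternating word $(w_1^{(1)},g^{(1)})(w_1^{(2)},g^{(2)})^*\cdots$ collapses it, from the left or the right, to a single block $(w_1,g)(w_2,1)^* = w_1\,g\,w_2^*$; the compatibility condition $g\cdot\s(w_2)=\s(w_1)$ is exactly the requirement that this product be nonzero, read off from the anchor-map bookkeeping in the definition of the product in $B$.

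For uniqueness, I would use the canonical action of $\langle \Gamma,E\rangle$ on $\mathcal P^\omega(E)$ constructed earlier (the universal $F$\nb-action restricted through the homomorphism $\langle\Gamma,E\rangle\to I(\mathcal P^\omega(E))$). If $w_1\,g\,w_2^* = w_1'\,g'\,(w_2')^*$ in $\langle\Gamma,E\rangle$, then the associated partial homeomorphisms of $\mathcal P^\omega(E)$ agree. The partial homeomorphism of $w_1\,g\,w_2^*$ has domain the cylinder set of infinite paths beginning with $w_2$ and sends $w_2\,z\mapsto w_1\,(g\cdot z)$, and similarly for the primed data; comparing domains forces $w_2 = w_2'$ (two distinct finite paths give cylinder sets that are either disjoint or one properly contained in the other, and equality of the \emph{exact} domains pins down the length and hence the path), comparing ranges then forces $w_1 = w_1'$, and comparing the actions on a single point forces $g\cdot z = g'\cdot z$ for all $z$, hence $g = g'$ since the $\Gamma$-action on $\mathcal P^\omega(E)$ through the cocycle is determined on each path and one may test on a path $z$ whose first edge lies over $\s(w_2)$ — here one uses that distinct group elements act differently once we remember the cocycle data, or more safely, that the generators $(w,1)$ for $w$ ranging over all finite paths already separate enough points. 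A cleaner alternative for the separation point is to observe that the map $B\to I(\mathcal P^\omega(E))$ together with the map $(w,g)\mapsto g\in\Gamma$ (well-defined on the full $\langle\Gamma,E\rangle$ only on the isotropy part, so better to test against $\mathcal P^\omega(E)$ directly) suffices.

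The main obstacle I anticipate is the existence (normal-form) direction: one must check carefully that the reduction of an arbitrary alternating word terminates in the claimed single-block form, handling the bookkeeping of anchor maps and cocycles so that every intermediate product that should be nonzero really does satisfy the $\s$-$\rg$ compatibility, and that the prefix/suffix case analysis in the rewriting of $(w_2,1)^*(w_1',g')$ is exhaustive. This is a finite but fiddly induction on word length, entirely parallel to the computation already carried out for self-similar groups in Lemma~\ref{lem:S(F)_self-similar_group}, so I would state the reduction step precisely and then invoke that analogy rather than repeating every case. Uniqueness, by contrast, is essentially immediate once we exploit the faithfulness of the relevant part of the action on $\mathcal P^\omega(E)$ encoded in the universal property of the groupoid model.
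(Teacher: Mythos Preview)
Your existence argument is essentially the paper's: show that the set \(\{\emptyset\}\cup\{w_1\,g\,w_2^*\}\) is closed under involution and products, which forces it to be all of \(\langle\Gamma,E\rangle\). The paper carries this out by first computing \((v,g)^*=(g^{-1}v,g^{-1})\) for \(v\in V\), then \((w_1\,g\,w_2^*)^*=w_2\,g^{-1}\,w_1^*\), and finally the product \((w_1\,g_1\,w_2^*)(w_3\,g_2\,w_4^*)\) by a two-case analysis on whether \(w_2\) or \(w_3\) is longer; your sketch of reducing \((w_2,1)^*(w_1',g')\) by prefix comparison is the same computation, just organised slightly differently.

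Your uniqueness argument, however, has a genuine gap. You propose to separate normal forms via the action on \(\mathcal{P}^\omega(E)\), but that action need not be faithful: nothing in the setup forces distinct \(g,g'\in\Gamma\) to act differently on infinite paths (compare Proposition~\ref{pro:self-similar_groupoid_effective}, where non-effectiveness of the analogous groupoid is exactly the failure of this faithfulness). Even the domain argument for recovering \(w_2\) can fail if some vertex has a unique outgoing edge, since then distinct finite paths can determine the same cylinder in \(\mathcal{P}^\omega(E)\). You noticed the first issue yourself near the end but did not resolve it.

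The paper's proof, as written, addresses only existence; uniqueness is implicit in the subsequent identification with Exel--Pardo's \(\mathcal{S}_{\Gamma,E}\). The clean way to prove it directly is to put the multiplication~\eqref{eq:mult_self-similar_graph} on the \emph{set} \(\{\emptyset\}\cup\{(w_1,g,w_2):g\cdot\s(w_2)=\s(w_1)\}\), check that this is an inverse semigroup in which the images of the generators \(B\) satisfy the defining relations of \(\langle\Gamma,E\rangle\), and conclude that the resulting surjection \(\langle\Gamma,E\rangle\to\mathcal{S}_{\Gamma,E}\) sends \(w_1\,g\,w_2^*\mapsto(w_1,g,w_2)\). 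Combined with existence, this gives a bijection and hence uniqueness, without any faithfulness hypothesis.
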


\begin{proof}
  It suffices to show that the subset of~\(\langle \Gamma,E\rangle\)
  consisting of \(\emptyset\) and the elements~\(w_1\,g\,w_2^*\) is
  closed under involution and products, so that it is an inverse
  semigroup as well.  Our proof is related to the proofs of
  \longref{Lemma}{lem:Ore_tight_simplify_groupoid} and
  \longref{Lemma}{lem:Ore_tight_simplify_groupoid2}.
  First we claim
  \((v,g)^* = (g^{-1}\cdot v,g^{-1})\)
  for all \(v\in V = \mathcal{P}^0(E)\).
  This follows because
  \((v,g)\cdot (g^{-1}\cdot v,g^{-1}) \cdot (v,g) = (v,g)\)
  and
  \((g^{-1}\cdot v,g^{-1}) \cdot (v,g) \cdot (g^{-1}\cdot v,g^{-1}) =
  (g^{-1}\cdot v,g^{-1})\).
  Since
  \(w_1 g w_2^* \defeq (w_1,g)\cdot (w_2,1)^* = (w_1,1)\cdot
  (\s(w_1),g)\cdot (w_2,1)^*\), this implies
  \begin{align*}
    (w_1\,g\,w_2^*)^* &\defeq (w_2,1)\cdot (\s(w_1),g)^* \cdot (w_1,1)^*
    = (w_2,1)\cdot (g^{-1}\cdot \s(w_1),g^{-1}) \cdot (w_1,1)^*
    \\ &= (w_2,1)\cdot (\s(w_2),g^{-1}) \cdot (w_1,1)^*
    \\ &= w_2\,g^{-1}\,w_1^*.
  \end{align*}
  This is of the desired form because \(\s(w_2) = g^{-1}\cdot
  \s(w_1)\).

  Now we prove that \(w_1 g_1 w_2^*\cdot w_3 g_2 w_4^*\)
  for \(w_1,w_2,w_3,w_4\in \mathcal{P}^*(E)\),
  \(g_1,g_2\in \Gamma\)
  with \(\s(w_1) = g_1\s(w_2)\)
  and \(\s(w_3) = g_2 \s(w_4)\)
  is empty or of the form \(w_5 g_3 w_6^*\)
  for some \(w_5,w_6\in\mathcal{P}^*(E)\),
  \(g_3\in \Gamma\)
  with \(\s(w_5) = g_3 \s(w_6)\).
  We distinguish the cases where~\(w_2\)
  is longer than~\(w_3\) or not.

  If~\(w_2\)
  is longer than~\(w_3\),
  then we write \(w_2 = w_2^1 w_2^2\)
  such that \(w_2^1,w_3\)
  have the same length.  Thus~\eqref{eq:self-similar_graph_GE_adjoint}
  gives
  \((w_2^1,1)^* (w_3,g_2) = \delta_{w_2^1,w_3} (\s(w_2^1),g_2)\).
  And \(\rg(w_2^1) = \rg(w_2)\),
  \(\s(w_2^1) = \rg(w_2^2)\),
  \(\s(w_2^2) = \s(w_2) = g_1^{-1}\s(w_1)\).  We compute
  \begin{align*}
    w_1 g_1 w_2^* w_3 g_2 w_4^*
    &= (w_1, g_1) (w_2^2,1)^* (w_2^1,1)^* (w_3, g_2) (w_4,1)^*
    \\ &= \delta_{w_2^1,w_3} (w_1, g_1) (w_2^2,1)^*(\s(w_2^1),g_2) (w_4,1)^*
    \\ &= \delta_{w_2^1,w_3} (w_1, g_1) (w_2^2,1)^*
    (g_2^{-1}\cdot \rg(w_2^2),g_2^{-1})^* (w_4,1)^*
    \\ &= \delta_{w_2^1,w_3} (w_1, g_1) (w_4\,(g_2^{-1}\cdot
    w_2^2),g_2^{-1}|_{w_2^2})^*
    \\ &= \delta_{w_2^1,w_3} w_1 (g_1 \cdot (g_2^{-1}|_{w_2^2})^{-1})
    (w_4\,(g_2^{-1}\cdot w_2^2))^*.
  \end{align*}
  This has the desired form because
  \(\rg(g_2^{-1}\cdot w_2^2) = g_2^{-1}\cdot \rg(w_2^2) =
  g_2^{-1}\cdot \s(w_2^1) = g_2^{-1}\cdot \s(w_3) = \s(w_4)\)
  and
  \(\s(w_1) = g_1 \s(w_2) = g_1 \s(w_2^1) = g_1 \s(w_3) = g_1 g_2
  \s(w_4)\) if \(w_2^1= w_3\).

  Now let~\(w_2\)
  be shorter than~\(w_3\).
  We split \(w_3 = w_3^1 w_3^2\),
  where~\(w_3^1\)
  has the same length as~\(w_2\).  As above, we compute
  \begin{align*}
    w_1 g_1 w_2^* w_3 g_2 w_4^*
    &= (w_1, g_1) (w_2,1)^* (w_3^1,1) (w_3^2, g_2) (w_4,1)^*
    \\&= \delta_{w_2,w_3^1} (w_1, g_1) (w_3^2,g_2) (w_4,1)^*
    \\&= \delta_{w_2,w_3^1} (w_1\,(g_1\cdot w_3^2), g_1|_{w_3^2}\cdot
    g_2) (w_4,1)^*
    \\&= \delta_{w_2,w_3^1} (w_1\,(g_1\cdot w_3^2))\, (g_1|_{w_3^2}\cdot
    g_2)\,w_4.\qedhere
  \end{align*}
\end{proof}

We let
\((w_1,g,w_2) \defeq w_1\, g\, w_2^{-1} \in\langle \Gamma,E\rangle\).
The proof of \longref{Lemma}{lem:self-similar_graph_isg} describes the
multiplication in~\(\langle \Gamma,E\rangle\):
\begin{equation}
  \label{eq:mult_self-similar_graph}
  (w_1, g_1, w_2)\cdot (w_3, g_2, w_4) =
  \begin{cases}
    \bigl(w_1\,(g_1\cdot x), g_1|_{x}\cdot g_2,w_4\bigr)&
    \text{if }w_3 = w_2 x,\\
    \bigl(w_1, g_1 (g_2^{-1}|_x)^{-1}, w_4\,(g_2^{-1}\cdot x)\bigr)&
    \text{if }w_2 = w_3 x,\\
    \emptyset&\text{otherwise,}
  \end{cases}
\end{equation}
with some \(x\in\mathcal{P}^*(E)\).
The involution is simply \((w_1,g,w_2)^* = (w_2,g^{-1},w_1)\).
Thus \(\langle \Gamma,E\rangle\)
is isomorphic to the inverse semigroup~\(\mathcal{S}_{\Gamma,E}\)
defined in \cite{Exel-Pardo:Self-similar}*{Definition 4.1}.

The generators of the inverse semigroup~\(\langle \Gamma,E\rangle\)
act on~\(\mathcal{P}^\omega(E)\) by the group action of~\(\Gamma\)
described above and by concatenating paths:
\(\vartheta_w(x) = w\,x\) is defined if \(\s(w) = \rg(x)\),
and~\(\vartheta_w\) is a homeomorphism from the clopen subset of
paths beginning in~\(\s(w)\) onto the clopen subset of words that
begin with the path~\(w\).  This extends to an action
of~\(\langle \Gamma,E\rangle\), where \((w_1,g,w_2)\) acts by the
homeomorphism
\(w_2\,\mathcal{P}^\omega(E) \congto w_1\,\mathcal{P}^\omega(E)\)
that first cuts away the leading path~\(w_2\), then acts by
\(g\in \Gamma\), and finally puts the path~\(w_1\) in front.  The
arguments in \cite{Exel-Pardo:Self-similar}*{Section~8} show that,
under the assumptions made there, our groupoid model is isomorphic
to the tight groupoid of the inverse semigroup
\(\langle \Gamma,E\rangle = \mathcal{S}_{\Gamma,E}\).  The
\(\Cst\)\nb-algebra~\(\mathcal{O}_{\Gamma,E}\) is isomorphic to the
groupoid \(\Cst\)\nb-algebra of this groupoid by
\cite{Exel-Pardo:Self-similar}*{Corollary~6.4}.

Now let~\(\Bisp\) be a groupoid correspondence on an arbitrary
discrete groupoid~\(\Gr\) instead of a discrete transformation group
\(\Gamma\ltimes V\).  The groupoid model may be described as above,
with only notational differences.  Let \(V\defeq \Gr^0\) and
\(E\defeq\Bisp/\Gr\) with the canonical maps
\(\rg,\s\colon E\rightrightarrows V\) as in
\cite{Antunes-Ko-Meyer:Groupoid_correspondences}*{Proposition~4.5}.
This defines a
directed graph.  We identify~\(\Bisp^{\Grcomp n}/\Gr\) with the
set~\(\mathcal{P}^n(E)\) of paths of length~\(n\) for all \(n\in\N\)
as above.  Hence the universal action of the groupoid correspondence
takes place on the space~\(\mathcal{P}^\omega(E)\) of infinite paths
in~\(E\) with the projective limit topology.  We may again replace
the semigroup of all slices of \(\Bisp^n\), \(n\in\N\), by the
semigroup of singleton or empty slices.  The resulting
\emph{inverse} semigroup with zero element that encodes actions of
the groupoid correspondence is the same one found in
\cite{Laca-Raeburn-Ramagge-Whittaker:Equilibrium_self-similar_groupoid}*{Proposition~4.5
  and Lemma~4.6}.  In particular, its non-zero elements are of the
form \((\mu,g,\nu)\) with finite paths \(\mu,\nu\in E^*\) and
\(g\in\Gr\) satisfying \(\s(\mu) = \rg(g)\), \(\s(g) = \rg(\nu)\),
and the multiplication is given by the same
formula~\eqref{eq:mult_self-similar_graph} as above.  This is also
the same formula as in
\cite{Laca-Raeburn-Ramagge-Whittaker:Equilibrium_self-similar_groupoid}*{Lemma~4.6}
because the cocycle condition implies
\(g^{-1}|_{g\cdot x} = (g|_x)^{-1}\) for all \(g\in\Gr\),
\(x\in E^*\) with \(\s(g) = \rg(x)\).

\section{Groupoid models as bicategorical limits}
\label{sec:models_vs_limits}

The concepts of limit and colimit of a diagram in a category have
bicategorical analogues.  In this section, we show that the groupoid
model of a diagram in~\(\Grcat\) is a limit of this diagram.  This
is an easy consequence of the universal property that defines the
groupoid model.  The interpretation of the groupoid model as a limit
has two nice consequences.  First, it explains the functoriality of
the groupoid model construction because the bicategorical limit
construction --~if it exists~-- is automatically a homomorphism of
bicategories.  Secondly, (absolute) Cuntz--Pimsner algebras of
proper product systems are described analogously
in~\cite{Albandik-Meyer:Colimits}.  Namely, the product system may
be viewed as a diagram in a certain bicategory of
\(\Cst\)\nb-correspondences, and the Cuntz--Pimsner algebra is shown
in~\cite{Albandik-Meyer:Colimits} to be its colimit.  When the
conventions regarding the \(\Cst\)\nb-correspondence bicategory are
changed as in~\cite{Antunes-Ko-Meyer:Groupoid_correspondences}, then
this colimit becomes a limit.  What we call a limit is often called a
bilimit.  This is necessary in \(2\)\nb-categories to avoid
confusion with the ordinary limits in the sense of category theory.
We may simply speak of limits because no confusion is possible in a
bicategory without an underlying category.

We first make limits of diagrams in bicategories more concrete
(see~\cite{Albandik-Meyer:Colimits}).  Let \(\Cat\) and~\(\Cat[D]\)
be bicategories and let \(F\colon \Cat\to\Cat[D]\) be a
homomorphism.

\begin{definition}
  \label{def:const_d}
  Let~\(d\) be an object of~\(\Cat[D]\).  The \emph{constant
    diagram} \(\const_d\colon \Cat\to\Cat[D]\) maps all objects
  of~\(\Cat\) to~\(d\), all arrows in~\(\Cat\) to the identity
  correspondence on~\(d\), and all pairs of composable arrows to the
  canonical invertible \(2\)\nb-arrow
  \(\id_d\circ \id_d\Rightarrow \id_d\).
\end{definition}

For an object~\(d\) of~\(\Cat[D]\), let \(\Cat[D]^{\Cat}(d,F)\) be
the category with strong transformations from the constant diagram
\(\const_d\colon \Cat\to\Cat[D]\) to~\(F\) as objects and
modifications between them as arrows.  The notions of strong
transformations and modifications are explained
in~\cite{Leinster:Basic_Bicategories} (see also
\longref{Proposition}{pro:trafo_category-diagram} and
\longref{Proposition}{pro:modification_category-diagram}).  Strong
transformations and transformations only differ if~\(\Cat[D]\) has
\(2\)\nb-arrows that are not invertible.  Then it is crucial to
restrict attention to strong transformations.  For two objects
\(d,x\in\Cat[D]\), let \(\Cat[D](d,x)\) be the category with arrows
\(d\to x\) as objects and \(2\)\nb-arrows among them as arrows.  An
arrow \(g\colon d_1 \to d_2\) induces functors
\(g^*\colon \Cat[D](d_2,x)\to\Cat[D](d_1,x)\) by composition
with~\(g\) and horizontal composition with the unit \(2\)\nb-arrow
\(1_g\colon g\Rightarrow g\).  It also induces a functor
\[
  g^*\colon \Cat[D]^{\Cat}(\const(d_2),F)\to\Cat[D]^{\Cat}(\const(d_1),F).
\]
A (strong) transformation \(\const(d_1) \to F\) consists of arrows
\(\lambda_x\colon d_1 \to F(x)\) for all \(x\in\Cat\) and invertible
\(2\)\nb-arrows
\(\lambda_\varphi\colon F(\varphi) \circ \lambda_x \Rightarrow
\lambda_y \circ 1_{d_1}\) for all \(x,y\in \Cat^0\) and
\(\varphi\in\Cat(x,y)\).  The functor~\(g^*\) composes
each~\(\lambda_x\) with \(g\colon d_1 \to d_2\) and produces
\(2\)\nb-arrows
\(F(\varphi) \circ (\lambda_x\circ g) \Rightarrow (\lambda_y\circ g)
\circ 1_{d_1}\) from the \(2\)\nb-arrows~\(\lambda_\varphi\) in the
canonical way.

A \emph{cone over~\(F\) with summit~\(d\)} for an object~\(d\)
of~\(\Cat[D]\) is a strong transformation
\(\vartheta\colon\const_d\to F\).  Given such a cone and another
object \(d'\in \Cat[D]\), there is a
functor
\[
  \vartheta_*\colon \Cat[D](d',d)\to \Cat[D]^{\Cat}(\const_{d'},F);
\]
it maps an arrow \(g\colon d'\to d\) to the strong
transformation \(g^*(\vartheta)\) with~\(g^*\) defined above, and a
\(2\)\nb-arrow \(\alpha\colon g_1\Rightarrow g_2\) to the canonical
modification \(g_1^*(\vartheta)\Rightarrow g_2^*(\vartheta)\)
defined by taking horizontal products with~\(\alpha\).
A \emph{limit cone} is a cone
\(\vartheta\colon \const(\lim F) \to F\) such that for each object
\(d\in\Cat[D]^0\), the functor
\[
  \vartheta_*\colon \Cat[D](d,\lim F)\to \Cat[D]^{\Cat}(\const_d,F)
\]
is an equivalence of categories.

We may embed~\(\Cat[D]\) into~\(\Cat[D]^{\Cat}\) by mapping objects,
arrows, and \(2\)\nb-arrows to constant diagrams, constant
transformations and constant modifications, respectively.  This
realises~\(\Cat[D]\) as a subbicategory of~\(\Cat[D]^{\Cat}\).  The
definition of the limit of~\(F\) says exactly that
\(\vartheta\colon \const(\lim F) \to F\) is a \emph{biuniversal
  arrow} (see \cite{Fiore:Pseudo_biadjoints}*{Definition~9.4})
from~\(F\) to an object of this subbicategory.  More precisely,
Fiore~\cite{Fiore:Pseudo_biadjoints} assumes~\(\Cat[D]\) to be
strict, that is, all associators and unit transformations are
identities.  The Coherence Theorem
(see~\cite{Leinster:Basic_Bicategories}) says that every small
bicategory is biequivalent to a strict one.  Since any set of
diagrams in~\(\Cat[D]\) only sees a small subcategory
of~\(\Cat[D]\), the results in~\cite{Fiore:Pseudo_biadjoints} carry
over to general bicategories.  We are particularly interested in the
following theorem, which says that a biadjunction exists once
biuniversal arrows exist:

\begin{theorem}[\cite{Fiore:Pseudo_biadjoints}*{Theorem~9.17}]
  \label{the:biadjoint}
  Let \(\Cat\) and~\(\Cat[D]\) be bicategories.  Let~\(\Cat\) be
  small.  Assume that each \(\Cat\)\nb-shaped diagram in~\(\Cat[D]\)
  has a limit.  Then there is a homomorphism of bicategories
  \(\lim\colon \Cat[D]^{\Cat}\to \Cat[D]\) which on objects maps a
  diagram to its limit.  This homomorphism is biadjoint to the
  inclusion \(\const\colon \Cat[D]\hookrightarrow \Cat[D]^{\Cat}\).
  The canonical transformations \(F\to \lim F\) form a strong
  transformation from the identity homomorphism on~\(\Cat[D]\) to
  the homomorphism \({\lim}\circ \const\).  The composition
  \(\const\circ {\lim}\) is isomorphic to the identity homomorphism
  on~\(\Cat[D]\).
\end{theorem}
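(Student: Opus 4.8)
The plan is to deduce the statement from the general theory of biuniversal arrows, exactly as the discussion preceding the theorem sets up. That discussion shows that the hypothesis ``every $\Cat$\nb-shaped diagram in~$\Cat[D]$ has a limit'' means precisely that the inclusion $\const\colon\Cat[D]\hookrightarrow\Cat[D]^{\Cat}$ admits a biuniversal arrow from every object~$F$ of $\Cat[D]^{\Cat}$, namely the limit cone $\vartheta_F\colon\const(\lim F)\to F$, whose defining property is that $(\vartheta_F)_*\colon\Cat[D](d,\lim F)\to\Cat[D]^{\Cat}(\const_d,F)$ is an equivalence of categories for every object~$d$ of~$\Cat[D]$. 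Having a biuniversal arrow at every object of the domain is the hypothesis of \cite{Fiore:Pseudo_biadjoints}*{Theorem~9.17}, which yields that $F\mapsto\lim F$ extends to a homomorphism biadjoint to~$\const$. Fiore phrases this for strict $2$\nb-categories, so the first step is the reduction: since any set of diagrams only involves a small part of~$\Cat[D]$ and every small bicategory is biequivalent to a strict one by the Coherence Theorem (see~\cite{Leinster:Basic_Bicategories}), the conclusion transports back to arbitrary~$\Cat[D]$ with small~$\Cat$.

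For orientation I would then spell out the shape of the construction that Fiore's theorem carries out. On arrows, a strong transformation $\sigma\colon F\to G$ is sent to the essentially unique $\lim\sigma\colon\lim F\to\lim G$ for which $(\vartheta_G)_*(\lim\sigma)$ is isomorphic to $\sigma\circ\vartheta_F$ in $\Cat[D]^{\Cat}(\const_{\lim F},G)$; such a $\lim\sigma$ and a witnessing invertible $2$\nb-arrow exist because $(\vartheta_G)_*$ is essentially surjective. On $2$\nb-arrows, a modification $m\colon\sigma\Rightarrow\tau$ is sent to the unique $\lim m\colon\lim\sigma\Rightarrow\lim\tau$ compatible with these witnesses, using that $(\vartheta_G)_*$ is fully faithful. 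The same fully faithfulness produces the comparison $2$\nb-isomorphisms $\lim(\tau\circ\sigma)\cong\lim\tau\circ\lim\sigma$ and $\lim(\id_F)\cong\id_{\lim F}$, because both sides map under $(\vartheta_H)_*$ to objects canonically isomorphic to $\tau\circ\sigma\circ\vartheta_F$, respectively to $\vartheta_F$. Taking $F=\const_d$ in the biuniversal property gives $\lim\const_d\cong d$, so the composite $\lim\circ\const$ is isomorphic to the identity homomorphism on~$\Cat[D]$; the cones $\vartheta_F$ assemble into the counit $\const\circ\lim\Rightarrow\id_{\Cat[D]^{\Cat}}$ and the isomorphisms just described into the unit $\id_{\Cat[D]}\Rightarrow\lim\circ\const$, whose components are the canonical transformations $F\to\lim F$.

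The genuinely laborious part, which I would cite rather than reproduce, is the verification of the coherence axioms: naturality of the comparison $2$\nb-isomorphisms in $\sigma$ and~$\tau$, the pentagon and triangle identities making $\lim$ a homomorphism, and the triangle (snake) identities for the unit and counit up to the prescribed invertible modifications. Each of these is an equality of $2$\nb-arrows between fixed $1$\nb-arrows whose images under the relevant fully faithful functor $(\vartheta_{\bullet})_*$ agree tautologically, so uniqueness forces the equality; this bookkeeping is exactly the content of \cite{Fiore:Pseudo_biadjoints}*{Section~9}. I expect this coherence bookkeeping to be the only real obstacle — everything else is the formal manipulation of biuniversal arrows outlined above, and it is handled once and for all by Fiore's theorem together with the coherence reduction.
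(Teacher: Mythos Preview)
Your proposal is correct and matches the paper's treatment exactly: the paper does not give its own proof of this theorem but simply cites \cite{Fiore:Pseudo_biadjoints}*{Theorem~9.17}, after setting up in the preceding paragraph precisely the reduction you describe (limit cones as biuniversal arrows, plus the Coherence Theorem to pass from Fiore's strict setting to general bicategories). Your sketch of how Fiore's argument runs is accurate and in fact supplies more detail than the paper itself offers.
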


Now we specialise to the target bicategory \(\Cat[D]=\Grcat\).  We
first describe the strong transformations to constant diagrams (that
is, cones over diagrams) and the modifications among them.  A strong
transformation from the constant diagram \(\const_{\Gr[H]}\) for a
groupoid~\(\Gr[H]\) to the diagram \(F=(\Gr_x,\Bisp_g,\mu_{g,h})\)
is equivalent to groupoid correspondences
\(\Bisp[Y]_x\colon \Gr_x\leftarrow\Gr[H]\) for all objects~\(x\)
of~\(\Cat\) and isomorphisms of groupoid correspondences
\[
V_g\colon \Bisp_g \Grcomp_{\Gr_y} \Bisp[Y]_y \congto \Bisp[Y]_x
\qquad\text{for all arrows }g\colon x\leftarrow y\text{ in }\Cat,
\]
such that~\(V_x\) is the canonical isomorphism for each object~\(x\)
and the diagrams
\begin{equation}
  \label{eq:cone_category-diagram}
  \begin{tikzpicture}[yscale=1.3,xscale=2.5,baseline=(current bounding
    box.west)]
    \node (cbb) at (144:1) {\(\Bisp[Y]_x\)};
    \node (cb) at (216:1) {\(\Bisp[Y]_x\)};
    \node (acb) at (72:1) {\(\Bisp_g\Grcomp_{\Gr_y} \Bisp[Y]_y\)};
    \node (ac) at (288:1) {\(\Bisp_{g h}\Grcomp_{\Gr_z} \Bisp[Y]_z\)};
    \node (aac) at (0:.8) {\(\Bisp_g\Grcomp_{\Gr_y}
      \Bisp_h\Grcomp_{\Gr_z}
      \Bisp[Y]_z\)};
    \draw[double,double equal sign distance] (cbb) -- node[swap]
    {\(\scriptstyle \id_{\Bisp[Y]_x}\)} (cb);
    \draw[dar] (acb) -- node[near start,swap] {\(\scriptstyle V_g\)} (cbb);
    \draw[dar] (ac) -- node[near start] {\(\scriptstyle V_{g h}\)} (cb);
    \draw[dar] (aac.north) -- node[swap]
    {\(\scriptstyle \id_{\Bisp_g}\Grcomp_{\Gr_y} V_h\)} (acb.south);
    \draw[dar] (aac.south) -- node {\(\scriptstyle \mu_{g,h} \Grcomp_{\Gr_z}
      \id_{\Bisp[Y]_z}\)} (ac.north);
  \end{tikzpicture}
\end{equation}
for composable arrows \(g\colon x\leftarrow y\),
\(h\colon y\leftarrow z\) in~\(\Cat\) commute.  Compared to
\longref{Proposition}{pro:trafo_category-diagram}, we have used the
unit isomorphisms
\(\Bisp[Y]_x \Grcomp_{\Gr_x} (\const_{\Gr[H]})_x \cong \Bisp[Y]_x\)
throughout to simplify the data.  This
turns~\eqref{eq:trafo_category-diagram}
into~\eqref{eq:cone_category-diagram}.  As in
\longref{Proposition}{pro:trafo_category-diagram}, the
diagram~\eqref{eq:cone_category-diagram} commutes automatically if
\(g\) or~\(h\) is an identity arrow.

Let \((\Bisp[Y]_x^1,V_g^1)\) and \((\Bisp[Y]_x^2,V_g^2)\) be two
such strong transformations.  A modification between them as
described in
\longref{Proposition}{pro:modification_category-diagram} is
equivalent to a family of continuous maps of correspondences
\[
  W_x\colon \Bisp[Y]_x^1\to \Bisp[Y]_x^2
\]
for all objects~\(x\) of~\(\Cat\), such that the diagrams
\begin{equation}
  \label{eq:cone_modification_category-diagram}
  \begin{tikzpicture}[yscale=1,xscale=4,baseline=(current bounding
    box.west)]
    \node (add) at (0,1) {\(\Bisp[Y]^1_x\)};
    \node (ad) at (1,1) {\(\Bisp[Y]^2_x\)};
    \node (ddc) at (0,0) {\(\Bisp_g\Grcomp_{\Gr_y} \Bisp[Y]^1_y\)};
    \node (dc) at (1,0) {\(\Bisp_g\Grcomp_{\Gr_y} \Bisp[Y]^2_y\)};
    
    \draw[dar] (add) -- node {\(\scriptstyle W_x\)} (ad);
    \draw[dar] (ddc) -- node {\(\scriptstyle V^1_g\)} (add);
    \draw[dar] (dc) -- node {\(\scriptstyle V^2_g\)} (ad);
    \draw[dar] (ddc) -- node {\(\scriptstyle \id_{\Bisp_g}\Grcomp_{\Gr_y} W_y\)}
    (dc);
  \end{tikzpicture}
\end{equation}
commute for all arrows \(g\colon x\leftarrow y\) in~\(\Cat\).  And
this diagram commutes automatically if~\(g\) is an identity arrow.
As above, the unit isomorphisms
\(\Bisp[Y]_x \Grcomp_{\Gr_x} (\const_{\Gr[H]})_x \cong \Bisp[Y]_x\)
allow to simplify the situation in
\longref{Proposition}{pro:modification_category-diagram}.

Now we prove that the groupoid model of a diagram is a limit as
well.  We first extend the defining universal property of a groupoid
model so that it describes correspondences from a groupoid model to
other groupoids:

\begin{lemma}
  \label{lem:model_is_limit}
  Let \(F=(\Gr_x,\Bisp_g,\mu_{g,h})\) be a diagram of groupoid
  correspondences.  Let~\(\Gr[U]\) be a groupoid model for it, and
  let~\(\Gr[H]\) be another groupoid.  Let~\(\Bisp[Y]\) be a space
  with a right \(\Gr[H]\)\nb-action and with a left
  \(\Gr[U]\)\nb-action.  The latter corresponds to an
  \(F\)\nb-action on~\(\Bisp[Y]\), which is given by a clopen
  decomposition \(\Bisp[Y] = \bigsqcup_{x\in\Cat^0} \Bisp[Y]_x\),
  groupoid actions of~\(\Gr_x\) on~\(\Bisp[Y]_x\) for
  \(x\in\Cat^0\), and homeomorphisms
  \(\alpha_g\colon \Bisp_g \Grcomp_{\Gr_{\s(g)}} \Bisp[Y]_{\s(g)}
  \congto \Bisp[Y]_{\rg(g)}\) for \(g\in \Cat\), subject to certain
  conditions.  The left \(\Gr[U]\)\nb-action and the right
  \(\Gr[H]\)\nb-action on~\(\Bisp[Y]\) commute if and only if
  \begin{enumerate}
  \item each subset \(\Bisp[Y]_x\subseteq \Bisp[Y]\) is
    \(\Gr[H]\)\nb-invariant,
  \item the actions of \(\Gr_x\) and~\(\Gr[H]\) on~\(\Bisp[Y]_x\)
    commute for all \(x\in\Cat^0\), and
  \item the maps~\(\alpha_g\) are \(\Gr[H]\)\nb-equivariant for all
    \(g\in\Cat\).
  \end{enumerate}
\end{lemma}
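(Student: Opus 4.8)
The strategy is to repackage the assertion ``the two actions commute'' into two statements about the underlying $\Gr[U]$\nb-action and then transport those statements to the language of $F$\nb-actions by means of the groupoid model bijection. Write $\s_{\Bisp[Y]}\colon \Bisp[Y]\to\Gr[H]^0$ and $\rg_{\Bisp[Y]}\colon \Bisp[Y]\to\Gr[U]^0$ for the right and left anchor maps, set $P\defeq \Bisp[Y]\times_{\s_{\Bisp[Y]},\Gr[H]^0,\rg}\Gr[H]$, and let $m\colon P\to\Bisp[Y]$, $(y,h)\mapsto y\cdot h$, be the right multiplication map.

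First I would record the following elementary equivalence: the left $\Gr[U]$\nb-action and the right $\Gr[H]$\nb-action on $\Bisp[Y]$ commute if and only if \textup{(a)}~$\s_{\Bisp[Y]}$ is constant on $\Gr[U]$\nb-orbits, so that $\Gr[U]$ acts on $P$ by $\gamma\cdot(y,h)\defeq(\gamma\cdot y,h)$, and \textup{(b)}~with respect to this action the map $m$ is $\Gr[U]$\nb-equivariant. This is just an unwinding of the three compatibility axioms for commuting actions (see \longref{Definition}{def:Bibundles}): condition \textup{(a)} is $\s_{\Bisp[Y]}(\gamma\cdot y)=\s_{\Bisp[Y]}(y)$; equality of anchor maps under $m$ is $\rg_{\Bisp[Y]}(y\cdot h)=\rg_{\Bisp[Y]}(y)$; and $m(\gamma\cdot(y,h))=\gamma\cdot m(y,h)$ is $(\gamma\cdot y)\cdot h=\gamma\cdot(y\cdot h)$.

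Next I would identify the $F$\nb-action on $P$ that corresponds, under the groupoid model bijection, to the $\Gr[U]$\nb-action $\gamma\cdot(y,h)=(\gamma\cdot y,h)$. The claim is that it is the evident one: clopen pieces $\Bisp[Y]_x\times_{\Gr[H]^0}\Gr[H]$, anchor map $(y,h)\mapsto\rg_{\Bisp[Y]}(y)$, and multiplication maps $\gamma\cdot(y,h)\defeq(\gamma\cdot y,h)$ built from the $\alpha_g$. That this data satisfies \longref{Definition}{def:diagram_dynamical_system} follows because the algebraic identities are inherited from $\Bisp[Y]$ while openness, continuity and surjectivity of the new multiplication maps are inherited from the $\alpha_g$ (they are pullbacks of the $\alpha_g$ along the source map $\Gr[H]\to\Gr[H]^0$). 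To see that this is indeed the $F$\nb-action matching the given $\Gr[U]$\nb-action, observe that the coordinate projections $\pi\colon P\to\Bisp[Y]$ and $\pi'\colon P\to\Gr[H]$ are, respectively, $F$\nb-equivariant and $F$\nb-invariant for it; hence by \longref{Lemma}{lem:groupoid_model_invariant_map} and the naturality clause of \longref{Definition}{def:universal_action} they are $\Gr[U]$\nb-equivariant and $\Gr[U]$\nb-invariant for the associated $\Gr[U]$\nb-action, and since $(\pi,\pi')$ is injective on $P$ this forces that associated action to be $\gamma\cdot(y,h)=(\gamma\cdot y,h)$, as wanted. I expect this identification to be the main obstacle: the groupoid model is characterised only by a bijection of actions, so one really has to recover the action on $P$ concretely, and the trick is to play off the equivariant projection $\pi$ against the invariant projection $\pi'$; once this is done the rest is routine.

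Finally I would translate conditions \textup{(a)} and \textup{(b)} back through the groupoid model. By \longref{Lemma}{lem:groupoid_model_invariant_map}, \textup{(a)} holds iff $\s_{\Bisp[Y]}$ is $F$\nb-invariant, which by \longref{Definition}{def:F-invariant} says $\s_{\Bisp[Y]}(\gamma\cdot y)=\s_{\Bisp[Y]}(y)$ for all $g\in\Cat$, $\gamma\in\Bisp_g$, $y\in\Bisp[Y]_{\s(g)}$ with $\s(\gamma)=\rg(y)$; for identity arrows $g$ this is part of condition~(2), and for general $g$ it is the statement that $\alpha_g$ preserves right anchor maps, part of condition~(3). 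By the naturality clause of \longref{Definition}{def:universal_action} together with the previous paragraph, \textup{(b)} holds iff $m$ is $F$\nb-equivariant, which by \longref{Definition}{def:F-invariant} amounts to $m(\Bisp[Y]_x\times_{\Gr[H]^0}\Gr[H])\subseteq\Bisp[Y]_x$ (this is condition~(1), and also yields $\rg_{\Bisp[Y]}(y\cdot h)=\rg_{\Bisp[Y]}(y)$ on each piece) together with $(\gamma\cdot y)\cdot h=\gamma\cdot(y\cdot h)$ (the associativity in condition~(2) for identity arrows, and the corresponding clause of condition~(3) for general $g$). Collating, the pair \textup{(a)}--\textup{(b)} is equivalent to the conjunction of conditions (1), (2) and (3), which completes the proof.
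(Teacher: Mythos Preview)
Your argument is correct and follows the same overall strategy as the paper: reduce ``the actions commute'' to (a) \(\Gr[U]\)\nb-invariance of~\(\s_{\Bisp[Y]}\) plus (b) \(\Gr[U]\)\nb-equivariance of right multiplication, translate~(a) via \longref{Lemma}{lem:groupoid_model_invariant_map}, and translate~(b) via the naturality clause of \longref{Definition}{def:universal_action}. The only difference is in how step~(b) is packaged. The paper restricts to the fibres \(\s_{\Bisp[Y]}^{-1}(x)\) and checks, for each \(h\in\Gr[H]\), that the homeomorphism \(\s_{\Bisp[Y]}^{-1}(\rg(h))\to\s_{\Bisp[Y]}^{-1}(\s(h))\) is \(\Gr[U]\)\nb-equivariant iff \(F\)\nb-equivariant; since the inclusion of a fibre is injective and \(F\)\nb-equivariant, the induced \(\Gr[U]\)\nb-action on each fibre is immediately seen to be the restriction. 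You instead bundle everything into one map \(m\colon P\to\Bisp[Y]\); this is tidier but forces the extra identification of the \(\Gr[U]\)\nb-action on~\(P\), which you handle via the pair \((\pi,\pi')\). One small point of presentation: your construction of the \(F\)\nb-action on~\(P\) (and the corresponding \(\Gr[U]\)\nb-action) tacitly requires the \(F\)\nb-invariance of~\(\s_{\Bisp[Y]}\), so that paragraph should logically come after you have translated condition~(a); the paper makes this explicit by first observing that invariance of~\(\s_{\Bisp[Y]}\) is necessary for both sides and then assuming it without loss of generality.
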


\begin{proof}
  The map \(\s_{\Bisp[Y]} = \s\colon \Bisp[Y] \to \Gr[H]^0\) is
  \(\Gr[U]\)\nb-invariant if and only if it is \(F\)\nb-invariant by
  \longref{Lemma}{lem:groupoid_model_invariant_map}.  This happens
  if and only if its restriction to~\(\Bisp[Y]_x\) is
  \(\Gr_x\)\nb-invariant and compatible with the maps~\(\alpha_g\)
  for all \(g\in \Cat\).  This is necessary both for the actions of
  \(\Gr[U]\) and~\(\Gr[H]\) to commute and for the conditions in the
  theorem.  So we may assume without loss of generality
  that~\(\s_{\Bisp[Y]}\) is \(F\)\nb-invariant.
	
  Let \(x\in\Gr[H]^0\).  The subset
  \(\s_{\Bisp[Y]}^{-1}(x)\subseteq \Bisp[Y]\) is \(F\)\nb-invariant
  because~\(\s_{\Bisp[Y]}\) is \(F\)\nb-invariant.  So there is a
  unique restricted \(F\)\nb-action on~\(\s_{\Bisp[Y]}^{-1}(x)\) for
  which the inclusion
  \(\s_{\Bisp[Y]}^{-1}(x)\hookrightarrow \Bisp[Y]\) is
  \(F\)\nb-equivariant.  It corresponds to a unique
  \(\Gr[U]\)\nb-action for which the inclusion
  \(\s_{\Bisp[Y]}^{-1}(x)\hookrightarrow \Bisp[Y]\) is
  \(\Gr[U]\)\nb-equivariant.  An arrow \(h\in\Gr[H]\) acts
  on~\(\Bisp[Y]\) by a homeomorphism
  \(\s_{\Bisp[Y]}^{-1}(\rg(h)) \to \s_{\Bisp[Y]}^{-1}(\s(h))\),
  \(y\mapsto y\cdot h\).  The left \(\Gr[U]\)\nb-action
  on~\(\Bisp[Y]\) commutes with the right \(\Gr[H]\)\nb-action if
  and only if all these homeomorphisms are \(\Gr[U]\)\nb-equivariant
  for the induced \(\Gr[U]\)\nb-actions on
  \(\s_{\Bisp[Y]}^{-1}(\rg(h))\) and \(\s_{\Bisp[Y]}^{-1}(\s(h))\).
  By the universal property of the groupoid model, this happens if
  and only if these homeomorphisms are \(F\)\nb-equivariant.  And
  this is equivalent to the three conditions in the statement of
  the lemma.
\end{proof}

A space~\(\Bisp[Y]\) with a left action of \(\Gr[U]\) and a right
action of~\(\Gr[H]\) is a correspondence \(\Gr[U]\leftarrow\Gr[H]\)
if and only if the actions of \(\Gr[U]\) and~\(\Gr[H]\) commute, the
action of~\(\Gr[H]\) is basic, and its anchor map
\(\Bisp[Y]\to\Gr[H]^0\) is a local homeomorphism.  The latter two
properties do not involve the left action at all.  Thus
\longref{Lemma}{lem:model_is_limit} describes correspondences
\(\Gr[U] \leftarrow \Gr[H]\) using \(F\)\nb-actions.

\begin{theorem}
  \label{the:groupoid_model_limit}
  A groupoid model~\(\Gr[U]\) for a diagram
  \(F\colon \Cat\to\Grcat\) is also a limit for~\(F\) in~\(\Grcat\).
\end{theorem}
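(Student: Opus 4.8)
The plan is to verify the defining property of a limit cone directly from the universal property of the groupoid model, using \longref{Lemma}{lem:model_is_limit} as the key translation device. First I would equip~\(\Gr[U]\) with a canonical cone \(\vartheta\colon \const_{\Gr[U]}\to F\). For this, recall that by \longref{Proposition}{pro:universal_F-action} the object space \(\Gr[U]^0\) carries a universal \(F\)\nb-action; applying \longref{Lemma}{lem:model_is_limit} with \(\Bisp[Y]=\Gr[U]\) itself (viewed as a correspondence \(\Gr[U]\leftarrow\Gr[U]\) via left and right multiplication), the left \(\Gr[U]\)\nb-action corresponds to an \(F\)\nb-action on~\(\Gr[U]\) that commutes with the right \(\Gr[U]\)\nb-action. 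This gives a clopen decomposition \(\Gr[U]=\bigsqcup_{x\in\Cat^0}\Bisp[Y]_x\) with groupoid actions of~\(\Gr_x\) on~\(\Bisp[Y]_x\) commuting with the right \(\Gr[U]\)\nb-action, and \(\Gr[U]\)-equivariant homeomorphisms \(\alpha_g\colon \Bisp_g\Grcomp_{\Gr_{\s(g)}}\Bisp[Y]_{\s(g)}\congto\Bisp[Y]_{\rg(g)}\). So each~\(\Bisp[Y]_x\) is a groupoid correspondence \(\Gr_x\leftarrow\Gr[U]\) (the right \(\Gr[U]\)\nb-action is basic and its anchor map is a local homeomorphism since \(\Gr[U]\) is étale), and the~\(\alpha_g\) become isomorphisms \(V_g\colon \Bisp_g\Grcomp_{\Gr_{\s(g)}}\Bisp[Y]_{\s(g)}\congto\Bisp[Y]_{\rg(g)}\) of correspondences over \(\Gr[U]\). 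The associativity conditions~\ref{en:diagram_dynamical_system1} on the \(F\)\nb-action translate exactly into the commuting hexagons~\eqref{eq:cone_category-diagram} (with the unit condition on~\(V_x\) coming from \longref{Lemma}{lem:action_pieces}). Thus \((\Bisp[Y]_x,V_g)\) is a cone \(\vartheta\colon\const_{\Gr[U]}\to F\).

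Next I would show that for every groupoid~\(\Gr[H]\) the induced functor \(\vartheta_*\colon \Grcat(\Gr[H],\Gr[U])\to\Grcat^{\Cat}(\const_{\Gr[H]},F)\) is an equivalence of categories. The strategy is to build an explicit inverse functor and natural isomorphisms. Given a cone over~\(F\) with summit~\(\Gr[H]\), i.e.\ correspondences \(\Bisp[Z]_x\colon \Gr_x\leftarrow\Gr[H]\) with isomorphisms \(W_g\colon \Bisp_g\Grcomp_{\Gr_{\s(g)}}\Bisp[Z]_{\s(g)}\congto\Bisp[Z]_{\rg(g)}\) satisfying~\eqref{eq:cone_category-diagram}, set \(\Bisp[Z]\defeq\bigsqcup_{x\in\Cat^0}\Bisp[Z]_x\). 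The left \(\Gr_x\)\nb-actions on~\(\Bisp[Z]_x\), the maps \(\rg\colon \Bisp[Z]_x\to\Gr_x^0\), and the homeomorphisms \(W_g\) (read as open, continuous, surjective multiplication maps on representatives) constitute exactly the data of an \(F\)\nb-action on the space~\(\Bisp[Z]\) in the sense of \longref{Definition}{def:diagram_dynamical_system}; conditions~\ref{en:diagram_dynamical_system1} and~\ref{en:diagram_dynamical_system2} follow from the cone hexagons and from \longref{Lemma}{homeo} applied to each~\(W_g\). Moreover the right \(\Gr[H]\)\nb-action on~\(\Bisp[Z]\) commutes with each piece of this \(F\)\nb-action and the~\(W_g\) are \(\Gr[H]\)\nb-equivariant, by hypothesis. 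By the definition of a groupoid model (and its naturality), the \(F\)\nb-action on~\(\Bisp[Z]\) corresponds to a unique left \(\Gr[U]\)\nb-action, and by \longref{Lemma}{lem:model_is_limit} this left action commutes with the right \(\Gr[H]\)\nb-action. Hence~\(\Bisp[Z]\) becomes a correspondence \(\Gr[U]\leftarrow\Gr[H]\). This assignment is the candidate inverse of~\(\vartheta_*\); one checks it is functorial by applying the same correspondence between \(F\)\nb-actions and \(\Gr[U]\)\nb-actions to \(2\)\nb-arrows of cones versus \(2\)\nb-arrows of correspondences (using \longref{Lemma}{lem:groupoid_model_invariant_map} and the naturality of the bijection to see that a map of correspondences \(\Gr[U]\leftarrow\Gr[H]\) is the same thing as a compatible family of maps of correspondences \(\Gr_x\leftarrow\Gr[H]\)).

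Finally I would verify that the two composites \(\vartheta_*\) then "take pieces", and "take pieces" then \(\vartheta_*\), are naturally isomorphic to the identity. In one direction: starting from a correspondence \(\Bisp[W]\colon \Gr[U]\leftarrow\Gr[H]\), decomposing it into pieces via the \(F\)\nb-action induced by \(\vartheta\), and then reassembling, one recovers \(\Bisp[W]\) up to the canonical isomorphism, because decomposing along the clopen partition \(\Gr[U]=\bigsqcup_x\Bisp[Y]_x\) (pulled back along the anchor map to~\(\Gr[U]^0\)) and recombining is the identity construction; the left \(\Gr[U]\)\nb-action is recovered exactly because the original action of the diagram on~\(\Bisp[W]\) was obtained \emph{from} that \(\Gr[U]\)\nb-action by the groupoid-model bijection. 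In the other direction, starting from a cone \((\Bisp[Z]_x,W_g)\), forming~\(\Bisp[Z]\), obtaining the \(\Gr[U]\)\nb-action, and then applying~\(\vartheta_*\) to get \(\Bisp[Z]\Grcomp_{\Gr[U]}\Bisp[Y]_x\): here one uses that \(\Bisp[Y]_x\) represents the \(x\)-component of the universal cone, together with the unit isomorphisms \(\Bisp[Z]_x\Grcomp_{\Gr_x}\Gr_x\cong\Bisp[Z]_x\), to produce a canonical isomorphism back to \(\Bisp[Z]_x\) compatible with the~\(W_g\). The main obstacle I anticipate is bookkeeping rather than conceptual: carefully matching the associators and unit isomorphisms on the bicategorical side (the hexagons~\eqref{eq:cone_category-diagram}) with the strict associativity conditions of \longref{Definition}{def:diagram_dynamical_system} on the action side, and checking that the equivalence \(\vartheta_*\) is natural in~\(\Gr[H]\) with respect to precomposition by correspondences \(\Gr[H]'\leftarrow\Gr[H]\). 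Since all of these reduce, via \longref{Lemma}{lem:model_is_limit} and \longref{Lemma}{lem:groupoid_model_invariant_map}, to the already-established equivalence between the category of \(F\)\nb-actions and the category of \(\Gr[U]\)\nb-actions, no genuinely new difficulty arises, and \longref{Theorem}{the:biadjoint} then yields the functoriality stated in \longref{Corollary}{cor:groupoid_model_functor}.
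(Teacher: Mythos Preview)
Your proposal is correct and follows essentially the same approach as the paper: both use \longref{Lemma}{lem:model_is_limit} as the translation device to identify cones \(F\leftarrow\const_{\Gr[H]}\) with correspondences \(\Gr[U]\leftarrow\Gr[H]\), and modifications with \(2\)\nb-arrows. The paper's version is more compressed---it simply asserts the equivalence of categories without separately constructing the universal cone~\(\vartheta\) or verifying the two round-trips---whereas you spell these out; one small slip is that in your last paragraph the composite should be \(\Bisp[Y]_x\Grcomp_{\Gr[U]}\Bisp[Z]\) rather than \(\Bisp[Z]\Grcomp_{\Gr[U]}\Bisp[Y]_x\), given the paper's composition conventions.
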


\begin{proof}
  Let \(F \leftarrow \const_{\Gr[H]}\) for a groupoid~\(\Gr[H]\) be
  a cone over the groupoid diagram~\(F\).  This is given by groupoid
  correspondences \(\Bisp[Y]_x\colon \Gr_x\leftarrow\Gr[H]\) for all
  objects~\(x\) of~\(\Cat\) and isomorphisms of groupoid
  correspondences
  \(V_g\colon \Bisp_g \Grcomp_{\Gr_y} \Bisp[Y]_y \congto
  \Bisp[Y]_x\) for all arrows
  \(g\colon x\leftarrow y\text{ in }\Cat\), such that~\(V_x\) is the
  canonical isomorphism for each object~\(x\) and the
  diagrams~\eqref{eq:cone_category-diagram} commute for composable
  arrows \(g\colon x\leftarrow y\), \(h\colon y\leftarrow z\)
  in~\(\Cat\).  Let
  \(\Bisp[Y] \defeq \bigsqcup_{x\in\Cat^0} \Bisp[Y]_x\), equipped
  with its induced right \(\Gr[H]\)\nb-action.  The left actions of
  the groupoids~\(\Gr_x\) and the maps~\(V_g\) together are the same
  as an action of the diagram~\(F\) on~\(\Bisp[Y]\) that commutes
  with the right \(\Gr[H]\)\nb-action.  By
  \longref{Lemma}{lem:model_is_limit}, this is equivalent to an
  action of the groupoid model~\(\Gr[U]\) on~\(\Bisp[Y]\) that
  commutes with the right \(\Gr[H]\)\nb-action.  A modification from
  \((\Bisp[Y]_x,V_g)\) to \((\Bisp[Y]'_x,V'_g)\) is the same as a
  family of \(\Gr_x,\Gr[H]\)\nb-equivariant maps
  \(\Bisp[Y]_x \to \Bisp[Y]'_x\) that also intertwine the maps
  \(V_g\) and~\(V_g'\).  This is equivalent to a
  \(\Gr[U],\Gr[H]\)-equivariant map
  \(\bigsqcup_{x\in \Cat^0} \Bisp[Y]_x \to \bigsqcup_{x\in \Cat^0}
  \Bisp[Y]'_x\).  Thus the category of cones
  \(F\leftarrow \const_{\Gr[H]}\) is equivalent to the category of
  correspondences \(\Gr[U]\leftarrow \Gr[H]\) and \(2\)\nb-arrows
  between them.  This equivalence is natural when we
  modify~\(\Gr[H]\).  Hence the groupoid model~\(\Gr[U]\) is a limit
  of the diagram~\(F\)
\end{proof}

We will show in~\cite{Ko-Meyer:Groupoid_models} that a groupoid
model always exists.

The converse of \longref{Theorem}{the:groupoid_model_limit} fails
because limits in a bicategory are unique only up to equivalence,
whereas groupoid models are unique up to isomorphism by
\longref{Proposition}{pro:groupoid_model}.  The equivalences
in~\(\Grcat\) are the Morita equivalences by
\longref{Theorem}{the:groupoid_equivalence}.

\begin{corollary}
  \label{cor:groupoid_model_functor}
  Assume that any diagram \(\Cat\to\Grcat\) has a groupoid model.
  Then the construction of groupoid models is part of a homomorphism
  of bicategories \(\Grcat^{\Cat} \to \Grcat\).
\end{corollary}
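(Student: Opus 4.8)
The plan is to deduce Corollary~\ref{cor:groupoid_model_functor} directly from the combination of Theorem~\ref{the:groupoid_model_limit} and Fiore's existence result for biadjoints, namely Theorem~\ref{the:biadjoint}. The key observation is that a groupoid model for a diagram is the same thing as a limit of that diagram in~\(\Grcat\): Theorem~\ref{the:groupoid_model_limit} shows that any groupoid model is a limit, and Proposition~\ref{pro:groupoid_model} shows the groupoid model is unique up to isomorphism. So once we assume that every diagram \(\Cat\to\Grcat\) has a groupoid model, we know in particular that every \(\Cat\)-shaped diagram in~\(\Grcat\) has a limit in the bicategorical sense.

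First I would invoke Theorem~\ref{the:biadjoint} with \(\Cat[D] = \Grcat\) (which is small once we restrict attention to the --~necessarily small~-- subbicategory of~\(\Grcat\) generated by the objects, arrows and \(2\)-arrows appearing in any given set of diagrams; as noted in the discussion preceding Theorem~\ref{the:biadjoint}, the Coherence Theorem lets us pass to a strict model so that Fiore's hypotheses apply). This yields a homomorphism of bicategories \(\lim\colon \Grcat^{\Cat} \to \Grcat\) which on objects sends a diagram to its limit. Then I would use Theorem~\ref{the:groupoid_model_limit} again, in the form that a groupoid model is a limit, to identify \(\lim F\) with a groupoid model of~\(F\) for each object~\(F\) of~\(\Grcat^{\Cat}\). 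Since groupoid models are unique up to isomorphism by Proposition~\ref{pro:groupoid_model}, this identification is canonical enough that the homomorphism~\(\lim\) may be regarded as ``the'' groupoid model construction on objects, and on arrows and \(2\)-arrows it provides exactly the functorial structure asserted in the corollary: a strong transformation \(F^1 \to F^2\) in~\(\Grcat^{\Cat}\) is sent to a groupoid correspondence between their groupoid models, and modifications to \(2\)-arrows between these, compatibly with composition up to the coherent invertible \(2\)-arrows that make~\(\lim\) a homomorphism.

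I do not expect any real obstacle here, since essentially all the work has been done: the content is in Theorem~\ref{the:groupoid_model_limit} (groupoid models are limits) and in the cited Theorem~\ref{the:biadjoint} (limits assemble into a biadjoint homomorphism when they all exist). The one point that deserves a sentence of care is the passage from ``\(\Grcat\) has the relevant limits'' to ``Fiore's theorem applies'': Fiore states his result for strict \(2\)-categories, so I would remind the reader --~exactly as the excerpt does before Theorem~\ref{the:biadjoint}~-- that any small bicategory is biequivalent to a strict one, that biequivalences preserve the existence of limits and transport the biadjoint homomorphism back, and that any fixed family of diagrams only involves a small part of~\(\Grcat\). With that remark in place the proof is a two-line citation chain, which I would write out as: assume every diagram has a groupoid model; by Theorem~\ref{the:groupoid_model_limit} every diagram then has a limit; by Theorem~\ref{the:biadjoint} there is a homomorphism \(\lim\colon \Grcat^{\Cat}\to\Grcat\) computing these limits on objects; and by Theorem~\ref{the:groupoid_model_limit} together with the uniqueness in Proposition~\ref{pro:groupoid_model} this homomorphism is the groupoid model construction. \(\qed\)
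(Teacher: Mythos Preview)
Your proposal is correct and follows essentially the same approach as the paper: the paper's proof is the two-line citation chain you describe, invoking Theorem~\ref{the:biadjoint} together with Theorem~\ref{the:groupoid_model_limit}. Your additional remarks about smallness/strictness and the appeal to Proposition~\ref{pro:groupoid_model} are reasonable elaborations but not strictly necessary, since the paper already handles the former in the discussion preceding Theorem~\ref{the:biadjoint} and the latter is implicit in the statement that the limit homomorphism computes groupoid models on objects.
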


\begin{proof}
  This follows from \longref{Theorem}{the:biadjoint} because the
  groupoid model of a diagram is also a limit for it by
  \longref{Theorem}{the:groupoid_model_limit}.
\end{proof}

Let \(F_1=(\Gr_x,\Bisp_g,\mu_{g,h})\) and
\(F_2=(\Gr[H]_x,\Bisp[Y]_g,\nu_{g,h})\) be \(\Cat\)\nb-shaped
diagrams.  Let \(\Gr[U]\) and~\(\Gr[V]\) be groupoid models for
\(F_1\) and~\(F_2\), respectively.  A strong transformation
\((\Bisp[Z]_x,V_g) \colon F_1 \leftarrow F_2\) from~\(F_2\)
to~\(F_1\) induces a groupoid correspondence
\(\Gr[V] \leftarrow \Gr[U]\) by
\longref{Corollary}{cor:groupoid_model_functor}.  It is easy to see
that the following explicit construction gives this correspondence.

First, the left action of~\(\Gr[V]\) on its own arrow space is
equivalent to an action of the diagram~\(F_2\) on~\(\Gr[V]\) by the
universal property of the groupoid model~\(\Gr[V]\).  This action commutes with
the action of~\(\Gr[V]\) by right multiplication.  The proof of
\longref{Theorem}{the:groupoid_model_limit} shows that the
\(F_2\)\nb-action on~\(\Gr[V]\) is equivalent to a strong
transformation \(F_2 \leftarrow \const_{\Gr[V]}\).  We compose this
with the given strong transformation \(F_1 \leftarrow F_2\) to get a
strong transformation \(F_1 \leftarrow \const_{\Gr[V]}\).  Denote
its underlying groupoid correspondences \(\Gr_x \leftarrow \Gr[V]\)
by~\(\Bisp[T]_x\).  Then the strong transformation is equivalent to
an \(F_1\)\nb-action on
\(\Bisp[T] \defeq \bigsqcup_{x\in \Cat^0} \Bisp[T]_x\) that commutes
with the right \(\Gr[V]\)\nb-action.  And this is equivalent to a
left action of~\(\Gr[U]\) on~\(\Bisp[T]\) that commutes with the
right \(\Gr[V]\)\nb-action.  So~\(\Bisp[T]\) becomes a groupoid
correspondence \(\Gr[U]\leftarrow \Gr[V]\).

\begin{bibdiv}
  \begin{biblist}
    \bibselect{references}
  \end{biblist}
\end{bibdiv}
\end{document}